\numberwithin{equation}{section}
\newtheorem{thm}{Theorem}[section]
\newtheorem{lem}[thm]{Lemma}
\newtheorem{cor}[thm]{Corollary}
\newtheorem{Prop}[thm]{Proposition}
\newtheorem{Rem}[thm]{Remark}
\renewcommand{\epsilon}{\varepsilon}
\begin{document}
	\title[On an eigenvalue problem related to the HLS critical exponent]
	{Qualitative properties of single blow-up solutions for nonlinear Hartree equation with slightly subcritical exponent}
	
\author[A. Cannone]{Alessandro Cannone}
\address{\noindent Alessandro Cannone  \newline
	Dipartimento di Matematica, Universit\`{a} degli Studi di Bari Aldo Moro,\newline
	Via Orabona 4, 70125 Bari, Italy.}\email{alessandro.cannone@uniba.it}

\author[S. Cingolani]{Silvia Cingolani$^\dag$}
	\address{\noindent Silvia Cingolani  \newline
		Dipartimento di Matematica, Universit\`{a} degli Studi di Bari Aldo Moro,\newline
		Via Orabona 4, 70125 Bari, Italy.}\email{silvia.cingolani@uniba.it}

	\author[M. Yang]{Minbo Yang$^\ddag$}
	\address{\noindent Minbo Yang  \newline
		School of Mathematical Sciences, Zhejiang Normal University,\newline
		Jinhua 321004, Zhejiang, People's Republic of China.}\email{mbyang@zjnu.edu.cn}
	
	\author[S. Zhao]{Shunneng Zhao$^\S$}
	\address{\noindent Shunneng Zhao  \newline
		Dipartimento di Matematica, Universit\`{a} degli Studi di Bari Aldo Moro,\newline Via Orabona 4, 70125 Bari, Italy.
		\vspace{2mm}
		\newline
School of Mathematical Sciences, Zhejiang Normal University,\newline
		Jinhua 321004, Zhejiang, People's Republic of China.}
	\email{snzhao@zjnu.edu.cn}

	\thanks{2020 {\em{Mathematics Subject Classification.}} Primary 35B40;  Secondly 35B33, 35J15.}

	\thanks{{\em{Key words and phrases.}} Nonlocal lineared problem; critical exponents; Morse index; qualitative properties;  single blow-up solutions}

\thanks{Alessandro Cannone is supported by D.M. 2023 n. 118- PNRR, "PDEs from Quantum Science", CUP: H91I23000500007, and by GNAMPA research project  "Problemi di ottimizzazione in PDEs da modelli biologici" from INDAM, CUP: E5324001950001}
	\thanks{$^\dag$Silvia Cingolani is supported by PNRR MUR project PE0000023 NQSTI - National Quantum Science and Technology Institute (CUP H93C22000670006) and partially supperted by INdAM-GNAMPA}
	
	\thanks{$^\ddag$Minbo Yang was partially supported by National Key Research and Development Program of China (No. 2022YFA1005700) and National Natural Science Foundation of China (12471114).}

	\thanks{$^\S$Shunneng Zhao was partially supported by PNRR MUR project PE0000023 NQSTI - National Quantum Science and Technology Institute (CUP H93C22000670006) and National Natural Science Foundation of China (12401146, 12261107) and Natural Science Foundation of Zhejiang Province (LMS25A010007)}
	
	\allowdisplaybreaks
	
	\begin{abstract}
		{\small
In this paper, we study the qualitative properties of single blow-up solutions to the nonlocal equations with slightly subcritical exponents
\begin{equation*}
				-\Delta u=(|x|^{-(n-2)}\ast u^{p-\epsilon})u^{p-1-\epsilon}\quad \mbox{in}~~\Omega,~~ u=0\quad \mbox{on}~~\partial\Omega,
			\end{equation*}
where $\Omega$ is a smooth bounded domain in $\mathbb{R}^n$ for $n=3,4,5$, $\ast$ denotes the standard convolution, $\epsilon>0$ is a small parameter and $p=\frac{n+2}{n-2}$ is $\mathcal{D}^{1,2}$ energy-critical exponent.
By exploiting various local Pohozaev identities and blow-up analysis,
we provide a number of estimates on the
first $(n+2)$-eigenvalues and their corresponding eigenfunctions, and examine the qualitative behavior of the eigenpairs $(\lambda_{i,\epsilon}, v_{i,\epsilon})$ to the linearied problem of the above nonlocal equations for $i=1,\cdots,n+2$. As a corollary, we derive the Morse index of a single-bubble solution in a nondegenerate setting.
		}
	\end{abstract}
	
	\vspace{3mm}
	
	\maketitle
	\section{Introduction}
	\subsection{Motivation and main results}
	In this paper, we are concerned with the following nonlocal problem
\begin{equation}\label{ele-1.1}
\left\lbrace
\begin{aligned}
    &-\Delta u=(|x|^{-{\alpha}}\ast u^{p_{\alpha}-\epsilon})u^{p_{\alpha}-1-\epsilon} \quad \mbox{in}\quad \Omega,\\
    &u>0\quad \mbox{in}\quad\hspace{1mm} \Omega,\\
    &u=0\quad \mbox{on}\hspace{2.5mm}\partial\Omega,
   \end{aligned}
\right.
\end{equation}
where $\Omega$ is a smooth bounded domain in $\mathbb{R}^n$ for $n\geq3$, $\alpha\in(0,n)$, $\epsilon>0$ is a small parameter and the exponent $p_{\alpha}=\frac{2n-\alpha}{n-2}$ is a threshold on the existence of a solution to \eqref{ele-1.1}.
If $\epsilon>0$, one
can find a solution to \eqref{ele-1.1} by applying  standard variational argument with the compact embedding $H^{1}(\Omega)\hookrightarrow L^{\frac{2n(p-\epsilon)}{2n-\alpha}}(\Omega)$. If $\epsilon=0$ and $\Omega$ is star-shaped, then an application of the nonlocal-type Pohozaev identity in \cite{GY18} gives nonexistence of a nontrivial solution for \eqref{ele-1.1}.
In the recent paper \cite{cw}, Chen and Wang
proved that if a solution $u_{\epsilon}$ of \eqref{ele-1.1} satisfies
\begin{equation*}
|\nabla u_\epsilon|^2\rightarrow C_{HLS}^{\frac{2n-\alpha}{n+2-\alpha}}\delta_{x_0}~~~\text{as}~~~\epsilon\rightarrow0
\end{equation*}
for $n\geq3$ and $\alpha\in(0, \min\{n,4\})$,
then the concentrate point $x_{0}\in \Omega$ is a critical point of the Robin function $\phi$ (see below). In addition, they also used the finite dimensional reduction method to give a converse result, i.e., for $\epsilon$ sufficiently small, \eqref{ele-1.1} has a family of solutions $u_{\epsilon}$ concentrating around $x_0$ under certain dimensional restriction. Here $\delta(x)$ denotes the Dirac measure centered at the origin, and $C_{HLS}>0$ is a constant depending on the dimension $n$ and parameter $\alpha$ (see \eqref{mini-p}),
 which can be characterized by the following minimization problem
\begin{equation}\label{mini-p}
\frac{1}{C_{HLS}}:=\inf\Big\{\|\nabla u\|_{L^2}\big|u\in \dot{H}^{1}(\mathbb{R}^n),\hspace{2mm}\Big(\int_{\mathbb{R}^n}\big(|x|^{-\alpha} \ast u^{p_{\alpha}}\big)u^{p_{\alpha}} dx\Big)^{\frac{1}{p_{\alpha}}}=1\Big\},
 \end{equation}
 where $n\geq 3$, $\alpha\in(0,n)$ and $\dot{H}^{1}(\mathbb{R}^n):=\mathcal{D}^{1,2}(\mathbb{R}^n)$, the completion of $C_{0}^{\infty}(\mathbb{R}^n)$ with respect to the norm $\|\nabla u\|_{L^2(\mathbb{R}^n)}$.
It is well-known that the critical nonlocal Hartree equation, up to suitable scaling,
\begin{equation}\label{ele-1.1-1}
    \Delta u+(|x|^{-\alpha}\ast u^{p_{\alpha}})u^{p_{\alpha}-1}=0 \quad \mbox{in}\quad \mathbb{R}^n,
    \end{equation}
is the Euler-Lagrange equation for the minimization problem \eqref{mini-p}.
Furthermore, the authors in \cite{DAIQIN, GY18, DY19, GHPS19} independently computed the optimal $C_{HLS}>0$ and classified all positive solutions of \eqref{ele-1.1-1} as functions of the form
\begin{equation}\label{defU}
W[\xi,\mu](x)=\tilde{c}_{n,\alpha}\big(\frac{\mu}{1+\mu^2|x-\xi|^2}\big)^{\frac{n-2}{2}},\hspace{1mm}\mu\in\mathbb{R}^{+},\hspace{1mm}\xi\in\mathbb{R}^n,
\end{equation}
and obtained the optimal constant in \eqref{mini-p}
    \begin{equation*}
C_{HLS}=S\left[\frac{\Gamma((n-\alpha)/2)\pi^{\alpha/2}}{\Gamma(n-\alpha/2)}\left(\frac{\Gamma(n)}{\Gamma(n/2)}\right)^{1-\frac{\alpha}{n}}\right]^{(2-n)/(2n-\alpha)}.
    \end{equation*}
Here the constant $\tilde{c}_{n,\alpha}$ is given by
\begin{equation}\label{fU}
    \tilde{c}_{n,\alpha}:=\big[n(n-2)\big]^{\frac{n-2}{4}}S^{\frac{(n-\alpha)(2-n)}{4(n-\alpha+2)}}C_{n,\alpha}^{\frac{2-n}{2(n-\alpha+2)}},
      \end{equation}
where $S$ is the best Sobolev constant and $C_{n,\alpha}$ is the best constant of the following classical Hardy-Littlewood-Sobolev inequality \cite{H-L-1928,S1963}
    \begin{equation}\label{hlsi}
    \int_{\mathbb{R}^n}\int_{\mathbb{R}^n}f(x)|x-y|^{-\alpha} g(y)dxdy\leq C(n,r,t,\alpha)\|f\|_{L^r(\mathbb{R}^n)}\|g\|_{L^t(\mathbb{R}^n)}
    \end{equation}
with $\alpha\in(0,n)$, $1<r,t<\infty$ and $\frac{1}{r}+\frac{1}{t}+\frac{\alpha}{n}=2$. Moreover, in the specific case where $t=r=\frac{2n}{2n-\alpha}$, Lieb in \cite{Lieb83} classified the extremal function getting a sharp constant by rearrangement and symmetrisation:
    \begin{equation}\label{defhlsbc}
    C_{n,r,t,\alpha}:=C_{n,\alpha}=\frac{\Gamma((n-\alpha)/2)\pi^{\alpha/2}}{\Gamma(n-\alpha/2)}\left(\frac{\Gamma(n)}{\Gamma(n/2)}\right)^{1-\frac{\alpha}{n}},
    \end{equation}
   and the equality holds if and only if
    \begin{equation*}
   f(x)=g(x)=a\Big(\frac{1}{1+\mu^2|x-x_0|^2}\Big)^{\frac{2n-\alpha}{2}}
    \end{equation*}
    for some $a\in \mathbb{C}$, $\mu\in \mathbb{R}\backslash\{0\}$ and $x_0\in \mathbb{R}^n$.

Concerning the asymptotic behavior of the least energy solutions of \eqref{ele-1.1} as $\epsilon\rightarrow0$ has already been studied in \cite{Cingolani} for the cases $p_\alpha:=p=\frac{n+2}{n-2}$ and $n=3,4,5$
 if
\begin{equation}\label{minimi}
\frac{\int_{\Omega}|\nabla u_{\epsilon}|^2dx}{\left[\int_{\Omega}(|x|^{-(n-2)} \ast|u_{\epsilon}|^{p-\epsilon})|u_{\epsilon}|^{p-\epsilon} dx\right]^{\frac{1}{p-\epsilon}}}=C_{HLS}+o(1)\quad\mbox{as}\quad\epsilon\rightarrow0
\end{equation}
for some positive constant $C_{HLS}$ depending only on $n$.

In addition to the nonlocal equations \eqref{ele-1.1} with slightly subcritical exponents, the asymptotic behavior of solutions of the Hartree-type Brezi-Nirenberg problem and the qualitative characteristics of the solutions to its corresponding eigenvalue problem have been extensively studied in \cite{yyz,YZ} and \cite{PWY}, respectively.
Regarding studies on the nonlinear Choquard equation in the bounded domains and related topics, we refer the reader to \cite{Cingolani-1,GHP,GHP-1,V-Moroz,Moroz-2} and references therein.

In particular, Cingolani et al. in \cite{Cingolani} established the following results:\\
\\
{\it {\bf Theorem A.}
 Assume that $n=3,4,5$, $p=\frac{n+2}{n-2}$ and $\epsilon$ is sufficiently small. Let $u_\epsilon$ be a solution of \eqref{ele-1.1} satisfying \eqref{minimi}.
 Then if $x_{\epsilon}$ is a maximum point of $u_{\epsilon}$, i.e. $u_{\epsilon}(x_{\epsilon})=\|u_{\epsilon}\|_{L^{\infty}(\Omega)}$, we have that $x_{\epsilon}\rightarrow x_0\in\Omega$ as $\epsilon\rightarrow0$ and the following results hold:
\begin{itemize}
\item[$(a)$]
$x_0$ is a critical point of the Robin's function $\phi(x)$ of $\Omega$ and
\begin{equation}\label{Fn}
\lim\limits_{\epsilon\rightarrow0}\epsilon\|u_\epsilon\|_{L^{\infty}(\Omega)}^2=F_n,
\end{equation}
where $F_n=-\widetilde{\alpha}_{n}^2\frac{n+2}{n-2}\big[\frac{1}{C_{HLS}}\big]^{\frac{n+2}{4}}\big\|W\big\|^{2(2^{\ast}-1)}_{L^{2^{\ast}-1}(\mathbb{R}^n)}\phi(x_0)$,  with
$$\widetilde{\alpha}_{n}=\left[\frac{n(n-2)}{\sqrt{S}}\right]^{\frac{n-2}{4}} \frac{\pi^{\frac{n}{2}}\Gamma(1)}{\Gamma(\frac{n+2}{2})}\left\{\frac{\Gamma(1)\pi^{(n-2)/2}}{\Gamma((n+2)/2)}\left(\frac{\Gamma(n)}{\Gamma(n/2)}\right)^{2/n}\right\}^{\frac{2-n}{8}}, $$
 where $2^*:=\frac{2n}{n-2}$ is a critical Sobolev exponent,  $\Gamma(s)=\int_0^{+\infty} x^{s-1}e^{-x}\,dx$ and $S$ is the best Sobolev constant in $\mathbb{R}^n$.
\item[$(b)$] For any $x\in\Omega\setminus\{x_0\}$, it holds that
\begin{equation}\label{ifini}
\lim\limits_{\epsilon\rightarrow0^{+}}\|u_{\epsilon}\|_{L^{\infty}(\Omega)}u_{\epsilon}(x)=\mathcal{K}_nG(x,x_{0})\hspace{2mm}\mbox{in} \hspace{2mm}C^{1}(\Omega\setminus\{x_0\})
\end{equation}
with $\mathcal{K}_n=\widetilde{\alpha}_{n}\big\|W\big\|^{2^{\ast}-1}_{L^{2^{\ast}-1}(\mathbb{R}^n)}$,
where $\widetilde{\alpha}_{n}$ is defined in $(a)$ and $G$ is the Green function.
\item[$(c)$]
\begin{equation}\label{bepus}
\lim\limits_{\epsilon\rightarrow0^{+}}\|u_{\epsilon}\|^{\epsilon}_{L^{\infty}(\Omega)}=1.
\end{equation}
\item[$(d)$] There exists a positive constant $C$, independent of $\varepsilon$, such that
\begin{equation}\label{dus}
u_{\epsilon}(x)\leq C\|u_{\epsilon}\|_{L^{\infty}(\Omega)}W\left(\|u_{\epsilon}\|_{L^{\infty}(\Omega)}^{\frac{4-(n-2)\varepsilon}{2(n-2)}}(x-x_{\varepsilon})\right).
\end{equation}
\end{itemize}
}

\par
	
In this line of research, the aim of this paper is to study qualitative properties of single blow-up solutions to the Hartree-type eigenvalue problem \begin{equation}\label{ele-2}
\left\lbrace
\begin{aligned}
    &-\Delta v=\lambda \mathcal{G}_{\epsilon}[v] \quad \mbox{in}\quad \Omega,\\
    &v=0\quad \mbox{on}\hspace{2.5mm}\partial\Omega,\hspace{2mm}\|v\|_{L^{\infty}(\Omega)}=1,
   \end{aligned}
\right.
\end{equation}
with
	\begin{equation}\label{defanndg2}
		\mathcal{G}_{\epsilon}[v]:=(p-\epsilon)\Big(|x|^{-(n-2)} \ast \big(u_{\epsilon}^{p-1-\epsilon}v\big)\Big)u_{\epsilon}^{p-1-\epsilon}
		+(p-1-\epsilon)\Big(|x|^{-(n-2)} \ast u_{\epsilon}^{p-\epsilon}\Big)u_{\epsilon}^{p-2-\epsilon}v,
	\end{equation}
where $p=\frac{n+2}{n-2}>2$ is the $\mathcal{D}^{1,2}$-energy-critical exponent and $\epsilon>0$ is close to zero.

Firstly, we concentrate on behavior of the first eigenvalue and eigenvector. Let $\tilde{v}_{i,\epsilon}$ be a dilation of $v_{i,\epsilon}$ corresponding to $\lambda_{i,\epsilon}$ defined as
\begin{equation}\label{vie}
\tilde{v}_{i,\epsilon}(x)=v_{i,\epsilon}\left(\big\|u_{\varepsilon}\big\|_{L^{\infty}(\Omega)}^{-(\frac{4-(n-2)\epsilon}{2(n-2)})}x+x_{\epsilon}\right)\quad\mbox{for}\quad x\in\Omega_{\epsilon}:=\big\|u_{\epsilon}\big\|_{L^{\infty}(\Omega)}^{\frac{4-(n-2)\epsilon}{2(n-2)}}(\Omega-x_{\epsilon}),
\end{equation} for $i=1,\cdots,n+2$.
\begin{thm}\label{Figalli-1}
		Assume that $n=3,4,5$, $p=\frac{n+2}{n-2}$ and $\epsilon$ is sufficiently small. Then
	\begin{equation}\label{vie-1-1}	
\lambda_{1,\epsilon}\rightarrow(2p-1)^{-1}\quad\mbox{and}\quad\tilde{v}_{1,\epsilon}\rightarrow W[0,1](x)\quad\mbox{in}\quad C_{loc}^{1}(\mathbb{R}^n).
\end{equation}
Moreover, if $\epsilon$ is small enough, the eigenvalue $\lambda_{1,\epsilon}$ is simple and
$$
\lim\limits_{\epsilon\rightarrow0^{+}}\big\|u_{\epsilon}\big\|^2_{L^{\infty}(\Omega)}v_{1,\epsilon}(x)=   \Gamma_n\tilde{c}_{n,n-2}^{2(p-2)}\big\|W[0,1]\big\|^{p}_{L^{p}(\mathbb{R}^n)}G(x,x_{0})$$
with
$$\Gamma_n:=I(\frac{n-2}{2})S^{\frac{(2-n)}{8}}C_{n,n-2}^{\frac{2-n}{8}}[n(n-2)]^{\frac{n-2}{4}},\,\, I(s)=\frac{\pi^{\frac{n}{2}}\Gamma(\frac{n-2s}{2})}{\Gamma(n-s)} \ \ \mbox{and }\ \ \Gamma(s)=\int_0^{+\infty} x^{s-1}e^{-x}\,dx,~s>0$$
Here $C_{n,n-2}$ is defined in \eqref{defhlsbc} and the convergence is in $C^{1,\tilde{\alpha}}(\omega)$ with $\omega$ any subdomain of $\Omega$ not containing $x_0$, and $G$ denotes the
Green's function of the Laplacian $-\Delta$ in $\Omega$ with the Dirichlet boundary condition.
\end{thm}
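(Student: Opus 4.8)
The plan rests on a single structural observation: the kernel $\mathcal{G}_{\epsilon}$ in \eqref{defanndg2} is exactly the G\^ateaux derivative at $u_{\epsilon}$ of the nonlinearity $N_{\epsilon}(u):=\big(|x|^{-(n-2)}\ast u^{p-\epsilon}\big)u^{p-1-\epsilon}$, and $N_{\epsilon}$ is positively homogeneous of degree $2p-1-2\epsilon$. Euler's identity then gives $\mathcal{G}_{\epsilon}[u_{\epsilon}]=(2p-1-2\epsilon)N_{\epsilon}(u_{\epsilon})=(2p-1-2\epsilon)(-\Delta u_{\epsilon})$, so that $\big((2p-1-2\epsilon)^{-1},\,u_{\epsilon}/\|u_{\epsilon}\|_{L^{\infty}}\big)$ already solves \eqref{ele-2}. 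Since the Riesz kernel $|x|^{-(n-2)}$ and $u_{\epsilon}$ are positive, and $p-1-\epsilon>0$, $p-2-\epsilon>0$ (the latter because $n\le5$) for $\epsilon$ small, $\mathcal{G}_{\epsilon}$ sends nonnegative functions to nonnegative functions; together with the positivity-preserving property of $(-\Delta)^{-1}$ with Dirichlet data, this makes the compact operator $T_{\epsilon}:=(-\Delta)^{-1}\mathcal{G}_{\epsilon}$ strongly positive: if $v\ge0$, $v\not\equiv0$, the second summand of $\mathcal{G}_{\epsilon}[v]$ is strictly positive wherever $v$ is, so $\mathcal{G}_{\epsilon}[v]\not\equiv0$ and $T_{\epsilon}v>0$ in $\Omega$ by the strong maximum principle and Hopf's lemma. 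Hence the Krein--Rutman theorem applies: the spectral radius $1/\lambda_{1,\epsilon}$ of $T_{\epsilon}$ is an algebraically simple eigenvalue and is the only one admitting a positive eigenfunction. This yields at once $\lambda_{1,\epsilon}=(2p-1-2\epsilon)^{-1}\to(2p-1)^{-1}$, the simplicity of $\lambda_{1,\epsilon}$ for $\epsilon$ small, and — using the normalization $\|v\|_{L^{\infty}(\Omega)}=1$ — the identification $v_{1,\epsilon}=u_{\epsilon}/\|u_{\epsilon}\|_{L^{\infty}}$.

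With $v_{1,\epsilon}$ identified, the convergence statements follow from the blow-up analysis of $u_{\epsilon}$ recalled in Theorem~A. Substituting $v_{1,\epsilon}=u_{\epsilon}/\|u_{\epsilon}\|_{L^{\infty}}$ into \eqref{vie}, the rescaled eigenfunction $\tilde v_{1,\epsilon}$ is precisely the standard dilation of $u_{\epsilon}/\|u_{\epsilon}\|_{L^{\infty}}$ about the maximum point $x_{\epsilon}$; a change of variables shows that $-\Delta\tilde v_{1,\epsilon}=\big(|x|^{-(n-2)}\ast\tilde v_{1,\epsilon}^{\,p-\epsilon}\big)\tilde v_{1,\epsilon}^{\,p-1-\epsilon}$ on $\Omega_{\epsilon}$, with $\tilde v_{1,\epsilon}(0)=1=\|\tilde v_{1,\epsilon}\|_{L^{\infty}(\Omega_{\epsilon})}$. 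The pointwise bound $(d)$ of Theorem~A controls $\tilde v_{1,\epsilon}$ and its nonlocal right-hand side uniformly on compact sets, so Calder\'on--Zygmund and Schauder estimates bound $\tilde v_{1,\epsilon}$ in $C^{1,\gamma}_{loc}(\mathbb{R}^{n})$ for some $\gamma\in(0,1)$; passing to the limit and invoking the classification \eqref{defU} of positive solutions of \eqref{ele-1.1-1}, together with the normalization $\tilde v_{1,\epsilon}(0)=1$, one obtains $\tilde v_{1,\epsilon}\to W[0,1]$ in $C^{1}_{loc}(\mathbb{R}^{n})$, which is \eqref{vie-1-1}.

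For the Green-function profile I would use that $\|u_{\epsilon}\|^{2}_{L^{\infty}}v_{1,\epsilon}=\|u_{\epsilon}\|_{L^{\infty}}u_{\epsilon}$ and write $\|u_{\epsilon}\|_{L^{\infty}}u_{\epsilon}(x)=\int_{\Omega}G(x,y)\,\|u_{\epsilon}\|_{L^{\infty}}N_{\epsilon}(u_{\epsilon})(y)\,dy$. Rescaling this integral about $x_{\epsilon}$, using $(c)$ of Theorem~A (i.e. $\|u_{\epsilon}\|^{\epsilon}_{L^{\infty}}\to1$) and $(d)$ to control the tails, one checks that $\|u_{\epsilon}\|_{L^{\infty}}N_{\epsilon}(u_{\epsilon})$ concentrates at $x_{0}$ as a measure, with total mass converging to $m:=\int_{\mathbb{R}^{n}}\big(|x|^{-(n-2)}\ast V^{p}\big)V^{p-1}\,dx$, where $V$ denotes the limiting bubble of the previous step. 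Since $G(x,\cdot)$ is smooth near $x_{0}$ as long as $x$ stays in a subdomain $\omega$ of $\Omega$ not containing $x_{0}$, this gives $\|u_{\epsilon}\|_{L^{\infty}}u_{\epsilon}(x)\to m\,G(x,x_{0})$, upgraded to $C^{1,\tilde\alpha}(\omega)$ by interior elliptic estimates away from $x_{0}$. Finally, because $V$ solves \eqref{ele-1.1-1} one has $|x|^{-(n-2)}\ast V^{p}=(-\Delta V)/V^{p-1}$, which for the explicit Aubin--Talenti-type profile $V$ is a constant multiple of $V$; inserting this into the definition of $m$ turns $m$ into an explicit multiple of $\|V\|^{p}_{L^{p}(\mathbb{R}^{n})}$, and unwinding that multiple together with the normalizations \eqref{defU}, \eqref{fU} and \eqref{defhlsbc} rewrites it as $\Gamma_{n}\,\tilde c_{n,n-2}^{\,2(p-2)}\|W[0,1]\|^{p}_{L^{p}(\mathbb{R}^{n})}$, as claimed.

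The soft parts of this argument — the explicit identification of the first eigenpair, hence the convergence of $\lambda_{1,\epsilon}$ and its simplicity, via Krein--Rutman — are immediate once the homogeneity remark is made. The main difficulty will be the last step: making the concentration of the nonlocal nonlinearity $\|u_{\epsilon}\|_{L^{\infty}}N_{\epsilon}(u_{\epsilon})$ quantitative, which requires a careful bookkeeping of the scaling exponents inside the convolution and a uniform bound on its tails through the decay estimate $(d)$, and then matching the resulting integral $m$ faithfully against the constants $\Gamma_{n}$, $\tilde c_{n,n-2}$ and $C_{n,n-2}$. The bound $(d)$ of Theorem~A is also what guarantees that the $C^{1}_{loc}$-convergence of $\tilde v_{1,\epsilon}$ is strong enough — and sufficiently uniform at infinity — to pass the nonlocal term to the limit, both there and in the mass computation.
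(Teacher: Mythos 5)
Your proposal is correct, and it takes a genuinely different and, in my view, cleaner route than the paper. The central new ingredient is your observation that $N_{\epsilon}(u)=\big(|x|^{-(n-2)}\ast u^{p-\epsilon}\big)u^{p-1-\epsilon}$ is positively homogeneous of degree $2p-1-2\epsilon$ and that $\mathcal{G}_{\epsilon}$ is precisely its G\^ateaux derivative at $u_{\epsilon}$; Euler's identity then forces $\mathcal{G}_{\epsilon}[u_{\epsilon}]=(2p-1-2\epsilon)N_{\epsilon}(u_{\epsilon})=(2p-1-2\epsilon)(-\Delta u_{\epsilon})$, so the pair $\big((2p-1-2\epsilon)^{-1},\,u_{\epsilon}/\|u_{\epsilon}\|_{L^{\infty}}\big)$ solves \eqref{ele-2} \emph{exactly}, for every $\epsilon$. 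Combined with Krein--Rutman applied to the compact, strongly positive operator $(-\Delta)^{-1}\mathcal{G}_{\epsilon}$ on $C^{1}_{0}(\bar\Omega)$ (strong positivity using $p-2-\epsilon>0$ for $n\le 5$, the strong maximum principle, and Hopf's lemma), this identifies the first eigenpair outright, yields simplicity, and gives the \emph{exact} value $\lambda_{1,\epsilon}=(2p-1-2\epsilon)^{-1}$ rather than merely its limit. The paper instead proves $\limsup_{\epsilon}\lambda_{1,\epsilon}\le(2p-1)^{-1}$ by testing the Rayleigh quotient with $f=u_{\epsilon}$ (which, if pushed to the exact computation, is equivalent to your homogeneity identity, though the paper only extracts the limit), passes to a blow-up limit and invokes Proposition \ref{propep} (the spectral decomposition of the linearized bubble) to pin down the limit value, and proves simplicity by a contradiction argument based on orthogonality of two hypothetical eigenfunctions. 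Your identification $v_{1,\epsilon}=u_{\epsilon}/\|u_{\epsilon}\|_{L^{\infty}}$ also makes the two convergence statements trivial: $\tilde v_{1,\epsilon}=\tilde u_{\epsilon}\to W[0,1]$ is exactly the blow-up convergence of $\tilde u_{\epsilon}$ from Theorem~A, and $\|u_{\epsilon}\|^{2}_{L^{\infty}}v_{1,\epsilon}=\|u_{\epsilon}\|_{L^{\infty}}u_{\epsilon}$ converges to $\mathcal{K}_{n}G(\cdot,x_{0})$ by Theorem~A(b) --- so the third paragraph of your plan, which re-runs a Green-representation/concentration argument, is unnecessary: you can cite Theorem~A(b) directly and only need to upgrade the $C^{1}$ convergence to $C^{1,\tilde\alpha}(\omega)$ by interior Schauder estimates away from $x_{0}$, as you note. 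In short: your route is shorter, yields a sharper statement ($\lambda_{1,\epsilon}$ constant in $\epsilon$ after the explicit $(2p-1-2\epsilon)$-scaling, and an explicit formula for $v_{1,\epsilon}$), and reduces the asymptotic parts to a direct citation of the prior blow-up analysis, whereas the paper's argument is self-contained at the level of the linearized problem but does not notice the exact algebraic identity underlying the first eigenpair.
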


Furthermore, we can see the Robin's function as $\phi(x)=H(x,x)$,  where  $H(x,y)$ is given as follows
\begin{equation*}
\Delta H(x,y) =0\quad\mbox{in}\quad\Omega,\quad\quad
	H(x,y)=-\frac{1}{(n-2)\omega_n|x-y|^{n-2}}\quad\mbox{on}\quad\partial\Omega.
\end{equation*}
The function $H$ is nothing but the regular part of the Green function. Indeed then we have
   \begin{equation}\label{Robin}
	H(x,y)=G(x,y)-\frac{1}{(n-2)\omega_n|x-y|^{n-2}},
\end{equation}
    where $\omega_n$ is the area of the unit sphere in $\mathbb{R}^n$.	
\begin{Rem}
It is easy to see from the maximum principle that $\phi(x)<0$ in $\Omega$.
\end{Rem}	
We have the following asymptotic behavior of the eigenfunctions $\tilde{v}_{i,\epsilon}$ for $i=2,\cdots,n+1$.
	\begin{thm}\label{Figalli}
		Assume that $n=3,4,5$, $p=\frac{n+2}{n-2}$ and $\epsilon$ is sufficiently small. Then
	\begin{equation}\label{vie-1}	
\tilde{v}_{i,\epsilon}\rightarrow-(n-2)\sum_{k=1}^{n}\frac{\alpha_k^ix_k}{(1+|x|^2)^{\frac{n}{2}}}\quad\mbox{as}\hspace{2mm}\epsilon\rightarrow0\hspace{2mm}\mbox{in}\hspace{2mm}C_{loc}^{1}(\mathbb{R}^n),
\end{equation}	
\begin{equation}\label{vie-2}	
\frac{v_{i,\epsilon}}{\epsilon^{\frac{n-1}{n-2}}}\rightarrow A_0\sum_{k=1}^{n}\alpha_k^i\frac{\partial G}{\partial w_k}(x,x_0)\quad\mbox{as}\hspace{2mm}\epsilon\rightarrow0\hspace{2mm}\mbox{in}\hspace{2mm}C_{loc}^{1}(\overline{\Omega}\setminus\{x_0\}),
\end{equation}	
for $i=2,\cdots,n+1$,
 where the vectors  $\alpha^{i}=(\alpha_1^{i},\cdots,\alpha_{n}^i)\neq0$ in $\mathbb{R}^n$ and some suitable constant $A_0\in\mathbb{R}$ and satisfying
$$A_0=\frac{(p-2)\tilde{c}_{n,n-2}^{2(p-2)}}{pF_n^{\frac{n-1}{n-2}}}\frac{\pi^{\frac{n}{2}}\Gamma(1)}{\Gamma(\frac{n+2}{2})}S^{\frac{(2-n)}{8}}C_{n,n-2}^{\frac{2-n}{8}}[n(n-2)]^{\frac{n-2}{4}}\int_{\mathbb{R}^n}W^{p}[0,1](w)dw
$$
and $F_n=-\widetilde{\alpha}_{n}^2\frac{n+2}{n-2}\big[\frac{1}{C_{HLS}}\big]^{\frac{n+2}{4}}\big\|W[0,1]\big\|^{2p}_{L^{p}(\mathbb{R}^n)}\phi(x_0)>0$ with
$$\widetilde{\alpha}_{n}=\left[\frac{n(n-2)}{\sqrt{S}}\right]^{\frac{n-2}{4}} \frac{\pi^{\frac{n}{2}}\Gamma(1)}{\Gamma(\frac{n+2}{2})}\left\{\frac{\Gamma(1)\pi^{(n-2)/2}}{\Gamma((n+2)/2)}\left(\frac{\Gamma(n)}{\Gamma(n/2)}\right)^{2/n}\right\}^{\frac{2-n}{8}}, $$
where $\Gamma(s)=\int_0^{+\infty} x^{s-1}e^{-x}\,dx,~s>0$ and $S$ is the best Sobolev constant in $\mathbb{R}^n$.
	\end{thm}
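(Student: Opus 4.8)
The plan is to carry out a blow-up analysis for the eigenpairs $(\lambda_{i,\epsilon}, v_{i,\epsilon})$ with $i=2,\dots,n+1$, paralleling the argument for the first eigenpair in Theorem 1.1 but now detecting the translation modes of the bubble. First I would set up the rescaled eigenfunctions $\tilde v_{i,\epsilon}$ as in \eqref{vie} and use the uniform bound $\|v_{i,\epsilon}\|_{L^\infty}=1$ together with elliptic estimates (interior Schauder estimates for the Laplacian, using that the nonlocal convolution kernel $|x|^{-(n-2)}$ against $u_\epsilon^{p-1-\epsilon}$ produces a $C^{1,\tilde\alpha}_{loc}$ coefficient by Theorem~A$(d)$ and standard Riesz-potential regularity) to extract a subsequential limit $\tilde v_i \in C^1_{loc}(\mathbb{R}^n)$. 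Passing to the limit in \eqref{ele-2}, using $\tilde v_{i,\epsilon}\to\tilde v_i$ and the convergence $u_\epsilon$ rescaled to $W[0,1]$, the limit $\tilde v_i$ solves the linearized equation at the bubble
\begin{equation*}
-\Delta \tilde v_i = \lambda_i \Big[ p\big(|x|^{-(n-2)}\ast (W^{p-1}\tilde v_i)\big)W^{p-1} + (p-1)\big(|x|^{-(n-2)}\ast W^{p}\big)W^{p-2}\tilde v_i\Big],
\end{equation*}
with $\lambda_i = \lim \lambda_{i,\epsilon}$ and $\tilde v_i$ bounded. The known nondegeneracy/classification of the linearized operator for the critical Hartree equation (the kernel at $\lambda = (2p-1)^{-1}$ being spanned by $\partial_\mu W$ and $\partial_{\xi_k}W$, $k=1,\dots,n$) forces $\lambda_i = (2p-1)^{-1}$ and $\tilde v_i$ to lie in that kernel. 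To exclude the dilation mode $\partial_\mu W$ and isolate $\sum_k \alpha_k^i \partial_{\xi_k}W$ — which after computing $\partial_{\xi_k}W[0,1] = -(n-2)\,\tilde c_{n,n-2}\, x_k/(1+|x|^2)^{n/2}$ gives \eqref{vie-1} — I would use orthogonality: $v_{i,\epsilon}$ is $L^2$-orthogonal (with respect to the eigenvalue bilinear form) to $v_{1,\epsilon}$, which converges to the $\partial_\mu W$ direction by Theorem~1.1, so the limit cannot contain a component along $\partial_\mu W$. The fact that $\alpha^i\neq 0$ follows because if all translation components vanished the limit would be identically zero, contradicting a lower bound on $\|\tilde v_{i,\epsilon}\|$ near the origin obtained from a Harnack-type argument or from the eigenvalue gap separating $\lambda_{i,\epsilon}$ from the continuous part.

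The second and more delicate assertion \eqref{vie-2} requires the precise rate $\epsilon^{(n-1)/(n-2)}$ and the exact constant $A_0$. Here the strategy is to work in the region away from $x_0$, where $v_{i,\epsilon}\to 0$, and to use a local Pohozaev-type identity — one of the "various local Pohozaev identities" advertised in the abstract — to pin down the leading-order asymptotics. Concretely, I would multiply the eigenvalue equation by $\partial_{x_k}v_{i,\epsilon}$ (or by a cutoff times $\partial_{x_k}u_\epsilon$) and integrate over $\Omega$, producing boundary terms involving $\partial_\nu v_{i,\epsilon}$ and interior terms that, after inserting the inner expansion \eqref{vie-1} and the outer Green's-function behavior, yield a solvability/matching condition. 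The balance of the inner contribution (of order determined by $\|u_\epsilon\|_\infty$ and the translation mode integral $\int W^p$) against the outer contribution (governed by $\nabla_{y}G(\cdot,x_0)$ and the Robin function through $F_n$ via \eqref{Fn}) fixes both the power $\epsilon^{(n-1)/(n-2)}$ and the constant $A_0$. The appearance of $F_n^{(n-1)/(n-2)}$ in $A_0$ is exactly the bookkeeping of $\|u_\epsilon\|_{L^\infty}$-powers via \eqref{Fn}: one converts the $\epsilon$-scale to the $\|u_\epsilon\|_\infty$-scale using $\epsilon\|u_\epsilon\|_\infty^2\to F_n$, and the factor $(p-2)/p$ together with the constants $\tilde c_{n,n-2}^{2(p-2)}$, $S^{(2-n)/8}$, $C_{n,n-2}^{(2-n)/8}$, $[n(n-2)]^{(n-2)/4}$ comes from the explicit normalization $\tilde c_{n,n-2}$ of $W$ in \eqref{fU}–\eqref{defhlsbc} and from $I((n-2)/2)=\int_{\mathbb{R}^n}(1+|x|^2)^{-(n-2)}dx$ appearing in the Riesz-potential evaluation $|x|^{-(n-2)}\ast W^{p-1-\epsilon}$.

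To upgrade the convergence in \eqref{vie-2} from $C^0$ to $C^1_{loc}(\overline\Omega\setminus\{x_0\})$, I would once more invoke elliptic regularity: away from $x_0$ the right-hand side $\lambda_{i,\epsilon}\mathcal{G}_\epsilon[v_{i,\epsilon}]$ is bounded in $C^{0,\tilde\alpha}$ after rescaling by $\epsilon^{-(n-1)/(n-2)}$ (using Theorem~A$(b)$ for the decay of $u_\epsilon$ and the Green's-function representation), so Schauder estimates plus the uniqueness of the limit identified by the Pohozaev analysis give the claimed $C^1$ convergence, including up to $\partial\Omega$ since the functions vanish there and the boundary is smooth. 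I expect the \emph{main obstacle} to be the Pohozaev-identity computation for \eqref{vie-2}: the nonlocal term $(|x|^{-(n-2)}\ast u_\epsilon^{p-\epsilon})$ does not commute cleanly with the Pohozaev vector field, so one must carefully track the cross terms in $\int (\text{convolution})\,\nabla(\cdot)$, separating the singular self-interaction near $x_\epsilon$ (which recombines into the $W^p$ integral and the $\tilde c_{n,n-2}$ constants) from the regular part that sees the Robin function — and then match these two regimes at an intermediate scale. The other technical nuisance is justifying the subcritical-to-critical passage in the convolution with $u_\epsilon^{p-1-\epsilon}$ uniformly in $\epsilon$, which requires the pointwise bound \eqref{dus} to dominate the integrand and apply dominated convergence in the Riesz potential.
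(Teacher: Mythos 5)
There is a genuine gap in the step that excludes the dilation mode. You claim that the kernel of the linearized operator sits at $\lambda=(2p-1)^{-1}$ and that $\tilde v_{1,\epsilon}\to\partial_\mu W$, but both statements are off by one eigenvalue: Proposition~\ref{propep} gives $\lambda_1=(2p-1)^{-1}$ with eigenfunction $W$ itself, while the translation/dilation modes $\partial_{\xi_k}W$, $\partial_\mu W$ all sit at $\lambda=1$; and Theorem~\ref{Figalli-1} gives $\tilde v_{1,\epsilon}\to W[0,1]$, not $\partial_\mu W$. Because $W$ (eigenvalue $(2p-1)^{-1}$) and $\partial_\mu W$ (eigenvalue $1$) are automatically orthogonal in the eigenvalue bilinear form, orthogonality of $v_{i,\epsilon}$ to $v_{1,\epsilon}\to W$ gives \emph{no} information about the $\partial_\mu W$ component of the limit, and your argument cannot rule it out. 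This is exactly the hard point: the paper has to prove $\beta^i=0$ in the decomposition $\tilde v_i\to -(n-2)\sum_k\alpha^i_kx_k/(1+|x|^2)^{n/2}+\beta^i(1-|x|^2)/(1+|x|^2)^{n/2}$ (Lemma~\ref{identity}), and it does so via a quantitative contradiction: the local Pohozaev identity of Corollary~\ref{conseq} with $z=x_\epsilon$, combined with the outer expansion $\|u_\epsilon\|_\infty^2v_{i,\epsilon}\to\mathcal{M}_0\beta^iG(\cdot,x_0)$ (Lemma~\ref{m00}) and the boundary-integral identity \eqref{GX0}, yields \eqref{refor}, which forces $1-\lambda_{i,\epsilon}$ to be of order $\|u_\epsilon\|_\infty^{-2}$ with the wrong sign relative to the eigenvalue bound of Lemma~\ref{baowenbei}. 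Nothing in your proposal supplies a substitute for this two-sided estimate.

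Beyond that, the route you sketch for \eqref{vie-2} is also not the one the paper takes. You propose a Pohozaev identity; the paper instead writes $v_{i,\epsilon}$ via the Green's representation \eqref{115if}, passes the rescaled inner limit \eqref{viepu} through the nonlocal convolution using the auxiliary linear transport problem of Lemma~\ref{Ve} (a uniqueness-of-solution device borrowed from Takahashi) to perform an integration by parts that moves the $\partial_{w_k}$ onto $G$, and then applies \eqref{Fn}--\eqref{bepus} to convert $\|u_\epsilon\|_\infty$-powers into $\epsilon$-powers. Your high-level description of the balance between inner and outer scales is consistent in spirit, but the technical mechanism (Green's representation plus the first-order PDE trick, not a Pohozaev pairing with $\partial_{x_k}v_{i,\epsilon}$) is substantially different, and the Pohozaev identities you allude to are used in the paper in Section~\ref{sangshen} for the \emph{eigenvalue} expansion of Theorem~\ref{remainder terms}, not for \eqref{vie-2}.
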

	For a further understanding of the eigenvalue behavior  $\lambda_{i,\epsilon}$, we establish a connection between the eigenvalues $\lambda_{i,\epsilon}$ and the eigenvalues of the Hessian matrix of the Robin function $\phi(x)$ for any $i=2,\cdots,n+1$. Our result can be stated as follows.
	\begin{thm}\label{remainder terms}
		Let the assumptions of Theorem \ref{Figalli} be satisfied. Assume that $\nu_1\leq\nu_2\leq\cdots\leq\nu_n$ the eigenvalues of the Hassian matrix $D^2\phi(x_0)$ of the Robin function at $x_0$.
		Then there exist some constant $\widetilde{\mathcal{H}}>0$ such that for $\varepsilon$ small, it holds
		\begin{equation*}
			\lambda_{i,\epsilon}=1- \widetilde{\mathcal{H}}\nu_{i-1}+o(\epsilon^{\frac{n}{n-2}}) \quad\mbox{for any}\hspace{2mm}i=2,\cdots,n+1,
		\end{equation*}
with $$\widetilde{\mathcal{H}}:=\frac{pn\tilde{\alpha}_n|\phi(x_0)|C_{HLS}^{-\frac{n+2}{4}}}{2(n-2)(p-2)\Gamma_n\tilde{c}_{n,n-2}^{2(p-2)}}\frac{[\int_{\mathbb{R}^n}W[0,1](x)dx]^3}{\int_{\mathbb{R}^n}W^{p-1}[0,1]|\nabla W[0,1]|^2dx}>0.$$
Moreover, the vectors $\alpha^i$ of \eqref{vie-1} are the eigenvectors corresponding to $\nu_i$ for $i=2,\cdots,n+1.$
	\end{thm}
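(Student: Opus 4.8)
The plan is to reduce $\lambda_{i,\epsilon}-1$ to a single boundary integral on $\partial\Omega$ and then evaluate both sides by matched asymptotics — the blow-up profile near $x_\epsilon$ for the denominator, the Green-function profile near $\partial\Omega$ for the numerator. Since $p-\epsilon$ is exactly the differentiation exponent, $\mathcal G_\epsilon$ is the linearization at $u_\epsilon$ of $\mathcal N_\epsilon[u]:=(|x|^{-(n-2)}\ast u^{p-\epsilon})u^{p-1-\epsilon}$, so differentiating $-\Delta u_\epsilon=\mathcal N_\epsilon[u_\epsilon]$ in $x_k$ gives $-\Delta(\partial_{x_k}u_\epsilon)=\mathcal G_\epsilon[\partial_{x_k}u_\epsilon]$ in $\Omega$, with $\partial_{x_k}u_\epsilon$ \emph{not} vanishing on $\partial\Omega$. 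Setting $Z_{i,\epsilon}:=\sum_{k=1}^{n}\alpha^i_k\,\partial_{x_k}u_\epsilon$ with $\alpha^i$ as in \eqref{vie-1}, and noting that $\mathcal B_\epsilon(\varphi,\psi):=\int_\Omega\mathcal G_\epsilon[\varphi]\psi$ is symmetric because the Riesz kernel is, one tests the eigenvalue equation against $Z_{i,\epsilon}$, tests $-\Delta Z_{i,\epsilon}=\mathcal G_\epsilon[Z_{i,\epsilon}]$ against $v_{i,\epsilon}$, integrates by parts (using $v_{i,\epsilon}=0$ and $\nabla u_\epsilon=(\partial_\nu u_\epsilon)\nu$ on $\partial\Omega$) and subtracts, obtaining the exact identity
\[
(\lambda_{i,\epsilon}-1)\,\mathcal B_\epsilon(v_{i,\epsilon},Z_{i,\epsilon})=-\int_{\partial\Omega}\frac{\partial v_{i,\epsilon}}{\partial\nu}\,\frac{\partial u_\epsilon}{\partial\nu}\,(\alpha^i\cdot\nu)\,d\sigma .
\]

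For the denominator, rescale by $x=x_\epsilon+\|u_\epsilon\|_{L^\infty}^{-\beta_\epsilon}y$ with $\beta_\epsilon=\frac{4-(n-2)\epsilon}{2(n-2)}$; by Theorem A$(d)$ the integral $\mathcal B_\epsilon(v_{i,\epsilon},Z_{i,\epsilon})=\int_\Omega\nabla v_{i,\epsilon}\cdot\nabla Z_{i,\epsilon}$ concentrates in the blow-up region, where $u_\epsilon\approx\|u_\epsilon\|_{L^\infty}W$, $\partial_{x_k}u_\epsilon\approx\|u_\epsilon\|_{L^\infty}^{1+\beta_\epsilon}\partial_{y_k}W$ and, by \eqref{vie-1}, $\tilde v_{i,\epsilon}\to\tilde c_{n,n-2}^{-1}\sum_{k}\alpha^i_k\partial_{y_k}W[0,1]$; using $\int_{\mathbb R^n}\nabla\partial_{y_k}W\cdot\nabla\partial_{y_l}W\,dy=\delta_{kl}\int_{\mathbb R^n}|\nabla\partial_{y_1}W|^2\,dy$ one gets $\mathcal B_\epsilon(v_{i,\epsilon},Z_{i,\epsilon})=\|u_\epsilon\|_{L^\infty}^{\,1+(3-n)\beta_\epsilon}\,c_1\,|\alpha^i|^2\,(1+o(1))$ with $c_1=\tilde c_{n,n-2}^{-1}\int_{\mathbb R^n}|\nabla\partial_{y_1}W[0,1]|^2$. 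For the numerator, Theorem A$(b)$ gives $\partial_\nu u_\epsilon=\|u_\epsilon\|_{L^\infty}^{-1}\mathcal K_n\,\partial_\nu G(\cdot,x_0)(1+o(1))$ on $\partial\Omega$ and \eqref{vie-2} gives $\partial_\nu v_{i,\epsilon}=\epsilon^{(n-1)/(n-2)}A_0\,\partial_\nu\big(\sum_k\alpha^i_k\partial_{w_k}G(\cdot,x_0)\big)(1+o(1))$, so the boundary term equals $-\epsilon^{(n-1)/(n-2)}\|u_\epsilon\|_{L^\infty}^{-1}A_0\mathcal K_n\,\mathcal I_i\,(1+o(1))$ with $\mathcal I_i:=\int_{\partial\Omega}\partial_\nu\big(\sum_k\alpha^i_k\partial_{w_k}G(x,x_0)\big)\,\partial_\nu G(x,x_0)\,(\alpha^i\cdot\nu)\,d\sigma_x$.

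The quantity $\mathcal I_i$ is classical: writing $G=\Gamma+H$ with $\Gamma(z)=((n-2)\omega_n|z|^{n-2})^{-1}$, $\Delta_x H=0$, $G|_{\partial\Omega}=0$ and $\phi=H(x,x)$, one applies the Rellich--Pohozaev identity to the pair $G(\cdot,x_0)$, $\sum_k\alpha^i_k\partial_{w_k}G(\cdot,x_0)$ — both harmonic and vanishing on $\partial\Omega$ — on $\Omega\setminus B_\rho(x_0)$ and lets $\rho\to0$; inserting $G=\Gamma+H$ in the $\partial B_\rho(x_0)$ contributions, the most singular $\Gamma$--$\Gamma$ pieces cancel and the remainder tends to a positive multiple of $(D^2\phi(x_0)\alpha^i)\cdot\alpha^i$, so $\mathcal I_i=-c_2\,(D^2\phi(x_0)\alpha^i)\cdot\alpha^i$ with $c_2>0$ explicit. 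Plugging into the identity and converting powers of $\|u_\epsilon\|_{L^\infty}$ into powers of $\epsilon$ via $\epsilon\|u_\epsilon\|_{L^\infty}^2\to F_n$ and $\beta_\epsilon\to\frac2{n-2}$ (so $\|u_\epsilon\|_{L^\infty}^{-2-(3-n)\beta_\epsilon}\sim\epsilon^{1/(n-2)}$ and the net power of $\epsilon$ is $\frac{n-1}{n-2}+\frac1{n-2}=\frac n{n-2}$), all constants assemble into $\widetilde{\mathcal H}$ and
\[
\lambda_{i,\epsilon}=1-\widetilde{\mathcal H}\,\epsilon^{\frac n{n-2}}\,\frac{(D^2\phi(x_0)\alpha^i)\cdot\alpha^i}{|\alpha^i|^2}+o\big(\epsilon^{\frac n{n-2}}\big).
\]
Finally, the orthogonality $\mathcal B_\epsilon(v_{i,\epsilon},v_{j,\epsilon})=0$ for $i\ne j$, rescaled and passed to the limit as above, forces $\alpha^i\cdot\alpha^j=0$, so $\{\alpha^i/|\alpha^i|\}_{i=2}^{n+1}$ is an orthonormal basis of $\mathbb R^n$; comparing the Courant--Fischer characterization of the $\lambda_{i,\epsilon}$ (with quasi-modes built from $a\cdot\nabla W$ projected into $H^1_0(\Omega)$, whose Rayleigh quotient equals $1-\widetilde{\mathcal H}\epsilon^{n/(n-2)}(D^2\phi(x_0)a)\cdot a+o(\cdot)$ by the same Pohozaev computation) with the min--max principle for the eigenvalues of $D^2\phi(x_0)$, together with $\sum_i (D^2\phi(x_0)\alpha^i)\cdot\alpha^i/|\alpha^i|^2=\operatorname{tr}D^2\phi(x_0)=\sum_j\nu_j$, forces each $\alpha^i$ to achieve an extreme value of the Rayleigh quotient on $\{\alpha^2,\dots,\alpha^{i-1}\}^\perp$, hence to be an eigenvector of $D^2\phi(x_0)$ with the appropriate $\nu_{i-1}$; then $(D^2\phi(x_0)\alpha^i)\cdot\alpha^i=\nu_{i-1}|\alpha^i|^2$ and the claimed formula follows, the explicit value of $\widetilde{\mathcal H}$ coming from reducing $c_1$ and $c_2$ to bubble integrals.

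The main obstacle is the evaluation of $\mathcal I_i$: tracking the exact cancellation of the singular $\partial B_\rho(x_0)$ contributions and extracting the precise constant $c_2$ in front of $(D^2\phi(x_0)\alpha^i)\cdot\alpha^i$ — this is the step where the Hessian of the Robin function genuinely enters — together with verifying that every $(1+o(1))$ above is of order $o(\epsilon^{n/(n-2)})$ relative to the leading term, which needs the convergences in Theorem A and in Theorems \ref{Figalli-1} and \ref{Figalli} to hold with quantitative, $\epsilon$-uniform remainders in both the blow-up and the exterior regimes.
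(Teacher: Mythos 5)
Your starting Pohozaev identity $(\lambda_{i,\epsilon}-1)\,\mathcal B_\epsilon(v_{i,\epsilon},Z_{i,\epsilon})=-\int_{\partial\Omega}\partial_\nu v_{i,\epsilon}\,\partial_\nu u_\epsilon\,(\alpha^i\cdot\nu)\,dS$ is exactly the paper's Lemma~\ref{qiegao} contracted against $\alpha^i$, and your scaling of the bulk term ($\|u_\epsilon\|_\infty^{\,1+(3-n)\beta_\epsilon}$, matching the paper's $\|u_\epsilon\|_\infty^{-(n-4)/(n-2)}$) and of the boundary term ($\epsilon^{(n-1)/(n-2)}\|u_\epsilon\|_\infty^{-1}$) are self-consistent and reproduce the $\epsilon^{n/(n-2)}$ rate. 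Up to that point the two arguments are essentially the same.

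Where you diverge — and where there is a real gap — is the identification of $\alpha^i$ as an eigenvector of $D^2\phi(x_0)$. By contracting with $\alpha^i$ you throw away precisely the information that makes this immediate. The paper keeps the index $j$ free: it proves Lemma~\ref{qiegao} for each $j=1,\dots,n$ separately, so the $j$-th component of the boundary term $\propto (D^2\phi(x_0)\alpha^i)_j$ is matched against the $j$-th component of the bulk term $\propto \alpha^i_j$, and the $j$-independence of $(\lambda_{i,\epsilon}-1)/\epsilon^{n/(n-2)}$ forces $(D^2\phi(x_0)\alpha^i)_j=\xi_i\,\alpha^i_j$ for \emph{all} $j$, i.e.\ $D^2\phi(x_0)\alpha^i=\xi_i\alpha^i$. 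Your contracted identity only gives the scalar relation $q_i:=(D^2\phi(x_0)\alpha^i)\cdot\alpha^i/|\alpha^i|^2=\xi_i$. Together with mutual orthogonality of the $\alpha^i$ (Lemma~\ref{wanfan}), the monotonicity $q_2\ge q_3\ge\cdots\ge q_{n+1}$, and the trace identity $\sum_iq_i=\operatorname{tr}D^2\phi(x_0)$, this still does \emph{not} force $\alpha^i$ to be an eigenvector (in $\mathbb R^2$, take $M=\operatorname{diag}(1,2)$ and $e_2,e_3$ a rotated orthonormal pair: (a)--(c) hold but neither $e_i$ is an eigenvector). So you genuinely need the missing ingredient (d): a sharp min--max upper/lower bound of the form $\lambda_{i,\epsilon}\le 1-\widetilde{\mathcal H}\,\epsilon^{n/(n-2)}\bigl(\max_{\dim V=i-1}\min_{a\in V,|a|=1}(D^2\phi(x_0)a)\cdot a\bigr)+o(\epsilon^{n/(n-2)})$. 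You claim this follows ``by the same Pohozaev computation'' applied to quasi-modes $a\cdot\nabla W$, but that is not right: the Pohozaev identity above uses crucially that $v_{i,\epsilon}$ is an exact eigenfunction; a quasi-mode only solves the eigenvalue equation up to an error, and what is required is a direct expansion of its Rayleigh quotient to order $\epsilon^{n/(n-2)}$ with the Robin-function Hessian appearing in the constant. The paper's Lemma~\ref{baowenbei} does estimate the quasi-mode Rayleigh quotient, but only to the crude accuracy $1+O(\epsilon^{n/(n-2)})$; extracting the Hessian-dependent constant is a nontrivial additional computation that is not in the paper and not in your sketch. In short: the contraction makes the proof strictly harder, and the replacement argument for the eigenvector property, as written, is incomplete. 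Reverting to the componentwise identity removes the need for (d) entirely. The Rellich--Pohozaev evaluation of $\mathcal I_i$ is, as you suspect, the other place where care is required; the paper outsources it to the classical Green-function identity \eqref{GX0-1}, which is the right move.
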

It is natural to investigate the $(n+2)$-th eigenvalue of \eqref{ele-2}-\eqref{defanndg2} and some qualitative properties of the corresponding eigenfunction. Then we establish the following results.
	\begin{thm}\label{thmprtb}
		Let the assumptions of Theorem \ref{Figalli} be satisfied.
		Then we have
		\begin{equation}\label{rtbdy46}	
\tilde{v}_{n+2,\epsilon}\rightarrow \beta \frac{1-|x|^2}{(1+|x|^2)^{\frac{n}{2}}}\quad\mbox{as}\hspace{2mm}\epsilon\rightarrow0\hspace{2mm}\mbox{in}\hspace{2mm}C_{loc}^{1}(\overline{\Omega}\setminus\{x_0\})
\end{equation}
with $\beta\neq0$ and there exist a constant $\mathcal{C}_0$ such that
	\begin{equation}\label{rt}	
\lambda_{n+2,\epsilon}=1-\epsilon(\mathcal{C}_0+o(1))
\end{equation}
with
$$
\mathcal{C}_0=-\frac{(n-2)\mathcal{K}_n \mathcal{M}_0F_n}{(2p-1)\Gamma_n\tilde{c}_{n,n-2}^{2(p-2)}}\Big(\int_{\mathbb{R}^n}W^{p-1}[0,1](x)\big(\frac{1-|x|^2}{(1+|x|^2)^{\frac{n}{2}}}\big)^2dx\Big)^{-1}\phi(x_0)<0,$$
where $F_n$ is defined in Theorem \ref{Figalli}.
Furthermore, if $\epsilon$ is small enough, the eigenvalue $\lambda_{n+2,\epsilon}$ is simple and corresponding eigenfunction $v_{n+2,\epsilon}$ has only two nodal regions, and the closure of the nodal set of the eigenfunction $v_{n+2,\epsilon}$ does not intersect the boundary of $\Omega$.
\end{thm}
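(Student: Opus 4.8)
The plan is to combine a rescaled blow-up analysis of $v_{n+2,\epsilon}$ with an exact Green--Pohozaev identity, feeding in the profile description of Theorem~A and the eigenpair asymptotics of Theorems~\ref{Figalli-1}--\ref{remainder terms}. Throughout write $\psi_{0}(x):=\frac{1-|x|^{2}}{(1+|x|^{2})^{n/2}}$ for the dilation mode and $\psi_{k}(x):=\frac{x_{k}}{(1+|x|^{2})^{n/2}}$, $k=1,\dots,n$, for the translation modes; recall that, by the nondegeneracy of $W[0,1]$, the bounded solutions of the linearized critical Hartree equation at $W[0,1]$ with eigenvalue $1$ are exactly the linear combinations of $\psi_{0},\psi_{1},\dots,\psi_{n}$, while $W[0,1]$ itself is the eigenfunction with eigenvalue $(2p-1)^{-1}$.

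\textbf{Step 1 (limiting profile and \eqref{rtbdy46}).} Using $(d)$ of Theorem~A to control $u_{\epsilon}$ and the normalization $\|v_{n+2,\epsilon}\|_{L^{\infty}(\Omega)}=1$, elliptic estimates give that $\tilde{v}_{n+2,\epsilon}$ in \eqref{vie} is bounded in $C^{1,\tilde{\alpha}}_{loc}(\mathbb{R}^{n})$, together with a uniform decay bound $|\tilde{v}_{n+2,\epsilon}(z)|\le C(1+|z|)^{-(n-2)}$ proved as in $(d)$ of Theorem~A; along a subsequence $\tilde{v}_{n+2,\epsilon}$ converges in $C^{1}_{loc}(\mathbb{R}^{n})$ to a bounded $V_{0}$. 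Since the form $\int_{\Omega}\mathcal{G}_{\epsilon}[\varphi]\,\chi\,dx$ is symmetric in $(\varphi,\chi)$, $v_{n+2,\epsilon}$ is orthogonal, for this form and for the Dirichlet product, to $v_{1,\epsilon},\dots,v_{n+1,\epsilon}$, whose rescalings converge to $W[0,1]$ and to $\psi_{1},\dots,\psi_{n}$ by Theorems~\ref{Figalli-1}--\ref{Figalli}. Passing to the limit in these relations, and using the decay bound to rule out $V_{0}\equiv0$ (which, via $v_{n+2,\epsilon}(x)=\lambda_{n+2,\epsilon}\int_{\Omega}G(x,y)\mathcal{G}_{\epsilon}[v_{n+2,\epsilon}](y)\,dy$, would give $\|v_{n+2,\epsilon}\|_{L^{\infty}(\Omega)}\to0$), one obtains $\lambda_{n+2,\epsilon}\to1$ and $V_{0}=\beta\psi_{0}$ with $\beta\neq0$. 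Expanding the same representation formula away from $x_{\epsilon}$ via $(b)$--$(c)$ of Theorem~A proves \eqref{rtbdy46} and shows that $\|u_{\epsilon}\|_{L^{\infty}(\Omega)}^{2}v_{n+2,\epsilon}$ converges in $C^{1}_{loc}(\overline{\Omega}\setminus\{x_{0}\})$ to a nonzero multiple of $G(\cdot,x_{0})$.

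\textbf{Step 2 (the sharp rate \eqref{rt}).} Let $Z_{\epsilon}$ be the generator of the scaling that leaves \eqref{ele-1.1} invariant for the exponent $p-\epsilon$, applied to $u_{\epsilon}$ and centered at $x_{\epsilon}$; differentiating the equation of $u_{\epsilon}$ (the moving-boundary term vanishing because $p-\epsilon>1$) gives $-\Delta Z_{\epsilon}=\mathcal{G}_{\epsilon}[Z_{\epsilon}]$ in $\Omega$ with $Z_{\epsilon}|_{\partial\Omega}\neq0$, and $\|u_{\epsilon}\|_{L^{\infty}(\Omega)}^{-1}Z_{\epsilon}$, rescaled, converges to $\tilde{c}_{n,n-2}\frac{n-2}{2}\psi_{0}$. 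Green's second identity for the pair $(Z_{\epsilon},v_{n+2,\epsilon})$, together with $v_{n+2,\epsilon}|_{\partial\Omega}=0$ and the symmetry of $\mathcal{G}_{\epsilon}$, yields
\begin{equation*}
(\lambda_{n+2,\epsilon}-1)\int_{\Omega}\mathcal{G}_{\epsilon}[Z_{\epsilon}]\,v_{n+2,\epsilon}\,dx=-\int_{\partial\Omega}Z_{\epsilon}\,\partial_{\nu}v_{n+2,\epsilon}\,d\sigma .
\end{equation*}
On $\partial\Omega$ one has $Z_{\epsilon}=((x-x_{\epsilon})\cdot\nu)\,\partial_{\nu}u_{\epsilon}$, and by $(b)$ of Theorem~A and Step~1 both $\partial_{\nu}u_{\epsilon}$ and $\partial_{\nu}v_{n+2,\epsilon}$ are, up to suitable powers of $\|u_{\epsilon}\|_{L^{\infty}(\Omega)}$, governed by $\partial_{\nu}G(\cdot,x_{0})$, so the boundary term produces the Robin function through $\int_{\partial\Omega}((x-x_{0})\cdot\nu)(\partial_{\nu}G(x,x_{0}))^{2}\,d\sigma=(n-2)\phi(x_{0})$. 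Rescaling both sides, replacing the emerging factor $\|u_{\epsilon}\|_{L^{\infty}(\Omega)}^{-2}$ by $\epsilon/F_{n}$ via \eqref{Fn}, inserting $\tilde{v}_{n+2,\epsilon}\to\beta\psi_{0}$ and the limit of the rescaled $Z_{\epsilon}$, and evaluating the remaining integrals against $W[0,1]$ and $\psi_{0}$ using the constants $F_{n},\mathcal{K}_{n},\Gamma_{n},\mathcal{M}_{0}$ already computed, one arrives at \eqref{rt}; the sign $\mathcal{C}_{0}<0$ follows from $\phi(x_{0})<0$ and $\int_{\mathbb{R}^{n}}W^{p-1}[0,1]\,\psi_{0}^{2}\,dx>0$.

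\textbf{Step 3 (simplicity, nodal set, and main obstacle).} If $\lambda_{n+2,\epsilon}$ were not simple, or if $\lambda_{n+3,\epsilon}$ also tended to $1$, Step~1 would produce a further bounded solution of the eigenvalue-$1$ linearized equation at $W[0,1]$ orthogonal to $W[0,1],\psi_{0},\dots,\psi_{n}$, which is impossible; hence $\lambda_{n+2,\epsilon}$ is simple for $\epsilon$ small. For the nodal set, $\tilde{v}_{n+2,\epsilon}\to\beta\psi_{0}$ in $C^{1}_{loc}(\mathbb{R}^{n})$ with $\psi_{0}$ vanishing transversally exactly on $\{|x|=1\}$ and $\psi_{0}(z)\sim-|z|^{2-n}$ as $|z|\to\infty$; combining this with the uniform decay bound of Step~1, near $x_{\epsilon}$ the set $\{v_{n+2,\epsilon}=0\}$ is a single closed hypersurface---a small perturbation of the sphere about $x_{\epsilon}$ of radius $\|u_{\epsilon}\|_{L^{\infty}(\Omega)}^{-\frac{4-(n-2)\epsilon}{2(n-2)}}\to0$---outside which $v_{n+2,\epsilon}$ keeps a fixed sign opposite to that of $\beta$, consistently with the far-field multiple of $G(\cdot,x_{0})$ from Step~1; hence $v_{n+2,\epsilon}$ has exactly two nodal domains and its nodal set lies in an arbitrarily small neighborhood of $x_{\epsilon}$, disjoint from $\partial\Omega$. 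The principal obstacle is Step~2: carrying out the Green--Pohozaev computation with the nonlocal Riesz potential and the $\epsilon$-dependent exponents while tracking all constants, so that the boundary contribution---of genuine leading order, exactly as in Theorem~A---collapses, after $\|u_{\epsilon}\|_{L^{\infty}(\Omega)}^{-2}\sim\epsilon/F_{n}$, onto the single coefficient $\mathcal{C}_{0}$ tied to $\phi(x_{0})$ through $F_{n},\mathcal{K}_{n},\Gamma_{n},\mathcal{M}_{0}$; the non-vanishing $\beta\neq0$ and the exclusion of spurious nodal components far from the bubble are handled via the Green representation and $(d)$ of Theorem~A.
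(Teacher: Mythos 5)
Your plan follows the same architecture as the paper's proof: blow-up of $\tilde v_{n+2,\epsilon}$, orthogonality to the lower eigenfunctions to kill the translation modes and isolate the dilation profile $\psi_0$, a Green--Pohozaev identity built from the scaling generator $Z_\epsilon=(x-x_\epsilon)\cdot\nabla u_\epsilon+\tfrac{2}{p-1-\epsilon}u_\epsilon$ to convert the boundary integral into the Robin function and read off the rate, and a maximum-principle/rigidity argument for the nodal picture. The constants in Step~2, including $\|u_\epsilon\|_\infty^{-1}Z_\epsilon\to\tilde c_{n,n-2}\tfrac{n-2}{2}\psi_0$ after rescaling and the replacement $\|u_\epsilon\|_\infty^{-2}\sim\epsilon/F_n$, are correct. (Minor: the classical boundary identity is $\int_{\partial\Omega}((x-x_0)\cdot\nu)\,(\partial_\nu G)^2\,d\sigma=-(n-2)\phi(x_0)$, not $+(n-2)\phi(x_0)$; your sign would reverse $\mathcal C_0$.)

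The substantive gap is in Step~1, where you assert that $\lambda_{n+2,\epsilon}\to1$ follows from passing to the limit in the orthogonality relations once $V_0\not\equiv0$ is established. That inference does not work. What the limiting argument gives you is that, along a subsequence, $\lambda_{n+2,\epsilon}\to\Lambda$ and $V_0\neq0$ solves the limit eigenvalue problem at $\Lambda$; from $\lambda_{n+2,\epsilon}\ge\lambda_{n+1,\epsilon}\to1$ you get $\Lambda\ge1$, but nothing so far prevents $\Lambda$ from being the \emph{next} eigenvalue $\lambda_{n+3}>1$ of the limit operator, in which case $V_0$ lies in a higher eigenspace and is automatically orthogonal to $W[0,1],\psi_1,\dots,\psi_n$ anyway — orthogonality cannot separate these cases. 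To conclude $\Lambda=1$ you need the matching upper bound $\limsup_{\epsilon\to0}\lambda_{n+2,\epsilon}\le1$, and that is not free: it is proved in the paper by running the min-max characterization on the $(n+2)$-dimensional space spanned by $u_\epsilon$, the cut-off translations $\psi\,\partial_{x_j}u_\epsilon$, and the cut-off dilation $\psi\big((x-x_\epsilon)\cdot\nabla u_\epsilon+\tfrac{2}{p-1-\epsilon}u_\epsilon\big)$, and then estimating a large number of nonlocal interaction terms to show the Rayleigh quotient on that space is $1+o(1)$. This is a genuine piece of work that your proposal omits entirely; without it, the convergence $\lambda_{n+2,\epsilon}\to1$ and hence the identification $V_0=\beta\psi_0$ are not established, and Steps~2 and 3 rest on an unproved assumption.
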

	
As a consequence of Theorems \ref{Figalli-1}-\ref{remainder terms}, we have the following Corollary.
\begin{cor}\label{emm-1}
Let the assumptions of Theorem \ref{Figalli} be satisfied. Let $ind(u_{\epsilon})$ and $ind(-D^2\phi(x_0))$ be a the Morse index of the solution $u_{\epsilon}$ to
\eqref{ele-1.1} and a critical point $x_0$ of the function $\phi(x)$. Then
$$1\leq1+ind(-D^2\phi(x_0))\leq ind(u_{\epsilon})\leq n+1$$
for sufficiently small $\epsilon>0$. Therefore if the matrix $D^2\phi(x_0)$ non-degenerate, then we have
$$ind(u_{\epsilon})=1+ind(-D^2\phi(x_0)).$$
\end{cor}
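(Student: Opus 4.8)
The strategy is to identify the Morse index of $u_\epsilon$ with the number of eigenvalues $\lambda_{i,\epsilon}$ of the linearised problem \eqref{ele-2}--\eqref{defanndg2} that are strictly less than $1$, and then to count these using Theorems \ref{Figalli-1}, \ref{remainder terms} and \ref{thmprtb}. Let $J_\epsilon$ denote the energy functional whose critical points solve \eqref{ele-1.1}. A direct computation shows that its second variation at $u_\epsilon$ is
\[
J_\epsilon''(u_\epsilon)[v,v]=\int_\Omega|\nabla v|^2\,dx-\int_\Omega \mathcal{G}_\epsilon[v]\,v\,dx,\qquad v\in H^1_0(\Omega),
\]
with $\mathcal{G}_\epsilon$ exactly as in \eqref{defanndg2}. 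Since the Riesz kernel $|x|^{-(n-2)}$ is positive definite and $u_\epsilon>0$, the bilinear form $(v,w)\mapsto\int_\Omega\mathcal{G}_\epsilon[v]\,w\,dx$ is symmetric and positive; moreover, because $\epsilon>0$ renders the underlying embedding subcritical, the associated operator is compact on $H^1_0(\Omega)$, so \eqref{ele-2}--\eqref{defanndg2} admits a discrete sequence $0<\lambda_{1,\epsilon}\le\lambda_{2,\epsilon}\le\cdots\to+\infty$. Testing $-\Delta v_{i,\epsilon}=\lambda_{i,\epsilon}\mathcal{G}_\epsilon[v_{i,\epsilon}]$ against $v_{i,\epsilon}$ yields $J_\epsilon''(u_\epsilon)[v_{i,\epsilon},v_{i,\epsilon}]=(1-\lambda_{i,\epsilon}^{-1})\int_\Omega|\nabla v_{i,\epsilon}|^2\,dx$, and a standard min-max argument then gives $ind(u_\epsilon)=\#\{i:\lambda_{i,\epsilon}<1\}$.

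Next I would locate the eigenvalues. By Theorem \ref{Figalli-1}, $\lambda_{1,\epsilon}\to(2p-1)^{-1}<1$ because $p=\tfrac{n+2}{n-2}>2$, hence $\lambda_{1,\epsilon}<1$ for $\epsilon$ small: one negative direction. For $i=2,\dots,n+1$, Theorem \ref{remainder terms} gives $\lambda_{i,\epsilon}=1-\widetilde{\mathcal{H}}\,\nu_{i-1}+o(\epsilon^{n/(n-2)})$ with $\widetilde{\mathcal{H}}>0$, so for $\epsilon$ small $\lambda_{i,\epsilon}<1$ whenever $\nu_{i-1}>0$ and $\lambda_{i,\epsilon}>1$ whenever $\nu_{i-1}<0$. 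Thus the number of indices $i\in\{2,\dots,n+1\}$ with $\lambda_{i,\epsilon}<1$ lies between $\#\{j:\nu_j>0\}$ and $\#\{j:\nu_j\ge0\}$, and $\#\{j:\nu_j>0\}$ is exactly the number of negative eigenvalues of $-D^2\phi(x_0)$, i.e. $ind(-D^2\phi(x_0))$. Finally, Theorem \ref{thmprtb} gives $\lambda_{n+2,\epsilon}=1-\epsilon(\mathcal{C}_0+o(1))$ with $\mathcal{C}_0<0$, so $\lambda_{n+2,\epsilon}>1$ for $\epsilon$ small; since the $\lambda_{i,\epsilon}$ are non-decreasing in $i$, every $\lambda_{i,\epsilon}$ with $i\ge n+2$ exceeds $1$.

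Combining these, for $\epsilon$ small we obtain $1\le 1+ind(-D^2\phi(x_0))\le ind(u_\epsilon)\le n+1$: the leftmost inequality is trivial, the middle one collects the definitely negative directions ($v_{1,\epsilon}$ together with the $v_{i,\epsilon}$ for which $\nu_{i-1}>0$), and the rightmost one holds because only $\lambda_{1,\epsilon},\dots,\lambda_{n+1,\epsilon}$ can lie below $1$. If $D^2\phi(x_0)$ is nondegenerate, then no $\nu_j$ vanishes, so $\#\{j:\nu_j\ge0\}=\#\{j:\nu_j>0\}$, the count above is exact, and $ind(u_\epsilon)=1+ind(-D^2\phi(x_0))$.

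The heavy analytic input — the asymptotic expansions of the $\lambda_{i,\epsilon}$ — is already supplied by the quoted theorems, so the only point requiring genuine care is the identification $ind(u_\epsilon)=\#\{i:\lambda_{i,\epsilon}<1\}$: one must verify that $\mathcal{G}_\epsilon$ is really (up to the normalising factor) the Hessian of the nonlocal nonlinearity, that the induced form is positive and the operator compact so that the spectrum is discrete, that the count is unaffected by the normalisation $\|v\|_{L^\infty(\Omega)}=1$ in \eqref{ele-2}, and that the borderline case $\lambda_{i,\epsilon}=1$ does not occur among the relevant indices for $\epsilon$ small — which is ensured by the strict signs of the leading coefficients in Theorems \ref{Figalli-1}--\ref{thmprtb} as long as the $\nu_j$ are nonzero. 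This is the anticipated, if essentially routine, obstacle.
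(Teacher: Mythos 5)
Your proposal is correct and follows essentially the same route as the paper: identify $ind(u_\epsilon)$ with $\#\{i:\lambda_{i,\epsilon}<1\}$, then count using the asymptotics of $\lambda_{1,\epsilon}$, $\lambda_{i,\epsilon}$ for $i=2,\dots,n+1$, and $\lambda_{n+2,\epsilon}$ supplied by Theorems \ref{Figalli-1}, \ref{remainder terms} and \ref{thmprtb}. The paper's own proof is a single terse sentence, so your write-up in effect supplies the spectral identification and the sign bookkeeping on the $\nu_j$ that the paper leaves implicit.
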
	

\begin{Rem}
By the discussion before, our results hold for the dimensions $n=3,4,5$ and the Newtonian potential instead of the Riesz potential. We believe however that this restriction is only technical and we conjecture that the results of this paper also hold for the following parameter region:
\begin{equation}\label{paramater}
\alpha=n-2\hspace{2mm}\mbox{if}\hspace{2mm}n\geq6, \hspace{2mm}\alpha \in(0,n)\hspace{2mm}\mbox{with}\hspace{2mm}(0,4]\hspace{2mm}\mbox{but}\hspace{2mm}\alpha\hspace{2mm}\neq n-2\hspace{2mm}\mbox{if}\hspace{2mm}n\geq3.
\end{equation}
\end{Rem}
Finally, we would like to mention some related results to our problem. The problem \eqref{ele-1.1} can be understood as the nonlocal version of the following problem with nonlinearities of slightly subcritical growth
 \begin{equation}\label{SC}
 -\Delta u
=u^{\frac{n+2}{n-2}-\epsilon}
 ,~~~u>0,~~~\text{in}~~\Omega,~~~
 		u=0,~~~\text{on}~~\partial\Omega,
 \end{equation}
 where $\Omega$ is a bounded domain in $\mathbb{R}^n$ for $n\geq3$ and $\epsilon$ is a small parameter. The studies on the asymptotic behavior of radial solutions of \eqref{SC} were initiated by Atkinson and Peletier in \cite{ATKINSON-1986} by using ODE technique in the unit ball of $\mathbb{R}^3$. Later, Brezis and Peletier \cite{BP} used the method of PDE to obtain the same results as that in \cite{ATKINSON-1986} for the spherical domains. Finally, the same kind of results hold for nonspherical domain, which was settled by Han in \cite{HANZCHAO} (independently by Rey in \cite{Rey-1989}). Moreover, this result was extended in \cite{Musso-Pistoia-2002}, where Musso and Pistoia obtained the existence of multi-peak solutions for certain domains, the elliptic systems in \cite{CKIM-1,Guerra} and the fractional cases \cite{CKL}. (See also \cite{LiWZ,Rey-1990,JW0,GW,HL}).
 Later, Grossi and Pacella in \cite{GP05} obtained the asymptotic behavior of the eigenvalues $\lambda$ for the problem
 \begin{equation}\label{SC-1}
 -\Delta v
=n(n-2)\lambda u_{\epsilon}^{p-\epsilon}v
 ,~~~v>0,~~~\text{in}~~\Omega,~~~
 		v=0,~~~\text{on}~~\partial\Omega,
 \end{equation}
and the Morse index of $u_{\epsilon}$, as well as their result was extended later to the multi-bubble solutions case in \cite{CKL-1}. For one-bubble solutions case,  Takahashi in \cite{T-1} analyzed the linearized Brezis-Nirenberg problem. In \cite{GG09}, Gladiali and Grossi studied the asymptotic behavior of the eigenvalues and Morse index of one-bubble solution for the linearized Gelfand problem. Later that, the corresponding problem of multi-bubble solutions was proved in \cite{GGO-1}, and further the qualitative properties of the first $m$ eigenfunctions was established by Gladiali et al. in \cite{GGO}. For the studies of the related topic, see also \cite{B-L-R,dgp,LTX} and the references therein for earlier works. Our results are motivated by the work of Gross and Pacella \cite{GP05} on the classical local problem \eqref{SC-1} and it can be viewed as non-local counterpart of the qualitative properties results in \cite{GP05}.
	
\subsection{Structure of the paper}
The paper is organized as follows. In section \ref{section2}, we establish a decay estimate of the rescaled eigenfunction $\tilde{v}_{i,\epsilon}$ for $i\in\mathbb{N}$. In section \ref{asymptotic}, we are devoted to prove Theorem \ref{Figalli-1}. In section \ref{section3-1}, we give the estimates of the eigenvalues $\lambda_{i,\epsilon}$ for $i=2,\dots,n+1$, and further combined with weak limit characterization of the rescaled eigenfunction $v_{i,\epsilon}$ in section \ref{section4}, we then complete the proof of Theorem \ref{Figalli}. In section \ref{sangshen}, we obtain the asymptotic behaviour of the eigenvalues $\lambda_{i,\epsilon}$ for $i=2,\cdots,n+1$, and we establish Theorem \ref{remainder terms}. Finally, in section \ref{section7}, we establish the qualitative characteristics for $(n+2)$-th eigenvalue and corresponding to eigenfunction, and the Morse index of $u_{\epsilon}$, that is, Theorem \ref{thmprtb} and Corollary \ref{emm-1}.
	
Throughout this paper, $c$ and $C$ are indiscriminately used to denote various absolutely positive constants. We will use big $O$ and small $o$ notations to describe the limit behavior of a certain quantity as $\epsilon\rightarrow0$.
	
\section{Decay estimate of the rescaled eigenfunction $\tilde{v}_{i,\epsilon}$}\label{section2}
In this section, we are devoted to establish a decay estimate of rescaled least energy solutions $v_{i,\epsilon}$ for $i\in\mathbb{N}$. For the simplicity of notations, we write $W(x)$ instead of $W[0,1](x)$ and $\|\cdot\|_{\infty}$ instead of the norm of $L^{\infty}(\Omega)$ in the sequel, namely that $W(x)$ solves
\begin{equation}\label{wx}
    -\Delta u=C_N(|x|^{-(n-2)}\ast u^{p})u^{p-1} \quad \mbox{in}\quad \mathbb{R}^n,
    \end{equation}
where $C_N=\tilde{c}_{n,n-2}^{2(p-2)}$ and $p=\frac{n+2}{n-2}$.

Let us choose $x_\epsilon\in\Omega$ and the number $\lambda_{\epsilon}>0$ by
\begin{equation}\label{miu}
\mu_{\epsilon}^{\frac{2(n-2)}{4-(n-2)\epsilon}}=\big\|u_{\epsilon}\big\|_{\infty}=u_\epsilon(x_\epsilon).
\end{equation}
We define a family of rescaled functions
\begin{equation}\label{miu-1}
\tilde{u}_{\epsilon}(x)=\mu_{\epsilon}^{-\frac{2(n-2)}{4-(n-2)\epsilon}}u_{\epsilon}(\mu_{\epsilon}^{-1}x+x_{\epsilon})\quad\mbox{for}\quad x\in\Omega_{\epsilon}=\mu_{\epsilon}(\Omega-x_{\epsilon}).
\end{equation}	
It follows from \eqref{dus} that
\begin{equation}\label{decay1}
\tilde{u}_{\epsilon}(x)\leq CW(x)\quad \mbox{for}\quad x\in\mathbb{R}^n.
\end{equation}
We first state the following lemma which is useful in our analysis (see \cite{WY} for the proof).
\begin{lem}\label{Lem6.1}
For any constant $0<\sigma<n-2$, there is a constant $C>0$ such that
\begin{equation}\label{Lem6.1-0}
\int_{\mathbb{R}^n}\frac{1}{|y-x|^{n-2}}\frac{1}{(1+|x|)^{2+\sigma}}dx\leq
\frac{C}{(1+|y|)^\sigma}.
\end{equation}
\end{lem}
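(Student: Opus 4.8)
The plan is to reduce to the range $|y|\ge 2$ and then carry out a standard three–region decomposition of $\mathbb{R}^n$ adapted to the two ``bad'' points $x=0$ and $x=y$. For $|y|\le 2$ the asserted inequality is equivalent to a uniform bound on the left–hand side, and this is immediate: near $x=y$ the integrand is comparable to $|x-y|^{-(n-2)}$, which is locally integrable in $\mathbb{R}^n$ since $n-2<n$, while for large $|x|$ it is $O\big(|x|^{-(n-2)-(2+\sigma)}\big)=O\big(|x|^{-(n+\sigma)}\big)$, which is integrable at infinity since $\sigma>0$; as the integral depends continuously on $y$ it is bounded on the compact set $\{|y|\le 2\}$, and there $(1+|y|)^{-\sigma}$ is bounded below, so the claim holds on this range.

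For $|y|\ge 2$ I would write $\mathbb{R}^n=A_1\cup A_2\cup A_3$ with $A_1=\{|x|\le|y|/2\}$, $A_2=\{|x-y|\le|y|/2\}$ and $A_3$ the complement, and estimate the three pieces separately. On $A_1$ one has $|x-y|\ge|y|/2$, hence $|x-y|^{-(n-2)}\le C|y|^{-(n-2)}$, and the contribution is at most
$$C|y|^{-(n-2)}\int_{|x|\le|y|/2}\frac{dx}{(1+|x|)^{2+\sigma}}.$$
Since $0<\sigma<n-2$ forces $2+\sigma<n$, the truncated integral is $\le C(|y|/2)^{\,n-2-\sigma}$ for $|y|\ge 2$, and the product is $\le C|y|^{-\sigma}$. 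On $A_2$ one has $|x|\ge|y|/2$, so $(1+|x|)^{-(2+\sigma)}\le C|y|^{-(2+\sigma)}$, while $\int_{|x-y|\le|y|/2}|x-y|^{-(n-2)}\,dx=\int_{|z|\le|y|/2}|z|^{-(n-2)}\,dz\le C|y|^2$; hence this contribution is $\le C|y|^{-(2+\sigma)}|y|^2=C|y|^{-\sigma}$.

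On $A_3$ I would split once more: on the annular part $\{|y|/2\le|x|\le2|y|\}$ one still has both $|x-y|\ge|y|/2$ and $|x|\ge|y|/2$, so the integrand is $\le C|y|^{-(n-2)}|y|^{-(2+\sigma)}$, and since this region has measure $O(|y|^n)$ its contribution is $\le C|y|^{\,n-(n-2)-(2+\sigma)}=C|y|^{-\sigma}$; on the remaining part $\{|x|\ge2|y|\}$ one has $|x-y|\ge|x|/2$ and $1+|x|\ge|x|$, so the integrand is $\le C|x|^{-(n+\sigma)}$ and $\int_{|x|\ge2|y|}|x|^{-(n+\sigma)}\,dx\le C|y|^{-\sigma}$. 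Summing the bounds over $A_1,A_2,A_3$ gives $\le C|y|^{-\sigma}\le C(1+|y|)^{-\sigma}$ for $|y|\ge2$, which combined with the previous range proves the lemma, with all constants depending only on $n$ and $\sigma$.

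There is no genuine obstacle here, this being a classical Riesz–potential/convolution estimate; the one point that requires attention is precisely that $\int_{\mathbb{R}^n}(1+|x|)^{-(2+\sigma)}\,dx$ may diverge (this happens whenever $2+\sigma\le n$, e.g.\ for small $\sigma$ when $n=3,4,5$), so in region $A_1$ one must not bound that factor by a constant but rather keep track of its growth rate $R^{\,n-2-\sigma}$ in $R=|y|/2$; the hypothesis $\sigma<n-2$ guarantees this exponent is positive and the powers of $|y|$ then cancel exactly to produce the decay $|y|^{-\sigma}$.
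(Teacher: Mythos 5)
Your proof is correct. The paper itself does not prove Lemma \ref{Lem6.1} but refers to \cite{WY}, where the bound is established by essentially the same three--region (near $0$, near $y$, and far-field) decomposition you use; your argument is the standard self-contained version of that Riesz-potential estimate, and your observation that in the region $|x|\le|y|/2$ one must track the growth rate $R^{\,n-2-\sigma}$ of $\int_{|x|\le R}(1+|x|)^{-(2+\sigma)}dx$ (rather than bounding it by a constant) is exactly where the hypothesis $\sigma<n-2$ enters.
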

\begin{lem}\label{L6.2}
For any constant $\sigma\geq n-2-\frac{\alpha}{2}$ and $\alpha\in(0,4)$, there is a constant $C>0$ such that
\begin{equation}\label{L6.2-0}
\int_{\mathbb{R}^n}\frac{1}{|y-x|^{\frac{2n(n-2)}{2n-\alpha}}}\frac{1}{(1+|x|)^{\frac{2n(2+\sigma)}{2n-\alpha}}}dx\leq
\frac{C}{(1+|y|)^\frac{n(2\sigma+\alpha)}{2n-\alpha}}.
\end{equation}
\end{lem}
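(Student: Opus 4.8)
The plan is to prove this by the region-decomposition argument that underlies Lemma \ref{Lem6.1} (see \cite{WY}); Lemma \ref{L6.2} is the nonlocal counterpart in which the exponents are rescaled so as to match the Hardy--Littlewood--Sobolev structure. Abbreviate $a:=\frac{2n(n-2)}{2n-\alpha}$, $b:=\frac{2n(2+\sigma)}{2n-\alpha}$, $c:=\frac{n(2\sigma+\alpha)}{2n-\alpha}$ and write $I(y)$ for the integral on the left. The hypotheses are used only through three elementary facts: $\alpha\in(0,4)$ is equivalent to $a<n$, so that $|x-y|^{-a}$ is locally integrable near $x=y$; a direct computation gives the homogeneity identity $c=a+b-n$; and $\sigma\ge n-2-\frac{\alpha}{2}$ together with $n\ge3$ forces $a+b>n$, so that the integrand is integrable at infinity. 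First I would dispose of the range $|y|\le1$: splitting $\mathbb{R}^n$ into $\{|x-y|\le1\}$, controlled by $a<n$, and $\{|x-y|\ge1\}$, where $|x-y|^{-a}(1+|x|)^{-b}\le C(1+|x|)^{-(a+b)}$ is integrable since $a+b>n$, gives $I(y)\le C$, while the right-hand side of the claim is bounded below by a positive constant on this range.

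For $|y|\ge1$ I would decompose
$$\mathbb{R}^n=D_1\cup D_2\cup D_3,\qquad D_1=\{|x-y|\le\tfrac12|y|\},\quad D_2=\{|x|\le\tfrac12|y|\},\quad D_3=\mathbb{R}^n\setminus(D_1\cup D_2),$$
and estimate the three pieces separately. On $D_1$ one has $|x|\simeq|y|$, hence $(1+|x|)^{-b}\le C(1+|y|)^{-b}$, and integrating $|x-y|^{-a}$ over a ball of radius $\tfrac12|y|$ costs $C|y|^{n-a}$ since $a<n$; thus $\int_{D_1}\le C(1+|y|)^{n-a-b}=C(1+|y|)^{-c}$. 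On $D_3$ I would split once more: where $|x|\le2|y|$ one has $|x|\simeq|y|$ and $|x-y|\gtrsim|y|$, giving a contribution $\le C(1+|y|)^{-b}\int_{\frac12|y|\le|z|\le3|y|}|z|^{-a}\,dz\le C(1+|y|)^{n-a-b}$; where $|x|\ge2|y|$ one has $|x-y|\simeq|x|$, giving $\le C\int_{|x|\ge2|y|}|x|^{-(a+b)}\,dx\le C|y|^{n-a-b}$ by $a+b>n$. Both are of the desired order $(1+|y|)^{-c}$.

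The piece $D_2$, a fixed-proportion neighbourhood of the origin, is where the decay rate of $I(y)$ is genuinely determined, and I expect it to be the main obstacle. There $|x-y|\simeq|y|$, so $\int_{D_2}\le C(1+|y|)^{-a}\int_{|x|\le\frac12|y|}(1+|x|)^{-b}\,dx$, and the growth in $R$ of $\int_{|x|\le R}(1+|x|)^{-b}\,dx$ --- dictated by the size of $b$ relative to $n$, equivalently by the hypothesis $\sigma\ge n-2-\frac{\alpha}{2}$ --- combines with the identity $c=a+b-n$ to produce exactly $\int_{D_2}\le C(1+|y|)^{-c}$, the borderline value $b=n$ being treated by a separate sharper count. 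Collecting the estimates on $D_1$, $D_2$, $D_3$ together with the case $|y|\le1$ yields $I(y)\le C(1+|y|)^{-c}$, which is the statement. Relative to Lemma \ref{Lem6.1} the only real subtlety is bookkeeping in the origin region: retaining the precise exponent $c$ there rather than the cruder $(1+|y|)^{-a}$, which is what pins down the exact relation among $\sigma$, $\alpha$ and $n$ in the hypotheses.
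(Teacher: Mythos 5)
Your decomposition into $D_1$, $D_2$, $D_3$ is the standard one, and the estimates on $D_1$ and $D_3$ are correct. The gap is precisely in the $D_2$ piece, and it is not a matter of bookkeeping: the argument you sketch there does not close under the stated hypothesis, and in fact cannot, because for $\sigma>n-2-\frac{\alpha}{2}$ the inequality as written in the paper is false. Writing $a=\frac{2n(n-2)}{2n-\alpha}$, $b=\frac{2n(2+\sigma)}{2n-\alpha}$, $c=\frac{n(2\sigma+\alpha)}{2n-\alpha}=a+b-n$, the hypothesis $\sigma\ge n-2-\frac{\alpha}{2}$ is equivalent to $b\ge n$, i.e.\ $c\ge a$. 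Your $D_2$ bound gives $\int_{D_2}\lesssim(1+|y|)^{-a}\int_{|x|\le|y|/2}(1+|x|)^{-b}\,dx$, and when $b>n$ the second factor is a bounded constant, so you obtain only $(1+|y|)^{-a}$, which is \emph{strictly weaker} than the required $(1+|y|)^{-c}$ since $c>a$. At the borderline $b=n$ the same computation produces a genuine $(1+|y|)^{-a}\log(1+|y|)$, and no "sharper count" removes the logarithm because the integral $\int_{|x|\le R}(1+|x|)^{-n}\,dx$ really is of order $\log R$. The identity $c=a+b-n$ turns your $D_2$ bound into $(1+|y|)^{-c}$ only when $b<n$, i.e.\ $\sigma<n-2-\frac{\alpha}{2}$, which is the \emph{opposite} of the hypothesis.

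Moreover this is not a defect of the decomposition: the contribution to the left-hand side from the fixed ball $\{|x|\le1\}$ alone is bounded below, for $|y|$ large, by $C(1+|y|)^{-a}$, since there $|x-y|\le2|y|$ and $(1+|x|)^{-b}\ge2^{-b}$. When $c>a$ this already contradicts $I(y)\le C(1+|y|)^{-c}$. So the correct decay exponent in the regime $b\ge n$ is $\min(a,c)=a$ (with a log loss at $b=n$), not $c$; the statement as reproduced in this paper (the paper itself only cites Lemma~3.6 of \cite{SYZ} rather than giving a proof) appears to carry a misprint either in the direction of the hypothesis on $\sigma$ or in the exponent on the right-hand side. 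Your write-up, by asserting that the hypothesis $\sigma\ge n-2-\frac{\alpha}{2}$ "dictates" the favourable growth of $\int_{|x|\le R}(1+|x|)^{-b}\,dx$, has the implication reversed, and this is exactly where the proof would break down if carried out in detail.
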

\begin{proof}
See Lemma 3.6 in \cite{SYZ}.
\end{proof}
We conduct a decay estimate for solutions of the eigenvalue problem \eqref{ele-2}-\eqref{defanndg2}.
\begin{lem}
Assume that $v_{i,\epsilon}$ is a solution of \eqref{ele-2}-\eqref{defanndg2}, $i\in\mathbb{N}$. Then there exists a positive constant $C$, independent of $\varepsilon$, such that
\begin{equation}\label{decay22}
v_{i,\epsilon}(x)\leq CW\left(\|u_{\epsilon}\|_{\infty}^{\frac{4-(n-2)\epsilon}{2(n-2)}}(x-x_{\epsilon})\right)\quad \mbox{for}\quad x\in\mathbb{R}^n,
\end{equation}
for each point $x\in\Omega$, and there holds
\begin{equation}\label{day2}
|\tilde{v}_{i,\epsilon}(x)|\leq CW(x)\quad \mbox{for}\quad x\in\mathbb{R}^n,
\end{equation}
where $\tilde{v}_{i,\epsilon}$ can be found in \eqref{vie}.	
\end{lem}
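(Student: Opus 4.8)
The plan is to prove the pointwise decay estimate \eqref{decay22} by a barrier/bootstrap argument built on the equation $-\Delta v_{i,\epsilon} = \lambda_{i,\epsilon}\,\mathcal{G}_\epsilon[v_{i,\epsilon}]$ together with the already-established sharp decay of $u_\epsilon$ from \eqref{dus}, after which \eqref{day2} follows by the change of variables \eqref{vie-1} — more precisely \eqref{vie}. First I would rescale: set $w_{i,\epsilon} := \tilde v_{i,\epsilon}$ on $\Omega_\epsilon$ and rewrite the eigenvalue equation in the rescaled variables. Using \eqref{miu}--\eqref{miu-1} and the decay bound \eqref{decay1}, $\tilde u_\epsilon(x)\le CW(x)$, the two convolution terms in $\mathcal{G}_\epsilon$ become, after rescaling,
$$
(p-\epsilon)\Bigl(|x|^{-(n-2)}\ast(\tilde u_\epsilon^{p-1-\epsilon} w_{i,\epsilon})\Bigr)\tilde u_\epsilon^{p-1-\epsilon}
+(p-1-\epsilon)\Bigl(|x|^{-(n-2)}\ast \tilde u_\epsilon^{p-\epsilon}\Bigr)\tilde u_\epsilon^{p-2-\epsilon}w_{i,\epsilon}.
$$
Since $\|w_{i,\epsilon}\|_{L^\infty}=1$ (from $\|v_{i,\epsilon}\|_{L^\infty(\Omega)}=1$), both terms are bounded by $C\bigl(|x|^{-(n-2)}\ast W^{p-1-\epsilon}\bigr)W^{p-1-\epsilon} + C\bigl(|x|^{-(n-2)}\ast W^{p-\epsilon}\bigr)W^{p-2-\epsilon}$, so $|\Delta w_{i,\epsilon}|$ is controlled by an explicit, integrable, radially decaying function.

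The core is then an iteration on the decay rate. Start from the crude bound $|w_{i,\epsilon}(x)|\le 1$ and $\tilde u_\epsilon \le CW \le C(1+|x|)^{-(n-2)}$. Feeding this into the convolutions and using Lemma \ref{Lem6.1} (with $\alpha=n-2$; note $W^{p-1}\sim (1+|x|)^{-(n+2)}$ corresponds to $2+\sigma = n+2$, i.e. $\sigma=n$, which slightly exceeds the range $\sigma<n-2$, so one truncates at $\sigma$ just below $n-2$ and also uses Lemma \ref{L6.2} for the Hartree-convolution term) one obtains $|\Delta w_{i,\epsilon}(x)| \le C(1+|x|)^{-(2+\sigma_0)}$ for some initial $\sigma_0>0$. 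Then, representing $w_{i,\epsilon}$ via the Green function of $-\Delta$ on $\Omega_\epsilon$ (whose singular part dominates and whose regular part is harmonic, hence controlled by the boundary which lies at distance $\gtrsim \mu_\epsilon \to \infty$ from the origin), and applying Lemma \ref{Lem6.1} once more, we gain decay: $|w_{i,\epsilon}(x)| \le C(1+|x|)^{-\sigma_0}$. Iterating this scheme, each step improving $\sigma$ by a fixed positive amount up to the ceiling $n-2$, terminates in finitely many steps and yields $|w_{i,\epsilon}(x)|\le C(1+|x|)^{-(n-2)} \le C W(x)$ uniformly in $\epsilon$, which is \eqref{day2}. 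Undoing the rescaling \eqref{vie} gives \eqref{decay22}. One must be careful that all constants are $\epsilon$-independent: this uses that $\lambda_{i,\epsilon}$ stays bounded (it converges by Theorems \ref{Figalli-1}--\ref{thmprtb}, or more elementarily one first shows boundedness of $\lambda_{i,\epsilon}$ directly), that $p-\epsilon, p-1-\epsilon$ stay bounded away from the bad exponents, and that $\Omega_\epsilon$ exhausts $\mathbb{R}^n$ so the Green-function regular part is negligible near the origin.

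The main obstacle is the borderline integrability in the convolution estimates: the natural decay exponent $n-2$ of $W$ makes $W^{p-1}$ decay like $(1+|x|)^{-(n+2)}$, which sits exactly at (or just outside) the threshold of Lemma \ref{Lem6.1}, so the iteration cannot jump straight to the optimal rate and must instead climb through intermediate exponents $\sigma < n-2$, with the two Hartree terms behaving slightly differently (the first has a convolution with an $L^1$-type kernel built from $W^{p-1-\epsilon}$, the second from $W^{p-\epsilon}$ which is heavier). Organizing the bootstrap so that both terms improve in tandem and verifying that the regular part of the rescaled Green function does not spoil the estimate (using $\dist(0,\partial\Omega_\epsilon)\to\infty$ and the maximum principle, as in the Remark after \eqref{Robin}) are the delicate points; the rest is a routine adaptation of the Brezis--Peletier / Han type decay argument to the nonlocal setting, in the spirit of \cite{WY,SYZ}.
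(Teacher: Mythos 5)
Your proposal is correct and follows essentially the same route as the paper: represent the eigenfunction via the Green's function, feed in the decay of $u_\epsilon$ and the trivial bound $\|v_{i,\epsilon}\|_\infty=1$, control the convolutions with Lemmas \ref{Lem6.1}--\ref{L6.2}, and bootstrap the decay rate in finitely many steps until it reaches that of $W$. The only cosmetic differences are that the paper works directly in the unrescaled domain $\Omega$ using the pointwise bound $G(x,y)\le c_n/|x-y|^{n-2}$ from \eqref{Gx} (which sidesteps the need to argue separately that the regular part of the rescaled Green's function is negligible), and it handles the $\epsilon$-dependent exponents $p-\epsilon$ by a Taylor expansion $W^{p-\epsilon}=W^p(1-\epsilon\log W+O(\epsilon^2))$ so that the error is absorbed into $O(\epsilon^2)$ terms at each iterate.
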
	
\begin{proof}
We have that the function $v_{i,\epsilon}$ verifies,
\begin{equation}\label{e-1}
\left\lbrace
\begin{aligned}
    &-\Delta v_{i,\epsilon}=\lambda_{i,\epsilon}C_N \mathcal{G}_{\epsilon}[v_{i,\epsilon}] \quad \mbox{in}\quad \Omega,\\
    &v_{i,\epsilon}=0\quad \mbox{on}\hspace{2.5mm}\partial\Omega,\hspace{2mm}
    \big\|v_{i,\epsilon}\big\|_{\infty}=1,
   \end{aligned}
\right.
\end{equation}
with
	\begin{equation}\label{de-1}
		\mathcal{G}_{\epsilon}[v_{i,\epsilon}]:=(p-\epsilon)\Big(|x|^{-(n-2)} \ast \big(u_{\epsilon}^{p-1-\epsilon}v_{i,\epsilon}\big)\Big)u_{\epsilon}^{p-1-\epsilon}
		+(p-1-\epsilon)\Big(|x|^{-(n-2)} \ast u_{\epsilon}^{p-\epsilon}\Big)u_{\epsilon}^{p-2-\epsilon}v_{i,\epsilon}.
	\end{equation}
Then we get from the integral representation of $v_{i,\epsilon}$
that
\begin{equation*}
v_{i,\epsilon}(x)=\lambda_{i,\epsilon}C_N
\Big[\int_{\Omega}\int_{\Omega}\frac{u_\epsilon^{p-1-\epsilon}(z)v_{i,\epsilon}(z)u_\epsilon^{p-1-\epsilon}(y)}{|y-z|^{n-2}}G(x,y)dydz+\int_{\Omega}\int_{\Omega}\frac{u_\epsilon^{p-\epsilon}(z)u_\epsilon^{p-2-\epsilon}(y)v_{i,\epsilon}(y)}{|y-z|^{n-2}}G(x,y)dydz\Big].
\end{equation*}
Taking $\alpha=n-2$ and $\sigma=2$ in \eqref{L6.2-0} and by \eqref{decay1}-\eqref{Lem6.1}, \eqref{e-1}, \eqref{Gx}, the definition of $W$ and the Hardy-Littlewood-Sobolev inequality, we have
\begin{equation*}
\begin{split}
&\int_{\Omega}\int_{\Omega}\frac{u_\epsilon^{p-1-\epsilon}(z)v_{i,\epsilon}(z)u_\epsilon^{p-1-\epsilon}(y)}{|y-z|^{n-2}}G(x,y)dydz\\&
\leq C\int_{\Omega}\int_{\Omega} \big\|u_{\epsilon}\big\|_{\infty}^{p-1-\varepsilon}\frac{W^{p-1-\epsilon}\big(\|u_{\epsilon}\|_{\infty}^{\frac{4-(n-2)\epsilon}{2(n-2)}}(y-x_{\epsilon})\big)}{|x-y|^{n-2}}\big\|u_{\epsilon}\big\|_{\infty}^{p-1-\varepsilon}\frac{W^{p-1-\epsilon}\big(\|u_{\epsilon}\|_{\infty}^{\frac{4-(n-2)\epsilon}{2(n-2)}}(z-x_{\epsilon})\big)}{|y-z|^{n-2}}dydz
+O(\epsilon^2)\\&
\leq C\bigg[\int_{\mathbb{R}^n}\frac{1}{|x-y|^{\frac{2n(n-2)}{n+2}}}\frac{\|u_{\epsilon}\|_{\infty}^{\frac{2n(p-1-\varepsilon)}{n+2}}}{(1+\|u_{\epsilon}\|_{\infty}^{\frac{4-(n-2)\epsilon}{2(n-2)}}|y-x_{\epsilon}|)^{\frac{2n(n-2)(p-1)}{n+2}}}dy\bigg]^{\frac{n+2}{2n}}
\\&
\hspace{3mm}\times C\bigg[\int_{\mathbb{R}^n}\frac{\|u_{\epsilon}\|_{\infty}^{\frac{2n(p-1-\varepsilon)}{n+2}}}{(1+\|u_{\epsilon}\|_{\infty}^{\frac{4-(n-2)\epsilon}{2(n-2)}}|z-x_{\epsilon}|)^{\frac{2n(n-2)(p-1)}{n+2}}}dz\bigg]^{\frac{n+2}{2n}}
+O(\epsilon^2)\\&\leq C \bigg[\int_{\mathbb{R}^n}\frac{1}{\big|\|u_{\epsilon}\|_{\infty}^{\frac{4-(n-2)\epsilon}{2(n-2)}}(x-x_{\epsilon})+y\big|^{\frac{2n(n-2)}{n+2}}}\frac{1}{(1+\big|y\big|)^{\frac{2n(n-2)(p-1)}{n+2}}}dy\bigg]^{\frac{n+2}{2n}}
\\&\leq C\bigg(\frac{1}{1+\|u_{\epsilon}\|_{\infty}^{\frac{4-(n-2)\epsilon}{2(n-2)}}|x-x_{\epsilon}|}\bigg)^{\frac{n+2}{2}}+O(\epsilon^2),
\end{split}
\end{equation*}
where we have exploited the fact that
\begin{equation*}
\begin{split}
W^{p-\epsilon}=W^{p}(1-\epsilon\log W+O(\epsilon^2))&=W^{p}-\epsilon W^{p}\log W+O(\epsilon^2W^{p}))
\end{split}
\end{equation*}
by Taylor's theorem.
Combining the following identity (see \cite{gyz})
\begin{equation}\label{p1-00}
\int_{\mathbb{R}^n}\frac{W^{p}(y)}{|x-y|^{n-2}}dy
=I(\frac{n-2}{2})S^{\frac{(2-n)}{8}}C_{n,n-2}^{\frac{2-n}{8}}[n(n-2)]^{\frac{n-2}{4}}W^{2^{\ast}-p}(x)=:\Gamma_nW^{2^{\ast}-p}(x),
\end{equation}
where
$$
I(s)=\frac{\pi^{\frac{n}{2}}\Gamma(\frac{n-2s}{2})}{\Gamma(n-s)}, \ \ \mbox{and }\Gamma(s)=\int_0^{+\infty} x^{s-1}e^{-x}\,dx,~s>0,
$$
then we similarly compute and get
\begin{equation*}
\int_{\Omega}\int_{\Omega}\frac{u_\epsilon^{p-\epsilon}(z)u_\epsilon^{p-2-\epsilon}(y)v_{i,\epsilon}(y)}{|y-z|^{n-2}}G(x,y)dydz\leq
C\bigg(\frac{1}{1+\|u_{\epsilon}\|_{\infty}^{\frac{4-(n-2)\epsilon}{2(n-2)}}|x-x_{\epsilon}|}\bigg)^{2}+O(\epsilon^2).
\end{equation*}
Here and several times in the sequel, we use the elementary identity \eqref{p1-00}.
As a consequence,
$$v_{i,\epsilon}\leq C\bigg(\frac{1}{1+\|u_{\epsilon}\|_{\infty}^{\frac{4-(n-2)\epsilon}{2(n-2)}}|x-x_{\epsilon}|}\bigg)^{2}+O(\epsilon^2).$$
Next repeating the above process, we know
\begin{equation*}
v_{i,\epsilon}\leq C\bigg(\frac{1}{1+\|u_{\epsilon}\|_{\infty}^{\frac{4-(n-2)\epsilon}{2(n-2)}}|x-x_{\epsilon}|}\bigg)^{4}+O(\epsilon^4).
\end{equation*}
Finally, we can proceed as in the above argument for finite number of times to  conclude that \eqref{decay22}, as desired.
\end{proof}

\section{Asymptotic behavior of the eigenpair $(\lambda_{1,\epsilon},v_{1,\epsilon})$}\label{asymptotic}
In this section, we are devoted to prove Theorem \ref{Figalli-1}. Before proving the main theorem, we need the following result.
\begin{Prop}\label{prondgr}
Assume $n\geq 3$, $0<\alpha<n$ with $0<\alpha\leq4$ and $p^{\ast}=\frac{2n-\alpha}{n-2}$. Then the space of all bounded solutions
to the linearized equation
\begin{equation*}
-\Delta v-pC_N\left(|x|^{-\alpha} \ast (W^{p-1}[0,1]v)\right)W^{p-1}[0,1]
-(p-1)C_N\left(|x|^{-\alpha} \ast W^{p}[0,1]\right)W^{p-2}[0,1]v=0
\end{equation*}
is spanned by in $\mathcal{D}^{1,2}(\mathbb{R}^n)$ of the form
\begin{equation*}
-(n-2)\frac{x_1}{(1+|x|^2)^{\frac{n}{2}}},\cdots,-(n-2)\frac{x_n}{(1+|x|^2)^{\frac{n}{2}}}\quad\mbox{and}\quad\frac{1-|x|^2}{(1+|x|^2)^{\frac{n}{2}}}.
\end{equation*}
\end{Prop}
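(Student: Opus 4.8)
The plan is to reduce the nondegeneracy statement for the nonlocal linearized operator to the classical nondegeneracy of the Aubin–Talenti bubble for $-\Delta u = u^{p}$ with $p = \frac{n+2}{n-2}$ (Sobolev-critical) or, more precisely, to the known kernel classification for the Choquard-type critical problem. First I would observe that $W = W[0,1]$ solves \eqref{wx}, i.e. $-\Delta W = C_N(|x|^{-\alpha}\ast W^{p})W^{p-1}$ with $\alpha = n-2$, so the linearized operator in the statement is precisely $\mathcal{L}v := -\Delta v - \mathcal{G}_0[v]$ where $\mathcal{G}_0$ is the $\epsilon = 0$ version of \eqref{defanndg2} evaluated at $u_\epsilon = W$. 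The two obvious families of solutions come from the invariances of \eqref{ele-1.1-1}: the translations $\partial_{\xi_k} W[\xi,1]\big|_{\xi=0}$ produce, up to normalization, $-(n-2)\,x_k/(1+|x|^2)^{n/2}$, and the dilation $\partial_\mu W[0,\mu]\big|_{\mu=1}$ produces $\frac{n-2}{2}\cdot\frac{1-|x|^2}{(1+|x|^2)^{n/2}}$. One checks directly (differentiating \eqref{ele-1.1-1} in $\xi$ and $\mu$) that these lie in the kernel of $\mathcal{L}$ and belong to $\mathcal{D}^{1,2}(\mathbb{R}^n)$; the content of the proposition is that there is nothing else among bounded solutions.

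For the reverse inclusion I would pass to Fourier analysis / spherical harmonics, exploiting the radial symmetry of $W$. Write $W(x) = w(|x|)$ and decompose a bounded solution $v \in \mathcal{D}^{1,2}$ into spherical harmonics, $v(x) = \sum_{\ell \ge 0} \sum_{m} a_{\ell,m}(|x|)\, Y_{\ell,m}(x/|x|)$. Since the convolution kernel $|x|^{-(n-2)}$ is radial, convolution commutes with the $SO(n)$-action and hence preserves each spherical-harmonic sector; therefore each component $a_{\ell,m}$ solves a decoupled scalar integro-ODE. The key computation is: for the kernel $|x|^{-(n-2)}$ and a profile $W^{p-1}$ with the precise power coming from $p = \frac{n+2}{n-2}$, one can use the identity \eqref{p1-00} — more generally the Funk–Hecke / Gegenbauer expansion of the Newtonian potential — to write $|x|^{-(n-2)}\ast (W^{p-1}[0,1]\phi_\ell)$ explicitly in the $\ell$-th sector, turning $\mathcal{L}$ restricted to that sector into a second-order ODE (or a rank-one perturbation of one) whose bounded $\mathcal{D}^{1,2}$-solutions can be enumerated. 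One then shows: the $\ell = 0$ sector has exactly the one solution $\frac{1-|x|^2}{(1+|x|^2)^{n/2}}$ (spanned by the dilation mode); the $\ell = 1$ sector has exactly the $n$ solutions $x_k/(1+|x|^2)^{n/2}$ (spanned by the translation modes); and the sectors $\ell \ge 2$ contribute nothing, because there the potential term is strictly too weak to support a bounded decaying solution — here one uses the strict monotonicity of the relevant eigenvalue in $\ell$, exactly as in the local Sobolev-critical case.

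Alternatively, and this is the route I would actually write up to keep the argument short, I would cite the already-established nondegeneracy of $W$ for the critical Choquard/Hartree equation \eqref{ele-1.1-1}: the statement that the kernel of the linearization is exactly $(n+1)$-dimensional, spanned by the translation and dilation generators, is known for $\alpha \in (0, n)$ in the relevant range — see the analysis in \cite{DY19, GHPS19, yyz, PWY} for the Hartree case with Riesz potential — and for $\alpha = n-2$, $n = 3,4,5$ it specializes to the claimed basis. In that case the bulk of the proof is the two verifications in the first paragraph together with a citation, plus the observation that ``bounded solution in $\mathcal{D}^{1,2}$'' is the correct class (one uses the decay estimate \eqref{day2}-type bound, i.e. $|v(x)| \le C W(x)$, obtained by the same Riesz-potential iteration as in the preceding lemma, to upgrade a bounded $\mathcal{D}^{1,2}$ solution to one with $W$-decay so that the spherical-harmonic argument applies). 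I expect the main obstacle to be the $\ell \ge 2$ sectors: showing the absence of extra kernel elements there requires either the explicit Gegenbauer decomposition of the Newtonian potential acting on the bubble — a genuine, if classical, computation — or a careful quotation of the nonlocal nondegeneracy literature with attention to whether the cited results are stated for bounded solutions or only for $\mathcal{D}^{1,2}$ solutions, and whether the exponent pair $(p, p-1, p-2)$ appearing in $\mathcal{G}_0$ matches the linearization treated there. Everything else — the two families, their membership in $\mathcal{D}^{1,2}$, and the decay bootstrap — is routine.
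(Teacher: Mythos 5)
Your preferred route---simply citing the established nondegeneracy literature for the critical Hartree/Choquard bubble---is exactly what the paper does: its entire proof of Proposition \ref{prondgr} reads ``For the proof, refer to \cite{DSB213, gyz, XLi}.'' Your citation list differs slightly (you name \cite{DY19, GHPS19, yyz, PWY}, with \cite{DY19} being arguably the most direct match; the paper instead points to \cite{DSB213, gyz, XLi}), and your spherical-harmonics/Funk--Hecke sketch in the first paragraph goes beyond anything written in the paper, but the approach you would actually take is substantively identical.
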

\begin{proof}
For the proof, refer to \cite{DSB213, gyz,XLi}.
\end{proof}
From Proposition \ref{prondgr} regarding the nondegeneracy of $W[0,1](x)$, we have the following result (cf. \cite{DSB213}).
\begin{Prop}\label{propep}
    Let $\lambda_i$, for $i=1,2,\ldots,$ denote the eigenvalues of the
     following eigenvalue problem
    \begin{equation*}
    -\Delta v
    =\lambda_iC_N\big[p\big(|x|^{-(n-2)} \ast (W^{p-1}[0,1]v)\big)W^{p-1}[0,1]
    +(p-1)\big(|x|^{-(n-2)} \ast W^{p}[0,1]\big)W^{p-2}[0,1]v\big]
    \end{equation*}
for all $v\in \mathcal{D}^{1,2}(\mathbb{R}^n)$. Then $\lambda_1=(2p-1)^{-1}$ is simple and the corresponding eigenfunction is $W[0,1](x)$, and $\lambda_{2}=\lambda_{3}=\cdots=\lambda_{n+2}=1$ with the corresponding $(n+1)$-dimensional eigenfunction space spanned by
    \[
    \left\{\frac{n-2}{2}W[0,1]+x\cdot\nabla W[0,1],\quad \partial_{x_1} W[0,1],\ldots,\partial_{x_n} W[0,1]\right\}.
    \]
    Furthermore $\lambda_{n+2}\leq\lambda_{n+3}\leq\cdots$.
    \end{Prop}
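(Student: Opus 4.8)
The plan is to reduce the eigenvalue problem in Proposition \ref{propep} to a family of scalar equations by decomposing $\mathcal{D}^{1,2}(\mathbb{R}^n)$ into spherical harmonics, exploiting the fact that $W[0,1]$ is radial and the Newtonian potential is rotationally invariant. First I would recall that the bilinear form defining the eigenvalue problem,
\[
\mathcal{B}(v,w):=C_N\int_{\mathbb{R}^n}\Big[p\big(|x|^{-(n-2)}\ast(W^{p-1}v)\big)W^{p-1}w+(p-1)\big(|x|^{-(n-2)}\ast W^{p}\big)W^{p-2}vw\Big]dx,
\]
is symmetric, positive, and compact with respect to the Dirichlet inner product $\int\nabla v\cdot\nabla w$ on $\mathcal{D}^{1,2}(\mathbb{R}^n)$ --- compactness coming from the decay of $W$ and Hardy--Littlewood--Sobolev estimates exactly as in the decay lemma above --- so the spectrum is a discrete sequence $\lambda_1\leq\lambda_2\leq\cdots\to\infty$ with an orthonormal basis of eigenfunctions. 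Writing $v(x)=\sum_{k\geq0}\sum_{m}\psi_{k,m}(|x|)Y_{k,m}(x/|x|)$ where $Y_{k,m}$ are spherical harmonics of degree $k$, the convolution term does not mix harmonics of different degree (Newton's theorem / the Funk--Hecke formula), so $\mathcal{B}$ and the Dirichlet form both block-diagonalize, and the eigenvalue problem splits into a one-dimensional radial problem for each $k$.

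Next I would identify the first two blocks explicitly. For $k=0$ (radial functions): $W[0,1]$ itself solves \eqref{wx}, and testing shows $\mathcal{B}(W,W)=(2p-1)\int|\nabla W|^2$ because the convolution $|x|^{-(n-2)}\ast W^{p}=\tfrac{1}{C_N}\,\tilde\Gamma\,W^{2^*-p}$-type identity (this is precisely \eqref{p1-00}) combines the two terms of $\mathcal{G}$ into a single multiple of $-\Delta W$; hence $\lambda=(2p-1)^{-1}$ is an eigenvalue with eigenfunction $W$. A Sturm--Liouville / convexity argument (the radial operator $W$ is the \emph{first} eigenfunction in the $k=0$ block because $W>0$) shows this is the smallest eigenvalue in the $k=0$ block, and standard arguments comparing across blocks (the $k=1$ block, via $\partial_{x_j}W$, has eigenvalue $1>(2p-1)^{-1}$ since $p>2$) give that $\lambda_1=(2p-1)^{-1}$ is the global first eigenvalue and is \emph{simple} in the full space (within the $k=0$ block simplicity follows from uniqueness of the positive radial solution of a Sturm--Liouville problem). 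For $k=1$: differentiating \eqref{wx} in $x_j$ shows each $\partial_{x_j}W$ solves the linearized equation with $\lambda=1$; for $k=0$ again, the dilation generator $\tfrac{n-2}{2}W+x\cdot\nabla W$ also solves the linearized equation with $\lambda=1$, giving the claimed $(n+1)$-dimensional eigenspace for $\lambda=1$. By Proposition \ref{prondgr} the space of \emph{all} bounded (equivalently $\mathcal{D}^{1,2}$) solutions of the linearized equation is exactly $(n+1)$-dimensional, so the eigenvalue $1$ has multiplicity exactly $n+1$; hence $\lambda_2=\cdots=\lambda_{n+2}=1$ and all later eigenvalues satisfy $\lambda_{n+3}\geq\lambda_{n+2}$, with strict-or-not left open as in the statement.

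It remains to rule out any eigenvalue strictly between $(2p-1)^{-1}$ and $1$, and to confirm there is none below $(2p-1)^{-1}$ in the higher blocks. I would argue block by block: in the $k=0$ block the operator has $W>0$ as its ground state so no eigenvalue lies below $(2p-1)^{-1}$ there; in the $k\geq1$ blocks one uses that $\partial_{x_j}W$ (degree $1$, and sign-changing in the angular variable but of one sign radially) is the ground state of the $k=1$ block with eigenvalue $1$, and monotonicity of the bottom eigenvalue of the radial block in $k$ --- the effective potential gains the centrifugal term $k(k+n-2)/|x|^2$ as $k$ increases --- pushes all $k\geq2$ blocks to have first eigenvalue $\geq1$. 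Therefore the only eigenvalues in $(0,1]$ are $(2p-1)^{-1}$ (simple) and $1$ (multiplicity $n+1$), and the ordering $\lambda_{n+2}\leq\lambda_{n+3}\leq\cdots$ is just the enumeration convention. The main obstacle is the spectral ordering across angular sectors: showing rigorously that no eigenvalue of a $k\geq2$ block drops below $1$ and that the $k=0$ block contributes nothing in $((2p-1)^{-1},1)$ requires a careful Sturm-comparison or a direct use of the nondegeneracy result of Proposition \ref{prondgr} together with the Courant--Fischer characterization; I expect to lean on Proposition \ref{prondgr} to pin down the multiplicity of $\lambda=1$ exactly and then close the gap with a monotonicity-in-$k$ argument for the radial ODEs, as is done in the references \cite{DSB213} cited after the statement.
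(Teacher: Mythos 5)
The paper gives no proof of Proposition \ref{propep} at all: it simply declares that the statement ``follows from'' the nondegeneracy result of Proposition \ref{prondgr} and points to \cite{DSB213}. Your proposal is therefore a genuinely different route --- an actual spectral computation via spherical harmonics --- and the pieces you lay out explicitly (compactness of the bilinear form from the decay of $W$, block-diagonalization of the convolution by Funk--Hecke, the identity \eqref{p1-00} giving $\mathcal{B}(W,W)=(2p-1)\int|\nabla W|^2$ and hence the eigenvalue $(2p-1)^{-1}$, the translation and dilation generators solving the linearized problem with eigenvalue $1$, and the use of Proposition \ref{prondgr} to pin down the multiplicity of the eigenvalue $1$) are all correct.

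However, there is a genuine gap at exactly the point you yourself flag: ruling out eigenvalues in the open interval $((2p-1)^{-1},1)$. The tools you invoke to close it do not apply as stated. After the Funk--Hecke decomposition, the operator restricted to each spherical-harmonic sector is \emph{not} a Sturm--Liouville ODE --- the first term of $\mathcal{G}$ produces a genuine integral operator in the radial variable (the $k$-th Funk--Hecke coefficient of $|x-y|^{-(n-2)}$), so classical oscillation theory (``$W>0$ is therefore the ground state of the $k=0$ block,'' ``the radial part of $\partial_{x_j}W$ is of one sign and therefore the ground state of the $k=1$ block,'' ``the $k=0$ block contributes nothing between the first and second eigenvalue'') cannot be quoted; one would need a Krein--Rutman-type positivity argument for the resolvent in each sector and a separate argument that the radial sector has no eigenvalue strictly between $(2p-1)^{-1}$ and $1$. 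Similarly, the ``centrifugal term grows with $k$'' comparison is not enough by itself, because the nonlocal part of the operator also depends on $k$ through the Funk--Hecke multipliers, and you need to verify that the net effect is monotone. Finally, note that Proposition \ref{prondgr} only identifies $\ker(L-I)$ and hence the \emph{multiplicity} of the eigenvalue $1$; combined with Courant--Fischer it tells you $\lambda_2\le 1$, but it does not rule out $\lambda_2<1$. So the step you defer is not a technicality covered by the cited nondegeneracy result --- it is exactly the content that \cite{DSB213} is cited for, and your proof as written does not supply it.
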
	
   Moreover, we need to recall the following regularity result.
    \begin{lem}[\cite{HANZCHAO}]\label{regular}
Let $u$ solve
\begin{equation*}
\begin{cases}
-\Delta u=f\quad\mbox{in}\hspace{2mm}\Omega\subset\mathbb{R}^n,\\
u=0\quad\quad\hspace{2mm}\mbox{on}\hspace{2mm}\partial\Omega.
\end{cases}
\end{equation*}
Then
\begin{equation}\label{regu}
\|u\|_{W^{1,q}(\Omega)}+\|\nabla u\|_{C^{0,\tilde{\alpha}}(\omega^{\prime})}\leq C\big(\|f\|_{L^1(\Omega)}+\|f\|_{L^\infty(\omega)}\big)
\end{equation}
for $q<n/(n-1)$ and $\tilde{\alpha}\in(0,1)$. Here $\omega$ be a neighborhood of $\partial\Omega$ and $\omega^{\prime}\subset\omega$ is a strict subdomain of $\omega$.
\end{lem}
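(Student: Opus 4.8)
The plan is to combine the $L^{1}$-theory for the Dirichlet Laplacian, which yields the $W^{1,q}$ bound, with a splitting of the source term that isolates the portion of $f$ living near $\partial\Omega$ (the only portion that feels the $L^{\infty}$ norm) and exploits the fact that the complementary portion generates a function which is \emph{harmonic} in a full neighbourhood of $\partial\Omega$.

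First I would establish the $W^{1,q}(\Omega)$ bound directly from the Green representation $u(x)=\int_{\Omega}G(x,y)f(y)\,dy$, where $G$ is the Dirichlet Green function of the smooth bounded domain $\Omega$; it satisfies the classical pointwise estimates $|G(x,y)|\le C|x-y|^{2-n}$ and $|\nabla_{x}G(x,y)|\le C|x-y|^{1-n}$. Since $(1-n)q>-n$ precisely when $q<n/(n-1)$, the quantity $\sup_{y\in\Omega}\big\|\,|\cdot-y|^{1-n}\big\|_{L^{q}(\Omega)}$ is finite, so Minkowski's integral inequality gives $\|\nabla u\|_{L^{q}(\Omega)}\le C\|f\|_{L^{1}(\Omega)}$; the analogous and easier argument with the kernel $|x-y|^{2-n}$ gives $\|u\|_{L^{q}(\Omega)}\le C\|f\|_{L^{1}(\Omega)}$. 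Hence $\|u\|_{W^{1,q}(\Omega)}\le C\|f\|_{L^{1}(\Omega)}$.

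Next I would treat the boundary H\"older bound by writing $f=f\chi_{\omega}+f\chi_{\Omega\setminus\omega}=:f_{1}+f_{2}$ and letting $u_{j}$ solve $-\Delta u_{j}=f_{j}$ in $\Omega$ with $u_{j}=0$ on $\partial\Omega$, so that $u=u_{1}+u_{2}$. Since $f_{1}\in L^{\infty}(\Omega)$ with $\|f_{1}\|_{L^{p}(\Omega)}\le C_{p}\|f\|_{L^{\infty}(\omega)}$ for every finite $p$, the global Calder\'on--Zygmund estimate on the smooth domain $\Omega$ gives $\|u_{1}\|_{W^{2,p}(\Omega)}\le C_{p}\|f\|_{L^{\infty}(\omega)}$; taking $p$ sufficiently large and applying Morrey's embedding $W^{2,p}(\Omega)\hookrightarrow C^{1,\tilde{\alpha}}(\overline{\Omega})$ yields $\|\nabla u_{1}\|_{C^{0,\tilde{\alpha}}(\overline{\Omega})}\le C\|f\|_{L^{\infty}(\omega)}$. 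As for $u_{2}$, the source $f_{2}$ vanishes identically on $\omega\cap\Omega$, so $u_{2}$ is harmonic there and vanishes on $\partial\Omega$; moreover $u_{2}=u-u_{1}$, whence $\|u_{2}\|_{L^{1}(\omega\cap\Omega)}\le\|u\|_{L^{1}(\Omega)}+\|u_{1}\|_{L^{1}(\Omega)}\le C\big(\|f\|_{L^{1}(\Omega)}+\|f\|_{L^{\infty}(\omega)}\big)$. Interior estimates for harmonic functions, together with boundary Schauder estimates on the smooth piece $\partial\Omega$ with zero boundary data, then bound $\|\nabla u_{2}\|_{C^{0,\tilde{\alpha}}(\omega^{\prime})}$ by $C\|u_{2}\|_{L^{1}(\omega\cap\Omega)}$; here it is essential that $\omega^{\prime}$ is a \emph{strict} subdomain of $\omega$, so that $\overline{\omega^{\prime}}$ stays at positive distance from the part of $\partial\omega$ lying inside $\Omega$, across which $u_{2}$ ceases to be harmonic. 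Adding the bounds for $u_{1}$ and $u_{2}$ gives $\|\nabla u\|_{C^{0,\tilde{\alpha}}(\omega^{\prime})}\le C\big(\|f\|_{L^{1}(\Omega)}+\|f\|_{L^{\infty}(\omega)}\big)$, which together with the first step is \eqref{regu}.

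The only delicate point is the geometric bookkeeping in the last step: one must arrange the cutoff and the intermediate domains so that $\omega^{\prime}$ is screened off from the interior boundary $\partial\omega\cap\Omega$, while still being allowed to reach all the way to $\partial\Omega$ (which is exactly the configuration in which \eqref{regu} is invoked, to obtain $C^{1}$-control up to $\partial\Omega$ away from the concentration point). Granting this, the remainder is a routine assembly of classical linear elliptic theory: kernel bounds for the Green function, global $L^{p}$ and Schauder estimates on smooth domains, and interior estimates for harmonic functions.
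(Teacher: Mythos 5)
Your argument is sound, but note that the paper itself offers no proof here: Lemma~\ref{regular} is stated and simply attributed to \cite{HANZCHAO}, so there is no ``paper's route'' to compare against.  What you have supplied is a self-contained justification, and both halves go through.  The $W^{1,q}$ bound via the Green-function kernel estimates and Minkowski's integral inequality is exactly the standard mechanism, and the threshold $q<n/(n-1)$ arises precisely as you say.  The boundary $C^{1,\tilde\alpha}$ bound via the splitting $f=f\chi_\omega+f\chi_{\Omega\setminus\omega}$ is also correct; the only place where your sketch is loose is the passage from $\|u_2\|_{L^1(\omega\cap\Omega)}$ to $\|\nabla u_2\|_{C^{0,\tilde\alpha}(\omega')}$ for the harmonic piece $u_2$, which really needs an iteration (local boundary $W^{2,p}$ regularity with zero source, bootstrapped from $L^q$ up to $L^\infty$, then Schauder) rather than a one-shot appeal to ``interior estimates plus boundary Schauder''.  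A slightly more economical route for $u_2$ that avoids this bootstrap altogether is to estimate directly from the representation $u_2(x)=\int_{\Omega\setminus\omega}G(x,y)f(y)\,dy$: for $x\in\omega'$ and $y\in\Omega\setminus\omega$ one has $|x-y|\ge\operatorname{dist}(\overline{\omega'},\Omega\setminus\omega)>0$, and $G$ together with its $x$-derivatives up to second order is bounded uniformly on such off-diagonal pairs (up to the smooth boundary), giving $\|\nabla u_2\|_{C^{0,\tilde\alpha}(\omega')}\le C\|f\|_{L^1(\Omega)}$ at once.  Either way the estimate \eqref{regu} follows, and your identification of the role of the strict inclusion $\overline{\omega'}\subset\omega$ is exactly the right bookkeeping.
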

Now we are ready to prove Theorem \ref{Figalli-1}.	
\begin{proof}[\textbf{Proof of Theorem \ref{Figalli-1}}]
First of all, set $p_{\epsilon}^{(1)}=(p-\epsilon),~ p_{\epsilon}^{(2)}=(p-1-\epsilon)$, then by the variational
characterization of the eigenvalue $\lambda_{1,\epsilon}$, we find
\begin{equation*}
\begin{split}
			\lambda_{1,\epsilon}&=\min\limits_{v\in H_{0}^1(\Omega)\setminus\{0\}}\frac{\int_{\Omega}|\nabla f(x)|^2dx}{p_{\epsilon}^{(1)}\int_{\Omega}\Big(|x|^{-(n-2)} \ast \big(u_{\epsilon}^{p-1-\epsilon}f\big)\Big)u_{\epsilon}^{p-1-\epsilon}fdx
		+p_{\varepsilon}^{(2)}\int_{\Omega}\Big(|x|^{-(n-2)} \ast u_{\epsilon}^{p-\epsilon}\Big)u_{\epsilon}^{p-2-\epsilon}f^2dx}.
\end{split}
		\end{equation*}
Choosing $f=u_{\epsilon}$, due to $\tilde{u}_{\epsilon}\rightarrow W[0,1]$ in $H_{0}^1{\Omega}$ as $\epsilon\rightarrow0$, then we get
\begin{equation*}
\begin{split}
			\lambda_{1,\epsilon}&\leq\frac{\int_{\Omega}|\nabla u_{\epsilon}(x)|^2dx}{\big(p_{\epsilon}^{(1)}+p_{\varepsilon}^{(2)}\big)\int_{\Omega}\Big(|x|^{-(n-2)} \ast u_{\epsilon}^{p-\epsilon}\Big)u_{\epsilon}^{p-\epsilon}(x)dx}\\&
=\frac{\int_{\mathbb{R}^n}|\nabla W[0,1](x)|^2dx+o(1)}{\big(2p-1\big)\int_{\mathbb{R}^n}\int_{\mathbb{R}^n}\frac{W^p[0,1](x)W^p[0,1](y)}{|x-y|^{n-2}}dxdy+o(1)}
\end{split}
		\end{equation*}
as $\epsilon\rightarrow0$. This implies that
$$\limsup\limits_{\epsilon\rightarrow0}\lambda_{1,\epsilon}\leq\frac{1}{2p-1}.$$
Then up to a subsequence we have $\lambda_{1,\epsilon}\rightarrow\lambda_1\in[0,(2p-1)^{-1}]$ for $\epsilon$ sufficiently small.
Moreover, similarly to the argument in Lemma \ref{limitlama}, up to a subsequence, there exists a function $f_0$, such that we have $\tilde{v}_{1,\epsilon}\rightarrow f_0$ in $C_{loc}^{1}(\mathbb{R}^n)$, where $f_0$ satisfies
\begin{equation}\label{e-00-1}
\left\lbrace
\begin{aligned}
    &-\Delta f_{0}=p\lambda_1C_N\Big(|x|^{-(n-2)} \ast \big(W^{p-1}[0,1]f_{0}\big)\Big)W^{p-1}[0,1]\\&
		\quad\quad~~+(p-1)\lambda_1C_N\Big(|x|^{-(n-2)} \ast W^{p}[0,1]\Big)W^{p-2}[0,1]f_{0} \quad \mbox{in}\quad \mathbb{R}^n.
   \end{aligned}
\right.
\end{equation}
It follows from Proposition \ref{propep} that
$$\lambda_1=(2p-1)^{-1}\quad\mbox{and}\quad f_0=W[0,1](x),$$
and the result \eqref{vie-1-1} follows.

Now we will show that $\lambda_{1,\epsilon}$ is simple. Then we may assume by contradiction that there exist at least two eigenfunctions $v_{1,\epsilon}^{(1)}$ and $v_{1,\epsilon}^{(2)}$ corresponding to $\lambda_{1,\epsilon}$ orthogonal in the space $H_{0}^{1}(\Omega)$, and by \eqref{vie-1-1} and \eqref{decay1}, as $\epsilon$ small enough we deduce that
\begin{equation*}
	\begin{split}	(p-\epsilon)\int_{\Omega}\int_{\Omega}&\frac{u_{\epsilon}^{p-1-\epsilon}(y){v_{1,\epsilon}}^{(1)}(y)u_{\epsilon}^{p-1-\epsilon}(x)v_{1,\epsilon}^{(2)}(x)}{|x-y|^{n-2}}dxdy \\&+(p-1-\epsilon)\int_{\Omega}\int_{\Omega}\frac{u_{\varepsilon}^{p-\epsilon}(y)u_{\epsilon}^{p-2-\epsilon}(x)v_{1,\epsilon}^{(1)}(x)v_{1,\epsilon}^{(2)}(x)}{|x-y|^{n-2}} dxdy=0\\&
\Rightarrow(2p-1)\int_{\mathbb{R}^n}\int_{\mathbb{R}^n}\frac{W^p[0,1](x)W^p[0,1](y)}{|x-y|^{n-2}}dxdy=0,
\end{split}
\end{equation*}
a contradiction for \eqref{p1-00}.

Furthermore, we have
We have
\begin{equation}\label{laplacian}
\begin{split}
-\Delta\big(\big\|u_{\epsilon}\big\|^2_{\infty}v_{1,\epsilon}\big)=&\lambda_{1,\epsilon}C_N\big\|u_{\epsilon}\big\|^2_{\infty}\Big[(p-\epsilon)\big(|x|^{-{(n-2)}}\ast u_\epsilon^{p-1-\epsilon}v_{1,\epsilon}\big)u_\epsilon^{p-1-\epsilon}\\&+(p-1-\epsilon)\big(|x|^{-{(n-2)}}\ast u_\epsilon^{p-\epsilon}\big)u_\epsilon^{p-2-\epsilon}v_{1,\epsilon}\Big]:=C_N\lambda_{1,\epsilon}\big\|u_{\epsilon}\big\|^2_{\infty}f_{\epsilon}\quad\mbox{in}\quad \Omega.
\end{split}
\end{equation}
We integrate the right-hand side of \eqref{laplacian}
$$\int_{\Omega}\int_{\Omega}\big\|u_{\epsilon}\big\|^2_{\infty}\frac{u_\epsilon^{p-1-\epsilon}(y)v_{1,\epsilon}(y)u_\epsilon^{p-1-\epsilon}(x)}{|x-y^{n-2}} dxdy=\frac{\mu_{\epsilon}^{\frac{4(n+2)}{4-(n-2)\epsilon}}}{\mu_{\epsilon}^{n+2}}\mu_{\epsilon}^{-\frac{4(n-2)\epsilon}{4-(n-2)\epsilon}}\int_{\Omega_{\epsilon}}\int_{\Omega_{\epsilon}}\frac{\tilde{u}_\epsilon^{p-1-\epsilon}(y)\tilde{v}_{1,\epsilon}(y)\tilde{u}_\epsilon^{p-1-\epsilon}(x)}{|x-y|^{n-2}} dxdy,$$
and
$$\int_{\Omega}\int_{\Omega}\big\|u_{\epsilon}\big\|^2_{\infty}\frac{u_\epsilon^{p-\epsilon}(y)u_\epsilon^{p-1-\epsilon}(x)v_{1,\epsilon}(x)}{|x-y|^{n-2}} dxdy=\frac{\mu_{\epsilon}^{\frac{4(n+2)}{4-(n-2)\epsilon}}}{\mu_{\epsilon}^{n+2}}\mu_{\epsilon}^{-\frac{4(n-2)\epsilon}{4-(n-2)\epsilon}}\int_{\Omega_{\epsilon}}\int_{\Omega_{\epsilon}}\frac{\tilde{u}_\epsilon^{p-\epsilon}(y)\tilde{v}_{1,\epsilon}(x)\tilde{u}_\epsilon^{p-2-\epsilon}(x)}{|x-y|^{n-2}} dxdy.$$
Therefore, combining \eqref{vie-1-1}, \eqref{decay1} and \eqref{day2} by dominated convergence,
we get
\begin{equation*}
\begin{split}
\lim\limits_{\epsilon\rightarrow0^{+}}\int_{\Omega}\lambda_{1,\epsilon}\big\|u_{\epsilon}\big\|^2_{\infty}C_Nf_{\epsilon}&=C_N\int_{\mathbb{R}^n}\int_{\mathbb{R}^n}\frac{W^{p}(y)W^{p-1}(x)}{|x-y|^{n-2}}dxdy\\&
=\Gamma_{n}C_N\big\|W\big\|^{p}_{L^{p}(\mathbb{R}^n)}<\infty.
\end{split}
\end{equation*}
Also using the bound \eqref{decay1}, \eqref{day2} and
$$
\int_{\overline{\Omega}\setminus\{x_0\}}\frac{1}{|x-y|^{n-2}}\frac{1}{|y-x_0|^{(n-2)(p-\epsilon)}}dy<\infty,
$$
we find
\begin{equation*}
\begin{split}
\lambda_{1,\epsilon}\big\|u_{\epsilon}\big\|^2_{\infty}C_Nf_{\epsilon}\leq
M\mu_{\epsilon}^{\frac{2(n-2)(p-1-\epsilon)}{4-(n-2)\epsilon}[\frac{n-2}{2}(2p-1-2\epsilon)-2]}\frac{1}{|x-x_{0}|^{(n-2)(p-1-\epsilon)}}
\end{split}
\end{equation*}
for $x\neq x_0$ and some $M>0$. Then $\lambda_{1,\epsilon}\|u_{\epsilon}\|^2_{\infty}f_{\epsilon}\rightarrow0$ for $x\neq x_0$ by \eqref{bepus}.
From here and \eqref{laplacian} we deduce that
$$
-\Delta(\big\|u_{\epsilon}\big\|^2_{\infty}v_{1,\epsilon})\rightarrow\Gamma_{n}C_N\big\|W\big\|^{p}_{L^{p}(\mathbb{R}^n)}\delta_{x=x_0}
$$
in the sense of distributions in $\Omega$ as $\epsilon\rightarrow0$. Let $\omega$ be any neighborhood of $\partial\Omega$ not containing $x_0$. By  Lemma \ref{regular}, we obtain
\begin{equation*}
\begin{split}
\big\|\|u_{\epsilon}\|^2_{\infty}v_{1,\epsilon}\big\|_{C^{1,\tilde{\alpha}}(\omega^{\prime})}&\leq C\Big[\big\|\lambda_{1,\epsilon}\|u_{\epsilon}\|^2_{\infty}C_Nf_{\epsilon}\big\|_{L^1(\Omega)}+\big\|\lambda_{1,\epsilon}\|u_{\epsilon}\|^2_{\infty}C_Nf_{\epsilon}\big\|_{L^\infty(\omega)}\Big].
\end{split}
\end{equation*}
Consequently
$$ \big\|u_{\epsilon}\big\|^2_{\infty}v_{1,\epsilon}\rightarrow\Gamma_{n}C_N\big\|W\big\|^{p}_{L^{p}(\mathbb{R}^n)}G(x,x_0)
\hspace{2mm}\mbox{in}\hspace{2mm}C^{1,\tilde{\alpha}}(\omega)\hspace{2mm}\mbox{as}\hspace{2mm}\epsilon\rightarrow0
$$
for any neighborhood $\omega$ of $\partial\Omega$, not containing $x_0$ and we conclude the proof.
\end{proof}

	\section{Estimates for the eigenvalues $\lambda_{i,\epsilon}$ for $i=2,\dots,n+1$}\label{section3-1}
This section aims to establish the estimates of the eigenvalues $\lambda_{i,\epsilon}$ for $i=2,\dots,n+1$. But first we need some technical results.
Since $x_\epsilon\to x_0$ by Theorem A, exists $\varrho >0$ s.t. $B_{2\varrho}(x_\epsilon) \subset \Omega$.  We can define
$$\psi(x)=\tilde{\psi}(x-x_{\epsilon}),$$
where $\tilde{\psi}$ is a function in $C_{0}^{\infty}(B_{2\varrho}(x_\epsilon))$ such that $\tilde{\psi}\equiv1$ in $B_{\varrho}(x_\epsilon)$, $0\leq\psi\leq1$ in $B_{2\varrho}(x_{\varepsilon})$, and
\begin{equation}\label{varepsilon}
\xi_{j,\epsilon}(x)=\psi(x)\frac{\partial u_{\epsilon}}{\partial x_j}(x),\quad j=1,\cdots,n,
\end{equation}
\begin{equation}\label{varep}
\xi_{n+1,\epsilon}(x)=\psi(x)\left((x-x_{\epsilon})\nabla u_{\epsilon}+\frac{2}{p-1-\epsilon}u_{\epsilon}\right).
\end{equation}

\begin{lem}\label{nabla}
It holds
\begin{equation}\label{dend}
			\begin{split}
-\Delta\left((x-z)\cdot\nabla u_{\varepsilon}+\frac{2}{p-1-\varepsilon}u_{\varepsilon}\right)&=(p-\varepsilon)C_N\Big(\int_{\Omega}\frac{u^{p-1-\varepsilon}\big((y-z)\cdot\nabla u_{\varepsilon}+\frac{2}{p-1-\varepsilon}u_{\varepsilon}\big)}{|x-y|^{n-2}}dy \Big)u_{\varepsilon}^{p-1-\varepsilon}(x)\\ &+(p-1-\varepsilon)C_N\Big(\int_{\Omega}\frac{u^{p-\varepsilon}}{|x-y|^{n-2}}dy \Big)u_{\varepsilon}^{p-2-\varepsilon}\big((x-z)\cdot\nabla u_{\varepsilon}+\frac{2}{p-1-\varepsilon}u_{\varepsilon}\big)
\end{split}
		\end{equation}
for any $z\in\mathbb{R}^n$.
\end{lem}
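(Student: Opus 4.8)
The plan is to verify the identity by direct differentiation, exploiting the fact that $u_\epsilon$ solves \eqref{ele-1.1} and that the generator of dilations is compatible with the scaling structure of the nonlocal equation. Write $N=n-2$ for the order of the Riesz kernel, $q=p-\epsilon$ and $q'=p-1-\epsilon$, and set $\mathcal{L}_z u := (x-z)\cdot\nabla u + \frac{2}{p-1-\epsilon}u$. After translating so that $z=0$ (the final statement for general $z$ follows by replacing $u_\epsilon(x)$ by $u_\epsilon(x+z)$, which still solves the equation on the translated domain, so it suffices to treat $z=0$ and then translate back), I would differentiate the equation
\[
-\Delta u_\epsilon = C_N\bigl(|x|^{-N}\ast u_\epsilon^{q}\bigr)u_\epsilon^{q'}
\]
along the vector field $x\cdot\nabla$. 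The key computational facts are: first, $\Delta$ commutes with $x\cdot\nabla$ up to a zeroth-order term, namely $\Delta(x\cdot\nabla u) = x\cdot\nabla(\Delta u) + 2\Delta u$; and second, the convolution with $|x|^{-N}$ transforms under $x\cdot\nabla$ according to $x\cdot\nabla\bigl(|x|^{-N}\ast g\bigr) = |x|^{-N}\ast(x\cdot\nabla g) - N\,|x|^{-N}\ast g$, which one checks by differentiating under the integral sign and integrating by parts, using the homogeneity $x\cdot\nabla(|x|^{-N}) = -N|x|^{-N}$.

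The main steps in order: (1) Apply $x\cdot\nabla$ to both sides of the equation. On the left one gets $-x\cdot\nabla(\Delta u_\epsilon) = -\Delta(x\cdot\nabla u_\epsilon) + 2\Delta u_\epsilon = -\Delta(x\cdot\nabla u_\epsilon) - 2C_N(|x|^{-N}\ast u_\epsilon^{q})u_\epsilon^{q'}$. (2) On the right, apply the product rule: $x\cdot\nabla\bigl[(|x|^{-N}\ast u_\epsilon^{q})u_\epsilon^{q'}\bigr]$ splits into a term where the derivative hits the potential and a term where it hits $u_\epsilon^{q'}$. Using the convolution identity from the previous paragraph with $g=u_\epsilon^{q}$ and $x\cdot\nabla(u_\epsilon^{q}) = q\,u_\epsilon^{q-1}(x\cdot\nabla u_\epsilon)$, and $x\cdot\nabla(u_\epsilon^{q'}) = q'\,u_\epsilon^{q'-1}(x\cdot\nabla u_\epsilon)$, collect the $-N$ terms and the algebraic prefactors. (3) Add the appropriate multiple of the original equation to convert every occurrence of $x\cdot\nabla u_\epsilon$ into $\mathcal{L}_0 u_\epsilon = x\cdot\nabla u_\epsilon + \frac{2}{p-1-\epsilon}u_\epsilon$; here the constant $\frac{2}{p-1-\epsilon}$ is forced precisely so that the zeroth-order terms (the $2\Delta u_\epsilon$ from step (1), the $-N$ terms from the convolution identity, and the cross terms) cancel. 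This is exactly the computation that shows $\mathcal{L}_0$ annihilates the linearized operator at the bubble, now performed at $u_\epsilon$ instead of $W$; the numerology works because $q+q' = 2p-1-2\epsilon$ and $\frac{n-2}{2}(2p-1) = n+2 = (n-2)p$, so the scaling exponent bookkeeping closes. (4) Finally, substitute back $-C_N(|x|^{-N}\ast u_\epsilon^{q})u_\epsilon^{q'} = \Delta u_\epsilon$ wherever convenient and translate $x\mapsto x-z$ to recover \eqref{dend} as stated.

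I expect the main obstacle to be the bookkeeping of constants in step (3) — making sure that when one collects the term coming from $2\Delta u_\epsilon$ in $\Delta(x\cdot\nabla u_\epsilon)$, the $-N$ term from differentiating the Riesz kernel, and the contributions $q$ and $q'$ from differentiating the two powers of $u_\epsilon$, everything regroups cleanly into the claimed right-hand side with the factor $\frac{2}{p-1-\epsilon}$ appearing in both convolution arguments. A secondary technical point is justifying the differentiation under the integral sign and the integration by parts in the convolution identity: this is legitimate because, by the decay estimate \eqref{decay1} (equivalently \eqref{dus}), $u_\epsilon^{q}$ decays like $|x|^{-(n-2)q}$ with $(n-2)q > n$ for $\epsilon$ small, so $|x|^{-N}\ast u_\epsilon^{q}$ and its first derivatives are finite and the boundary terms at infinity vanish; near the singularity of the kernel the integrals are absolutely convergent since $N=n-2<n$. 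Once these two points are handled, the identity \eqref{dend} follows by assembling the pieces, and the general $z$ case is immediate by translation invariance of $\Delta$ and of the convolution.
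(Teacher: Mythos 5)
Your overall strategy is the same as the paper's (apply $(x-z)\cdot\nabla$ to the equation, use the commutator $\Delta(x\cdot\nabla u)=x\cdot\nabla\Delta u+2\Delta u$, rewrite the differentiated convolution by homogeneity plus integration by parts, then collect terms), so the plan is the right one. However, your stated convolution identity is wrong and would derail the bookkeeping if carried out. Writing $K=|\cdot|^{-N}$ with $N=n-2$, the correct identity on $\Omega$ (using $x=(x-y)+y$, Euler's relation $(x-y)\cdot\nabla_x K(x-y)=-NK(x-y)$, and an integration by parts in $y$ with $g=0$ on $\partial\Omega$ so that the boundary term dies) is
\begin{equation*}
x\cdot\nabla_x\big(K\ast g\big)(x)=K\ast\big(y\cdot\nabla g\big)(x)+(n-N)\,(K\ast g)(x)=K\ast\big(y\cdot\nabla g\big)(x)+2\,(K\ast g)(x),
\end{equation*}
because the integration by parts picks up $\operatorname{div}_y(y)=n$ in addition to the $-N$ from homogeneity. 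You claim the extra term is $-N\,K\ast g$, i.e.\ $-(n-2)$ instead of $+2$; these differ for every $n\ge 3$. With the correct coefficient $+2$, the zeroth-order terms assemble to $(4+c)(K\ast u^{p-\epsilon})u^{p-1-\epsilon}$ with $c=\tfrac{2}{p-1-\epsilon}$, and the target requires $(q+q')c$ where $q+q'=2p-1-2\epsilon$; since $q+q'-1=2(p-1-\epsilon)$, one gets $c=\tfrac{4}{2(p-1-\epsilon)}=\tfrac{2}{p-1-\epsilon}$ and the identity closes. With your $-N$, the constant would instead come out as $\tfrac{4-n}{2(p-1-\epsilon)}$, which is not the claimed $\tfrac{2}{p-1-\epsilon}$ for any $n\ge3$. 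Relatedly, your check ``$\tfrac{n-2}{2}(2p-1)=n+2$'' is false: $\tfrac{n-2}{2}(2p-1)=\tfrac{n+6}{2}\neq n+2$ for $n\neq 2$. The paper avoids the scaling shortcut entirely and instead performs the splitting $x_i-z_i=(y_i-z_i)+(x_i-y_i)$ inside the differentiated kernel and integrates by parts term-by-term, which automatically produces the correct constants (see \eqref{comput}--\eqref{yield} and the line after). Once you replace your convolution identity by the correct one (and drop the spurious numerological claim), the rest of your outline is sound and essentially reproduces the paper's argument.
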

\begin{proof}
By a straightforward computation, for any $y\in\mathbb{R}^n$ and using that $u_\epsilon$ is a solution of \eqref{ele-1.1},  we get
\begin{equation}\label{comput}
\begin{split}
-\Delta\left((x-z)\cdot\nabla u_{\varepsilon}+\frac{2}{p-1-\varepsilon}u_{\varepsilon}\right)&=\frac{2(p-\varepsilon)}{p-1-\varepsilon}(-\Delta u_{\varepsilon})-\sum_{j=1}^{n}\sum_{i\neq j}^{n}(x_i-z_j)\frac{\partial^3u_{\varepsilon}}{\partial x_i\partial^2x_j}\\&
=\frac{2(p-\varepsilon)}{p-1-\varepsilon}C_N\left(\int_{\Omega}\frac{u_{\varepsilon}^{p-\varepsilon}(y)}{|x-y|^{n-2}}dy \right)u_{\varepsilon}^{p-1-\varepsilon}-\sum_{j=1}^{n}\sum_{i\neq j}^{n}(x_i-z_j)\frac{\partial^3u_{\varepsilon}}{\partial x_i\partial^2x_j}.
\end{split}
\end{equation}
Thus, since
\begin{equation}\label{yield}
\begin{split}
-\sum_{j=1}^{n}\sum_{i\neq j}^{n}(x_i-z_j)\frac{\partial^3u_{\varepsilon}}{\partial x_i\partial^2x_j}&
=C_N\Big(\int_{\Omega}\sum_{i=1}^{n}(x_i-z_i)\frac{\partial}{\partial x_{i}}\big(\frac{1}{|x-y|^{n-2}}\big)u_{\varepsilon}^{p-\varepsilon}(y)dy \Big)u_{\varepsilon}^{p-1-\varepsilon}(x)\\&\hspace{3mm}+(p-1-\varepsilon)C_Nu_{\varepsilon}^{p-2-\varepsilon}(x)(x-z)\cdot\nabla u_{\varepsilon}\left(\int_{\Omega}\frac{u_{\varepsilon}^{p-\varepsilon}(y)}{|x-y|^{n-2}}dy\right),
\end{split}
\end{equation}
and integration by parts yields,
\begin{equation*}
\begin{split}
&\Big(\int_{\Omega}\sum_{i=1}^{n}(x_i-z_i)\frac{\partial}{\partial x_{i}}\big(\frac{1}{|x-y|^{n-2}}\big)u_{\epsilon}^{p-\epsilon}(y)dy \Big)u_{\epsilon}^{p-1-\epsilon}\\
=&\Big(\int_{\Omega}\sum_{i=1}^{n}\frac{y_i-z_i}{|x-y|^{n-2}}\frac{\partial (u_{\epsilon}^{p-\epsilon}(y))}{\partial y_{i}}dy \Big)u_{\epsilon}^{p-1-\epsilon}+\Big(\int_{\Omega}\sum_{i=1}^{n}\frac{x_i-y_i}{|x-y|^{n-2}}\frac{\partial (u_{\epsilon}^{p-\epsilon}(y))}{\partial y_{i}}dy \Big)u_{\epsilon}^{p-1-\epsilon}\\
=&(p-\varepsilon)\Big(\int_{\Omega}\frac{(y-z)\cdot\nabla u_{\varepsilon}u_{\varepsilon}^{p-1-\varepsilon}(y)}{|x-y|^{n-2}}dy \Big)u_{\varepsilon}^{p-1-\varepsilon}-\Big(\int_{\Omega}\sum_{i=1}^{n}\frac{\partial}{\partial y_{i}}\left(\frac{x_i-y_i}{|x-y|^{n-2}}\right)u_{\varepsilon}^{p-\varepsilon}(y)dy \Big)u_{\varepsilon}^{p-1-\varepsilon}\\
=&(p-\epsilon)\Big(\int_{\Omega}\frac{u_{\epsilon}^{p-1-\epsilon}\big((y-z)\cdot\nabla u_{\epsilon}+\frac{2}{p-1-\epsilon}u_{\epsilon}\big)}{|x-y|^{n-2}}dy \Big)u_{\epsilon}^{p-1-\epsilon}+\left(2-\frac{2(p-\epsilon)}{p-1-\epsilon}\right)\Big(\int_{\Omega} \frac{u_{\epsilon}^{p-\epsilon}(y)}{|x-y|^{n-2}}dy\Big)u_{\epsilon}^{p-1-\epsilon}.
\end{split}
\end{equation*}
Combining this identity with \eqref{comput}-\eqref{yield} yields the conclusion since
$$\left(2-\frac{2(p-\epsilon)}{p-1-\epsilon}\right)=-\left(2-\frac{2(p-2-\epsilon)}{p-1-\epsilon}\right).$$
\end{proof}	

\begin{lem}
The functions
$$u_{\epsilon},~\xi_{1,\epsilon},\cdots,\xi_{n+1,\epsilon}$$
are linearly independent for $\epsilon$ sufficiently small.
\end{lem}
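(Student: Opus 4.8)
\noindent\emph{Proof strategy.}
The plan is to argue directly, exploiting only that $x_\epsilon$ is an interior maximum point of $u_\epsilon$ (by \eqref{miu}, so $u_\epsilon(x_\epsilon)=\|u_\epsilon\|_\infty\neq0$ and $\nabla u_\epsilon(x_\epsilon)=0$) together with the concentration profile of $u_\epsilon$. Fix $\epsilon$ small and suppose
\[
c_0u_\epsilon+\sum_{j=1}^{n+1}c_j\,\xi_{j,\epsilon}\equiv0\qquad\text{in }\Omega .
\]
On $B_{\varrho}(x_\epsilon)$, where $\psi\equiv1$, this reads $c_0u_\epsilon+\sum_{j=1}^{n}c_j\partial_{x_j}u_\epsilon+c_{n+1}\big((x-x_\epsilon)\cdot\nabla u_\epsilon+\tfrac{2}{p-1-\epsilon}u_\epsilon\big)\equiv0$, and I would read off the relations among the $c_j$'s by evaluating this identity and its first derivatives at the peak $x_\epsilon$.

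\noindent\textbf{Step 1.} Evaluate at $x=x_\epsilon$: since $\nabla u_\epsilon(x_\epsilon)=0$, all derivative terms drop and, dividing by $\|u_\epsilon\|_\infty$, one gets $c_0=-\tfrac{2}{p-1-\epsilon}\,c_{n+1}$.

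\noindent\textbf{Step 2.} Differentiate the identity in $x_k$ and evaluate at $x_\epsilon$: using $\nabla u_\epsilon(x_\epsilon)=0$ once more, every first-order contribution vanishes (including $\partial_{x_k}\!\big((x-x_\epsilon)\cdot\nabla u_\epsilon\big)(x_\epsilon)=\partial_{x_k}u_\epsilon(x_\epsilon)=0$), leaving $\sum_{j=1}^{n}c_j\,\partial_{x_k}\partial_{x_j}u_\epsilon(x_\epsilon)=0$ for all $k$; that is, $(c_1,\dots,c_n)$ lies in the kernel of $D^2u_\epsilon(x_\epsilon)$. Here I invoke the rescaling \eqref{miu-1}: the known convergence $\tilde u_\epsilon\to W[0,1]$, the decay bound \eqref{decay1}, and interior Schauder estimates for the (smooth) nonlocal right-hand side upgrade this to $C^2_{\mathrm{loc}}(\mathbb R^n)$; since $D^2W[0,1](0)$ is a negative multiple of the identity, $D^2u_\epsilon(x_\epsilon)=\|u_\epsilon\|_\infty\,\mu_\epsilon^{2}\,D^2\tilde u_\epsilon(0)$ is invertible for $\epsilon$ small, forcing $c_1=\dots=c_n=0$.

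\noindent\textbf{Step 3.} With $c_1=\dots=c_n=0$ and $c_0=-\tfrac{2}{p-1-\epsilon}c_{n+1}$, the $u_\epsilon$–terms cancel and the identity on $B_\varrho(x_\epsilon)$ collapses to $c_{n+1}\,(x-x_\epsilon)\cdot\nabla u_\epsilon\equiv0$. If $c_{n+1}\neq0$, then $(x-x_\epsilon)\cdot\nabla u_\epsilon\equiv0$ on $B_\varrho(x_\epsilon)$, so $u_\epsilon$ is constant along every ray issuing from $x_\epsilon$ and hence $u_\epsilon\equiv u_\epsilon(x_\epsilon)$ on the ball; but there $-\Delta u_\epsilon=(|x|^{-(n-2)}\ast u_\epsilon^{p-\epsilon})u_\epsilon^{p-1-\epsilon}>0$ (the potential $|x|^{-(n-2)}\ast u_\epsilon^{p-\epsilon}$ is positive since $u_\epsilon>0$), a contradiction — alternatively, by unique continuation $u_\epsilon$ would have to be globally constant, contradicting $u_\epsilon|_{\partial\Omega}=0$. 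Thus $c_{n+1}=0$, hence $c_0=0$, and all coefficients vanish. This proves that $u_\epsilon,\xi_{1,\epsilon},\dots,\xi_{n+1,\epsilon}$ are linearly independent for $\epsilon$ sufficiently small.

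\noindent The only genuinely delicate point is the nondegeneracy of the Hessian of $u_\epsilon$ at its peak used in Step 2: one must ensure the convergence $\tilde u_\epsilon\to W[0,1]$ holds in a topology strong enough ($C^2$ near the origin) for $D^2\tilde u_\epsilon(0)$ to be close to $D^2W[0,1](0)$, which is exactly where the uniform decay estimate \eqref{decay1} is fed into interior elliptic estimates for the nonlocal nonlinearity. Everything else reduces to elementary linear algebra and the strong maximum principle.
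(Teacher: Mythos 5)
Your proof is correct, and it genuinely departs from the paper's route. The paper first argues by contradiction that the coefficient of $u_\epsilon$ must vanish, plugging the linearized equations for $\xi_{j,\epsilon}$ and the equation for $u_\epsilon$ into one another to extract the contradiction $2(p-1-\epsilon)=0$; with that coefficient killed, it then evaluates at the peak $x_\epsilon$ (where $\nabla u_\epsilon=0$) to force $A_{n+1,\epsilon}=0$, and finally passes to the rescaled limit to exploit linear independence of $\partial_{x_1}W,\dots,\partial_{x_n}W$. You instead avoid the PDE manipulation entirely: you evaluate the identity and its gradient at $x_\epsilon$ to obtain, respectively, the linear relation $c_0=-\tfrac{2}{p-1-\epsilon}c_{n+1}$ and the condition $D^2u_\epsilon(x_\epsilon)(c_1,\dots,c_n)^T=0$, then use nondegeneracy of $D^2\tilde u_\epsilon(0)$ (valid since $D^2W[0,1](0)=-\tilde c_{n,n-2}(n-2)\,I_n$ and $\tilde u_\epsilon\to W[0,1]$ in $C^2_{loc}$) to kill $c_1,\dots,c_n$, and finally observe that after the $u_\epsilon$-terms cancel the residual identity $c_{n+1}(x-x_\epsilon)\cdot\nabla u_\epsilon\equiv0$ forces $c_{n+1}=0$. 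Both arguments ultimately hinge on the same input — the $C^2_{loc}$ convergence $\tilde u_\epsilon\to W[0,1]$, which the paper has already secured through uniform $C^{2,\alpha}$ bounds — but yours is localized entirely at the peak and reduces to elementary linear algebra plus a one-line maximum-principle observation. It also sidesteps Lemma \ref{nabla} and the auxiliary computation leading to $2(p-1-\epsilon)=0$. One minor caveat: the fallback appeal to ``unique continuation'' in Step 3 is not quite the right tool here, since the equation is nonlocal; the primary argument you give — that $(x-x_\epsilon)\cdot\nabla u_\epsilon\equiv0$ on a ball makes $u_\epsilon$ constant there, contradicting $-\Delta u_\epsilon=(|x|^{-(n-2)}\ast u_\epsilon^{p-\epsilon})u_\epsilon^{p-1-\epsilon}>0$ — is the one to keep. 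In sum, your proof is valid and arguably cleaner than the paper's; just drop the unique-continuation aside.
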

\begin{proof}
We may assume by contradiction that there exist numbers $A_{0,\epsilon}, A_{1,\varepsilon}\cdots,A_{n+1,\varepsilon}$ such that
\begin{equation}\label{A01}
A_{0,\epsilon}u_{\epsilon}+\sum_{k=1}^nA_{k,\epsilon}\xi_{k,\epsilon}+A_{n+1,\epsilon}\xi_{n+1,\epsilon}\equiv0\quad\mbox{in}\hspace{2mm}\Omega,
\end{equation}
and $\sum_{k=1}^{n+1}A^2_{k,\epsilon}\neq0$.
Without any loss of generality, we may assume that for any small $\varepsilon$, there holds
\begin{equation}\label{A2}
\sum_{k=1}^{n+1}A^2_{k,\varepsilon}=1.
\end{equation}
We aim to show that
 \begin{equation}\label{A3}
A_{0,\varepsilon}=0
\end{equation}
for any $\epsilon>0$.
Indeed, assume that \eqref{A3} does not hold true,  then by \eqref{A01} we have
\begin{equation}\label{A04}
u_{\epsilon}(x)=-\sum_{k=1}^{n}\frac{A_{k,\varepsilon}}{A_{0,\epsilon}}\xi_{k,\varepsilon}-\frac{A_{n+1,\epsilon}}{A_{0,\epsilon}}\xi_{n+1,\epsilon}=-\sum_{k=1}^{n+1}\frac{A_{k,\epsilon}}{A_{0,\epsilon}}\xi_{k,\epsilon}.
\end{equation}
Now, noticing that $\tilde{\psi}\equiv1$ in $B_{\varrho}(x_{\epsilon})$, and
$$\frac{\partial}{\partial y_j}\left(\frac{1}{|x-y|^{n-2}}\right)=-\frac{\partial}{\partial x_j}\left(\frac{1}{|x-y|^{n-2}}\right),$$
then we have
\begin{equation}\label{A5}
\begin{split}
-\Delta\xi_{k,\epsilon}&=(p-\epsilon)C_N\left(\int_{\Omega}\frac{u_{\epsilon}^{p-1-\epsilon}(y)\xi_{k,\epsilon}(y)}{|x-y|^{n-2}}dy \right)u_{\epsilon}^{p-1-\epsilon}(x)\\&+(p-1-\epsilon)C_N\left(\int_{\Omega}\frac{u_{\epsilon}^{p-\epsilon}(y)}{|x-y|^{n-2}}dy \right)u_{\epsilon}^{p-1-\epsilon}(x)\xi_{k,\epsilon}(x) \hspace{2mm}\mbox{for}\hspace{2mm}k=1,\cdots,n \hspace{2mm}\mbox{in the ball}\hspace{2mm} B_{\varrho}(x_{\epsilon}).
\end{split}
\end{equation}
On the other hand, due to Lemma \ref{nabla}, we deduce that
\begin{equation}\label{A6}
			\begin{split}
-\Delta\xi_{n+1,\epsilon}&=(p-\epsilon)C_N\Big(\int_{\Omega}\frac{u_{\epsilon}^{p-1-\epsilon}(y)\xi_{n+1,\epsilon}(y)}{|x-y|^{n-2}}dy \Big)u_{\epsilon}^{p-1-\epsilon}(x)\\ &\hspace{3mm}+(p-1-\epsilon)C_N\Big(\int_{\Omega}\frac{u_{\epsilon}^{p-1-\epsilon}}{|x-y|^{n-2}}dy \Big)u_{\epsilon}^{p-2-\epsilon}(x)\xi_{n+1,\epsilon}(x).
\end{split}
		\end{equation}
In conclusion, combining \eqref{A5}-\eqref{A6}, \eqref{ele-1.1} and \eqref{A04}  we obtain
\begin{equation*}
\begin{split}
\Big(\int_{\Omega} \frac{u_{\epsilon}^{p-\epsilon}(y)}{|x-y|^{n-2}}dy\Big)u_{\epsilon}^{p-2-\epsilon}\Big(-\sum_{k=1}^{n+1}\frac{A_{k,\epsilon}}{A_{0,\epsilon}}\xi_{k,\epsilon}\Big)&=(p-\epsilon)\Big(\int_{\Omega}\frac{u^{p-1-\epsilon}}{|x-y|^{n-2}}\Big(-\sum_{k=1}^{n+1}\frac{A_{k,\epsilon}}{A_{0,\epsilon}}\xi_{k,\epsilon}\Big)dy \Big)u_{\epsilon}^{p-1-\epsilon}(x)\\ &+(p-1-\epsilon)\Big(\int_{\Omega}\frac{u_{\epsilon}^{p-\epsilon}}{|x-y|^{n-2}}dy \Big)u_{\epsilon}^{p-2-\epsilon}\Big(-\sum_{k=1}^{n+1}\frac{A_{k,\epsilon}}{A_{0,\epsilon}}\xi_{k,\epsilon}\Big).
\end{split}
\end{equation*}
in the ball $B_{\varrho}(x_{\epsilon})$. This implies
$2(p-1-\epsilon)=0$, that is a contradiction. Hence  we have proved the \eqref{A3}.\\ \noindent
Now we can pass to show that $A_{n+1,\epsilon}=0$. Given $\epsilon>0$, by \eqref{A01}, \eqref{varepsilon}-\eqref{varep} and since  $x_{\varepsilon}$ is a maximum point of $u_{\epsilon}$, we have

\begin{equation}\label{alafa}
\frac{2A_{n+1,\epsilon}}{p-1-\epsilon}u_{\epsilon}(x_{\epsilon})=0,
\end{equation} and so the thesis follows from \eqref{alafa}. As a consequence we have
\begin{equation}\label{afak}
\begin{split}
\sum_{k=1}^nA_{k,\epsilon}\frac{\partial u_{\epsilon}}{\partial x_j}(x)=0
\hspace{2mm}\mbox{in the ball}\hspace{2mm} B_{\varrho}(x_{\epsilon}).
\end{split}
\end{equation}
We note that the authors of \cite{DAIQIN, DY19,GHPS19} independently classified the extremal functions of \eqref{mini-p} as the bubbles $W[\xi,\lambda]$ (as defined in \eqref{defU}). These bubbles are the positive solutions to \eqref{ele-1.1}. Given this, and considering that $\|\tilde{u}_{\epsilon}(x)\|_{\infty}$ is uniformly bounded by some constant $M$, elliptic regularity implies that
$$\|\tilde{u}_{\epsilon}(x)\|_{C^{2+\alpha}(\overline{\Omega})}\leq M\hspace{2mm}\mbox{with}\hspace{2mm}\alpha\in(0,1).$$
Therefore, it is not hard to see that
$\tilde{u}_{\epsilon}(x)\rightarrow W[0,1](x)$ in $C_{loc}^2(\mathbb{R}^n)$
by combining the elliptic interior estimates and where $W(x)$ are the only positive solutions of the equation \eqref{wx}.
It is immediate to see that
\begin{equation*}
\sum_{k=1}^nA_{k}\frac{\partial u_{\epsilon}}{\partial x_j}(x)=0
\hspace{2mm}\mbox{in}\hspace{2mm} \mathbb{R}^n,
\end{equation*}
where $A_{k}$ is the limit of $A_{k,\epsilon}$  with $\sum_{k=1}^{n+1}A^2_{k}=1.$ On the other hand, one can easily verify that $A_{k}=0$ since $\partial_{x_1}W(x),\cdots,\partial_{x_n}W(x)$ are linearly independent.  Whence we get the contradiction with the initial assumption $\sum_{k=1}^{n+1}A^2_{k}=1.$
\end{proof}

	\begin{lem}\label{baowenbei}
		For $i=2,\dots,n+1$, we have
		\begin{equation}\label{baowen1}
\lambda_{i,\epsilon}\leq1+O(\epsilon^{\frac{n}{n-2}}).
		\end{equation}
\end{lem}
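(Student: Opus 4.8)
The plan is to combine the Courant--Fischer min--max principle with the test functions $u_\epsilon,\xi_{1,\epsilon},\dots,\xi_{i-1,\epsilon}$ built above. Write $a(v,w):=\int_\Omega\nabla v\cdot\nabla w$ and let $b(v,w)$ be the symmetric bilinear form with $b(v,v)=\int_\Omega\mathcal G_\epsilon[v]\,v\,dx$, so that $\lambda_{i,\epsilon}=\min_{\dim V=i}\max_{0\neq v\in V}R_\epsilon(v)$ with $R_\epsilon:=a/b$; since the Riesz-type form $b$ is positive definite this is legitimate. By the previous lemma $V_i:=\mathrm{span}\{u_\epsilon,\xi_{1,\epsilon},\dots,\xi_{i-1,\epsilon}\}$ has dimension $i$, so it suffices to show $R_\epsilon(v)\le 1+O(\epsilon^{n/(n-2)})$ for every $v\in V_i\setminus\{0\}$. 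The argument rests on two facts. (i) Since $u_\epsilon$ solves \eqref{ele-1.1}, $\mathcal G_\epsilon[u_\epsilon]=(2p-1-2\epsilon)(-\Delta u_\epsilon)$, hence $b(u_\epsilon,\varphi)=(2p-1-2\epsilon)\,a(u_\epsilon,\varphi)$ for all $\varphi\in H^1_0(\Omega)$; in particular $R_\epsilon(u_\epsilon)=(2p-1-2\epsilon)^{-1}\to(2p-1)^{-1}<1$, and $b(\varphi,u_\epsilon)=0\Leftrightarrow a(\varphi,u_\epsilon)=0$. (ii) By Lemma \ref{nabla} and \eqref{A5}--\eqref{A6}, the $\xi_{k,\epsilon}$ solve the eigenvalue equation at eigenvalue $1$ inside $B_\varrho(x_\epsilon)$, the failure being commutator terms carrying $\nabla\psi$ or $\Delta\psi$ and hence supported in the fixed annulus $B_{2\varrho}(x_\epsilon)\setminus B_\varrho(x_\epsilon)$.

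Using (i), replace $\xi_{k,\epsilon}$ by $\hat\xi_{k,\epsilon}:=\xi_{k,\epsilon}-b(\xi_{k,\epsilon},u_\epsilon)b(u_\epsilon,u_\epsilon)^{-1}u_\epsilon$, which is both $a$- and $b$-orthogonal to $u_\epsilon$; since $a(u_\epsilon,\xi_{k,\epsilon})$ reduces, after an integration by parts, to an annulus integral of $|\nabla u_\epsilon|^2$ and hence is $O(\|u_\epsilon\|_\infty^{-2})$ by Theorem A(d) and the rescaling \eqref{miu-1}, the correction is negligible and $\widehat W_{i-1}:=\mathrm{span}\{\hat\xi_{1,\epsilon},\dots,\hat\xi_{i-1,\epsilon}\}$ is spanned by small perturbations of the $\xi_{k,\epsilon}$. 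For $v=tu_\epsilon+w$ with $w\in\widehat W_{i-1}$ the cross terms in $a(v,v)$ and $b(v,v)$ vanish, so $R_\epsilon(v)\le\max\{R_\epsilon(u_\epsilon),R_\epsilon(w)\}$, and since $R_\epsilon(u_\epsilon)<1$ for $\epsilon$ small it remains to estimate $R_\epsilon$ on $\widehat W_{i-1}$. Here the decisive step is the \emph{uniform comparison} $a(\hat\xi_{k,\epsilon},\hat\xi_{\ell,\epsilon})-b(\hat\xi_{k,\epsilon},\hat\xi_{\ell,\epsilon})=O(\|u_\epsilon\|_\infty^{-2})$: integrating by parts and differentiating \eqref{ele-1.1} one writes $a(\xi_{k,\epsilon},\xi_{\ell,\epsilon})=\int_\Omega\psi^2\mathcal L_\epsilon[\partial_{x_k}u_\epsilon]\,\partial_{x_\ell}u_\epsilon+E_{k\ell}$, where $\mathcal L_\epsilon$ is the nonlocal linearised nonlinearity; the commutator $E_{k\ell}$ is an annulus integral quadratic in $\nabla u_\epsilon,\nabla^2u_\epsilon$, each of size $O(\|u_\epsilon\|_\infty^{-1})$ there by Theorem A(d) and interior elliptic estimates, so $E_{k\ell}=O(\|u_\epsilon\|_\infty^{-2})$; and the discrepancy between $\int\psi^2\mathcal L_\epsilon[\partial_{x_k}u_\epsilon]\partial_{x_\ell}u_\epsilon$ and $b(\xi_{k,\epsilon},\xi_{\ell,\epsilon})$ is the Riesz double integral with weight $\psi(x)(\psi(y)-\psi(x))$, supported where one variable leaves $B_\varrho(x_\epsilon)$, hence controlled (in fact by $o(\|u_\epsilon\|_\infty^{-2})$) via \eqref{decay1} and Lemmas \ref{Lem6.1}--\ref{L6.2}.

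Finally, rescaling by \eqref{miu-1}, using $\tilde u_\epsilon\to W$ in $C^2_{loc}$ together with the decay \eqref{decay1}, and the radial symmetry of $W$ (which forces $\int_{\mathbb R^n}\nabla\partial_{x_k}W\cdot\nabla\partial_{x_\ell}W=0$ for $k\neq\ell$), one obtains $b(\hat\xi_{k,\epsilon},\hat\xi_{\ell,\epsilon})=\|u_\epsilon\|_\infty^{4/(n-2)}\bigl(\gamma\,\delta_{k\ell}+o(1)\bigr)$ with $\gamma=C_N^{-1}\|\nabla\partial_{x_1}W\|_{L^2}^2>0$; thus $b(w,w)\ge c\,\|u_\epsilon\|_\infty^{4/(n-2)}\sum_k t_k^2$ for $w=\sum_k t_k\hat\xi_{k,\epsilon}$ and $\epsilon$ small, and combining with the previous paragraph,
\[
a(w,w)-b(w,w)\ \le\ C\,\|u_\epsilon\|_\infty^{-2}\textstyle\sum_k t_k^2\ \le\ C'\,\|u_\epsilon\|_\infty^{-2-\frac{4}{n-2}}\,b(w,w)\ =\ C'\,\|u_\epsilon\|_\infty^{-\frac{2n}{n-2}}\,b(w,w),
\]
and by \eqref{Fn}, $\|u_\epsilon\|_\infty^{-2n/(n-2)}=(\|u_\epsilon\|_\infty^{2})^{-n/(n-2)}=O(\epsilon^{n/(n-2)})$. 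Hence $R_\epsilon(w)\le 1+O(\epsilon^{n/(n-2)})$ on $\widehat W_{i-1}$, so $\max_{v\in V_i}R_\epsilon(v)\le 1+O(\epsilon^{n/(n-2)})$, and \eqref{baowen1} follows from the min--max characterisation. The delicate point is the double error bound of the second paragraph --- simultaneously absorbing the $\psi$-commutator and the nonlocal ``tail'' into $O(\|u_\epsilon\|_\infty^{-2})$ --- since it is precisely the ratio of this error to the intrinsic size $\|u_\epsilon\|_\infty^{4/(n-2)}$ of $b(\xi_{k,\epsilon},\xi_{k,\epsilon})$ that generates the exponent $n/(n-2)$.
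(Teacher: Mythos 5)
Your proposal is correct and reaches the same exponent through a somewhat different organisation of the argument. Both you and the paper use the same $i$-dimensional test space $\mathrm{span}\{u_\epsilon,\xi_{1,\epsilon},\dots,\xi_{i-1,\epsilon}\}$ in the Courant--Fischer characterisation, and both feed the same inputs in: the decay bound \eqref{dus}/\eqref{decay1}, the $C^2_{\mathrm{loc}}$ convergence of the rescaled bubble, the annulus-support of the cutoff commutator, and the identity $\epsilon\|u_\epsilon\|_\infty^2\to F_n$ to convert $\|u_\epsilon\|_\infty^{-2n/(n-2)}$ into $O(\epsilon^{n/(n-2)})$. The difference is strategic. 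You exploit the algebraic observation $\mathcal G_\epsilon[u_\epsilon]=(2p-1-2\epsilon)(-\Delta u_\epsilon)$ to orthogonalise the $\xi_{k,\epsilon}$ simultaneously with respect to $a$ and $b$ against $u_\epsilon$, decouple the Rayleigh quotient by a mediant inequality $R_\epsilon(tu_\epsilon+w)\le\max\{R_\epsilon(u_\epsilon),R_\epsilon(w)\}$, and then only need a single uniform comparison $a(\hat\xi_k,\hat\xi_\ell)-b(\hat\xi_k,\hat\xi_\ell)=O(\|u_\epsilon\|_\infty^{-2})$ against the intrinsic size $b(\hat\xi_k,\hat\xi_k)\sim\|u_\epsilon\|_\infty^{4/(n-2)}$. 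The paper instead writes $f=a_0u_\epsilon+\psi z_\epsilon$ directly, computes $\mathcal P_{\epsilon,1}=1+\mathcal P_{\epsilon,2}/\mathcal P_{\epsilon,3}$ term by term, and then must separately control the coefficient $a_{0,\epsilon}$ (the estimate \eqref{Big} $a_{0,\epsilon}^2\|u_\epsilon\|_\infty^2=O(1)$ and the contradiction argument around \eqref{Big-0}). Your orthogonalisation makes that extra step unnecessary, because the $u_\epsilon$-component is handled once and for all by $R_\epsilon(u_\epsilon)=(2p-1-2\epsilon)^{-1}<1$; this is a genuine simplification of the bookkeeping.

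Two small points worth making explicit if you were to write this out in full. First, the mediant inequality requires $b$ to be positive on the relevant vectors; this is true since the Riesz bilinear form is $\langle\phi,(-\Delta)^{-1}\phi\rangle\ge0$ with $\phi=u_\epsilon^{p-1-\epsilon}v$, plus the manifestly nonnegative local term, but it should be said. Second, in your step (ii), $\partial_{x_k}u_\epsilon$ satisfies the linearised equation globally with the nonlocal term evaluated on $\partial_{x_k}u_\epsilon$ (not on $\xi_{k,\epsilon}=\psi\partial_{x_k}u_\epsilon$), so the deviation of $b(\xi_{k,\epsilon},\cdot)$ from $\int\psi^2\mathcal L_\epsilon[\partial_{x_k}u_\epsilon]\cdot$ really does produce a nonlocal tail of the form you describe, with weight $\psi(x)\bigl(\psi(x)-\psi(y)\bigr)$; your reduction there is correct, but it is the one place where the nonlocal structure genuinely enters and deserves the same explicit treatment the paper gives its $I_1,\dots,I_4$ and $J_0,\dots,J_4$ decompositions.
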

	\begin{proof}
We define a linear space $\mathcal{V}$ spanned by $$\{u_{\epsilon}\}\cup\{\xi_{j,\epsilon}: 1\leq j\leq i-1\},$$
so that any nonzero function $f\in\mathcal{V}\setminus\{0\}$ can be write as
\begin{equation}\label{nonzero}
f=a_0u_{\epsilon}+\sum_{j=1}^{i-1}a_j\xi_{j,\epsilon}.
\end{equation}
By the variational
characterization of the eigenvalue $\lambda_{i,\epsilon}$, we have
		\begin{equation}\label{minmax}
\begin{split}
			\lambda_{i,\epsilon}&=\min\limits_{\substack{\mathcal{V}\subset H_{0}^1(\Omega),\\ dim\mathcal{V}=i}}\max\limits_{f\in\mathcal{V}\setminus\{0\}}\frac{\int_{\Omega}|\nabla f(x)|^2dx}{p_{\epsilon}^{(1)}\int_{\Omega}\Big(|x|^{-(n-2)} \ast \big(u_{\epsilon}^{p-1-\epsilon}f\big)\Big)u_{\epsilon}^{p-1-\epsilon}fdx
		+p_{\varepsilon}^{(2)}\int_{\Omega}\Big(|x|^{-(n-2)} \ast u_{\epsilon}^{p-\epsilon}\Big)u_{\epsilon}^{p-2-\epsilon}f^2dx}			\\&\leq\max\limits_{f\in\mathcal{V}\setminus\{0\}}\frac{\int_{\Omega}|\nabla f(x)|^2dx}{p_{\epsilon}^{(1)}\int_{\Omega}\Big(|x|^{-(n-2)} \ast \big(u_{\epsilon}^{p-1-\epsilon}f\big)\Big)u_{\epsilon}^{p-1-\epsilon}fdx
		+p_{\epsilon}^{(2)}\int_{\Omega}\Big(|x|^{-(n-2)} \ast u_{\epsilon}^{p-\epsilon}\Big)u_{\epsilon}^{p-2-\epsilon}f^2dx}\\&
:=\max\limits_{f\in\mathcal{V}\setminus\{0\}}\mathcal{P}_{\epsilon,1}
\end{split}
		\end{equation}
with $p_{\epsilon}^{(1)}=(p-\epsilon),~ p_{\epsilon}^{(2)}=(p-1-\epsilon).$
For the sake of notational simplicity, we write $z_{\epsilon}=\sum_{j=1}^{i-1}a_j\frac{\partial u_{\epsilon}}{\partial x_j}$, so we have $f=a_0u_{\epsilon}+\psi z_{\epsilon}$.
Multiplying \eqref{ele-1.1} by $\psi z_{\epsilon}$ and integrating, we have
		\begin{equation}\label{kuangquanshui}
			\begin{split}
			\int_{\Omega}\nabla(a_0u_{\epsilon})\cdot\nabla(\psi z_{\epsilon})dx=a_0\int_{\Omega}\Big(|x|^{-(n-2)} \ast u_{\epsilon}^{p-\epsilon}\Big)u_{\epsilon}^{p-1-\epsilon}\psi z_{\epsilon}dx.
			\end{split}
		\end{equation}
Moreover, testing
$$-\Delta z_{\epsilon}=(p-\epsilon)\Big(|x|^{-(n-2)} \ast \big(u_{\epsilon}^{p-1-\epsilon}z_{\epsilon}\big)\Big)u_{\epsilon}^{p-1-\epsilon}
		+(p-1-\epsilon)\Big(|x|^{-(n-2)} \ast u_{\epsilon}^{p-\epsilon}\Big)u_{\epsilon}^{p-2-\epsilon}z_{\epsilon}$$
with $\psi^2 z_{\epsilon}$, one has
		\begin{equation}\label{psi-z}
\begin{split}
			\int_{\Omega}|\nabla(\psi z_{\epsilon})|^2dx&=\int_{\Omega}|\nabla\psi|^2z^2_{\epsilon}dx+(p-\epsilon)\int_{\Omega}\Big(|x|^{-(n-2)} \ast \big(u_{\epsilon}^{p-1-\epsilon}z_{\epsilon}\big)\Big)u_{\epsilon}^{p-1-\epsilon}\psi^2 z_{\varepsilon}dx\\&
		\hspace{3mm}+(p-1-\epsilon)\int_{\Omega}\Big(|x|^{-(n-2)} \ast u_{\epsilon}^{p-\epsilon}\Big)u_{\epsilon}^{p-2-\epsilon}\psi^2 z^2_{\epsilon}dx.
		\end{split}
\end{equation}
Then by \eqref{kuangquanshui}-\eqref{psi-z}, we can see $\mathcal{P}_{\epsilon,1}=1+\mathcal{P}_{\epsilon,2}/\mathcal{P}_{\epsilon,3}$ where
\begin{equation*}
\begin{split}			\mathcal{P}_{\epsilon,2}&=-(p_{\epsilon}^{(1)}+p_{\epsilon}^{(2)}-1)a_0^2\int_{\Omega}\Big(|x|^{-(n-2)} \ast u_{\epsilon}^{p-\epsilon}\Big)u_{\epsilon}^{p-\epsilon}dx
-p_{\epsilon}^{(1)}a_0\int_{\Omega}\Big(|x|^{-(n-2)} \ast \big(u_{\epsilon}^{p-1-\epsilon}\psi z_{\epsilon}\big)\Big)u_{\epsilon}^{p-\epsilon}dx\\&\hspace{3mm}
-[p_{\epsilon}^{(1)}+2p_{\epsilon}^{(2)}-1]a_0\int_{\Omega}\Big(|x|^{-(n-2)} \ast u_{\epsilon}^{p-\epsilon}\Big)u_{\epsilon}^{p-1-\epsilon}\psi z_{\epsilon}dx+\int_{\Omega}|\nabla\psi|^2z^2_{\epsilon}dx
\\&\hspace{3mm}+p_{\epsilon}^{(1)}\Big[\int_{\Omega}\Big(|x|^{-(n-2)} \ast \big(u_{\epsilon}^{p-1-\epsilon} z_{\epsilon}\big)\Big)u_{\epsilon}^{p-1-\epsilon}\psi^2 z_{\epsilon}dx-\int_{\Omega}\Big(|x|^{-(n-2)} \ast \big(u_{\epsilon}^{p-1-\epsilon}\psi z_{\epsilon}\big)\Big)u_{\epsilon}^{p-1-\epsilon}\psi z_{\epsilon}dx\Big].
\end{split}
		\end{equation*}
		and
		\begin{equation*}
		\begin{split}	\mathcal{P}_{\epsilon,3}&=(p_{\epsilon}^{(1)}+p_{\epsilon}^{(2)})a_0^2\int_{\Omega}\Big(|x|^{-(n-2)} \ast u_{\epsilon}^{p-\epsilon}\Big)u_{\epsilon}^{p-\epsilon}dx
		+(p_{\epsilon}^{(1)}+2p_{\epsilon}^{(2)})a_0\int_{\Omega}\Big(|x|^{-(n-2)} \ast u_{\epsilon}^{p-\epsilon}\Big)u_{\epsilon}^{p-1-\epsilon}\psi z_{\epsilon}dx\\&\hspace{3mm}+p_{\epsilon}^{(1)}\Big[a_0\int_{\Omega}\Big(|x|^{-(n-2)} \ast \big(u_{\epsilon}^{p-1-\epsilon}\psi z_{\epsilon}\big)\Big)u_{\epsilon}^{p-\epsilon}dx+\int_{\Omega}\Big(|x|^{-(n-2)} \ast \big(u_{\epsilon}^{p-1-\epsilon}\psi z_{\epsilon}\big)\Big)u_{\epsilon}^{p-1-\epsilon}\psi z_{\epsilon}dx\Big]\\&\hspace{3mm}+p_{\epsilon}^{(2)}\int_{\Omega}\Big(|x|^{-(n-2)} \ast u_{\epsilon}^{p-\epsilon}\Big)u_{\epsilon}^{p-2-\epsilon}\psi^2 z_{\epsilon}^2dx.
		\end{split}
\end{equation*}

	Our aim is to find an upper bound of $\mathcal{P}_{\epsilon,2}$ and  $\mathcal{P}_{\epsilon,3}$. 	
		Let's analyze each piece.\\ \noindent
        By an integration by parts and direct calculation we have
		\begin{equation}\label{first  piece}
			\begin{split}				
&\int_{\Omega}\Big(|x|^{-(n-2)} \ast u_{\epsilon}^{p-\epsilon}\Big)u_{\epsilon}^{p-1-\epsilon}\psi z_{\epsilon}dx=\int_{\Omega}\int_{\Omega}\frac{u_{\epsilon}^{p-1-\epsilon}(x)\psi(x) u_{\epsilon}^{p-\epsilon}(y)}{|x-y|^{n-2}}\sum_{j=1}^{i-1}a_j\frac{\partial u_{\epsilon}}{\partial x_j}(x) dxdy\\
=&-\frac{1}{p-\epsilon}\int_{\Omega}\int_{\Omega}\frac{u_{\epsilon}^{p-\epsilon}(x) u_{\epsilon}^{p-\epsilon}(y)}{|x-y|^{n-2}}\sum_{j=1}^{i-1}a_j\frac{\partial \psi}{\partial x_j} dxdy-\frac{1}{p-\epsilon}\int_{\Omega}\int_{\Omega}\sum_{j=1}^{i-1}a_j\frac{\partial}{\partial x_j}(\frac{1}{|x-y|^{n-2}})\psi u_{\epsilon}^{p-\epsilon}(x) u_{\epsilon}^{p-\epsilon}(y) dxdy\\
=&O\Big(\frac{1}{\|u_{\epsilon}\|_{\infty}^{p-\epsilon}}\Big)\frac{\|u_{\epsilon}\|_{\infty}^{\frac{(n-2)(p-1-\epsilon)(p-\epsilon)}{4}}}{\|u_{\epsilon}\|_{\infty}^{\frac{(p-1-\epsilon)(n+2)}{4}}}\Big\|W[0,1]\Big\|^{p-\epsilon}_{L^{\frac{2n}{n+2}(p-\epsilon)}}
\Big[\int_{\Omega\cap\{|x-x_{\epsilon}|\geq\varrho\}}\Big|\sum_{j=1}^{i-1}a_j\frac{\partial \psi}{\partial x_j}(\|u_{\epsilon}\|_{\infty}u_{\epsilon})^{p-\epsilon}\Big|^{\frac{2n}{n+2}}dx\Big]^{\frac{n+2}{2n}}\\&+
\frac{n-2}{p-\epsilon}\int_{\Omega}\int_{\Omega}\sum_{j=1}^{i-1}a_j \frac{(x_j-y_j)\psi u_{\epsilon}^{p-\epsilon}(x) u_{\epsilon}^{p-\epsilon}(y) }{|x-y|^{n}}dxdy.
\end{split}
\end{equation}
For the last term of \eqref{first  piece}, we have the following estimates.
		\begin{equation*}
\begin{split}
			&\int_{\Omega}\int_{\Omega}\sum_{j=1}^{i-1}a_j \frac{(x_j-y_j)\psi(x) u_{\epsilon}^{p-\epsilon}(x) u_{\varepsilon}^{p-\epsilon}(y) }{|x-y|^{n}}dxdy=\Big(\int_{\{|x-x_{\epsilon}|\leq\varrho\}}\int_{\{|x-x_{\epsilon}|\leq\varrho\}}
+\int_{\Omega\setminus\{|x-x_{\epsilon}|\leq\varrho\}}\int_{\{|x-x_{\epsilon}|\leq\varrho\}}\\+&\int_{\{|x-x_{\epsilon}|\leq\varrho\}}\int_{\Omega\setminus\{|x-x_{\epsilon}|\leq\varrho\}}+\int_{\Omega\setminus\{|x-x_{\epsilon}|\leq\varrho\}}\int_{\Omega\setminus\{|x-x_{\epsilon}|\leq\varrho\}}\Big)\sum_{j=1}^{i-1}a_j \frac{(x_j-y_j)\psi(x)u_{\epsilon}^{p-\epsilon}(x) u_{\epsilon}^{p-\epsilon}(y) }{|x-y|^{n}}dxdy\\=&I_1+I_2+I_3+I_4.
\end{split}
		\end{equation*}
We have $I_1=0$ by the symmetry, and
		\begin{equation*}
\begin{split}
I_2\leq&C\big\|u_{\epsilon}\big\|_{\infty}^{\frac{(n-2)(p-1-\epsilon)(p-\epsilon)}{2}}\int_{\Omega\setminus\{|y-x_{\epsilon}|\leq\varrho\}}\int_{\{|x-x_{\epsilon}|\leq\varrho\}}
\Big(\frac{1}{1+\|u_{\epsilon}\|_{\infty}^{p-1-\epsilon}|y-x_{\epsilon}|^2}\Big)^{\frac{(n-2)(p-\epsilon)}{2}}\\&\times\frac{1}{|x-y|^{n-1}}\Big(\frac{1}{1+\|u_{\epsilon}\|_{\infty}^{p-1-\epsilon}|x-x_{\epsilon}|^2}\Big)^{\frac{(n-2)(p-\epsilon)}{2}}dxdy\\
\leq&C\frac{\|u_{\epsilon}\|_{\infty}^{\frac{(n-2)(p-1-\epsilon)(p-\epsilon)}{2}}}{\|u_{\epsilon}\|_{\infty}^{\frac{(p-1-\epsilon)(n+1)}{2}}}
\Big[\int_{\Omega\setminus\{|y|\leq\varrho\|u_{\epsilon}\|_{\infty}^{\frac{p-1-\epsilon}{2}}\}}\frac{1}{(1+|y|^2)^{(n-2)(p-\epsilon)}}dy\Big]^{\frac{1}{2}}\\&\times\Big[\int_{\{|x|\leq\varrho\|u_{\epsilon}\|_{\infty}^{\frac{p-1-\epsilon}{2}}\}}\frac{1}{(1+|x|^2)^{\frac{(n-2)n(p-\epsilon)}{n+2}}}dx\Big]^{\frac{n+2}{2n}}
\leq C\frac{1}{\|u_{\epsilon}\|_{\infty}^{\frac{(p-1-\epsilon)(n+2)}{4}}}=O\Big(\big\|u_{\epsilon}\big\|_{\infty}^{\frac{\epsilon(n+2)}{4}-p}\Big).
\end{split}
		\end{equation*}

by Hardy-Littlewood-Sobolev inequality, $u_{\epsilon}\leq W[x_{\epsilon},\mu_{\epsilon}]$, \eqref{miu}. Similarly, we have
	\begin{equation*}
\begin{split}
I_3=O\Big(\big\|u_{\epsilon}\big\|_{\infty}^{\frac{\epsilon(n+2)}{4}-p}\Big),\quad I_4=O\Big(\big\|u_{\epsilon}\big\|_{\infty}^{\frac{\epsilon(n+2)}{4}-p}\Big).
\end{split}
		\end{equation*}

Finally, combining \eqref{first  piece} with  these inequalities,  Hardy-Littlewood-Sobolev inequality, the condition $u_{\epsilon}\leq W[x_{\epsilon},\mu_{\epsilon}]$ and \eqref{miu}, we have
\begin{equation*}
    \begin{split}
        &\int_{\Omega}\Big(|x|^{-(n-2)} \ast u_{\epsilon}^{p-\epsilon}\Big)u_{\epsilon}^{p-1-\epsilon}\psi z_{\epsilon}dx\leq O\Big(\big\|u_{\epsilon}\big\|_{\infty}^{\epsilon-p}\Big)+O\Big(\big\|u_{\epsilon}\big\|_{\infty}^{\frac{\epsilon(n+2)}{4}-p}\Big)
    \end{split}
\end{equation*}

Thus, similar to the above argument, we obtain	
		\begin{equation}\label{dx11}
			\begin{split}
				\int_{\Omega}\Big(|x|^{-(n-2)} \ast \big(u_{\epsilon}^{p-1-\epsilon}\psi z_{\epsilon}\big)\Big)u_{\epsilon}^{p-\epsilon}dx=O\Big(\big\|u_{\epsilon}\big\|_{L^{\infty}(\Omega)}^{\epsilon-p}\Big)+O\Big(\big\|u_{\epsilon}\big\|_{\infty}^{\frac{\epsilon(n+2)}{4}-p}\Big).
			\end{split}
		\end{equation}
Next by symmetry and integration by parts, we find
		\begin{equation}\label{JO}
		\begin{split}
			J_0:&=\int_{\Omega}\Big(|x|^{-(n-2)} \ast \big(u_{\epsilon}^{p-1-\epsilon} z_{\epsilon}\big)\Big)u_{\epsilon}^{p-1-\epsilon}\psi^2 z_{\epsilon}dx-\int_{\Omega}\Big(|x|^{-(n-2)} \ast \big(u_{\epsilon}^{p-1-\epsilon}\psi z_{\epsilon}\big)\Big)u_{\epsilon}^{p-1-\epsilon}\psi z_{\epsilon}dx\\&
=\frac{1}{2}\int_{\Omega}\int_{\Omega}\frac{(\psi(x)-\psi(y))^2u_{\epsilon}^{p-1-\epsilon}(x) u_{\epsilon}^{p-1-\epsilon}(y)}{|x-y|^{n-2}}\sum_{j=1}^{i-1}a_j\frac{\partial u_{\epsilon}}{\partial x_j}(x) \sum_{k=1}^{i-1}a_k\frac{\partial u_{\epsilon}}{\partial y_k}(y) dxdy\\&
=\frac{1}{2(p-\epsilon)^2}\int_{\Omega}\int_{\Omega}\sum_{j,k=1}^{i-1}a_ja_k\frac{\partial^2}{\partial x_j\partial y_k}\Big(\frac{(\psi(x)-\psi(y))^2}{|x-y|^{n-2}}\Big)u_{\epsilon}^{p-\epsilon}(x) u_{\epsilon}^{p-\epsilon}(y)dxdy.
			\end{split}
		\end{equation}
It is noticing that		
		\begin{equation}\label{4J}
			\begin{split}
				\frac{\partial^2}{\partial x_j\partial y_k}\Big(\frac{(\psi(x)-\psi(y))^2}{|x-y|^{n-2}}\Big)&=\frac{\partial^2}{\partial x_j\partial y_k}\Big(\frac{1}{|x-y|^{n-2}}\Big)(\psi(x)-\psi(y))^2+\frac{\partial}{\partial x_j}\Big(\frac{1}{|x-y|^{n-2}}\Big)\frac{\partial}{\partial y_k}(\psi(x)-\psi(y))^2\\&+\frac{\partial}{\partial y_k}\Big(\frac{1}{|x-y|^{n-2}}\Big)\frac{\partial}{\partial x_j}(\psi(x)-\psi(y))^2+\frac{\partial^2}{\partial x_j\partial y_k}\Big((\psi(x)-\psi(y))^2\Big)\frac{1}{|x-y|^{n-2}}\\&
:=J_1+J_2+J_3+J_4.
			\end{split}
		\end{equation}
Therefore, by Hardy Littlewood Sobolev inequality and definition of $\psi$, similar to estimate of $I_1-I_4$, we deduce that
\begin{equation*}
			\begin{split}
&\int_{\Omega}\int_{\Omega}\sum_{j,k=1}^{i-1}a_ja_ku_{\epsilon}^{p-\epsilon}(x) u_{\epsilon}^{p-\epsilon}(y)dxdy\leq C\Big(\int_{\{|x-x_{\epsilon}|\leq\varrho\}}\int_{\{|x-x_{\epsilon}|\leq\varrho\}}
+2\int_{\Omega\setminus\{|x-x_{\epsilon}|\leq\varrho\}}\int_{\{|x-x_{\epsilon}|\leq\varrho\}}\\+&\int_{\Omega\setminus\{|x-x_{\epsilon}|\leq\varrho\}}\int_{\Omega\setminus\{|x-x_{\epsilon}|\leq\varrho\}}\Big)
\sum_{j,k=1}^{i-1}a_ja_k\frac{W^{p-\epsilon}[\mu_{\epsilon},\lambda_{\epsilon}](x) W^{p-\epsilon}[\mu_{\epsilon},\lambda_{\epsilon}](y)}{|x-y|^n}dxdy = O\Big(\big\|u_{\epsilon}\big\|_{\infty}^{\frac{\epsilon(n+2)}{4}-p}\Big).
\end{split}
		\end{equation*}
Similar computations and gives that
\begin{equation*}
			\begin{split}
\int_{\Omega}\int_{\Omega}\sum_{j,k=1}^{i-1}a_ja_kJ_2u_{\epsilon}^{p-\epsilon}(x) u_{\epsilon}^{p-\epsilon}(y)dxdy=O\Big(\big\|u_{\epsilon}\big\|_{\infty}^{\frac{\epsilon(n+2)}{4}-p}\Big).
\end{split}
		\end{equation*}
and
\begin{equation*}
			\begin{split}
\int_{\Omega}\int_{\Omega}\sum_{j,k=1}^{i-1}a_ja_kJ_3u_{\epsilon}^{p-\epsilon}(x) u_{\epsilon}^{p-\epsilon}(y)dxdy=O\Big(\|u_{\epsilon}\|_{\infty}^{\frac{\epsilon(n+2)}{4}-p}\Big).
\end{split}
		\end{equation*}
Finally, we compute
\begin{equation*}
			\begin{split}
\int_{\Omega}\int_{\Omega}\sum_{j,k=1}^{i-1}a_ja_kJ_4u_{\epsilon}^{p-\epsilon}(x) u_{\epsilon}^{p-\epsilon}(y)dxdy&\leq C\Big[\int_{\Omega}\Big|\sum_{j=1}^{i-1}a_j\frac{\partial\psi(x)}{\partial x_j}W^{p-\epsilon}[\mu_{\epsilon},\lambda_{\epsilon}](x)\Big|^{\frac{2n}{n+2}}dx\Big]^{\frac{n+2}{2n}}
\\&\times \Big[\int_{\Omega}\Big|\sum_{k=1}^{i-1}a_k\frac{\partial\psi(y)}{\partial y_k}W^{p-\epsilon}[\mu_{\varepsilon},\lambda_{\epsilon}](y)\Big|^{\frac{2n}{n+2}}dy\Big]^{\frac{n+2}{2n}}	=O\Big(\|u_{\epsilon}\|_{\infty}^{\frac{\epsilon(n+2)}{4}-p}\Big).
\end{split}
		\end{equation*}
Combining the previous estimates together, we get
\begin{equation}\label{dx}
J_0=O\Big(\big\|u_{\epsilon}\big\|_{\infty}^{\frac{\epsilon(n+2)}{4}-p}\Big).
\end{equation}
		Moreover, we have
		\begin{equation*}
\begin{split}
			\int_{\Omega}\Big(|x|^{-(n-2)} \ast \big(u_{\epsilon}^{p-1-\epsilon}\psi z_{\epsilon}\big)\Big)u_{\epsilon}^{p-1-\epsilon}\psi z_{\epsilon}dx=\int_{\Omega}\int_{\Omega}\frac{u_{\epsilon}^{p-1-\epsilon}(x)\psi(x)  u_{\epsilon}^{p-1-\epsilon}(y)\psi(y) }{|x-y|^{n-2}}\sum_{j=1}^{i-1}a_j\frac{\partial u_{\epsilon}}{\partial x_j}\sum_{k=1}^{i-1}a_k\frac{\partial u_{\epsilon}}{\partial y_k}dxdy.
		\end{split}
\end{equation*}
A direct computation show that
\begin{equation*}
\begin{split}
&\int_{\Omega}\int_{\Omega}\frac{u_{\epsilon}^{p-1-\epsilon}(x)\psi(x)  u_{\epsilon}^{p-1-\epsilon}(y)\psi(y) }{|x-y|^{n-2}}\frac{\partial u_{\epsilon}}{\partial x_j}(x)\frac{\partial u_{\epsilon}}{\partial y_k}(y) dxdy\\
=&\frac{\big\|u_{\epsilon}\big\|_{\infty}^{p+1+\epsilon}}{\big\|u_{\epsilon}\big\|_{\infty}^{\frac{p-1-\epsilon}{2}(n-2)}}\int_{\Omega_{\epsilon}}
\int_{\Omega_{\epsilon}}\frac{\tilde{u}_{\epsilon}^{p-1-\epsilon}(y)\tilde{u}_{\epsilon}^{p-1-\epsilon}(x)}{|x-y|^{n-2}}\psi\Big(\|u_{\epsilon}\|_{\infty}^{-\frac{4-(n-2)\epsilon}{2(n-2)}}x+x_{\epsilon}\Big)
\psi\Big(\big\|u_{\epsilon}\big\|_{\infty}^{-\frac{4-(n-2)\epsilon}{2(n-2)}}y+x_{\epsilon}\Big)\frac{\partial \tilde{u}_{\epsilon}}{\partial x_j}\frac{\partial \tilde{u}_{\epsilon}}{\partial y_k}
dxdy\\
=&
\big\|u_{\epsilon}\big\|_{\infty}^{\frac{4}{n-2}}\Big[\int_{\mathbb{R}^n}
\int_{\mathbb{R}^n}\psi^2(x_0)\frac{W^{p-1}[0,1](y)W^{p-1}[0,1](x)}{|x-y|^{n-2}}\frac{\partial W(x)}{\partial x_j}\frac{\partial W(y)}{y_k}dxdy+o(1)\Big]
\\
=&\big\|u_{\epsilon}\big\|_{L^{\infty}}^{\frac{4}{n-2}}\Big[-\frac{1}{p}\psi^2(x_0)\int_{\mathbb{R}^{n}}\frac{W^{p}[0,1](y)}{|x-y|^{n-2}}\Big(\frac{\partial}{\partial z_k}\big(W^{p}[z,1](y)\big)\Big)\Big|_{z=0}W^{p-1}[0,1](x)\frac{\partial W(x)}{\partial x_j}dxdy+o(1)\Big]\\
=&
\frac{1}{p}\big\|u_{\epsilon}\big\|_{\infty}^{\frac{4}{n-2}}\Big[\psi^2(x_0)\int_{\mathbb{R}^{n}}W^{p-1}[0,1](x)\frac{\partial W(x)}{\partial x_j}\frac{\partial W(x)}{\partial x_k}dxdy+o(1)\Big]\\
=&\frac{1}{p}\big\|u_{\epsilon}\big\|_{\infty}^{\frac{4}{n-2}}\Big[\frac{\delta_j^{k}}{n}\int_{\mathbb{R}^{n}}W^{p-1}[0,1](x)\big|\nabla W(x)\big|^2dx+o(1)\Big]
		\end{split}
\end{equation*}
since $\tilde{u}_{\epsilon}(x)\rightarrow W[0,1](x)$ in $C_{loc}^2(\mathbb{R}^n)$ and \eqref{p1-00} holds.
	Analogously, we deduce that
		\begin{equation*}
			\begin{split}
			\int_{\Omega}\Big(|x|^{-(n-2)} \ast u_{\epsilon}^{p-\epsilon}\Big)u_{\epsilon}^{p-2-\epsilon}\psi^2 z_{\epsilon}^2dx&=\int_{\Omega}\int_{\Omega}\frac{u_{\epsilon}^{p-\epsilon}(y) u_{\epsilon}^{p-2-\epsilon}(x)}{|x-y|^{n-2}}\psi^2(x)\sum_{j=1}^{n}a_j\frac{\partial u_{\epsilon}}{\partial x_j}(x)\sum_{k=1}^{n}a_k\frac{\partial u_{\epsilon}}{\partial x_k}(x) dxdy\\&
=\big\|u_{\epsilon}\big\|_{\infty}^{\frac{4}{n-2}}\Big[\sum_{j=1}^{n}a_j\frac{\delta_j^{k}}{n}\int_{\mathbb{R}^{n}}W^{2^{\ast}-2}[0,1](x)\big|\nabla W(x)\big|^2dx+o(1)\Big].
			\end{split}
		\end{equation*}
To obtain a lower bound of  $\mathcal{P}_{\epsilon,1}$, utilizing the above  estimates and  taking $(a_0,\cdots,a_{i-1})=\frac{1}{i-1}(0,1,\cdots,1)$, we deduce
\begin{equation*}
			\begin{split}
\max\limits_{f\in\mathcal{V}\setminus\{0\}}\mathcal{P}_{\epsilon,1}&\geq1+
\frac{\int_{\Omega}|\nabla\psi|^2z^2_{\epsilon}dx+(p-\epsilon)J_0}{(p-\epsilon)\int_{\Omega}\int_{\Omega}\frac{u_{\epsilon}^{p-1-\epsilon}\psi z_{\epsilon}u_{\epsilon}^{p-1-\epsilon}\psi z_{\epsilon}}{|x-y|^{n-2}} dxdy+(p-1-\epsilon)\int_{\Omega}\int_{\Omega}\frac{u_{\epsilon}^{p-\epsilon} u_{\epsilon}^{p-2-\epsilon}\psi^2 z^2_{\epsilon}}{|x-y|^{n-2}} dxdy}\\&=1+\frac{1}{\big\|u_{\epsilon}\big\|_{\infty}^{\frac{2n}{n-2}}}\bigg[\frac{n\mathcal{K}_n^2}{p}\frac{\sum_{j,k=1}^{i-1}a_ja_k\int_{\Omega}|\nabla\psi|^2\partial_{x_j}G(x,x_0)\partial_{x_k}G(x,x_0)}{\int_{\mathbb{R}^{n}}W^{p-1}[0,1](x)\big|\nabla W(x)\big|^2dx}+o(1)\bigg]
\end{split}
		\end{equation*}
by \eqref{ifini}, combined with the identity
\begin{equation}\label{m000}
\int_{\Omega}|\nabla\psi|^2z^2_{\epsilon}dx=\int_{\Omega\cap\{|x-x_{\epsilon}|\geq\varrho\}}\big|\nabla\psi\big|^2\sum_{j=1}^{i-1}a_j\frac{\partial u_{\epsilon}}{\partial x_j}(x)\sum_{k=1}^{i-1}a_k\frac{\partial u_{\epsilon}}{\partial x_k}(x)dx=O\Big(\big\|u_{\epsilon}\big\|_{\infty}^{-2}\Big).
\end{equation}
Hence we conclude that
\begin{equation}\label{mo0}
\max\limits_{f\in\mathcal{V}\setminus\{0\}}\mathcal{P}_{\epsilon,1}\geq1+\frac{M_0}{\big\|u_{\epsilon}\big\|_{\infty}^{\frac{2n}{n-2}}}
\end{equation}
for some constant $M_0>0$. On the other hand, by the boundedness of $\mathcal{P}_{\epsilon,2}$ and  $\mathcal{P}_{\epsilon,3}$, \eqref{m000}, we have
\begin{equation}\label{mo0-2}
\begin{split}
&\max\limits_{f\in\mathcal{V}\setminus\{0\}}\mathcal{P}_{\epsilon,1}=1\\+&\frac{-(2p-1-\epsilon)a_{0,\epsilon}^2\big\|u_{\epsilon}\big\|_{\infty}^{2}\int_{\Omega}\int_{\Omega}\frac{u_{\epsilon}^{p-\epsilon}u_{\epsilon}^{p-\epsilon}}{|x-y|^{n-2}} dxdy+O\Big(\big\|u_{\epsilon}\big\|_{\infty}^{\frac{\epsilon(n+2)}{4}-p+2}\Big)+O\Big(\big\|u_{\epsilon}\big\|_{\infty}^{\epsilon-p+2}\Big)+O\big(1\big)}{\big\|u_{\epsilon}\big\|_{L^{\infty}}^{\frac{2n}{n-2}}\Big[ O\Big(\big\|u_{\epsilon}\big\|_{\infty}^{-\frac{4}{n-2}}\Big)+O\Big(\big\|u_{\epsilon}\big\|_{\infty}^{\frac{\epsilon(n+2)}{4}-p-\frac{4}{n-2}}\Big)+O\Big(\big\|u_{\epsilon}\big\|_{\infty}^{\epsilon-p-\frac{4}{n-2}}\Big)+A(n)+o(1)\Big]},
\end{split}
\end{equation}
where $A(n)=\frac{p}{n}\int_{\mathbb{R}^{n}}W^{p-1}[0,1](x)|\nabla W(x)|^2dx$.
\\ \noindent In order to conclude an upper bound of \eqref{mo0}, it is sufficient to show
\begin{equation}\label{Big}
a_{0,\epsilon}^2\big\|u_{\epsilon}\big\|_{\infty}^{2}=O\big(1\big).
\end{equation}
Now, assume that $\lim\limits_{\epsilon\rightarrow0}a_{0,\epsilon}^2\big\|u_{\epsilon}\big\|_{\infty}^{2}=+\infty$, by \eqref{mo0}, we have
\begin{equation}\label{Big-0}
\max\limits_{f\in\mathcal{V}\setminus\{0\}}\mathcal{P}_{\epsilon,1}\leq1-\frac{M_0}{\big\|u_{\epsilon}\big\|_{\infty}^{\frac{2n}{n-2}}}
\end{equation}
some constant $M_0$. This gives the expected contradiction to \eqref{mo0}. Thus, \eqref{baowen1} follows from \eqref{Fn}, \eqref{minmax} and \eqref{mo0-2}-\eqref{Big}. This completes the proof of the lemma.
	\end{proof}
\begin{lem}\label{limitlama}
For $i=2,\cdots,n+1$,
we have the following limit
\begin{equation}\label{baowen}
\lim\limits_{\epsilon\rightarrow0}\lambda_{i,\epsilon}=1.
		\end{equation}
	\end{lem}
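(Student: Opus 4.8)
The plan is to match the upper bound already established in Lemma~\ref{baowenbei} with a lower bound coming from a blow-up analysis of the eigenfunctions. By Lemma~\ref{baowenbei} we have $\lambda_{i,\epsilon}\le 1+O(\epsilon^{\frac{n}{n-2}})$, so $\limsup_{\epsilon\to0}\lambda_{i,\epsilon}\le1$; on the other hand $\lambda_{i,\epsilon}\ge\lambda_{1,\epsilon}\to(2p-1)^{-1}>0$ by Theorem~\ref{Figalli-1} and the monotonicity $\lambda_{1,\epsilon}\le\cdots\le\lambda_{n+1,\epsilon}$. Hence $\{\lambda_{i,\epsilon}\}$ is bounded and it suffices to show that every convergent subsequence has limit $1$. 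So fix a subsequence (not relabelled) with $\lambda_{i,\epsilon}\to\bar\lambda_i\in[(2p-1)^{-1},1]$.

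First I would establish compactness of the rescaled eigenfunctions. The function $\tilde v_{i,\epsilon}$ solves, on the expanding domain $\Omega_\epsilon\nearrow\mathbb{R}^n$, a rescaled version of \eqref{e-1}; using the decay bounds \eqref{decay1} and \eqref{day2} (so that $\tilde u_\epsilon\le CW$ and $|\tilde v_{i,\epsilon}|\le CW$), together with the convolution estimates of Lemmas~\ref{Lem6.1}--\ref{L6.2} and the Hardy--Littlewood--Sobolev inequality, the right-hand side of this equation is bounded in $L^\infty_{loc}(\mathbb{R}^n)$ uniformly in $\epsilon$. Elliptic regularity then yields a uniform $C^{1,\tilde\alpha}_{loc}$ bound, so up to a further subsequence $\tilde v_{i,\epsilon}\to\bar v_i$ in $C^1_{loc}(\mathbb{R}^n)$, with $|\bar v_i|\le CW$. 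Passing to the limit in the rescaled equation by dominated convergence (with $W$ as majorant, $\tilde u_\epsilon\to W$ in $C^2_{loc}$, $W^{p-\epsilon}\to W^p$, and \eqref{p1-00}), $\bar v_i$ is a bounded solution of
\begin{equation*}
-\Delta\bar v_i=\bar\lambda_iC_N\Big[p\big(|x|^{-(n-2)}\ast(W^{p-1}\bar v_i)\big)W^{p-1}+(p-1)\big(|x|^{-(n-2)}\ast W^{p}\big)W^{p-2}\bar v_i\Big]\quad\text{in }\mathbb{R}^n.
\end{equation*}
Moreover $\bar v_i\not\equiv0$: since $\|\tilde v_{i,\epsilon}\|_{L^\infty(\Omega_\epsilon)}=1$ while $|\tilde v_{i,\epsilon}(x)|\le CW(x)\to0$ as $|x|\to\infty$, for $\epsilon$ small the supremum of $|\tilde v_{i,\epsilon}|$ is attained on a fixed ball $B_R$, whence $\|\bar v_i\|_{L^\infty(B_R)}\ge\tfrac12$ by $C^1_{loc}$ convergence. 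Therefore $\bar\lambda_i$ is an eigenvalue of the limit problem of Proposition~\ref{propep}, and since $\bar\lambda_i\le1$, Proposition~\ref{propep} forces $\bar\lambda_i\in\{(2p-1)^{-1},1\}$.

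It remains to rule out $\bar\lambda_i=(2p-1)^{-1}$ for $i\ge2$. If this held, then by monotonicity $\lambda_{1,\epsilon},\lambda_{2,\epsilon}\to(2p-1)^{-1}$ as well, and the corresponding eigenfunctions can be chosen orthogonal with respect to the bilinear form $B_\epsilon$ associated to $\mathcal{G}_\epsilon[\cdot]$ in \eqref{defanndg2}, i.e. $B_\epsilon(v_{1,\epsilon},v_{2,\epsilon})=0$. By Theorem~\ref{Figalli-1}, $\tilde v_{1,\epsilon}\to W$; by the previous paragraph, applied to $j=2$, $\tilde v_{2,\epsilon}\to\bar v_2\not\equiv0$ solving the limit problem at the eigenvalue $(2p-1)^{-1}$, whose eigenspace is spanned by $W$ (Proposition~\ref{propep}), so $\bar v_2=cW$ with $c\ne0$. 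Rescaling $B_\epsilon(v_{1,\epsilon},v_{2,\epsilon})$ exactly as in the simplicity argument in the proof of Theorem~\ref{Figalli-1} and letting $\epsilon\to0$ (same dominated-convergence estimates, plus \eqref{p1-00}), we obtain
\begin{equation*}
c\,(2p-1)\int_{\mathbb{R}^n}\int_{\mathbb{R}^n}\frac{W^{p}(x)W^{p}(y)}{|x-y|^{n-2}}\,dx\,dy=0,
\end{equation*}
which by \eqref{p1-00} forces $c=0$, a contradiction. (Equivalently: the limit eigenvalue problem of Proposition~\ref{propep} is nondegenerate with $(2p-1)^{-1}$ a simple eigenvalue, so at most one of the $\lambda_{j,\epsilon}$ can converge to $(2p-1)^{-1}$, and that one is $\lambda_{1,\epsilon}$.) Hence $\bar\lambda_i=1$, and since the subsequence was arbitrary, $\lim_{\epsilon\to0}\lambda_{i,\epsilon}=1$.

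The main obstacle is the compactness step together with the passage to the limit in the nonlocal term: one must track precisely how the Newton kernel rescales under \eqref{vie} so that the limiting equation is exactly the one in Proposition~\ref{propep}, and --- in dimensions $n=3,4$, where $W\notin L^2(\mathbb{R}^n)$ --- one must exploit the extra decay gained from convolution with $|x|^{-(n-2)}$ (Lemmas~\ref{Lem6.1}--\ref{L6.2}) to justify dominated convergence both in the equation for $\tilde v_{i,\epsilon}$ and in the bilinear form $B_\epsilon$ on the region where $|x|$ is large.
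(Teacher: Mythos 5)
Your argument is correct and follows essentially the same route as the paper's proof: use Lemma~\ref{baowenbei} for the upper bound, extract a $C^1_{loc}$ limit of the rescaled eigenfunctions (which is nonzero by the pointwise decay estimate \eqref{day2}), invoke Proposition~\ref{propep} to classify the limit eigenvalue, and rule out the value $(2p-1)^{-1}$ via the orthogonality of $v_{1,\epsilon}$ and $v_{2,\epsilon}$ together with \eqref{p1-00}. The only difference is organizational — you make the subsequence bookkeeping and the reduction to $i=2$ more explicit than the paper does, which is a minor clarity improvement rather than a different method.
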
	
\begin{proof}
In view of \eqref{baowen1}, it is enough to prove $\lambda_{2,\epsilon}\rightarrow1$ as $\epsilon\rightarrow0$.
\\ \noindent By \eqref{ele-2}-\eqref{defanndg2}, we have that
\begin{equation}\label{e}
\left\lbrace
\begin{aligned}
    &-\Delta \tilde{v}_{2,\epsilon}=\lambda_{i,\epsilon}C_N \mathcal{G}_{\epsilon}[\tilde{v}_{2,\epsilon}] \quad \mbox{in}\quad \Omega_{\epsilon}:=\big\|u_{\varepsilon}\big\|_{\infty}^{\frac{4-(n-2)\epsilon}{2(n-2)}}(\Omega-x_{\epsilon}),\\
    &\tilde{v}_{2,\epsilon}=0\quad \mbox{on}\hspace{2.5mm}\partial\Omega_{\epsilon},\hspace{2mm}
    \|\tilde{v}_{2,\epsilon}\|_{L^{\infty}(\Omega_\epsilon)}=1,
   \end{aligned}
\right.
\end{equation}
where
	\begin{equation}\label{de}
		\mathcal{G}_{\epsilon}[\tilde{v}_{2,\epsilon}]=(p-\epsilon)\Big(|x|^{-(n-2)} \ast \big(\tilde{u}_{\epsilon}^{p-1-\epsilon}\tilde{v}_{2,\epsilon}\big)\Big)\tilde{u}_{\epsilon}^{p-1-\epsilon}
		+(p-1-\epsilon)\Big(|x|^{-(n-2)} \ast \tilde{u}_{\epsilon}^{p-\epsilon}\Big)\tilde{u}_{\epsilon}^{p-2-\epsilon}\tilde{v}_{2,\epsilon},
	\end{equation}
where $C_N$ can be found in \eqref{wx}. Then by \eqref{e}, \eqref{decay1} and \eqref{day2}, we have
\begin{equation}\label{ccinfinity}
\int_{_{\mathbb{R}^n}}|\nabla \tilde{v}_{2,\epsilon}|^2dx\leq C\int_{_{\mathbb{R}^n}}\frac{W^p(x)W^p(y)}{|x-y|^{n-2}}dxdy<+\infty,\quad\int_{_{\mathbb{R}^n}}|\tilde{v}_{2,\epsilon}|^{2^{\ast}}dx<+\infty.
		\end{equation}
 This implies that $\tilde{v}_{2,\epsilon}$ is uniformly bounded in $\mathcal{D}^{1,2}(\mathbb{R}^n)$. Then, by elliptic regularity and since $\lambda_{2,\epsilon}\rightarrow\Lambda<1$ by \eqref{baowen1}, we get that, up to subsequence,
 $$\lim\limits_{\epsilon\rightarrow0}\tilde{v}_{2,\epsilon}=v_0\quad\mbox{in}\hspace{2mm}C_{loc}^{1}(\mathbb{R}^n),$$
where $v_0$ satisfies
\begin{equation}\label{e-00}
\left\lbrace
\begin{aligned}
    &-\Delta v_{0}=pC_N\Lambda\Big(|x|^{-(n-2)} \ast \big(W^{p-1}[0,1]v_{0}\big)\Big)W^{p-1}\\&
		\quad\quad+(p-1)C_N\Lambda\Big(|x|^{-(n-2)} \ast W^{p}[0,1]\Big)W^{p-2}[0,1]v_{0} \quad \mbox{in}\quad \mathbb{R}^n,\hspace{2mm}|v_{0}|\leq1, \quad v\in\mathcal{D}^{1,2}(\mathbb{R}^n).
   \end{aligned}
\right.
\end{equation}
We now want to prove that $v_0\not\equiv0$. By contradiction, if $v=0$, we chose $y_{\epsilon}$ as a maximum point of $\tilde{v}_{2,\epsilon}$ such that
$\tilde{v}_{2,\epsilon}(y_{\epsilon})=\|\tilde{v}_{2,\epsilon}\|_{\infty}=1$ then necessary $|y_{\epsilon}|\rightarrow\infty$.
This gives the expected contradiction to \eqref{day2}. Moreover, due to Proposition \ref{propep}, we have $\Lambda=(2p-1)^{-1}$ and $v=W[0,1](x)$, and exploiting the orthogonality of $v_{2,\epsilon}$ and passing to the limit as $\epsilon\rightarrow0$ yields
\begin{equation*}
\begin{split}	
(p-\epsilon)\int_{\Omega}&\Big(|x|^{-(n-2)} \ast \big(\tilde{u}_{\epsilon}^{p-1-\epsilon}\tilde{v}_{2,\epsilon}\big)\Big)\tilde{u}_{\epsilon}^{p-\epsilon}dx		+(p-1-\epsilon)\int_{\Omega}\Big(|x|^{-(n-2)} \ast \tilde{u}_{\epsilon}^{p-\epsilon}\Big)\tilde{u}_{\epsilon}^{p-1-\epsilon}\tilde{v}_{2,\epsilon}dx=0	\\&\Rightarrow(2p-1)\int_{\mathbb{R}^n}\int_{\mathbb{R}^n}\frac{W^p[0,1](x)W^p[0,1](y)}{|x-y|^{n-2}}dxdy=0.
\end{split}
\end{equation*}
This estimate gives a contradiction. Hence $\lambda_{2,\epsilon}=1$, which finishes the proof.
\end{proof}

	\section{Asyptotic behaviour of the eigenfunctions $v_{i,\epsilon}$ for $i=2,\cdots,n+1$}\label{section4}
This section is dedicated to the proof of the Theorem \ref{Figalli}. We can start with some preliminary Lemmas.
\begin{lem}\label{identity}
For the rescaled eigenfunctions $\tilde{v}_{i,\epsilon}$ defined in \eqref{vie}, we have
\begin{equation}\label{viep}
\tilde{v}_{i,\epsilon}(x)\rightarrow-(n-2)\sum_{k=1}^{n}\frac{\alpha_k^ix_k}{(1+|x|^2)^{\frac{n}{2}}}+\beta^{i}\frac{1-|x|^2}{(1+|x|^2)^{\frac{n}{2}}}\quad\mbox{in}\hspace{2mm} C_{loc}^{1}(\mathbb{R}^n)
\end{equation}
with $\alpha^{i}=(\alpha_1^{i},\cdots,\alpha_{n}^i,\beta^{i})\neq(0,\cdots,0)$ in $\mathbb{R}^{n+1}$ for $i=2,\cdots,n+1$.
\end{lem}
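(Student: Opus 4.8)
The plan is to combine the a priori bound \eqref{day2} with elliptic regularity and the nondegeneracy result of Proposition \ref{prondgr} to pin down the weak limit of the rescaled eigenfunctions. First I would recall from Lemma \ref{limitlama} that $\lambda_{i,\epsilon}\to 1$ for $i=2,\dots,n+1$. Writing the equation satisfied by $\tilde v_{i,\epsilon}$ on the expanding domain $\Omega_\epsilon$ (namely \eqref{e}--\eqref{de} with $2$ replaced by $i$) and testing against $\tilde v_{i,\epsilon}$ itself, the decay estimate \eqref{day2} together with $\tilde u_\epsilon\le CW$ and the Hardy--Littlewood--Sobolev inequality gives, exactly as in \eqref{ccinfinity},
\begin{equation*}
\int_{\mathbb{R}^n}|\nabla \tilde v_{i,\epsilon}|^2\,dx \le C\int_{\mathbb{R}^n}\int_{\mathbb{R}^n}\frac{W^p(x)W^p(y)}{|x-y|^{n-2}}\,dx\,dy<+\infty,\qquad \int_{\mathbb{R}^n}|\tilde v_{i,\epsilon}|^{2^*}\,dx<+\infty,
\end{equation*}
so $\{\tilde v_{i,\epsilon}\}$ is bounded in $\mathcal{D}^{1,2}(\mathbb{R}^n)$ and, by \eqref{day2}, pointwise dominated by $CW$.

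Next I would pass to the limit. Since $\tilde u_\epsilon\to W[0,1]$ in $C^2_{loc}(\mathbb{R}^n)$ and $\tilde u_\epsilon\le CW$, the nonlocal terms $\mathcal{G}_\epsilon[\tilde v_{i,\epsilon}]$ converge (splitting the convolution into a near part, controlled by HLS and local $C^1$ convergence, and a far part, controlled by Lemma \ref{Lem6.1}--\ref{L6.2} and the $W$-decay, picking up the Taylor expansion $W^{p-\epsilon}=W^p-\epsilon W^p\log W+O(\epsilon^2 W^p)$ as in the proof of the decay lemma). By elliptic interior estimates $\tilde v_{i,\epsilon}$ is bounded in $C^{1,\tilde\alpha}_{loc}$, so up to a subsequence $\tilde v_{i,\epsilon}\to v_0^i$ in $C^1_{loc}(\mathbb{R}^n)$, where $v_0^i\in\mathcal{D}^{1,2}(\mathbb{R}^n)$, $|v_0^i|\le CW$, and $v_0^i$ solves the linearized equation
\begin{equation*}
-\Delta v_0^i = pC_N\bigl(|x|^{-(n-2)}\ast(W^{p-1}[0,1]v_0^i)\bigr)W^{p-1}[0,1]+(p-1)C_N\bigl(|x|^{-(n-2)}\ast W^p[0,1]\bigr)W^{p-2}[0,1]v_0^i.
\end{equation*}
By Proposition \ref{prondgr} the bounded solution space is spanned by $-(n-2)\frac{x_k}{(1+|x|^2)^{n/2}}$, $k=1,\dots,n$, and $\frac{1-|x|^2}{(1+|x|^2)^{n/2}}$, so $v_0^i=-(n-2)\sum_{k=1}^n\frac{\alpha_k^i x_k}{(1+|x|^2)^{n/2}}+\beta^i\frac{1-|x|^2}{(1+|x|^2)^{n/2}}$ for some coefficients.

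The remaining point — and the main obstacle — is to show the limit is nontrivial, i.e.\ $(\alpha_1^i,\dots,\alpha_n^i,\beta^i)\ne(0,\dots,0)$. Here I would argue exactly as in Lemma \ref{limitlama}: if $v_0^i\equiv0$, then choosing $y_\epsilon$ a maximum point of $\tilde v_{i,\epsilon}$ with $\tilde v_{i,\epsilon}(y_\epsilon)=\|\tilde v_{i,\epsilon}\|_\infty=1$ forces $|y_\epsilon|\to\infty$, which contradicts the uniform decay bound \eqref{day2}. (One must check that the weak limit does not "leak" the $W[0,1]$-component, but that component is excluded because $v_{i,\epsilon}$ is orthogonal in $H_0^1(\Omega)$ to the first eigenfunction, whose rescaling converges to $W[0,1]$ by Theorem \ref{Figalli-1}; passing the orthogonality relation to the limit rules out a $W[0,1]$ term just as in the displayed contradiction at the end of the proof of Lemma \ref{limitlama}.) This forces $v_0^i\not\equiv0$, and since $W[0,1]$ is not among the spanning functions above, $v_0^i$ has the claimed form with a nonzero coefficient vector, establishing \eqref{viep}.
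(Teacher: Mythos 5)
Your proposal is correct and follows essentially the same route as the paper: establish the $\mathcal{D}^{1,2}$ bound and the pointwise $CW$ bound via \eqref{day2}, pass to a $C^1_{loc}$ limit solving the $\lambda=1$ linearized equation (using $\lambda_{i,\epsilon}\to 1$ from Lemma \ref{limitlama}), invoke Proposition \ref{prondgr} to identify the kernel, and rule out the zero limit by the maximum-point argument contradicting \eqref{day2}. Your parenthetical worry about a $W[0,1]$ component ``leaking'' is superfluous --- $W[0,1]$ solves the linearized equation only with eigenvalue $(2p-1)^{-1}$, not $1$, so Proposition \ref{prondgr} already excludes it --- but this extra remark is harmless.
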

\begin{proof}
		Let us recall the problem \eqref{e}-\eqref{de} on $\tilde{v}_{i,\epsilon}$ for $i=2,\cdots,n+1$.
Then by \eqref{decay1} and elliptic theory, we get that,
up to a subsequence, there exists a function $v_i$, such that
		 $$\tilde{v}_{i,\epsilon}\rightarrow v_i \quad\mbox{in}\hspace{2mm} C^1_{loc}(\mathbb{R}^n)\hspace{2mm}\mbox{as}\hspace{2mm}\epsilon\rightarrow0,$$ where $v_i$ satisfies
		\begin{equation}\label{Q1}
-\Delta v_{i}=pC_N\Big(|x|^{-(n-2)} \ast \big(W^{p-1}v_{i}\big)\Big)W^{p-1}
		+(p-1)C_N\Big(|x|^{-(n-2)} \ast W^{p}\Big)W^{p-2}v_{i}.			
		\end{equation}
Similar to \eqref{ccinfinity}, then we have $\tilde{v}_{i,\epsilon}$ is uniformly bounded in $\mathcal{D}^{1,2}(\mathbb{R}^n)$ and we know that $v_{i}\in\mathcal{D}^{1,2}(\mathbb{R}^n)$. Then, using Proposition \ref{prondgr}, we get that
\begin{equation*}
v_i(x)=(2-n)\sum_{k=1}^{n}\frac{\alpha_k^ix_k}{(1+|x|^2)^{\frac{n}{2}}}+\beta^{i}\frac{1-|x|^2}{(1+|x|^2)^{\frac{n}{2}}}.
\end{equation*}		
In order to prove \eqref{viep}, it is enough to show
\begin{equation}\label{vector666}
(\alpha_1^i,\alpha_2^i,\cdots,\alpha_n^i,\beta^{i})\neq0.
\end{equation}		
Let $y_{\varepsilon}^{i}$ is a maximum point of $\tilde{v}_{i,\epsilon}$, i.e.
 $$
 \tilde{v}_{i,\epsilon}(y_{\epsilon}^{i})=\big\|\tilde{v}_{i,\epsilon}\big\|_{\infty}=1.
 $$
 By contradiction, if \eqref{vector666} does not hold true, then necessarily $|y_{\epsilon}^{i}|\rightarrow\infty$, which contradicts \eqref{day2}.
	\end{proof}
\begin{lem}\label{B4}
	There is a constant $C>0$, such that
	$$
	\frac{1}{|x|^{n-2}}\ast \left(\frac{1}{1+\big\|u_{\epsilon}\big\|_{\infty}^{\frac{4-(n-2)\epsilon}{2(n-2)}}|x-x_{\epsilon}|}\right)^{(n-2)(p-\epsilon)}\leq C\frac{\big\|u_{\epsilon}\big\|_{\infty}^{-\frac{4-(n-2)\epsilon}{n-2}}}{\big(1+\big\|u_{\epsilon}\big\|_{\infty}^{\frac{4-(n-2)\epsilon}{2(n-2)}}|x-x_{\epsilon}|\big)^{n-2}},
	$$
where $\epsilon>0$.
\end{lem}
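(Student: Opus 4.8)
The strategy is to rescale and reduce the claim to a single scale-invariant convolution inequality for the Newtonian kernel. Set $\mu_\epsilon:=\|u_\epsilon\|_{\infty}^{\frac{4-(n-2)\epsilon}{2(n-2)}}$ as in \eqref{miu}, and abbreviate $b:=(n-2)(p-\epsilon)=n+2-(n-2)\epsilon$ and $\xi:=\mu_\epsilon(x-x_\epsilon)$. Performing the change of variables $y=x_\epsilon+\mu_\epsilon^{-1}z$ in the convolution integral, one finds
\[
\frac{1}{|x|^{n-2}}\ast\Big(\frac{1}{1+\mu_\epsilon|\cdot-x_\epsilon|}\Big)^{b}(x)=\mu_\epsilon^{-2}\int_{\mathbb{R}^n}\frac{dz}{|\xi-z|^{n-2}\,(1+|z|)^{b}},
\]
and since $\mu_\epsilon^{-2}=\|u_\epsilon\|_{\infty}^{-\frac{4-(n-2)\epsilon}{n-2}}$ and $1+|\xi|=1+\mu_\epsilon|x-x_\epsilon|$, the assertion of the lemma is equivalent to the $\epsilon$-uniform estimate
\[
\int_{\mathbb{R}^n}\frac{dz}{|\xi-z|^{n-2}\,(1+|z|)^{b}}\leq\frac{C}{(1+|\xi|)^{n-2}}\qquad\text{for every }\xi\in\mathbb{R}^n .
\]

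The crucial point is that for $\epsilon$ small one has $b=n+2-(n-2)\epsilon>n$, so the weight $(1+|z|)^{-b}$ is integrable on $\mathbb{R}^n$; this is precisely the range that Lemma \ref{Lem6.1} excludes (there the exponent must be strictly below $n$), and consequently the decay of the convolution is dictated not by the weight but by the singularity of the Newtonian kernel. For all $\epsilon$ in a fixed small interval, $b$ stays in a compact subinterval of $(n,\infty)$, so all constants below can be chosen uniformly. To prove the displayed estimate I would treat $|\xi|\leq1$ and $|\xi|\geq1$ separately. For $|\xi|\leq1$ the integral is bounded uniformly in $\xi$ and $\epsilon$: split it over $\{|\xi-z|\leq1\}$, where $(1+|z|)^{-b}\leq1$ and $\int_{|w|\leq1}|w|^{-(n-2)}dw<\infty$, and over $\{|\xi-z|>1\}$, where $|\xi-z|^{-(n-2)}\leq1$ and $\int_{\mathbb{R}^n}(1+|z|)^{-b}dz<\infty$; since $(1+|\xi|)^{-(n-2)}$ is bounded below on this range, the inequality holds trivially.

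For $|\xi|\geq1$ I would split $\mathbb{R}^n=E_1\cup E_2$ with $E_1=\{|\xi-z|\geq|\xi|/2\}$ and $E_2=\{|\xi-z|<|\xi|/2\}$. On $E_1$ one bounds $|\xi-z|^{-(n-2)}\leq2^{n-2}|\xi|^{-(n-2)}$ and factors it out, leaving $\int_{\mathbb{R}^n}(1+|z|)^{-b}dz\leq C$ (finite because $b>n$), which contributes $C|\xi|^{-(n-2)}$. On $E_2$ one has $|z|\geq|\xi|-|\xi-z|>|\xi|/2$, hence $(1+|z|)^{-b}\leq2^{b}|\xi|^{-b}$, while $\int_{E_2}|\xi-z|^{-(n-2)}dz\leq\int_{|w|\leq|\xi|/2}|w|^{-(n-2)}dw=C|\xi|^{2}$; this contributes $C|\xi|^{2-b}=C|\xi|^{-(n-(n-2)\epsilon)}\leq C|\xi|^{-(n-2)}$, the last inequality holding because $|\xi|\geq1$ and $n-(n-2)\epsilon\geq n-2$ once $\epsilon\leq2/(n-2)$. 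Adding the two contributions yields the required bound, and undoing the rescaling gives the statement of the lemma.

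I do not expect a genuine obstacle here: the argument is elementary potential-theoretic bookkeeping. The one point deserving attention is that one must use the integrability $b>n$ of the weight in order to obtain the full $(1+|\xi|)^{-(n-2)}$ decay — the weaker bound $(1+|\xi|)^{-\sigma}$ with $\sigma<n-2$ that would follow merely by comparing $(1+|z|)^{-b}$ with $(1+|z|)^{-(2+\sigma)}$ and invoking Lemma \ref{Lem6.1} is not sharp enough for the applications in the sequel.
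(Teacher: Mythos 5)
Your proof is correct. The rescaling $y=x_\epsilon+\mu_\epsilon^{-1}z$ is computed accurately (the Jacobian $\mu_\epsilon^{-n}$ combined with $|x-y|^{-(n-2)}=\mu_\epsilon^{n-2}|\xi-z|^{-(n-2)}$ produces exactly the prefactor $\mu_\epsilon^{-2}=\|u_\epsilon\|_\infty^{-(4-(n-2)\epsilon)/(n-2)}$), and the remaining scale-invariant estimate
\[
\int_{\mathbb{R}^n}\frac{dz}{|\xi-z|^{n-2}(1+|z|)^{b}}\leq C(1+|\xi|)^{-(n-2)},\qquad b=n+2-(n-2)\epsilon,
\]
follows exactly as you argue: the $E_1$ piece uses the integrability of $(1+|z|)^{-b}$ (valid since $b>n$ for $\epsilon$ small), while the $E_2$ piece produces $C|\xi|^{2-b}=C|\xi|^{-n+(n-2)\epsilon}\leq C|\xi|^{-(n-2)}$ for $|\xi|\geq1$ once $\epsilon\leq2/(n-2)$, which is certainly the case in the paper's regime $n\in\{3,4,5\}$, $\epsilon$ small. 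The paper itself gives no proof and simply cites Wei--Yan \cite{WY}; the estimate there is established by precisely this kind of annular decomposition, so you have effectively reconstructed the argument the paper delegates to the reference. Your closing remark is also apt: trying to reach this via Lemma \ref{Lem6.1} by comparing $(1+|z|)^{-b}$ with $(1+|z|)^{-(2+\sigma)}$, $\sigma<n-2$, would only yield the non-sharp decay $(1+|\xi|)^{-\sigma}$, so the direct splitting is the right tool here.
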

\begin{proof}
 The estimate can be achieved in the same way as in \cite{WY}.
\end{proof}
\begin{lem}\label{m00}
For any $i\in\{2,\dots,n+1\}$ then there exists $\beta^{i}\neq0$ such that
\begin{equation}\label{number}
\big\|u_{\epsilon}\big\|_{\infty}^2v_{i,\epsilon}(x)\rightarrow \mathcal{M}_0\beta^{i}G(x,x_0)\quad\mbox{in}\hspace{2mm}C^{1,\alpha}(\omega)
\end{equation}
where the convergence is in $C^{1,\tilde{\alpha}}(\omega)$ with $\omega$ any compact set of $\bar{\Omega}$ not containing $x_0$ the limit point of $x_\epsilon$ and $\mathcal{M}_0=-\big(\frac{4}{n(n+2)}+\frac{1}{n}\big)\Gamma_nC_N\omega_n$ where
$$\Gamma_n=I(s)S^{\frac{(2-n)}{8}}C_{n,n-2}^{\frac{2-n}{8}}[n(n-2)]^{\frac{n-2}{4}}
\hspace{2mm}\mbox{with}\hspace{2mm}I(s)=\frac{\pi^{\frac{n}{2}}\Gamma(\frac{n-2s}{2})}{\Gamma(n-s)}, \ \ \mbox{and }\Gamma(s)=\int_0^{+\infty} x^{s-1}e^{-x}\,dx,~s>0.$$
\end{lem}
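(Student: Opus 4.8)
The plan is to reproduce the argument that closed the proof of Theorem~\ref{Figalli-1}, with the profile $W[0,1]$ there replaced by the weak limit $v_i$ of the rescaled eigenfunctions supplied by Lemma~\ref{identity}. Put $g_{i,\epsilon}:=C_N\lambda_{i,\epsilon}\|u_\epsilon\|_\infty^2\,\mathcal{G}_\epsilon[v_{i,\epsilon}]$, so that \eqref{e-1} gives $-\Delta\big(\|u_\epsilon\|_\infty^2 v_{i,\epsilon}\big)=g_{i,\epsilon}$ in $\Omega$ with zero boundary data, hence $\|u_\epsilon\|_\infty^2 v_{i,\epsilon}(x)=\int_\Omega G(x,y)\,g_{i,\epsilon}(y)\,dy$. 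The statement reduces to showing $g_{i,\epsilon}\rightharpoonup\mathcal{M}_0\beta^i\,\delta_{x_0}$ weakly-$*$ as measures on $\Omega$: granting this, the interior–boundary estimate of Lemma~\ref{regular} bounds $\big\|\,\|u_\epsilon\|_\infty^2 v_{i,\epsilon}\big\|_{C^{1,\tilde\alpha}(\omega')}$ by $\|g_{i,\epsilon}\|_{L^1(\Omega)}+\|g_{i,\epsilon}\|_{L^\infty(\omega)}$ and upgrades the distributional convergence to $\|u_\epsilon\|_\infty^2 v_{i,\epsilon}\to\mathcal{M}_0\beta^i\,G(\cdot,x_0)$ in $C^{1,\tilde\alpha}(\omega)$ on every compact $\omega\subset\overline\Omega$ not containing $x_0$, exactly as in Theorem~\ref{Figalli-1}.

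To compute the mass I integrate $g_{i,\epsilon}$ over $\Omega$ and change variables by $x\mapsto\mu_\epsilon^{-1}x+x_\epsilon$ as in \eqref{miu-1}--\eqref{vie}. Using $\mu_\epsilon=\|u_\epsilon\|_\infty^{2(n-2)/(4-(n-2)\epsilon)}$, $p=\frac{n+2}{n-2}$ and $\|u_\epsilon\|_\infty^\epsilon\to1$ (from \eqref{bepus}), the $\mu_\epsilon$-prefactors of the two resulting double integrals over $\Omega_\epsilon$ tend to $1$, and the decay bounds \eqref{decay1}, \eqref{day2} together with $\tilde u_\epsilon\to W$, $\tilde v_{i,\epsilon}\to v_i$ in $C_{loc}^1(\mathbb{R}^n)$ (Lemma~\ref{identity}) and dominated convergence give
\[
\int_\Omega g_{i,\epsilon}\,dx\ \longrightarrow\ C_N\Big[p\iint\frac{W^{p-1}(y)\,v_i(y)\,W^{p-1}(x)}{|x-y|^{n-2}}\,dx\,dy+(p-1)\iint\frac{W^{p}(y)\,W^{p-2}(x)\,v_i(x)}{|x-y|^{n-2}}\,dx\,dy\Big],
\]
which by \eqref{Q1} equals $\int_{\mathbb{R}^n}(-\Delta v_i)\,dx$. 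Now I insert $v_i=-(n-2)\sum_k\alpha_k^i\,x_k(1+|x|^2)^{-n/2}+\beta^i(1-|x|^2)(1+|x|^2)^{-n/2}$ from Lemma~\ref{identity}: since $W,W^{p-1}$ are radial and $|x-y|^{2-n}$ is invariant under simultaneous reflection of the $k$-th coordinates of $x$ and $y$, every contribution carrying the odd factor $x_k(1+|x|^2)^{-n/2}$ vanishes, so only the $\beta^i$-component of $v_i$ survives. Collapsing the inner convolution by $|x|^{-(n-2)}\ast W^p=\Gamma_n W$ (identity \eqref{p1-00}) and evaluating the remaining one-dimensional integrals $\int_0^\infty r^m(1+r^2)^{-s}\,dr$ via the Beta function then yields $\int_\Omega g_{i,\epsilon}\,dx\to\mathcal{M}_0\beta^i$ with $\mathcal{M}_0$ the constant in the statement; in particular these masses are uniformly bounded.

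It remains to exclude escape of mass. For $x$ in a fixed compact $\omega$ with $x_0\notin\omega$, the bounds \eqref{decay1}, \eqref{day2}, the convolution estimate of Lemma~\ref{B4} applied to $|x|^{-(n-2)}\ast(1+\|u_\epsilon\|_\infty^{(4-(n-2)\epsilon)/(2(n-2))}|x-x_\epsilon|)^{-(n-2)(p-\epsilon)}$, and $\|u_\epsilon\|_\infty^\epsilon\to1$ give $|g_{i,\epsilon}(x)|\le C\|u_\epsilon\|_\infty^{-\gamma_\epsilon}\,|x-x_0|^{-(n-2)(p-1)}$ with $\gamma_\epsilon$ bounded below by a positive constant — and it is precisely here that the restriction $n\le5$ enters, making that lower bound positive — so $g_{i,\epsilon}\to0$ locally uniformly on $\overline\Omega\setminus\{x_0\}$, exactly as in the corresponding step of Theorem~\ref{Figalli-1}. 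Combined with the uniform mass bound this forces $g_{i,\epsilon}\rightharpoonup\mathcal{M}_0\beta^i\,\delta_{x_0}$, and the first paragraph completes the proof, with $\beta^i$ the coefficient from Lemma~\ref{identity} (for which $(\alpha^i,\beta^i)\ne0$). The delicate point is the middle step: justifying dominated convergence across the rescaling — the $u_\epsilon^{-\epsilon}$ logarithmic corrections, the behaviour at infinity of $W^{p-1}v_i$, and the expansion $\Omega_\epsilon\uparrow\mathbb{R}^n$ — and then carrying out the exact evaluation of the limiting nonlocal integrals that pins down $\mathcal{M}_0$, the coordinate-reflection cancellation being what isolates $\beta^i$.
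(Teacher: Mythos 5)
Your proposal follows essentially the same route as the paper: represent $\|u_\epsilon\|_\infty^2 v_{i,\epsilon}$ via the Green kernel, show $g_{i,\epsilon}:=C_N\lambda_{i,\epsilon}\|u_\epsilon\|_\infty^2\mathcal{G}_\epsilon[v_{i,\epsilon}]\rightharpoonup\mathcal{M}_0\beta^i\delta_{x_0}$ by (a) computing the total mass through the rescaling, dominated convergence, the cancellation of the odd $\alpha^i$-contributions, and the identities \eqref{p1-00}, \eqref{thatwwe}, \eqref{wn}, and (b) showing $g_{i,\epsilon}\to 0$ locally uniformly away from $x_0$ via Lemma~\ref{B4} and the pointwise decay bounds; then upgrade to $C^{1,\tilde\alpha}$ convergence via Lemma~\ref{regular}. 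This is exactly the paper's argument, and the constant $\mathcal{M}_0$ falls out the same way.

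One small inaccuracy in your commentary: the exponent $\gamma_\epsilon$ in the pointwise bound $|g_{i,\epsilon}|\lesssim\|u_\epsilon\|_\infty^{-\gamma_\epsilon}|x-x_0|^{-(n-2)(p-1)}$ is $\frac{4}{n-2}+O(\epsilon)$, which is positive for \emph{every} $n\geq 3$; this is not where $n\le 5$ is used. The restriction $n\le 5$ is needed elsewhere (e.g.\ to have $p-2>0$ so that $u_\epsilon^{p-2-\epsilon}$ in $\mathcal{G}_\epsilon$ is well-behaved near $\partial\Omega$, and in the underlying Theorem~A), not in this vanishing step. This doesn't affect the validity of your argument, but the attribution is misplaced.
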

\begin{proof}
Let us fix $i\in\{2,\cdots,n+1\}$, we have
\begin{equation}\label{mn}
-\Delta\big(\big\|u_{\epsilon}\big\|_{\infty}^2v_{i,\epsilon}\big)=\lambda_{i,\epsilon}C_N\big\|u_{\epsilon}\big\|_{\infty}^2 \mathcal{G}[v_{i,\epsilon}]\quad\mbox{in}\hspace{2mm}\Omega.
\end{equation}
We integrate the right-hand side of \eqref{mn}
\begin{equation*}
\begin{split}
\lambda_{i,\epsilon}C_N\big\|u_{\epsilon}\big\|_{\infty}^2 \int_{\Omega}\mathcal{G}[v_{i,\epsilon}]dx&=
\lambda_{i,\epsilon}C_N(p-\epsilon)\big\|u_{\epsilon}\big\|_{\infty}^2\int_{\Omega}\int_{\Omega}\frac{u_{\epsilon}^{p-1-\epsilon}(y)v_{i,\epsilon}(y)u_{\epsilon}^{p-1-\epsilon}(x)}{|x-y|^{n-2}}dxdy\\&
\hspace{2mm}+\lambda_{i,\epsilon}C_N(p-1-\epsilon)\big\|u_{\epsilon}\big\|_{\infty}^2\int_{\Omega}\int_{\Omega}\frac{u_{\epsilon}^{p-\epsilon}(y)u_{\epsilon}^{p-2-\epsilon}(x)v_{i,\epsilon}(x)}{|x-y|^{n-2}}dxdy.
\end{split}
\end{equation*}
 Using \eqref{bepus}, \eqref{decay1} and \eqref{day2} by dominated convergence, we deduce that
\begin{equation*}
\begin{split}
\big\|u_{\epsilon}\big\|_{\infty}^2\int_{\Omega}\int_{\Omega}\frac{u_{\epsilon}^{p-1-\epsilon}(y)v_{i,\epsilon}(y)u_{\epsilon}^{p-1-\epsilon}(x)}{|x-y|^{n-2}}dxdy
&= \frac{\big\|u_{\epsilon}\big\|_{\infty}^{2(p-\epsilon)}}{\big\|u_{\epsilon}\big\|_{\infty}^{\frac{p-1-\epsilon}{2}(n+2)}}\int_{\Omega_{\epsilon}}\int_{\Omega_{\epsilon}}\frac{\tilde{u}_{\epsilon}^{p-1-\epsilon}(y)\tilde{v}_{i,\epsilon}(y)\tilde{u}_{\epsilon}^{p-1-\epsilon}(x)}{|x-y|^{n-2}}dxdy\\&\leq M\int_{\mathbb{R}^n}\int_{\mathbb{R}^n}\frac{W^{p}(y)W^{p-1}(x)}{|x-y|^{n-2}}dxdy<\infty
\end{split}
\end{equation*}
as $\epsilon\rightarrow0$ and for some $M>0$. Similarly, we have
\begin{equation*}
\begin{split}
\big\|u_{\epsilon}\big\|_{\infty}^2\int_{\Omega}\int_{\Omega}\frac{u_{\epsilon}^{p-\epsilon}(y)u_{\epsilon}^{p-2-\epsilon}(x)v_{i,\epsilon}(x)}{|x-y|^{n-2}}dxdy\leq M\int_{\mathbb{R}^n}\int_{\mathbb{R}^n}\frac{W^{p}(y)W^{p-1}(x)}{|x-y|^{n-2}}dxdy<\infty
\end{split}
\end{equation*}
as $\epsilon\rightarrow0$ and for some $M>0$.
For $\psi\in C_{0}^{\infty}(\Omega)$, multiplying \eqref{mn} by $\psi$ and integrating we get
\begin{equation*}
\begin{split}
&\lambda_{i,\epsilon}C_N(p-\epsilon)\big\|u_{\epsilon}\big\|_{\infty}^2\int_{\Omega}\int_{\Omega}\frac{u_{\epsilon}^{p-1-\epsilon}(y)v_{i,\epsilon}(y)u_{\epsilon}^{p-1-\epsilon}(x)\psi(x)}{|x-y|^{n-2}}dxdy\\&
=\lambda_{i,\epsilon}C_N(p-\epsilon)\frac{\big\|u_{\epsilon}\big\|_{\infty}^{2(p-\epsilon)}}{\big\|u_{\epsilon}\big\|_{\infty}^{\frac{p-1-\epsilon}{2}(n-2)}}\int_{\Omega_{\epsilon}}\int_{\Omega_{\epsilon}}\frac{\tilde{u}_{\epsilon}^{p-1-\epsilon}(y)\tilde{v}_{i,\epsilon}(y)\tilde{u}_{\epsilon}^{p-1-\epsilon}(x)}{|x-y|^{n-2}}\psi\Big(\big\|u_{\epsilon}\big\|_{\infty}^{-\frac{4-(n-2)\epsilon}{2(n-2)}}x+x_{\epsilon}\Big)
dxdy\\&
=C_Np\int_{\mathbb{R}^n}\int_{\mathbb{R}^n}\frac{W^{p-1}(y)W^{p-1}(x)}{|x-y|^{n-2}}\Big((2-n)\sum_{k=1}^{n}\frac{\alpha_k^iy_k}{(1+|y|^2)^{\frac{n}{2}}}+\beta^{i}\frac{1-|y|^2}{(1+|y|^2)^{\frac{n}{2}}}
\Big)\psi\big(x_{0}\big)dxdy+o(1),
\end{split}
\end{equation*}
where we exploited \eqref{bepus}, \eqref{decay1}, \eqref{day2}, \eqref{baowen} and \eqref{viep} in the last equality. On account of the oddness of the integrand, we have that
\begin{equation}\label{W-0}
\int_{\mathbb{R}^n}\int_{\mathbb{R}^n}\frac{W^{p-1}(y)W^{p-1}(x)}{|x-y|^{n-2}}\frac{y_k}{(1+|y|^2)^{\frac{n}{2}}}dxdy=
\int_{\mathbb{R}^n}\int_{\mathbb{R}^n}\frac{y_k}{(1+|y|^2)^{\frac{n}{2}+2}}\frac{1}{|x-y|^{n-2}}\frac{1}{(1+|x|^2)^{2}}dxdy=0
\end{equation}
for any $k=1,\cdots,n$. By \eqref{p1-00},
we have
\begin{equation}\label{thatwwe}
\begin{split}
\int_{\mathbb{R}^n}\frac{W^{p-1}(y)}{|x-y|^{n-2}}\frac{1-|y|^2}{(1+|y|^2)^{\frac{n}{2}}}dy&=\frac{\partial}{\partial\lambda}\Big(\int_{\mathbb{R}^n}\frac{W^{p}[0,\mu](y)}{|x-y|^{n-2}}dy\Big)\Big|_{\mu=1}
=I(\frac{n-2}{2})S^{\frac{(2-n)}{8}}C_{n,n-2}^{\frac{2-n}{8}}[n(n-2)]^{\frac{n-2}{4}}\Big(\frac{\partial W}{\partial\mu}\Big|_{\mu=1}\Big)\\&=I(\frac{n-2}{2})S^{\frac{(2-n)}{8}}C_{n,n-2}^{\frac{2-n}{8}}[n(n-2)]^{\frac{n-2}{4}}
\frac{1-|x|^2}{(1+|x|^2)^{\frac{n}{2}}}.
\end{split}
\end{equation}
Considering the above equality, we calculate
\begin{equation*}
\begin{split}
&\lambda_{i,\epsilon}C_N(p-\epsilon)\big\|u_{\epsilon}\big\|_{\infty}^2\int_{\Omega}\int_{\Omega}\frac{u_{\epsilon}^{p-1-\epsilon}(y)v_{i,\epsilon}(y)u_{\epsilon}^{p-1-\epsilon}(x)\psi(x)}{|x-y|^{n-2}}dxdy\\&
=\Gamma_nC_Np\beta^{i}\psi\big(x_{0}\big)\int_{\mathbb{R}^n}W^{p-1}(x)\frac{1-|x|^2}{(1+|x|^2)^{\frac{n}{2}}}
dx+o(1)=-\frac{1}{n}\Gamma_nC_N\omega_n\beta^{i}\psi\big(x_{0}\big)+o(1),
\end{split}
\end{equation*}
where $\omega_n$ is the measure of the surface  the unit ball in $\mathbb{R}^n$. In the last inequality, we have applied the  computation
\begin{equation}\label{wn}
\int_{\mathbb{R}^n}W^{p-1}(x)\frac{1-|x|^2}{(1+|x|^2)^{\frac{n}{2}}}
dx=\frac{\omega_n}{2}\Big(\frac{\Gamma(\frac{n}{2})\Gamma(2)}{\Gamma(\frac{n}{2}+2)}-\frac{\Gamma(\frac{n}{2}+1)\Gamma(1)}{\Gamma(\frac{n}{2}+2)}\Big)=-\frac{n-2}{n(n+2)}\omega_n.
\end{equation}
Similarly, we find that
\begin{equation*}
\begin{split}
&\lambda_{i,\epsilon}C_N(p-1-\epsilon)\big\|u_{\epsilon}\big\|_{\infty}^2\int_{\Omega}\int_{\Omega}\frac{u_{\epsilon}^{p-\epsilon}(y)u_{\epsilon}^{p-2-\epsilon}(x)v_{i,\epsilon}(x)}{|x-y|^{n-2}}\psi(x)dxdy\\&
=C_N(p-1)\int_{\mathbb{R}^n}\int_{\mathbb{R}^n}\frac{W^{p}(y)W^{p-2}(x)}{|x-y|^{n-2}}\Big((2-n)\sum_{k=1}^{n}\frac{\alpha_k^ix_k}{(1+|x|^2)^{\frac{n}{2}}}+\beta^{i}\frac{1-|x|^2}{(1+|x|^2)^{\frac{n}{2}}}
\Big)\psi\big(x_{0}\big)dxdy+o(1).
\end{split}
\end{equation*}
Therefore, from \eqref{p1-00} we conclude that
\begin{equation*}
\begin{split}
&\lambda_{i,\epsilon}C_N(p-1-\epsilon)\big\|u_{\epsilon}\big\|_{\infty}^2\int_{\Omega}\int_{\Omega}\frac{u_{\epsilon}^{p-\epsilon}(y)u_{\epsilon}^{p-2-\epsilon}(x)v_{i,\epsilon}(x)}{|x-y|^{n-2}}\psi(x)dxdy\\&
=\Gamma_nC_N(p-1)\beta^{i}\psi\big(x_{0}\big)\int_{\mathbb{R}^n}W^{2^{\ast}-2}(x)\frac{1-|x|^2}{(1+|x|^2)^{\frac{n}{2}}}
dx+o(1)=-\frac{4}{n(n+2)}\Gamma_nC_N\omega_n\beta^{i}\psi\big(x_{0}\big)+o(1).
\end{split}
\end{equation*}
As a result,
\begin{equation*}
\lambda_{i,\epsilon}C_N\big\|u_{\epsilon}\big\|_{\infty}^2 \int_{\Omega}\mathcal{G}[v_{i,\epsilon}]\psi(x)dx=-\big(\frac{4}{n(n+2)}+\frac{1}{n}\big)\Gamma_nC_N\omega_n\beta^{i}\psi\big(x_{0}\big)+o(1).
\end{equation*}
On the other hand,
\begin{equation*}
\begin{split}
\lambda_{i,\epsilon}C_N\big\|u_{\epsilon}\big\|_{\infty}^2 \mathcal{G}[v_{i,\epsilon}]&=\lambda_{i,\epsilon}C_N
\big\|u_{\epsilon}\big\|_{\infty}^2\Big(|x|^{-(n-2)} \ast \big(u_{\epsilon}^{p-1-\epsilon}v_{i,\epsilon}\big)\Big)u_{\epsilon}^{p-1-\epsilon}
\\&\hspace{3mm}+\lambda_{i,\epsilon}C_N\big\|u_{\epsilon}\big\|_{\infty}^2\Big(|x|^{-(n-2)} \ast u_{\epsilon}^{p-\epsilon}\Big)u_{\epsilon}^{p-2-\epsilon}v_{i,\epsilon}=:\mathcal{I}_{\epsilon,1}+\mathcal{I}_{\epsilon,2}.
\end{split}
\end{equation*}
Hence, using Lemma \ref{B4}, \eqref{dus} and \eqref{decay22} in Theorem A with $x_{\epsilon}\rightarrow x_0\in\Omega$ as $\epsilon\rightarrow0$, we find
\begin{equation*}
\begin{split}
\mathcal{I}_{\epsilon,1}&\leq M_0\big\|u_{\epsilon}\big\|_{\infty}^{2(p-\epsilon)}\Big(|x|^{-(n-2)} \ast W^{p-\epsilon}\big(\big\|u_{\epsilon}\big\|_{\infty}^{\frac{4-(n-2)\epsilon}{2(n-2)}}(x-x_{\epsilon})\big)\Big)W^{p-1-\epsilon}\Big(\big\|u_{\epsilon}\big\|_{\infty}^{\frac{4-(n-2)\epsilon}{2(n-2)}}(x-x_{\epsilon})\Big)
\\&\leq M_0\frac{\big\|u_{\epsilon}\big\|_{\infty}^{2(p-\epsilon)}\cdot\big\|u_{\epsilon}\big\|_{\infty}^{-\frac{4-(n-2)\epsilon}{n-2}}}{\big\|u_{\epsilon}\big\|_{\infty}^{\frac{4-(n-2)\epsilon}{2}(p-\epsilon)}|x-x_{0}|^{(n-2)(p-\epsilon)}}\leq M_0\frac{1}{\big\|u_{\epsilon}\big\|_{\infty}^{\frac{4}{n-2}}},
\end{split}
\end{equation*}
for $x\neq x_0$ and some positive constant $M_{0}$ not depending on $\epsilon$. Then
$$\lambda_{i,\epsilon}C_N
\big\|u_{\epsilon}\big\|_{\infty}^2\Big(|x|^{-(n-2)} \ast \big(u_{\epsilon}^{p-1-\epsilon}v_{i,\epsilon}\big)\Big)u_{\epsilon}^{p-1-\varepsilon}
\rightarrow0\quad \mbox{for}\hspace{2mm}x\neq x_0.$$
Also we have
\begin{equation*}
\begin{split}
\mathcal{I}_{\epsilon,2}\leq M_0\frac{\big\|u_{\epsilon}\big\|_{\infty}^{2(p-\epsilon)}\cdot\big\|u_{\epsilon}\big\|_{\infty}^{-\frac{4-(n-2)\epsilon}{n-2}}}{\big\|u_{\epsilon}\big\|_{\infty}^{\frac{4-(n-2)\epsilon}{2}(p-\epsilon)}|x-x_{\epsilon}|^{(n-2)(p-\epsilon)}}\leq M_0\frac{1}{\big\|u_{\epsilon}\big\|_{\infty}^{\frac{4}{n-2}}}\hspace{2mm} \hspace{2mm}\mbox{in}\hspace{2mm} \bar{\Omega}\setminus\{x_0\}
\end{split}
\end{equation*}
for some constant $M_{0}>0$ not depending on $\epsilon$.
Thus
$$\lambda_{i,\epsilon}C_N\big\|u_{\epsilon}\big\|_{\infty}^2\Big(|x|^{-(n-2)} \ast u_{\epsilon}^{p-\epsilon}\Big)u_{\epsilon}^{p-2-\epsilon}v_{i,\epsilon}\rightarrow0\quad\mbox{for}\hspace{2mm}x\neq x_0.$$
From the above estimates we have
$$
-\Delta\big(\big\|u_{\epsilon}\big\|_{\infty}^2v_{i,\epsilon}\big)\rightarrow-\big(\frac{4}{n(n+2)}+\frac{1}{n}\big)\Gamma_nC_N\omega_n\beta^{i}\delta_{x=x_0}
$$
in the sense of distribution in $\Omega$. \\ \noindent Let $\omega$ be any compact set in $\bar{\Omega}$ not containing $x_0$. By  Lemma \ref{regular}, we obtain
$$
\big\|\big\|u_{\epsilon}\big\|_{\infty}^2v_{i,\epsilon}\big\|_{C^{1,\tilde{\alpha}}(\omega)}\leq
C\Big(\big\|\lambda_{i,\epsilon}C_N\big\|u_{\epsilon}\big\|_{\infty}^2 \mathcal{G}[v_{i,\epsilon}]\big\|_{L^{1}(\Omega)}+\big\|\lambda_{i,\epsilon}C_N\big\|u_{\epsilon}\big\|_{\infty}^2 \mathcal{G}[v_{i,\epsilon}]\big\|_{L^{\infty}(\omega)}\Big).
$$
Thus,
$$\big\|u_{\epsilon}\big\|_{\infty}^2v_{i,\epsilon}\rightarrow-\big(\frac{4}{n(n+2)}+\frac{1}{n}\big)\Gamma_nC_N\omega_n\beta^{i}G(x,x_0)\quad\mbox{in}\hspace{2mm}C^{1,\tilde{\alpha}}(\omega)$$
as $\epsilon\rightarrow0$ and the conclusion follows.
 \end{proof}	
As a consequence of Lemma \ref{nabla}, we find the following Corollary.
\begin{cor}\label{conseq}
It holds that
\begin{equation}\label{conse1}
\begin{split}
&\int_{\partial\Omega}((x-z),\nu)\frac{\partial u_{\epsilon}}{\partial\nu}\frac{\partial v_{i,\epsilon}}{\partial\nu}dS_x\\&=
(p-\epsilon)C_N(1-\lambda_{i,\epsilon})\int_{\Omega}\int_{\Omega}\frac{u_{\epsilon}^{p-1-\epsilon}v_{i,\epsilon}}{|x-y|^{n-2}}u_{\epsilon}^{p-1-\epsilon}\big((y-z)\cdot\nabla u_{\epsilon}+\frac{2}{p-1-\epsilon}u_{\epsilon}\big)dxdy\\ &+(p-1-\epsilon)C_N(1-\lambda_{i,\epsilon})\int_{\Omega}\int_{\Omega}\frac{u_{\epsilon}^{p-\epsilon}(y)}{|x-y|^{n-2}} u_{\epsilon}^{p-2-\epsilon}(x)v_{i,\epsilon}(x)\big((x-z)\cdot\nabla u_{\epsilon}+\frac{2}{p-1-\epsilon}u_{\epsilon}\big)dxdy
\end{split}
\end{equation}
for any $z\in\mathbb{R}^n$ and for $i=2,\cdots,n+2$,
where $\nu=\nu(x)$ denotes the unit outward normal to the boundary $\partial \Omega$.
\end{cor}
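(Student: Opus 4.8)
The plan is to derive \eqref{conse1} as a nonlocal Pohozaev-type identity: one pairs the linearised eigenvalue equation \eqref{e-1} for $v_{i,\epsilon}$ against the dilation--translation field $\eta_z:=(x-z)\cdot\nabla u_{\epsilon}+\frac{2}{p-1-\epsilon}u_{\epsilon}$, integrates by parts, and exploits the symmetry of the Riesz kernel to collapse all the ``bulk'' terms into the single factor $1-\lambda_{i,\epsilon}$. Fix $z\in\mathbb{R}^n$. By standard elliptic regularity (the Newtonian potential of a bounded compactly supported function is locally $C^{1,\tilde\alpha}$, so a bootstrap applied to \eqref{ele-1.1} gives $u_{\epsilon}\in C^{3}(\overline\Omega)$ and applied to \eqref{e-1} gives $v_{i,\epsilon}\in C^{2}(\overline\Omega)$), the function $\eta_z$ lies in $C^{2}(\overline\Omega)$, so all the integrations by parts below are legitimate; moreover, by Lemma \ref{nabla}, $\eta_z$ satisfies \eqref{dend}, which — identifying $|x|^{-(n-2)}\ast f(x)=\int_{\Omega}|x-y|^{-(n-2)}f(y)\,dy$ for $f$ supported in $\Omega$ — reads $-\Delta\eta_z=C_N\mathcal{G}_{\epsilon}[\eta_z]$ in $\Omega$, with $\mathcal{G}_{\epsilon}$ the operator of \eqref{defanndg2} evaluated at $\eta_z$.

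First I would apply Green's second identity to the pair $(v_{i,\epsilon},\eta_z)$:
\[
\int_{\Omega}\big(v_{i,\epsilon}\Delta\eta_z-\eta_z\Delta v_{i,\epsilon}\big)\,dx
=\int_{\partial\Omega}\Big(v_{i,\epsilon}\tfrac{\partial\eta_z}{\partial\nu}-\eta_z\tfrac{\partial v_{i,\epsilon}}{\partial\nu}\Big)\,dS_x
=-\int_{\partial\Omega}\eta_z\,\tfrac{\partial v_{i,\epsilon}}{\partial\nu}\,dS_x,
\]
the first boundary term vanishing because $v_{i,\epsilon}\equiv0$ on $\partial\Omega$. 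Since also $u_{\epsilon}\equiv0$ on $\partial\Omega$, its tangential gradient vanishes there and $\nabla u_{\epsilon}=\tfrac{\partial u_{\epsilon}}{\partial\nu}\,\nu$, whence $\eta_z=\big((x-z),\nu\big)\tfrac{\partial u_{\epsilon}}{\partial\nu}$ on $\partial\Omega$, so the right-hand side above equals $-\int_{\partial\Omega}\big((x-z),\nu\big)\tfrac{\partial u_{\epsilon}}{\partial\nu}\tfrac{\partial v_{i,\epsilon}}{\partial\nu}\,dS_x$, i.e. minus the left-hand side of \eqref{conse1}.

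Next I would substitute the two PDEs into the bulk integral, obtaining $C_N\big(\lambda_{i,\epsilon}\int_{\Omega}\eta_z\,\mathcal{G}_{\epsilon}[v_{i,\epsilon}]\,dx-\int_{\Omega}v_{i,\epsilon}\,\mathcal{G}_{\epsilon}[\eta_z]\,dx\big)$. The decisive point is the self-adjointness of $\mathcal{G}_{\epsilon}$ for the $L^2(\Omega)$ pairing: expanding both integrals via \eqref{defanndg2}, applying Fubini and using $|x-y|=|y-x|$, the second summand of $\mathcal{G}_{\epsilon}$ is already symmetric in its two factors, and in the first summand one only relabels the integration variables, so that $\int_{\Omega}\eta_z\,\mathcal{G}_{\epsilon}[v_{i,\epsilon}]\,dx=\int_{\Omega}v_{i,\epsilon}\,\mathcal{G}_{\epsilon}[\eta_z]\,dx$. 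Hence the bulk integral equals $C_N(\lambda_{i,\epsilon}-1)\int_{\Omega}\eta_z\,\mathcal{G}_{\epsilon}[v_{i,\epsilon}]\,dx$, and comparing with the previous step yields
\[
\int_{\partial\Omega}\big((x-z),\nu\big)\tfrac{\partial u_{\epsilon}}{\partial\nu}\tfrac{\partial v_{i,\epsilon}}{\partial\nu}\,dS_x
=C_N(1-\lambda_{i,\epsilon})\int_{\Omega}\eta_z\,\mathcal{G}_{\epsilon}[v_{i,\epsilon}]\,dx.
\]
Expanding $\int_{\Omega}\eta_z\,\mathcal{G}_{\epsilon}[v_{i,\epsilon}]\,dx$ through \eqref{defanndg2} — and relabelling the integration variables so that the factor carrying $\eta_z$ sits at $y$ in the first summand and at $x$ in the second — reproduces verbatim the two double integrals on the right-hand side of \eqref{conse1}. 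Since nothing in the argument used more than \eqref{ele-1.1}, \eqref{e-1} and the homogeneous boundary condition on $v_{i,\epsilon}$, the identity holds for every $z\in\mathbb{R}^n$ and every index $i=2,\dots,n+2$.

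I do not expect a genuine obstacle. The only two points that need care are: (i) justifying that $\eta_z$ is an admissible test function for Green's identity, i.e. that $u_{\epsilon}$ is of class $C^{3}$ up to the boundary — a routine elliptic bootstrap for the Hartree equation, the Newtonian potential of a bounded function being $C^{1,\tilde\alpha}$; and (ii) bookkeeping the symmetry of the double integrals together with the signs, since it is precisely the combination $\lambda_{i,\epsilon}\,\eta_z\mathcal{G}_{\epsilon}[v_{i,\epsilon}]-v_{i,\epsilon}\mathcal{G}_{\epsilon}[\eta_z]$ and the minus sign produced by Green's identity that turn $\lambda_{i,\epsilon}-1$ into the claimed prefactor $1-\lambda_{i,\epsilon}$.
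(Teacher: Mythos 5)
Your proposal is correct and follows essentially the same route as the paper: the paper also multiplies the identity from Lemma \ref{nabla} (i.e. $-\Delta\eta_z = C_N\mathcal{G}_{\epsilon}[\eta_z]$) by $v_{i,\epsilon}$, multiplies the eigenvalue equation by $\eta_z$, integrates both by parts and subtracts, exactly the content of Green's second identity, then uses the symmetry $|x-y|=|y-x|$ to identify the two cross terms and produce the prefactor $1-\lambda_{i,\epsilon}$. The only cosmetic difference is that you phrase the cancellation as self-adjointness of $\mathcal{G}_{\epsilon}$ and invoke Green's second identity in one stroke, whereas the paper writes out the two integrated-by-parts identities \eqref{de11} and \eqref{d00} separately before subtracting; your brief remark about justifying the $C^{2}$ regularity of $\eta_z$ via bootstrap is a small point the paper leaves implicit.
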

\begin{proof}
Recalling the identity \eqref{dend}, and multiplying this identity by $v_{i,\epsilon}$ and integrating by parts, we obtain
\begin{equation}\label{de11}
			\begin{split}
\int_{\Omega}&\nabla\Big((x-z)\cdot\nabla u_{\epsilon}+\frac{2}{p-1-\epsilon}u_{\epsilon}\Big)\cdot\nabla v_{i,\epsilon}dx\\&=(p-\epsilon)C_N\int_{\Omega}\Big(\int_{\Omega}\frac{u^{p-1-\epsilon}\big((y-z)\cdot\nabla u_{\epsilon}+\frac{2}{p-1-\epsilon}u_{\epsilon}\big)}{|x-y|^{n-2}}dy \Big)u_{\epsilon}^{p-1-\epsilon}v_{i,\epsilon}dx\\ &\hspace{3mm}+(p-1-\epsilon)C_N\int_{\Omega}\Big(\int_{\Omega}\frac{u^{p-\epsilon}}{|x-y|^{n-2}}dy \Big)u_{\epsilon}^{p-2-\epsilon}v_{i,\epsilon}\big((y-z)\cdot\nabla u_{\epsilon}+\frac{2}{p-1-\epsilon}u_{\epsilon}\big)dx.
\end{split}
		\end{equation}
Similarly, by \eqref{ele-2}-\eqref{defanndg2}, we have
\begin{equation}\label{d00}
			\begin{split}
\int_{\Omega}&\nabla\Big((x-z)\cdot\nabla u_{\epsilon}+\frac{2}{p-1-\epsilon}u_{\epsilon}\Big)\cdot\nabla v_{i,\epsilon}dx-\int_{\partial\Omega}\nabla\Big((x-z)\cdot\nabla u_{\epsilon}\Big)\frac{\partial v_{i,\epsilon}}{\partial\nu}dS_x \\&=(p-\epsilon)C_N\lambda_{i,\epsilon}\int_{\Omega}\Big(\int_{\Omega}\frac{u_{\epsilon}^{p-1-\epsilon}v_{i,\epsilon}}{|x-y|^{n-2}}dy \Big)u_{\epsilon}^{p-1-\epsilon}\big((x-z)\cdot\nabla u_{\epsilon}+\frac{2}{p-1-\epsilon}u_{\epsilon}\big)dx\\ &\hspace{3mm}+(p-1-\epsilon)C_N\lambda_{i,\epsilon}\int_{\Omega}\Big(\int_{\Omega}\frac{u_{\epsilon}^{p-\epsilon}}{|x-y|^{n-2}}dy \Big)u_{\varepsilon}^{p-2-\epsilon}v_{i,\epsilon}\big((x-z)\cdot\nabla u_{\epsilon}+\frac{2}{p-1-\epsilon}u_{\epsilon}\big)dx.
\end{split}
		\end{equation}
Hence, combining \eqref{de11}-\eqref{d00} and the  symmetry of the integrals give \eqref{conse1}.
\end{proof}

  We are now ready to prove Theorem \ref{Figalli}.
\begin{proof}[\textbf{Proof of Theorem \ref{Figalli} (for \eqref{vie-1})}]
In order to pove \eqref{vie-1}, it is  sufficient to get that $\beta^{i}=0$ in \eqref{identity} for any $i=\{2,\cdots,n+1\}$. To show this,
we use \eqref{ifini} and \eqref{number} to write
\begin{equation}\label{write }	\frac{\partial}{\partial\nu}\big(\big\|u_{\epsilon}\big\|_{\infty}u_{\epsilon}\big)\frac{\partial}{\partial\nu}\big(\big\|u_{\epsilon}\big\|_{\infty}^2v_{i,\epsilon}\big) \rightarrow\mathcal{K}_n \mathcal{M}_0\beta^{i}\big(\frac{\partial G}{\partial\nu}(x,x_{0})\big)^2
\end{equation}
uniformly on $\partial\Omega$. Furthermore, since we have the following identity (see \cite{BP})
\begin{equation}\label{GX0}
\int_{\partial\Omega}\frac{\partial G(x,x_0)}{\partial \nu}\frac{\partial G(x,x_0)}{\partial \nu}(\nu,x-x_0)dS_{x}=-(n-2)H(x_0,x_0).
\end{equation}
Therefore, choosing $z=x_{\epsilon}$ in the left hand side of \eqref{conse1}, we can write
\begin{equation}\label{can}
\begin{split}
\int_{\partial\Omega}((x-x_{\epsilon}),\nu)\frac{\partial u_{\epsilon}}{\partial\nu}\frac{\partial v_{i,\epsilon}}{\partial\nu}dS_{x}&=
\frac{\mathcal{K}_n \mathcal{M}_0\beta^{i}}{\big\|u_{\epsilon}\big\|^{3}_{\infty}}\Big(\int_{\Omega\partial}((x-x_{0}),\nu) \big(\frac{\partial G}{\partial\nu}(x,x_{0})\big)^2+o(1)\Big)\\&
=-\frac{(n-2)\mathcal{K}_n \mathcal{M}_0\beta^{i}}{\big\|u_{\epsilon}\big\|^{3}_{\infty}}\big(H(x_0,x_0)+o(1)\big)
\end{split}
\end{equation}
as $\epsilon\rightarrow0$. On the other hand, taking $z=x_{\epsilon}$ in the right hand side of \eqref{conse1},
\begin{equation}\label{hand}
\begin{split}
&\int_{\Omega}\int_{\Omega}\frac{u_{\epsilon}^{p-1-\epsilon}(y)\big((y-x_{\epsilon})\cdot\nabla u_{\epsilon}+\frac{2}{p-1-\epsilon}u_{\epsilon}(y)\big)}{|x-y|^{n-2}}u_{\epsilon}^{p-1-\epsilon}(x)v_{i,\epsilon}(x)dxdy\\&
=\frac{\big\|u_{\epsilon}\big\|_{\infty}^{\frac{4-(n-2)\epsilon}{2}}\cdot\big\|u_{\epsilon}\big\|_{\infty}^{2(p-1-\epsilon)+1}}{\big\|u_{\epsilon}\big\|_{\infty}^{\frac{4-(n-2)\epsilon}{n-2}n}}
\int_{\Omega_{\epsilon}}\int_{\Omega_{\epsilon}}\frac{\tilde{u}_{\epsilon}^{p-1-\epsilon}(y)\big[y_{\epsilon}\cdot\nabla \tilde{u}_{\epsilon}+\frac{2}{p-1-\epsilon}\tilde{u}_{\epsilon}(y)\big]\tilde{u}_{\epsilon}^{p-1-\epsilon}(x)\tilde{v}_{i,\epsilon}(x)}{|x-y|^{n-2}}
dxdy\\&
=\frac{1}{\big\|u_{\epsilon}\big\|_{\infty}^{\frac{2-(n-2)\epsilon}{2}}}\bigg[\int_{\mathbb{R}^n}\int_{\mathbb{R}^n}\frac{W^{p-1}(y)W^{p-1}(x)}{|x-y|^{n-2}}\frac{1-|y|^2}{(1+|y|^2)^{\frac{n}{2}}}\Big((2-n)\sum_{k=1}^{n}\frac{\alpha_k^ix_k}{(1+|x|^2)^{\frac{n}{2}}}+\beta^{i}\frac{1-|x|^2}{(1+|x|^2)^{\frac{n}{2}}}
\Big)dxdy\\&\hspace{3mm}+o(1)\bigg]
=\frac{1}{\big\|u_{\epsilon}\big\|_{\infty}^{\frac{2-(n-2)\epsilon}{2}}}\bigg[\beta^{i}\int_{\mathbb{R}^n}\int_{\mathbb{R}^n}\frac{W^{p-1}(y)W^{p-1}(x)}{|x-y|^{n-2}}\frac{1-|y|^2}{(1+|y|^2)^{\frac{n}{2}}}\frac{1-|x|^2}{(1+|x|^2)^{\frac{n}{2}}}
dxdy+o(1)\bigg]
\end{split}
\end{equation}
as $\epsilon\rightarrow0$, where in the last integral we used the oddness of the integrand. Combining \eqref{p1-00} and \eqref{thatwwe} yields
\begin{equation*}
\begin{split}
\int_{\mathbb{R}^n}\int_{\mathbb{R}^n}\frac{W^{p-1}(y)W^{p-1}(x)}{|x-y|^{n-2}}\frac{1-|y|^2}{(1+|y|^2)^{\frac{n}{2}}}\frac{1-|x|^2}{(1+|x|^2)^{\frac{n}{2}}}
dxdy=\Gamma_n\int_{\mathbb{R}^n}W^{p-1}(x)\Big(\frac{1-|x|^2}{(1+|x|^2)^{\frac{n}{2}}}\Big)^2dx.
\end{split}
\end{equation*}
As a consequence, we obtain that
\begin{equation}\label{hand23}
\begin{split}
&\int_{\Omega}\int_{\Omega}\frac{u_{\epsilon}^{p-1-\epsilon}(y)\big((y-x_{\epsilon})\cdot\nabla u_{\epsilon}+\frac{2}{p-1-\epsilon}u_{\epsilon}(y)\big)}{|x-y|^{n-2}}u_{\epsilon}^{p-1-\epsilon}(x)v_{i,\epsilon}(x)dxdy\\&
=\frac{1}{\big\|u_{\epsilon}\big\|_{\infty}^{\frac{2-(n-2)\epsilon}{2}}}\bigg[\beta^{i}
\Gamma_n\int_{\mathbb{R}^n}W^{p-1}(x)\Big(\frac{1-|x|^2}{(1+|x|^2)^{\frac{n}{2}}}\Big)^2dx+o(1)\bigg]
\end{split}
\end{equation}
as $\epsilon\rightarrow0$. Similarly, we have
\begin{equation}\label{h23}
\begin{split}
&\int_{\Omega}\int_{\Omega}\frac{u_{\epsilon}^{p-\epsilon}(y)}{|x-y|^{n-2}} u_{\epsilon}^{p-2-\epsilon}(x)v_{i,\epsilon}(x)\big((x-x_{\epsilon})\cdot\nabla u_{\epsilon}+\frac{2}{p-1-\epsilon}u_{\epsilon}\big)dxdy\\&
=\frac{1}{\big\|u_{\epsilon}\big\|_{\infty}^{\frac{2-(n-2)\epsilon}{2}}}
\int_{\Omega_{\epsilon}}\int_{\Omega_{\epsilon}}\frac{\tilde{u}_{\epsilon}^{p-\epsilon}(z)\tilde{u}_{\epsilon}^{p-2-\epsilon}(y)\tilde{v}_{i,\epsilon}(y)}{|y-z|^{n-2}}
\big[y_{\epsilon}\cdot\nabla \tilde{u}_{\epsilon}+\frac{2}{p-1-\epsilon}\tilde{u}_{\epsilon}(y)\big]dydz\\&
=\frac{1}{\big\|u_{\epsilon}\big\|_{\infty}^{\frac{2-(n-2)\epsilon}{2}}}\bigg[\beta^{i}
\Gamma_n\int_{\mathbb{R}^n}W^{2^{\ast}-2}(y)\Big(\frac{1-|y|^2}{(1+|y|^2)^{\frac{n}{2}}}\Big)^2dy+o(1)\bigg]
\end{split}
\end{equation}
as $\epsilon\rightarrow0$. Combining \eqref{conse1}, \eqref{can}, \eqref{hand23} and \eqref{h23}, we deduce that
\begin{equation*}
\begin{split}
-\frac{(n-2)\mathcal{K}_n \mathcal{M}_0\beta^{i}}{\big\|u_{\epsilon}\big\|^{3}_{\infty}}\big(H(x_0,x_0)+o(1)\big)=
\frac{(2p-1)C_N(1-\lambda_{i,\epsilon})}{\big\|u_{\epsilon}\big\|_{\infty}^{\frac{2-(n-2)\epsilon}{2}}}\bigg[\beta^{i}
\Gamma_n\int_{\mathbb{R}^n}W^{p-1}(x)\Big(\frac{1-|x|^2}{(1+|x|^2)^{\frac{n}{2}}}\Big)^2dx+o(1)\bigg]
\end{split}
\end{equation*}
as $\epsilon\rightarrow0$. Therefore if $\beta^{i}\neq0$ we have
\begin{equation}\label{refor}
1-\lambda_{i,\varepsilon}=\frac{1}{\big\|u_{\epsilon}\big\|_{\infty}^{\frac{4+(n-2)\epsilon}{2}}}\bigg[
-\frac{(n-2)\mathcal{K}_n \mathcal{M}_0}{(2p-1)\Gamma_nC_N}\Big(\int_{\mathbb{R}^n}W^{p-1}(x)\big(\frac{1-|x|^2}{(1+|x|^2)^{\frac{n}{2}}}\big)^2dx\Big)^{-1}H(x_0,x_0)+o(1)\bigg]
\end{equation}
as $\varepsilon\rightarrow0$. In view of \eqref{ifini}, Lemma \ref{m00} and the fact that $H(x,x)<0$ by the maximum principle, simple computations give
\begin{equation}\label{xiaoyu}
-\frac{(n-2)\mathcal{K}_n \mathcal{M}_0}{(2p-1)\Gamma_nC_N}\Big(\int_{\mathbb{R}^n}W^{p-1}(x)\big(\frac{1-|x|^2}{(1+|x|^2)^{\frac{n}{2}}}\big)^2dx\Big)^{-1}H(x_0,x_0)<0
\end{equation}
by
$$\Big(\int_{\mathbb{R}^n}W^{p-1}(x)\big(\frac{1-|x|^2}{(1+|x|^2)^{\frac{n}{2}}}\big)^2dx\Big)^{-1}=\frac{2^{n+1}}{\pi^{\frac{n}{2}}}>0.$$
It follows from \eqref{bepus} and \eqref{refor} that
\begin{equation}\label{yu}
1-\lambda_{i,\epsilon}\leq-\frac{1}{2\big\|u_{\epsilon}\big\|_{\infty}^{2}}
\frac{(n-2)\mathcal{K}_n \mathcal{M}_0}{(2p-1)\Gamma_nC_N}\Big(\int_{\mathbb{R}^n}W^{p-1}(x)\big(\frac{1-|x|^2}{(1+|x|^2)^{\frac{n}{2}}}\big)^2dx\Big)^{-1}H(x_0,x_0)
\end{equation}
for $\epsilon$ sufficiently small. On the other hand, owing to  \eqref{Fn} and \eqref{baowen},
$$
1-\lambda_{i,\epsilon}\geq\frac{\widehat{M}_0}{\big\|u_{\epsilon}\big\|_{\infty}^{\frac{2n}{n-2}}}\quad \mbox{for}\hspace{2mm}\epsilon\hspace{2mm}\mbox{sufficiently small},
$$
and some constant $\widehat{M}_0>0$. Combining the above estimates and \eqref{yu}, we get
$$
\frac{2\widehat{M}_0}{\big\|u_{\epsilon}\big\|_{\infty}^{\frac{4}{n-2}}}\leq1-\lambda_{i,\epsilon}\leq-\frac{(n-2)\mathcal{K}_n \mathcal{M}_0}{(2p-1)\Gamma_nC_N}\Big(\int_{\mathbb{R}^n}W^{p-1}(x)\big(\frac{1-|x|^2}{(1+|x|^2)^{\frac{n}{2}}}\big)^2dx\Big)^{-1}H(x_0,x_0).
$$
Hence we clearly get a contradiction with \eqref{xiaoyu}, which concludes the proof of \eqref{vie-1}.
\end{proof}

Before starting the proof of \eqref{vie-2} in Theorem \ref{Figalli}, we would like to note that Hartree type nonlinearity in problem \eqref{defanndg2} is nonlocal. In particular, we have to borrow some new ideas to overcome the difficulties caused by nonlocal cross terms, not exploiting the methods of constructing orthogonal matrix by Gross-Pacella in \cite{GP05}.
Then following the work of Takahashi \cite{T} for unique solvability result, we can introduce the following initial value problem of the linear first order PDE.
\begin{lem}\label{Ve}
Let some vectors $\alpha^{i}=(\alpha_1^{i},\cdots,\alpha_{n}^i)\neq0$ in $\mathbb{R}^n$ and $\Gamma_{\alpha}=\{x\in\mathbb{R}^n:\alpha\cdot z=0\}$. Then there exists a unique solution $\tilde{V}_{\epsilon}$ of the following initial value problem
\begin{equation}\label{SOLV}
\left\lbrace
\begin{aligned}
    &\sum_{k=1}^{n}\alpha_{k}^{i}\frac{\partial\tilde{V}_{\epsilon}(w)}{\partial w_k}=\int_{\Omega_\epsilon}\frac{\tilde{u}_{\epsilon}^{p-1-\epsilon}(z)\tilde{v}_{i,\epsilon}(z)\tilde{u}_{\epsilon}^{p-1-\epsilon}(w)}{|w-z|^{n-2}}dz  \quad \mbox{in}\quad \Omega_{\epsilon}:=\|u_{\epsilon}\|_{L^{\infty}(\Omega)}^{\frac{4-(n-2)\epsilon}{2(n-2)}}(\Omega-x_{\epsilon}),\\
    &\tilde{V}_{\epsilon}(w)\big|_{\Gamma_{\alpha}}=\frac{\Gamma_n}{p^2}W^{p}[0,1](w).
   \end{aligned}
\right.
\end{equation}
Moreover, we have
\begin{equation}\label{esti}
\tilde{V}_{\epsilon}(w)\leq C\frac{1}{|w|^{n+1}}\quad\mbox{as}\hspace{2mm}|w|\rightarrow\infty.
\end{equation}
\end{lem}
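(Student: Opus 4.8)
The plan is to solve \eqref{SOLV} by the method of characteristics — reading the equation as a linear transport equation with constant advection direction $\alpha^i$ — and then to extract the decay \eqref{esti} from the resulting explicit formula.

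\emph{Existence and uniqueness.} Since $\alpha^i\neq0$, complete $\alpha^i/|\alpha^i|$ to an orthonormal basis of $\mathbb{R}^n$ and write every $w\in\mathbb{R}^n$ uniquely as $w=w'+s\,\alpha^i$ with $w'\in\Gamma_\alpha=\{z:\alpha^i\cdot z=0\}$ and $s=(\alpha^i\cdot w)/|\alpha^i|^2$. Setting
\[
F_\epsilon(\zeta):=\tilde u_\epsilon^{p-1-\epsilon}(\zeta)\Big(|\cdot|^{-(n-2)}\ast\big(\tilde u_\epsilon^{p-1-\epsilon}\tilde v_{i,\epsilon}\big)\Big)(\zeta),
\]
the first equation in \eqref{SOLV} reads $\sum_k\alpha_k^i\partial_{w_k}\tilde V_\epsilon=F_\epsilon$, i.e. $\frac{d}{d\tau}\tilde V_\epsilon(w'+\tau\alpha^i)=F_\epsilon(w'+\tau\alpha^i)$ along each characteristic line. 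Integrating from $\tau=0$, where the datum is prescribed on $\Gamma_\alpha$, produces the unique candidate
\[
\tilde V_\epsilon(w'+s\alpha^i)=\frac{\Gamma_n}{p^2}W^p[0,1](w')+\int_0^s F_\epsilon(w'+\tau\alpha^i)\,d\tau .
\]
Uniqueness is immediate from this ODE representation. To see that the formula actually defines a classical solution one needs $F_\epsilon$ to be continuous and $C^1$ in $w$: this follows from $\tilde u_\epsilon\le CW$ and $|\tilde v_{i,\epsilon}|\le CW$ (see \eqref{decay1}, \eqref{day2}), which place $\tilde u_\epsilon^{p-1-\epsilon}\tilde v_{i,\epsilon}$ in $L^1\cap L^\infty$ uniformly in $\epsilon$, together with the smoothing of the Riesz potential $|\cdot|^{-(n-2)}\ast(\cdot)$ (Lemma \ref{Lem6.1}) and interior elliptic estimates for $\tilde u_\epsilon,\tilde v_{i,\epsilon}$.

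\emph{Decay estimate.} Put $s=(\alpha^i\cdot w)/|\alpha^i|^2$ in the solution formula. By \eqref{p1-00} (here $2^{\ast}-p=1$) one has $|\cdot|^{-(n-2)}\ast W^p=\Gamma_n W$, hence, after an integration by parts, $|\cdot|^{-(n-2)}\ast(W^{p-1}\partial_k W)=\tfrac{\Gamma_n}{p}\partial_k W$; combining this with $\tilde u_\epsilon\to W$ and $\tilde v_{i,\epsilon}\to c_0\,\alpha^i\cdot\nabla W$ in $C^{1}_{loc}(\mathbb{R}^n)$ (Theorem \ref{Figalli}, \eqref{vie-1}, for a suitable constant $c_0$) shows that the formal $\epsilon\to0$ limit $F_0$ of $F_\epsilon$ equals $\alpha^i\cdot\nabla\big(\tfrac{\Gamma_n}{p^2}W^p[0,1]\big)$ — exactly the directional derivative along $\alpha^i$ of the initial datum prescribed in \eqref{SOLV}. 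Consequently $\int_0^s F_0(w'+\tau\alpha^i)\,d\tau=\tfrac{\Gamma_n}{p^2}\big(W^p[0,1](w)-W^p[0,1](w')\big)$, so that the solution formula collapses to
\[
\tilde V_\epsilon(w)=\frac{\Gamma_n}{p^2}W^p[0,1](w)+\int_0^s\big(F_\epsilon-F_0\big)(w'+\tau\alpha^i)\,d\tau ,
\]
i.e. the $W^p[0,1](w')$ contribution cancels identically against the $F_0$-part of the characteristic integral. The first term on the right is $\le C(1+|w|)^{-(n+2)}$. For the remainder, split $F_\epsilon-F_0$ into a contribution of $\tilde u_\epsilon^{p-1-\epsilon}-W^{p-1}$ tested against the Riesz potential of $\tilde u_\epsilon^{p-1-\epsilon}\tilde v_{i,\epsilon}$ and a contribution of $W^{p-1}$ tested against the Riesz potential of $\tilde u_\epsilon^{p-1-\epsilon}\tilde v_{i,\epsilon}-c_0W^{p-1}\alpha^i\cdot\nabla W$; using the Taylor expansion $W^{p-\epsilon}=W^p(1-\epsilon\log W+O(\epsilon^2))$, quantitative (uniform in $\epsilon$) decay of $\tilde u_\epsilon-W$ and $\tilde v_{i,\epsilon}-c_0\,\alpha^i\cdot\nabla W$ at infinity, and the weighted convolution bounds of Lemmas \ref{Lem6.1}--\ref{L6.2}, one obtains a pointwise bound of order $(1+|\zeta|)^{-(n+2)}$ for $|F_\epsilon-F_0|(\zeta)$ with vanishing $\epsilon=0$ limit; integrating along the characteristic and exploiting the orthogonal splitting $|w'+\tau\alpha^i|^2=|w'|^2+\tau^2|\alpha^i|^2$ then yields $\tilde V_\epsilon(w)\le C(1+|w|)^{-(n+1)}$, which is \eqref{esti}.

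\emph{Main obstacle.} The characteristics argument and the regularity of $F_\epsilon$ are routine; the delicate point is \eqref{esti}. The issue is that when $w$ is almost parallel to $\alpha^i$ the transverse component $w'$ stays bounded while $|w|\to\infty$, so the boundary contribution $\tfrac{\Gamma_n}{p^2}W^p[0,1](w')$ is only $O(1)$ and the naive pointwise estimates give no decay at all along that direction. The decay can therefore be recovered only because the datum is, to leading order, a primitive of the right-hand side along $\alpha^i$ (this is precisely the content of \eqref{p1-00}), so that after the exact cancellation only the genuinely small remainder $F_\epsilon-F_0$ survives in the characteristic integral. Making this quantitative — controlling $F_\epsilon-F_0$ uniformly in the direction of $w$, which requires sharp uniform-in-$\epsilon$ decay of $\tilde u_\epsilon-W$ and $\tilde v_{i,\epsilon}-c_0\,\alpha^i\cdot\nabla W$ beyond the crude bound \eqref{day2}, together with careful use of Lemmas \ref{Lem6.1}--\ref{L6.2} — is the technical heart of the proof.
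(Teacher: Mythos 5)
The paper's own proof is two sentences: unique solvability is deferred to Lemma~2.4 of \cite{T}, and the decay \eqref{esti} is asserted to follow from \eqref{decay1}, \eqref{day2} and \eqref{p1-00}. Your proposal reconstructs the hidden mechanism, and the central observation is correct and worth making explicit: the $\epsilon=0$ limit $F_0$ of the right-hand side equals the directional derivative $\alpha^i\cdot\nabla\big(\tfrac{\Gamma_n}{p^2}W^p[0,1]\big)$ of the boundary datum (this is exactly what \eqref{p1-00} delivers via the computation \eqref{whence}), so after writing the characteristic formula one can subtract off the primitive of $F_0$ and reduce to estimating $\int_0^s(F_\epsilon-F_0)$. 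You have correctly isolated the structural cancellation and also the correct worry, that directions close to $\alpha^i$ have $w'$ bounded and the naive boundary term does not decay.

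However, the decay step is not closed. Two concrete problems. First, you appeal to ``quantitative (uniform in $\epsilon$) decay of $\tilde u_\epsilon - W$ and $\tilde v_{i,\epsilon}-c_0\,\alpha^i\cdot\nabla W$ at infinity''. The paper provides only $C^1_{loc}$ (resp. $C^2_{loc}$) convergence plus the pointwise dominations $\tilde u_\epsilon\le CW$, $|\tilde v_{i,\epsilon}|\le CW$; no rate of decay of the differences at infinity is available, so this is an unproved extra input. Second, and more seriously, even granting a bound $|(F_\epsilon-F_0)(\zeta)|\le\delta_\epsilon(1+|\zeta|)^{-(n+2)}$ with $\delta_\epsilon\to0$, integrating over the characteristic from $\tau=0$ gives, using the orthogonal splitting,
\begin{equation*}
\Big|\int_0^s(F_\epsilon-F_0)(w'+\tau\alpha^i)\,d\tau\Big|
\;\lesssim\;\delta_\epsilon\int_0^\infty\frac{d\tau}{\big(1+\sqrt{|w'|^2+\tau^2|\alpha^i|^2}\big)^{n+2}}
\;\lesssim\;\delta_\epsilon\,(1+|w'|)^{-(n+1)},
\end{equation*}
which decays in the transverse variable $w'$ only. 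Along the direction $\alpha^i$ (so $w'=0$) this contribution does not tend to zero as $|w|\to\infty$, and the asserted $(1+|w|)^{-(n+1)}$ bound does not follow from the displayed inequalities. To obtain decay along $\alpha^i$ one must integrate the characteristic from $\tau=+\infty$ instead, which amounts to proving that $\int_0^\infty F_\epsilon(w'+\tau\alpha^i)\,d\tau=-\tfrac{\Gamma_n}{p^2}W^p[0,1](w')$ up to an error with the right decay in $w'$ --- but that total-mass identity is essentially the content of \eqref{esti} itself, so the argument as written is circular at that point. In short: the cancellation idea is right and matches the paper's implicit route through Takahashi's lemma, but the passage from the (assumed) bound on $F_\epsilon-F_0$ to \eqref{esti} does not go through, and the assumed bound itself is not supplied by the paper's estimates.
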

\begin{proof}
The unique solvability result follows from Lemma 2.4 in \cite{T} in same way. Estimate \eqref{esti} can be achieved by \eqref{decay1}, \eqref{day2} and \eqref{p1-00}.
\end{proof}
\begin{lem}
For all $x\neq y\in\Omega$, there exists a positive constant $c$ such that
\begin{equation}\label{Gx}
0<G(x,y)<\frac{c_n}{|x-y|^{n-2}}\quad\mbox{and}\quad|\nabla_{x}G(x,y)|\leq\frac{c}{|x-y|^{n-1}}.
\end{equation}
\end{lem}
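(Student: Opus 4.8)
The plan is to deduce both bounds in \eqref{Gx} from the maximum principle together with the classical interior and boundary gradient estimates for harmonic functions. First I would record the two elementary facts. Since $G(\cdot,y)$ is harmonic in $\Omega\setminus\{y\}$, vanishes on $\partial\Omega$, and behaves like $\frac{1}{(n-2)\omega_n|x-y|^{n-2}}$ as $x\to y$, applying the minimum principle on $\Omega\setminus\overline{B_\delta(y)}$ for small $\delta>0$ gives $G(\cdot,y)\ge 0$ there, hence on all of $\Omega\setminus\{y\}$; as $G(\cdot,y)$ is harmonic on the connected set $\Omega\setminus\{y\}$ and not identically zero, the strong maximum principle yields $G(x,y)>0$ for every $x\ne y$. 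For the upper bound I would use the splitting $G(x,y)=\frac{1}{(n-2)\omega_n|x-y|^{n-2}}+H(x,y)$ from \eqref{Robin}: the regular part $H(\cdot,y)$ is harmonic in $\Omega$ with boundary values $-\frac{1}{(n-2)\omega_n|x-y|^{n-2}}<0$, so $H(x,y)<0$ in $\Omega$ by the maximum principle, giving $0<G(x,y)<\frac{1}{(n-2)\omega_n|x-y|^{n-2}}$, i.e. the first estimate with $c_n=\frac{1}{(n-2)\omega_n}$.

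For the gradient bound I would fix $x\ne y$ in $\Omega$, set $r=|x-y|$ and $d(x)=\dist(x,\partial\Omega)$, and split into two regimes. If $d(x)\ge r/8$, then $y\notin B_{r/8}(x)\subset\Omega$, so $G(\cdot,y)$ is harmonic on $B_{r/8}(x)$ and the interior gradient estimate gives $|\nabla_x G(x,y)|\le \frac{C}{r}\sup_{B_{r/8}(x)}G(\cdot,y)$; since $|z-y|\ge \frac{7r}{8}$ on that ball, the upper bound just proved controls the supremum by $C r^{2-n}$, hence $|\nabla_x G(x,y)|\le C r^{1-n}$. If $d(x)<r/8$, I would pick $\bar x\in\partial\Omega$ with $|x-\bar x|=d(x)$ and work on $\Omega\cap B_{r/4}(\bar x)$, where $|z-y|\ge \frac{5r}{8}$, so $G(\cdot,y)$ is harmonic there and vanishes on the smooth boundary portion $\partial\Omega\cap B_{r/4}(\bar x)$; the boundary gradient estimate for harmonic functions vanishing on a smooth boundary piece, whose constant is uniform in $\bar x$ by compactness and smoothness of $\partial\Omega$, then gives $|\nabla_x G(x,y)|\le \frac{C}{r}\sup_{\Omega\cap B_{r/4}(\bar x)}G(\cdot,y)\le C r^{1-n}$, using $x\in B_{r/8}(\bar x)$ and, once more, the upper bound on $G$. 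Combining the two regimes produces the second estimate with a constant depending only on $n$ and the fixed smooth domain $\Omega$.

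The only delicate point --- and where I expect the (mild) main obstacle to lie --- is the boundary regime: one must choose the radii so that $y$ stays a definite distance outside the ball used to invoke the estimate, and appeal to boundary Schauder-type bounds for harmonic functions vanishing on a portion of $\partial\Omega$ with a constant independent of the base point. Both are standard since $\partial\Omega$ is smooth, so a classical reference for the interior and boundary gradient estimates may be cited; everything else is a routine application of the maximum principle.
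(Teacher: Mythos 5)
Your argument is correct, but it is a self-contained proof while the paper simply delegates: the paper's ``proof'' is a one-line citation to Lemma~A.1 of Ackermann--Clapp--Pistoia \cite{Ackerman}, which records exactly these pointwise bounds for the Dirichlet Green's function of a smooth bounded domain. What you supply instead is the standard direct argument: positivity and the upper bound via the decomposition $G=\frac{1}{(n-2)\omega_n|x-y|^{n-2}}+H$ from \eqref{Robin} and the maximum principle applied to $G$ on $\Omega\setminus\overline{B_\delta(y)}$ and to the regular part $H(\cdot,y)$ (harmonic with strictly negative boundary data), yielding $c_n=\frac{1}{(n-2)\omega_n}$; then the gradient bound by splitting into an interior regime, where the interior gradient estimate for harmonic functions on $B_{r/8}(x)$ combined with $|z-y|\ge\frac{7}{8}r$ there gives $|\nabla_xG|\le Cr^{1-n}$, and a near-boundary regime, where the boundary gradient estimate for a harmonic function vanishing on $\partial\Omega\cap B_{r/4}(\bar x)$ together with $|z-y|\ge\frac{5}{8}r$ gives the same. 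The one technical point you flag --- uniformity of the boundary Schauder constant in the base point, and the fact that when $r$ is comparable to $\operatorname{diam}\Omega$ one should cap the radius at a fixed flatness scale $\rho_0$ of $\partial\Omega$ (the resulting loss of a factor $r/\rho_0$ is harmless since $r\le\operatorname{diam}\Omega$) --- is indeed the only delicate step, and you treat it correctly. In short: the paper's approach buys brevity by outsourcing to a reference; yours buys a self-contained, elementary derivation from the maximum principle and classical harmonic estimates, at the cost of carrying out the two-regime gradient argument. Both are valid.
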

\begin{proof}
For the derivation of \eqref{Gx}, see the proof of Lemma A.1 of \cite{Ackerman}.
\end{proof}
We now resume the proof of \eqref{vie-2} in Theorem \ref{Figalli}.
\begin{proof}[\textbf{Proof of Theorem \ref{Figalli} (for \eqref{vie-2})}]
By the Green's representation, we have
\begin{equation}\label{115if}
\begin{split}
v_{i,\epsilon}(x)&=\lambda_{i,\epsilon}C_N \int_{\Omega}G(x,w)\mathcal{G}[v_{i,\epsilon}]dw
=\lambda_{i,\epsilon}C_N(p-\epsilon)\int_{\Omega}\int_{\Omega}\frac{u_{\epsilon}^{p-1-\epsilon}(z)v_{i,\epsilon}(z)u_{\epsilon}^{p-1-\epsilon}(w)}{|w-z|^{n-2}} G(x,w)dwdz	\\&+\lambda_{i,\epsilon}C_N(p-1-\epsilon)\int_{\Omega}\int_{\Omega}\frac{u_{\epsilon}^{p-\epsilon}(z)u_{\epsilon}^{p-2-\epsilon}(w)v_{i,\epsilon}(w)}{|w-z|^{n-2}} G(x,w)dwdz=:P_1+P_2.
\end{split}
\end{equation}
Applying \eqref{bepus} and \eqref{baowen}, we deduce that
\begin{equation}\label{and}
\begin{split}
P_1&=\frac{\lambda_{i,\epsilon}C_N(p-\epsilon)\big\|u_{\epsilon}\big\|_{\infty}^{\frac{4-(n-2)\epsilon}{2}}\cdot\big\|u_{\epsilon}\big\|_{\infty}^{2(p-1-\epsilon)}}{\big\|u_{\epsilon}\big\|_{\infty}^{\frac{4-(n-2)\epsilon}{n-2}n}}
\int_{\Omega_\epsilon}\int_{\Omega_\epsilon}\frac{\tilde{u}_{\epsilon}^{p-1-\epsilon}(z)\tilde{v}_{i,\epsilon}(z)\tilde{u}_{\epsilon}^{p-1-\epsilon}(w)}{|w-z|^{n-2}} \\&\hspace{3mm}\times G\left(x,\frac{w}{\big\|u_{\epsilon}\big\|_{\infty}^{\frac{4-(n-2)\epsilon}{2(n-2)}}}+x_{\epsilon}\right)dwdz\\&	=\frac{C_Np+o(1)}{\big\|u_{\epsilon}\big\|_{\infty}^{\frac{4-(n-2)\epsilon}{2}}}
\int_{\Omega_\epsilon}\int_{\Omega_\epsilon}\frac{\tilde{u}_{\epsilon}^{p-1-\epsilon}(z)\tilde{v}_{i,\epsilon}(z)\tilde{u}_{\epsilon}^{p-1-\epsilon}(w)}{|w-z|^{n-2}} G\left(x,\frac{w}{\big\|u_{\epsilon}\big\|_{\infty}^{\frac{4-(n-2)\epsilon}{2(n-2)}}}+x_{\epsilon}\right)dwdz. \end{split}
\end{equation}
Analogously, we can infer that
\begin{equation}\label{and11}
P_2=\frac{C_N(p-1)+o(1)}{\big\|u_{\epsilon}\big\|_{\infty}^{\frac{4-(n-2)\epsilon}{2}}}
\int_{\Omega_\epsilon}\int_{\Omega_\epsilon}\frac{\tilde{u}_{\epsilon}^{p-\epsilon}(z)\tilde{u}_{\epsilon}^{p-2-\epsilon}(w)\tilde{v}_{i,\epsilon}(w)}{|w-z|^{n-2}} G\left(x,\frac{w}{\big\|u_{\epsilon}\big\|_{\infty}^{\frac{4-(n-2)\epsilon}{2(n-2)}}}+x_{\epsilon}\right)dwdz.
\end{equation}
Since
\begin{equation}\label{viepu}
\tilde{v}_{i,\epsilon}(z)\rightarrow-(n-2)\sum_{k=1}^{n}\frac{\alpha_k^ix_k}{(1+|x|^2)^{\frac{n}{2}}}
=\sum_{k=1}^{n}\alpha_k^i\frac{\partial}{\partial z_k}\big(W[0,1](z)\big),
\end{equation}
and whence we infer from \eqref{decay1}, \eqref{day2} and \eqref{p1-00} that
\begin{equation}\label{whence}
\begin{split}
\int_{\Omega_\epsilon}\frac{\tilde{u}_{\epsilon}^{p-1-\epsilon}(z)\tilde{v}_{i,\epsilon}(z)\tilde{u}_{\varepsilon}^{p-1-\varepsilon}(w)}{|w-z|^{n-2}}dz
&\rightarrow \sum_{k=1}^{n}\alpha_k^i\int_{\mathbb{R}^{n}}\frac{W^{p-1}[0,1](z)\frac{\partial}{\partial z_k}\big(W[0,1](z)\big)W^{p-1}[0,1](w)}{|w-z|^{n-2}}dz\\&
=-\frac{1}{p}\sum_{k=1}^{n}\alpha_k^i\int_{\mathbb{R}^{n}}\frac{W^{p-1}[0,1](w)}{|w-z|^{n-2}}\Big(\frac{\partial}{\partial y_k}\big(W^{p}[y,1](z)\big)\Big)\Big|_{y=0}dz\\&
=-\frac{1}{p}\sum_{k=1}^{n}\alpha_k^i\Big(\frac{\partial}{\partial y_k}\Big(\int_{\mathbb{R}^{n}}\frac{W^{p}[y,1](z)}{|w-z|^{n-2}}dz\Big)\Big|_{y=0}\Big)W^{p-1}[0,1](w)
\\&=\frac{\Gamma_n}{p^2}\sum_{k=1}^{n}\alpha_k^i\Big(\frac{\partial}{\partial w_{k}}W^p[0,1](w)\Big).
\end{split}
\end{equation}
As a consequence, from Lemma \ref{Ve}, \eqref{and}, \eqref{whence} and an integration by parts yields
\begin{equation}\label{whence11}
\begin{split}
P_1&=\frac{C_Np+o(1)}{\big\|u_{\epsilon}\big\|_{\infty}^{\frac{4-(n-2)\epsilon}{2}}}
\int_{\Omega_\epsilon}\int_{\Omega_\epsilon}\frac{\tilde{u}_{\epsilon}^{p-1-\epsilon}(z)\tilde{v}_{i,\epsilon}(z)\tilde{u}_{\epsilon}^{p-1-\epsilon}(w)}{|w-z|^{n-2}} G\left(x,\frac{w}{\big\|u_{\epsilon}\big\|_{\infty}^{\frac{4-(n-2)\epsilon}{2(n-2)}}}+x_{\epsilon}\right)dwdz\\& =\frac{C_Np+o(1)}{\big\|u_{\epsilon}\big\|_{\infty}^{\frac{4-(n-2)\epsilon}{2}}}
\int_{\Omega_\epsilon}\sum_{k=1}^{n}\alpha_{k}^{i}\frac{\partial\tilde{V}_{\epsilon}(w)}{\partial w_k}G\left(x,\frac{w}{\big\|u_{\epsilon}\big\|_{\infty}^{\frac{4-(n-2)\epsilon}{2(n-2)}}}+x_{\epsilon}\right)dw
\\&
=-\frac{C_Np+o(1)}{\big\|u_{\epsilon}\big\|_{\infty}^{\frac{(n-1)[4-(n-2)\epsilon]}{2(n-2)}}}
\int_{\Omega_\epsilon}\tilde{V}_{\epsilon}(w)\left(\sum_{k=1}^{n}\alpha_{k}^{i}\frac{\partial}{\partial w_k}G\left(x,\frac{w}{\big\|u_{\epsilon}\big\|_{\infty}^{\frac{4-(n-2)\epsilon}{2(n-2)}}}+x_{\epsilon}\right)\right)dw.
\end{split}
\end{equation}
Passing to the limit as $\epsilon\rightarrow0$ in \eqref{whence11} and combining \eqref{Gx} with
$$
\int_{\Omega_\epsilon}\tilde{V}_{\epsilon}(w)dw\rightarrow\frac{\Gamma_n}{p^2}\int_{\mathbb{R}^n}W^{p}[0,1](w)dw,
$$
we would get
\begin{equation}\label{whence11}
P_1=-\frac{C_N\Gamma_n}{p\big\|u_{\epsilon}\big\|_{\infty}^{\frac{(n-1)[4-(n-2)\epsilon]}{2(n-2)}}}
\left(\sum_{k=1}^{n}\alpha_{k}^{i}\frac{\partial}{\partial w_k}G\left(x,x_{0}\right)\right)\int_{\mathbb{R}^n}W^{p}[0,1](w)dw
+o(1).
\end{equation}
For $P_2$, similarly to the argument of $P_1$, we have
\begin{equation}\label{argu}
P_2=\frac{C_N\Gamma_n(p-1)}{(2^{\ast}-1)\big\|u_{\epsilon}\big\|_{\infty}^{\frac{(n-1)[4-(n-2)\epsilon]}{2(n-2)}}}
\left(\sum_{k=1}^{n}\alpha_{k}^{i}\frac{\partial}{\partial w_k}G\left(x,x_{0}\right)\right)\int_{\mathbb{R}^n}W^{2^{\ast}-1}[0,1](w)dw
+o(1)
\end{equation}
by \eqref{decay1}, \eqref{day2}, \eqref{p1-00}, \eqref{and11}, \eqref{viepu}. Recalling \eqref{Fn} and \eqref{bepus}, and combining them with \eqref{whence11}, \eqref{argu} and regularity theory, we have
$$
\frac{v_{i,\epsilon}}{\epsilon^{\frac{n-1}{n-2}}}=A_0\left(\sum_{k=1}^{n}\alpha_{k}^{i}\frac{\partial}{\partial w_k}G\left(x,x_{0}\right)\right)
+o(1)\quad\mbox{in}\hspace{2mm}C_{loc}^1\big(\overline{\Omega}\setminus\{x_0\}\big)
$$
as $\epsilon$ sufficiently small  with
$$
A_0=\frac{C_N\Gamma_n(p-2)}{pF_n^{\frac{n-1}{n-2}}}\int_{\mathbb{R}^n}W^{p}[0,1](w)dw,
$$
where $F_n$ is defined in \eqref{Fn},
which concludes the proof.
\end{proof}

	\section{Asymptotic behaviour of the eigenvalues $\lambda_{i,\epsilon}$ for $i=2,\cdots,n+1$}	\label{sangshen}
The objective of this section is to give the proof of Theorem \ref{remainder terms}. The following Lemmas will be used to obtain the asymptotic behaviour of the eigenvalues $\lambda_{i,\epsilon}$ for $i=2,\cdots,n+1$.	
	\begin{lem}\label{qiegao}
		Let $\{u_\epsilon\}_{\epsilon>0}$ be a least energy solutions of  \eqref{ele-1.1}-\eqref{minimi}. Suppose that $v_{i,\varepsilon}$ is a solution \eqref{ele-2}-\eqref{defanndg2}. Then
		\begin{equation}\label{qiegao-1}
\begin{split}
			\int_{\partial\Omega}\frac{\partial u_{\epsilon}}{\partial x_j}\frac{\partial v_{i,\epsilon}}{\partial\nu}dS_x&=
(p-\epsilon)C_N(1-\lambda_{i,\epsilon})\int_{\Omega}\int_{\Omega}\frac{u_{\epsilon}^{p-1-\epsilon}(y)\frac{\partial u_{\epsilon}}{\partial y_j}(y)u_{\epsilon}^{p-1-\epsilon}(x)v_{i,\epsilon}(x)}{|x-y|^{n-2}}dxdy\\ &+(p-1-\epsilon)C_N(1-\lambda_{i,\epsilon})\int_{\Omega}\int_{\Omega}\frac{u_{\epsilon}^{p-\epsilon}(y)u_{\epsilon}^{p-2-\epsilon}(x)v_{i,\epsilon}(x)\frac{\partial u_{\epsilon}(x)}{\partial x_j}}{|x-y|^{n-2}} dxdy
		\end{split}
\end{equation}
		for any $j=1,\cdots,n$.
	\end{lem}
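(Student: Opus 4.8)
The plan is to prove \eqref{qiegao-1} as the ``translation'' counterpart of Corollary \ref{conseq}: the role played there by the dilation generator $(x-z)\cdot\nabla u_\epsilon+\frac{2}{p-1-\epsilon}u_\epsilon$ and by Lemma \ref{nabla} will here be played by $\partial u_\epsilon/\partial x_j$. First I would differentiate the equation for $u_\epsilon$ (in the convolution form used in the proof of Lemma \ref{nabla}) with respect to $x_j$. Writing out the Riesz potential and using $\partial_{x_j}|x-y|^{-(n-2)}=-\partial_{y_j}|x-y|^{-(n-2)}$, an integration by parts in $y$ — whose boundary term vanishes since $u_\epsilon=0$ on $\partial\Omega$ and $p-\epsilon>0$ — gives
\[
\partial_{x_j}\Big(|x|^{-(n-2)}\ast u_\epsilon^{p-\epsilon}\Big)=(p-\epsilon)\Big(|x|^{-(n-2)}\ast\big(u_\epsilon^{p-1-\epsilon}\,\partial_j u_\epsilon\big)\Big),
\]
and combined with $\partial_{x_j}(u_\epsilon^{p-1-\epsilon})=(p-1-\epsilon)u_\epsilon^{p-2-\epsilon}\partial_j u_\epsilon$ this yields
\[
-\Delta(\partial_j u_\epsilon)=(p-\epsilon)C_N\Big(|x|^{-(n-2)}\ast\big(u_\epsilon^{p-1-\epsilon}\partial_j u_\epsilon\big)\Big)u_\epsilon^{p-1-\epsilon}+(p-1-\epsilon)C_N\Big(|x|^{-(n-2)}\ast u_\epsilon^{p-\epsilon}\Big)u_\epsilon^{p-2-\epsilon}\partial_j u_\epsilon\quad\mbox{in }\Omega,
\]
that is, $-\Delta(\partial_j u_\epsilon)=C_N\,\mathcal{G}_\epsilon[\partial_j u_\epsilon]$ with $\mathcal{G}_\epsilon$ as in \eqref{defanndg2}. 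Elliptic regularity gives $u_\epsilon\in C^{2}(\overline{\Omega})$, so $\partial_j u_\epsilon\in C^{1}(\overline{\Omega})$ and the boundary integral in \eqref{qiegao-1} is well defined.

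Next I would pair the two equations and subtract. Multiplying $-\Delta(\partial_j u_\epsilon)=C_N\mathcal{G}_\epsilon[\partial_j u_\epsilon]$ by $v_{i,\epsilon}$ and integrating over $\Omega$ produces no boundary term because $v_{i,\epsilon}=0$ on $\partial\Omega$:
\[
\int_\Omega \nabla(\partial_j u_\epsilon)\cdot\nabla v_{i,\epsilon}\,dx=C_N\int_\Omega \mathcal{G}_\epsilon[\partial_j u_\epsilon]\,v_{i,\epsilon}\,dx.
\]
Multiplying $-\Delta v_{i,\epsilon}=\lambda_{i,\epsilon}C_N\mathcal{G}_\epsilon[v_{i,\epsilon}]$ by $\partial_j u_\epsilon$ and integrating over $\Omega$ keeps a boundary term, since $\partial_j u_\epsilon$ need not vanish on $\partial\Omega$:
\[
\int_\Omega \nabla v_{i,\epsilon}\cdot\nabla(\partial_j u_\epsilon)\,dx-\int_{\partial\Omega}\frac{\partial u_\epsilon}{\partial x_j}\frac{\partial v_{i,\epsilon}}{\partial\nu}\,dS_x=\lambda_{i,\epsilon}C_N\int_\Omega \mathcal{G}_\epsilon[v_{i,\epsilon}]\,\partial_j u_\epsilon\,dx.
\]
Subtracting, the Dirichlet integrals cancel and
\[
\int_{\partial\Omega}\frac{\partial u_\epsilon}{\partial x_j}\frac{\partial v_{i,\epsilon}}{\partial\nu}\,dS_x=C_N\int_\Omega \mathcal{G}_\epsilon[\partial_j u_\epsilon]\,v_{i,\epsilon}\,dx-\lambda_{i,\epsilon}C_N\int_\Omega \mathcal{G}_\epsilon[v_{i,\epsilon}]\,\partial_j u_\epsilon\,dx .
\]
The final ingredient is the symmetry of the bilinear form attached to $\mathcal{G}_\epsilon$, namely $\int_\Omega \mathcal{G}_\epsilon[\phi]\,\psi\,dx=\int_\Omega \mathcal{G}_\epsilon[\psi]\,\phi\,dx$: the first summand of $\mathcal{G}_\epsilon$ generates the kernel $|x-y|^{-(n-2)}u_\epsilon^{p-1-\epsilon}(x)u_\epsilon^{p-1-\epsilon}(y)$, symmetric in $(x,y)$, and the second generates $|x-y|^{-(n-2)}u_\epsilon^{p-\epsilon}(y)u_\epsilon^{p-2-\epsilon}(x)\phi(x)\psi(x)$, manifestly symmetric in $\phi\leftrightarrow\psi$. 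Hence the two bulk terms combine to $C_N(1-\lambda_{i,\epsilon})\int_\Omega \mathcal{G}_\epsilon[\partial_j u_\epsilon]\,v_{i,\epsilon}\,dx$, and spelling $\mathcal{G}_\epsilon[\partial_j u_\epsilon]$ out (carrying $\partial_j u_\epsilon$ on the $y$-variable inside the convolution in the first term) gives exactly the right-hand side of \eqref{qiegao-1}.

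Finiteness of all the integrals above follows from the pointwise bounds \eqref{dus} on $u_\epsilon$ and \eqref{decay22} on $v_{i,\epsilon}$, the Hardy--Littlewood--Sobolev inequality and Lemma \ref{L6.2}, exactly as in Corollary \ref{conseq}. I expect the only genuinely delicate point to be the differentiation of the Riesz potential under the integral sign together with the integration by parts past the diagonal singularity $y=x$; this is handled in the standard way by excising a ball $B_\delta(x)$, integrating by parts on $\Omega\setminus B_\delta(x)$ — where the spherical contribution over $\partial B_\delta(x)$ is $O(\delta^{-(n-2)}\cdot\delta^{n-1})=O(\delta)$ — and letting $\delta\to 0$, which is legitimate since $|x-y|^{-(n-2)}$ and $|x-y|^{-(n-1)}$ are locally integrable in $\mathbb{R}^n$. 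Everything else parallels the proof of Corollary \ref{conseq}.
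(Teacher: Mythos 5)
Your proof is correct and follows exactly the route the paper intends: the paper's one-line proof ("similar to \eqref{de11}-\eqref{d00}") points to Corollary \ref{conseq}, and you carry out precisely the translation analogue of that argument — deriving that $\partial_j u_\epsilon$ satisfies the linearized equation $-\Delta(\partial_j u_\epsilon)=C_N\mathcal{G}_\epsilon[\partial_j u_\epsilon]$ via differentiation of the Riesz potential and one integration by parts in $y$ whose boundary term dies since $u_\epsilon=0$ on $\partial\Omega$, then pairing this against $v_{i,\epsilon}$ and the eigenvalue equation against $\partial_j u_\epsilon$, and finally invoking the symmetry of the bilinear form attached to $\mathcal{G}_\epsilon$ to collapse the two bulk terms into the common factor $(1-\lambda_{i,\epsilon})$.
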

		\begin{proof}
The conclusion can be derived by simple computations similar to \eqref{de11}-\eqref{d00} and we omit it.	
	\end{proof}
	
	\begin{lem}\label{wanfan}
		Let $v_{l,\epsilon}$ and  $v_{m,\epsilon}$ are two orthogonal eigenfunctions of \eqref{ele-2}-\eqref{defanndg2} in $H_{0}^{1}(\Omega)$ corresponding to eigenvalues $\lambda_{l,\epsilon}$ and $\lambda_{m,\epsilon}$ , then there holds	
$$
(\alpha^{l},\alpha^{m})=0,
		$$
where the corresponding vector $\alpha^{l}$ and $\alpha^{m}$ defined in Theorem \ref{Figalli}.
	\end{lem}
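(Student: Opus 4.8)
The plan is to convert the $H_{0}^{1}(\Omega)$-orthogonality of $v_{l,\epsilon}$ and $v_{m,\epsilon}$ into the vanishing of the bilinear form attached to $\mathcal{G}_{\epsilon}$, then to rescale that identity around $x_{\epsilon}$, pass to the limit, and evaluate the resulting integral via \eqref{p1-00}. First I would test $-\Delta v_{l,\epsilon}=\lambda_{l,\epsilon}C_{N}\mathcal{G}_{\epsilon}[v_{l,\epsilon}]$ against $v_{m,\epsilon}\in H_{0}^{1}(\Omega)$. Since $\int_{\Omega}\nabla v_{l,\epsilon}\cdot\nabla v_{m,\epsilon}\,dx=0$ while $\lambda_{l,\epsilon}\to1\neq0$ by \eqref{baowen} and $C_{N}\neq0$ (see \eqref{wx}), this yields $\int_{\Omega}\mathcal{G}_{\epsilon}[v_{l,\epsilon}]v_{m,\epsilon}\,dx=0$ for $\epsilon$ small, i.e.
\[
(p-\epsilon)\!\int_{\Omega}\!\int_{\Omega}\!\frac{u_{\epsilon}^{p-1-\epsilon}(y)v_{l,\epsilon}(y)\,u_{\epsilon}^{p-1-\epsilon}(x)v_{m,\epsilon}(x)}{|x-y|^{n-2}}\,dx\,dy+(p-1-\epsilon)\!\int_{\Omega}\!\int_{\Omega}\!\frac{u_{\epsilon}^{p-\epsilon}(y)\,u_{\epsilon}^{p-2-\epsilon}(x)v_{l,\epsilon}(x)v_{m,\epsilon}(x)}{|x-y|^{n-2}}\,dx\,dy=0 .
\]
(Testing the equation for $v_{m,\epsilon}$ against $v_{l,\epsilon}$ gives the same identity by the $l\leftrightarrow m$ symmetry of these integrals, so no compatibility issue arises even if $\lambda_{l,\epsilon}=\lambda_{m,\epsilon}$.)

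Next I would multiply this identity by $\|u_{\epsilon}\|_{\infty}^{2}$ and rescale as in \eqref{miu}--\eqref{miu-1}, \eqref{vie}. A routine bookkeeping of exponents --- identical to the one in the proofs of Theorem \ref{Figalli-1} and of Lemma \ref{m00} --- shows that each of the two double integrals acquires the factor $\|u_{\epsilon}\|_{\infty}^{-2+\frac{(n-2)\epsilon}{2}}$, where $\|u_{\epsilon}\|_{\infty}^{(n-2)\epsilon/2}\to1$ by \eqref{bepus}; hence the identity becomes
\[
(p-\epsilon)\!\int_{\Omega_{\epsilon}}\!\int_{\Omega_{\epsilon}}\!\frac{\tilde{u}_{\epsilon}^{p-1-\epsilon}(y)\tilde{v}_{l,\epsilon}(y)\tilde{u}_{\epsilon}^{p-1-\epsilon}(x)\tilde{v}_{m,\epsilon}(x)}{|x-y|^{n-2}}\,dx\,dy+(p-1-\epsilon)\!\int_{\Omega_{\epsilon}}\!\int_{\Omega_{\epsilon}}\!\frac{\tilde{u}_{\epsilon}^{p-\epsilon}(y)\tilde{u}_{\epsilon}^{p-2-\epsilon}(x)\tilde{v}_{l,\epsilon}(x)\tilde{v}_{m,\epsilon}(x)}{|x-y|^{n-2}}\,dx\,dy=o(1).
\]
Using the pointwise decay \eqref{decay1} and \eqref{day2}, the Hardy-Littlewood-Sobolev inequality and Lemma \ref{L6.2} to produce $\epsilon$-uniform integrable majorants (exactly as in Section \ref{section2} and in Lemma \ref{m00}), together with $\tilde{u}_{\epsilon}\to W$ and $\tilde{v}_{i,\epsilon}\to v_{i}:=-(n-2)\sum_{k=1}^{n}\frac{\alpha_{k}^{i}x_{k}}{(1+|x|^{2})^{n/2}}$ in $C_{loc}^{1}(\mathbb{R}^{n})$ (Theorem \ref{Figalli}, \eqref{vie-1}), dominated convergence gives
\[
p\!\int_{\mathbb{R}^{n}}\!\int_{\mathbb{R}^{n}}\!\frac{W^{p-1}(y)v_{l}(y)W^{p-1}(x)v_{m}(x)}{|x-y|^{n-2}}\,dx\,dy+(p-1)\!\int_{\mathbb{R}^{n}}\!\int_{\mathbb{R}^{n}}\!\frac{W^{p}(y)W^{p-2}(x)v_{l}(x)v_{m}(x)}{|x-y|^{n-2}}\,dx\,dy=0 .
\]

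To evaluate the left-hand side I would use that $v_{i}$ equals, up to the normalising constant $\tilde{c}_{n,n-2}$, the function $\sum_{k}\alpha_{k}^{i}\partial_{x_{k}}W$, then write $W^{p-1}\partial_{k}W=p^{-1}\partial_{k}(W^{p})$, integrate by parts inside the Riesz convolutions, and apply \eqref{p1-00} (so that $\int_{\mathbb{R}^{n}}|x-y|^{-(n-2)}W^{p}(y)\,dy=\Gamma_{n}W(x)$, since $2^{\ast}-p=1$, and $2^{\ast}-2=p-1$), together with the rotational identity $\int_{\mathbb{R}^{n}}W^{p-1}\partial_{j}W\,\partial_{k}W\,dx=\frac{\delta_{j}^{k}}{n}\int_{\mathbb{R}^{n}}W^{p-1}|\nabla W|^{2}\,dx$. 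Both double integrals then collapse to a strictly positive multiple of $(\alpha^{l},\alpha^{m})$ --- the two multiples differing only by the factor $(p-1)$ --- so the last displayed equality reads $\mathcal{C}\,(\alpha^{l},\alpha^{m})=0$ with $\mathcal{C}=\frac{p\,\Gamma_{n}}{n}\,\tilde{c}_{n,n-2}^{-2}\int_{\mathbb{R}^{n}}W^{p-1}|\nabla W|^{2}\,dx>0$, whence $(\alpha^{l},\alpha^{m})=0$, as claimed.

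The only delicate point is the limit passage in the rescaled double convolution integrals: one has to check that the two terms scale at the same rate $\|u_{\epsilon}\|_{\infty}^{-2}$ and to exhibit $\epsilon$-uniform dominating functions compatible with the merely $C_{loc}^{1}$ convergence of $\tilde{v}_{i,\epsilon}$. Both are entirely analogous to estimates already carried out in Section \ref{section2} and in Lemma \ref{m00}; once they are in place, the algebraic reduction of the limiting integral to $(\alpha^{l},\alpha^{m})$ via \eqref{p1-00} and the rotational symmetry of $W$ is the substantive part of the proof.
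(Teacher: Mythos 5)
Your derivation is correct, and in the final algebraic step it takes a genuinely different --- and in fact more robust --- route than the paper. Both you and the paper derive the orthogonality identity \eqref{pingjiehuoguo}, rescale by $\|u_\epsilon\|_\infty^2$, and use \eqref{decay1}, \eqref{day2}, \eqref{bepus} and \eqref{vie-1} with dominated convergence to obtain the limiting identity
\[
p\int_{\mathbb R^n}\!\int_{\mathbb R^n}\frac{W^{p-1}(y)v_l(y)\,W^{p-1}(x)v_m(x)}{|x-y|^{n-2}}\,dxdy+(p-1)\int_{\mathbb R^n}\!\int_{\mathbb R^n}\frac{W^{p}(y)\,W^{p-2}(x)v_l(x)v_m(x)}{|x-y|^{n-2}}\,dxdy=0,
\]
with $v_i=-(n-2)\sum_k\alpha_k^i\frac{x_k}{(1+|x|^2)^{n/2}}=\tilde c_{n,n-2}^{-1}\sum_k\alpha_k^i\partial_{x_k}W$. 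The paper then tries to annihilate the first (cross) term via the claim $\int_{\mathbb R^n}\frac{y_k}{(1+|y|^2)^{n/2+2}}\frac{dy}{|x-y|^{n-2}}=0$; this is not actually correct --- the Riesz potential of an odd function is odd in $x_k$, not identically zero, and by exactly your integration by parts ($W^{p-1}\partial_kW=p^{-1}\partial_k(W^p)$ together with \eqref{p1-00}) it equals $\frac{\Gamma_n}{p}\partial_{x_k}W(x)$, so the diagonal $k=h$ contributions of the first double integral survive. (The oddness argument is valid in \eqref{W-0}, where the outer test function $W^{p-1}(x)$ is radial, but not here where it is $W^{p-1}(x)x_h(1+|x|^2)^{-n/2}$.) What saves the lemma is precisely the computation you carry out, which evaluates \emph{both} double integrals: the first equals $\frac{\Gamma_n\tilde c_{n,n-2}^{-2}}{n}(\alpha^l,\alpha^m)\int W^{p-1}|\nabla W|^2\,dx$ (after absorbing the prefactor $p$) and the second $\frac{(p-1)\Gamma_n\tilde c_{n,n-2}^{-2}}{n}(\alpha^l,\alpha^m)\int W^{p-1}|\nabla W|^2\,dx$, both with the same sign, so their sum $\frac{p\Gamma_n\tilde c_{n,n-2}^{-2}}{n}(\alpha^l,\alpha^m)\int W^{p-1}|\nabla W|^2\,dx=0$ forces $(\alpha^l,\alpha^m)=0$. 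The conclusion of the lemma therefore agrees with the paper's, but your evaluation is the one that actually closes the argument; what the direct route buys is immunity to the false oddness cancellation, at the cost of the extra integration by parts which you justify via Lemma \ref{prondgr} and \eqref{p1-00} just as in Lemma \ref{m00}.
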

	\begin{proof}
		By \eqref{ele-2}-\eqref{defanndg2} and orthogonality, we obtain
		\begin{equation}\label{pingjiehuoguo}
		\begin{split}	(p-\epsilon)\int_{\Omega}\int_{\Omega}&\frac{u_{\epsilon}^{p-1-\epsilon}(y)v_{l,\epsilon}(y)u_{\epsilon}^{p-1-\epsilon}(x)v_{m,\epsilon}(x)}{|x-y|^{n-2}}dxdy \\&+(p-1-\epsilon)\int_{\Omega}\int_{\Omega}\frac{u_{\varepsilon}^{p-\epsilon}(y)u_{\epsilon}^{p-2-\epsilon}(x)v_{l,\epsilon}(x)v_{m,\epsilon}(x)}{|x-y|^{n-2}} dxdy=0.
\end{split}
		\end{equation}
Coupling \eqref{vie-1}, \eqref{decay1}, \eqref{day2} and \eqref{pingjiehuoguo} yields
		\begin{equation*}
\begin{split}		
p\sum_{k,h=1}^{n}\int_{\mathbb{R}^n}\int_{\mathbb{R}^n}&\frac{W^{p-1}(y)W^{p-1}(x)}{|x-y|^{n-2}}
\Big(\frac{\alpha_k^ly_k}{(1+|y|^2)^{\frac{n}{2}}}\Big)\Big(\frac{\alpha_h^mx_h}{(1+|x|^2)^{\frac{n}{2}}}\Big)dxdy\\&+	(p-1)\sum_{k,h=1}^{n}\int_{\mathbb{R}^n}\int_{\mathbb{R}^n}\frac{W^{p}(y)W^{p-2}(x)}{|x-y|^{n-2}}
\Big(\frac{\alpha_k^lx_k\alpha_h^mx_h}{(1+|x|^2)^{n}}\Big)dxdy=0.	\end{split}
\end{equation*}
Furthermore, we have that
$$
\int_{\mathbb{R}^n}\frac{W^{p-1}(y)}{|x-y|^{n-2}}
\frac{y_k}{(1+|y|^2)^{\frac{n}{2}}}dy=\int_{\mathbb{R}^n}\frac{y_k}{(1+|y|^2)^{\frac{n}{2}+2}}\frac{1}{|x-y|^{n-2}}dy=0,
$$
which implies
$$\int_{\mathbb{R}^n}\int_{\mathbb{R}^n}\frac{W^{p-1}(y)W^{p-1}(x)}{|x-y|^{n-2}}
\Big(\frac{y_k}{(1+|y|^2)^{\frac{n}{2}}}\Big)\Big(\frac{x_h}{(1+|x|^2)^{\frac{n}{2}}}\Big)dxdy=0.$$
As a result,
$$\sum_{k,h=1}^{n}\int_{\mathbb{R}^n}\int_{\mathbb{R}^n}\frac{W^{p}(y)W^{p-2}(x)}{|x-y|^{n-2}}
\Big(\frac{\alpha_k^lx_k\alpha_h^mx_h}{(1+|x|^2)^{n}}\Big)dxdy=0,$$
so that we eventually get
$$		\sum_{k=1}^{n}\frac{\alpha_k^l\alpha_k^m}{n}\int_{\mathbb{R}^n}\int_{\mathbb{R}^n}\frac{W^{p}(y)W^{p-2}(x)}{|x-y|^{n-2}}
\Big(\frac{|x|^2}{(1+|x|^2)^{n}}\Big)dxdy=0.$$
The desired identity is obtained.
	\end{proof}

We next give the following proof.
\begin{proof}[\textbf{Proof of Theorem \ref{remainder terms}}]
For each $i=2,\cdots,n+1$, we define $L_i^{\epsilon}$ and $R_{i}^{\epsilon}$ as the left-hand and right-hand sides of identity \eqref{qiegao-1}. We will then proceed to estimate their values.

First we estimate $L_i^{\epsilon}$.
It follows directly from \eqref{vie-2} and \eqref{Fn}-\eqref{ifini} that
		\begin{equation}\label{caI}
	\begin{split}		L_i^{\epsilon}&=\big\|u_{\epsilon}\big\|_{\infty}^{-\frac{3n-4}{n-2}}\int_{\partial\Omega}\frac{\partial }{\partial x_j}\big(\big\|u_{\epsilon}\big\|_{\infty}u_{i,\epsilon}\big)\frac{\partial }{\partial\nu}\big(\big\|u_{\epsilon}\big\|_{\infty}^{\frac{2(n-1)}{n-2}}v_{i,\epsilon}\big)dS_x\\&
=F_n^{\frac{n-1}{n-2}}\mathcal{K}_nA_0\big\|u_{\epsilon}\big\|_{\infty}^{-\frac{3n-4}{n-2}}\Big[A_0\sum_{k=1}^{n}\alpha_k^i\int_{\partial\Omega}\frac{\partial}{\partial\nu}\Big(\frac{\partial G}{\partial w_k}(x,x_0)\Big)\frac{\partial G}{\partial x_j}(x,x_0)+o(1)\Big],
		\end{split}
\end{equation}
		where $A_0$, $F_n$ and $\mathcal{K}_n$ are defined in Theorem \ref{Figalli} and Theorem A. Furthermore, we observe that similar to the computation of \eqref{GX0}, we have
 \begin{equation}\label{GX0-1}
\int_{\partial\Omega}\frac{\partial G(x,w)}{\partial x_j}\frac{\partial G(x,w)}{\partial w_k}\Big(\frac{\partial G}{\partial\nu}(x,w)\Big)dS_{x}=\frac{1}{2}\frac{\partial^2}{\partial x_j\partial x_k}H(w,w)\quad\mbox{for all}\hspace{2mm}j, k=1,\cdots,n.
\end{equation}
Combining \eqref{caI} and \eqref{GX0-1} yields
		\begin{equation*}		L_i^{\epsilon}=\|u_{\epsilon}\|_{L^{\infty}(\Omega)}^{-\frac{3n-4}{n-2}}\Big[\frac{F_n^{\frac{n-1}{n-2}}\mathcal{K}_nA_0}{2}\sum_{k=1}^{n}\alpha_k^i\frac{\partial^2}{\partial x_j\partial x_k}H(x_0,x_0)+o(1)\Big].
		\end{equation*}
Next we estimate $R_{i}^{\epsilon}$. By \eqref{vie} and \eqref{miu-1}, we have
		\begin{equation}\label{AIC}
\begin{split}
\int_{\Omega}\int_{\Omega}&\frac{u_{\epsilon}^{p-1-\epsilon}(y)\frac{\partial u_{\epsilon}}{\partial y_j}(y)u_{\epsilon}^{p-1-\epsilon}(x)v_{i,\epsilon}(x)}{|x-y|^{n-2}}dxdy\\&=
\frac{\big\|u_{\epsilon}\big\|_{\infty}^{\frac{4-(n-2)\epsilon}{2}}\cdot\big\|u_{\epsilon}\big\|_{\infty}^{2(p-1-\epsilon)}}{\big\|u_{\epsilon}\big\|_{\infty}^{\frac{4-(n-2)\epsilon}{n-2}n}}
\big\|u_{\epsilon}\big\|_{\infty}^{\frac{4-(n-2)\epsilon}{2(n-2)}+1}\int_{\Omega_\epsilon}\int_{\Omega_\epsilon}\frac{\tilde{u}_{\epsilon}^{p-1-\epsilon}(y)\frac{\partial\tilde{u}_{\epsilon}}{\partial y_j}(y)\tilde{v}_{i,\epsilon}(x)\tilde{u}_{\epsilon}^{p-1-\epsilon}(x)}{|x-y|^{n-2}}dxdy.
\end{split}
\end{equation}
Moreover, by Hardy-Littlewood-Sobolev inequality, combining \eqref{vie-1}, \eqref{decay1} and \eqref{day2} and dominated convergence, we deduce that
\begin{equation}\label{A1-1}
\begin{split}
\int_{\Omega_\epsilon}\int_{\Omega_\epsilon}&\frac{\tilde{u}_{\epsilon}^{p-1-\epsilon}(y)\frac{\partial\tilde{u}_{\epsilon}}{\partial y_j}(y)\tilde{v}_{i,\epsilon}(x)\tilde{u}_{\epsilon}^{p-1-\epsilon}(x)}{|x-y|^{n-2}}dxdy
\\&=-(n-2)\sum_{k=1}^{n}\alpha_k^i\int_{\mathbb{R}^n}\int_{\mathbb{R}^n}\frac{W^{p-1}(y)W^{p-1}(x)}{|x-y|^{n-2}}\frac{\partial W(y)}{\partial y_j}\frac{x_k}{(1+|x|^2)^{\frac{n}{2}}}dxdy+o(1)\\&
=(n-2)\frac{\Gamma_n}{p}\sum_{k=1}^{n}\alpha_k^i\int_{\mathbb{R}^n}W^{p-1}\frac{\partial W(x)}{\partial x_k}\frac{\partial W(x)}{\partial x_j}dx+o(1)\\&
=(n-2)\frac{\Gamma_n}{pn}\alpha_j^i\int_{\mathbb{R}^n}W^{p-1}|\nabla W|^2dx+o(1),
\end{split}
\end{equation}
where we used similarly to argument of \eqref{whence} and the fact that $\int_{\mathbb{R}^n}\nabla(\frac{\partial W}{\partial x_k})\nabla(\frac{\partial W}{\partial x_j})dx=0$ if $k\neq j$. Therefore, by \eqref{AIC} and \eqref{A1-1} we have
\begin{equation*}
\begin{split} \int_{\Omega}\int_{\Omega}\frac{u_{\epsilon}^{p-1-\epsilon}(y)\frac{\partial u_{\epsilon}}{\partial y_j}(y)u_{\epsilon}^{p-1-\epsilon}(x)v_{i,\epsilon}(x)}{|x-y|^{n-2}}dxdy
=\frac{(n-2)}{\big\|u_{\epsilon}\big\|_{\infty}^{\frac{n-4}{(n-2)}}}\frac{\Gamma_n}{pn}\Big[\alpha_j^i\int_{\mathbb{R}^n}W^{p-1}|\nabla W|^2dx+o(1)\Big]
\end{split}
\end{equation*}
by \eqref{bepus}. Analogously, we also have
 $$
\int_{\Omega}\int_{\Omega}\frac{u_{\epsilon}^{p-\epsilon}(y)u_{\varepsilon}^{p-2-\epsilon}(x)v_{i,\epsilon}(x)\frac{\partial u_{\epsilon}(x)}{\partial x_j}}{|x-y|^{n-2}} dxdy=-\frac{(n-2)}{\big\|u_{\epsilon}\big\|_{\infty}^{\frac{n-4}{n-2}}}\frac{\Gamma_n}{n}\Big[\alpha_j^i\int_{\mathbb{R}^n}W^{2^{\ast}-2}|\nabla W|^2dx+o(1)\Big]
$$
Consequently, we get that for each $j$,
$$R_i^{\epsilon}=-\frac{(n-2)}{\big\|u_{\epsilon}\big\|_{\infty}^{\frac{n-4}{n-2}}}\frac{\Gamma_nC_N}{n}(p-2)(1-\lambda_{i,\epsilon})\Big[\alpha_j^i\int_{\mathbb{R}^n}W^{p-1}|\nabla W|^2dx+o(1)\Big].$$
Combining the estimates for $L_{i}^{\epsilon}$ and $R_i^{\epsilon}$ together, we obtain for each $j=1,\cdots,n$,
\begin{equation}\label{FR}
\begin{split}
&\Big[\frac{F_n^{\frac{n-1}{n-2}}\mathcal{K}_nA_0}{2}\sum_{k=1}^{n}\alpha_k^i\frac{\partial^2}{\partial x_j\partial x_k}H(x_0,x_0)+o(1)\Big]\\=&-\big\|u_{\epsilon}\big\|_{\infty}^{\frac{2n}{n-2}}(n-2)\frac{\Gamma_nC_N}{n}(p-2)(1-\lambda_{i,\epsilon})\Big[\alpha_j^i\int_{\mathbb{R}^n}W^{p-1}|\nabla W|^2dx+o(1)\Big].
\end{split}
\end{equation}
Owing to $\alpha_k^i\neq0$ in Theorem \ref{Figalli},
\begin{equation}\label{FR-1}
\frac{\lambda_{i,\epsilon}-1}{\epsilon^{\frac{n}{n+2}}}\rightarrow\widetilde{\mathcal{H}}\frac{\sum_{k=1}^{n}\alpha_k^i\frac{\partial^2}{\partial x_j\partial x_k}H(x_0,x_0)}{\alpha_j^i}=:\widetilde{\mathcal{H}}\xi_i,
\end{equation}
with
$$\widetilde{\mathcal{H}}:=\frac{pn\tilde{\alpha}_n\phi(x_0)C_{HL}^{-\frac{n+2}{4}}}{2(n-2)(p-2)\Gamma_n\tilde{c}_{n,\mu}^{2(p-2)}}\frac{(\int_{\mathbb{R}^n}W(x)dx)^3}{\int_{\mathbb{R}^n}W^{p-1}|\nabla W|^2dx}.$$	
Thus, thanks to \eqref{FR} and \eqref{FR-1}, there holds
\begin{equation}\label{FR-2}
\sum_{k=1}^{n}\alpha_k^i\frac{\partial^2}{\partial x_j\partial x_k}H(x_0,x_0)=\xi_i\alpha_j^i.
\end{equation}	
This implies that $\xi_i$ is an eigenvalue of the Hessian matrix $D^{2}H(x_0,x_0)$ with $\alpha^i$ as its corresponding eigenvector. In virtue of Lemma \ref{wanfan}, we readily establish that $\xi_i$ for $i=2,\cdots,n+1$ are precisely the $n$ eigenvalues $\nu_1,\cdots,\nu_n$ of the Hessian matrix $D^{2}H(x_0,x_0)$. Furthermore, it is important to note that the ordering $\lambda_{2,\epsilon}\leq\cdots\leq\lambda_{n+1,\epsilon}$, when combined with \eqref{FR-1}, implies $\xi_i=\nu_{i-1}$ for $i=2,\cdots,n+1$. This concludes the proof.

	\end{proof}

\section{Qualitative characteristics for $(n+2)$-th eigenvalue and corresponding to eigenfunction}\label{section7}
In this section, we are devoted to show that Theorem \ref{thmprtb} and Corollary \ref{emm-1}. 	
We first need estimate for $\lambda_{n+2,\epsilon}$, which is similar to the treatment in Lemmas \ref{baowenbei} and \ref{limitlama}.
\begin{lem}
There holds
\begin{equation}\label{ba}
\lim\limits_{\epsilon\rightarrow0}\lambda_{n+2,\epsilon}=1.
		\end{equation}
\end{lem}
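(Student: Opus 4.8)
The plan is to sandwich $\lambda_{n+2,\epsilon}$ between $1-o(1)$ and $1+o(1)$. For the upper bound I would take the $(n+2)$-dimensional test space $\mathcal V_\epsilon$ spanned by $u_\epsilon,\xi_{1,\epsilon},\dots,\xi_{n+1,\epsilon}$ in $H^1_0(\Omega)$, which is admissible since these functions are linearly independent for $\epsilon$ small (proved above). By the min--max characterization of $\lambda_{n+2,\epsilon}$ as in \eqref{minmax} it suffices to bound the Rayleigh quotient $\mathcal P_{\epsilon,1}$ from above on $\mathcal V_\epsilon\setminus\{0\}$. Writing a generic element as $f=a_0u_\epsilon+\psi z_\epsilon$ with $z_\epsilon=\sum_{j=1}^n a_j\frac{\partial u_\epsilon}{\partial x_j}+a_{n+1}\big((x-x_\epsilon)\cdot\nabla u_\epsilon+\frac{2}{p-1-\epsilon}u_\epsilon\big)$, one repeats verbatim the computation of Lemma \ref{baowenbei}: test \eqref{ele-1.1} against $\psi z_\epsilon$, test the linearized equation for $z_\epsilon$ against $\psi^2 z_\epsilon$, and use that $z_\epsilon$ solves the linearized equation exactly in $B_\varrho(x_\epsilon)$. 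The only new ingredient is Lemma \ref{nabla}, which guarantees that the dilation generator $\xi_{n+1,\epsilon}$ obeys the \emph{same} linearized equation in $B_\varrho(x_\epsilon)$ with eigenvalue $1$, so adjoining it to $\{\xi_{j,\epsilon}\}_{j=1}^n$ does not alter the structure of the estimate; all cross terms and cut-off errors are controlled by the Hardy--Littlewood--Sobolev inequality together with $u_\epsilon\le W[x_\epsilon,\mu_\epsilon]$, \eqref{miu}, the change of variables $x\mapsto \mu_\epsilon^{-1}x+x_\epsilon$ and the bound $\tilde u_\epsilon\le CW$, exactly as for $I_1,\dots,I_4$ and $J_0$ there. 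This gives $\mathcal P_{\epsilon,1}(f)=1+o(1)$ uniformly on $\mathcal V_\epsilon\setminus\{0\}$, hence $\limsup_{\epsilon\to0}\lambda_{n+2,\epsilon}\le1$.

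For the lower bound the quickest route is monotonicity of the eigenvalues: $\lambda_{n+1,\epsilon}\le\lambda_{n+2,\epsilon}$, and $\lambda_{n+1,\epsilon}\to1$ by Lemma \ref{limitlama}, so $\liminf_{\epsilon\to0}\lambda_{n+2,\epsilon}\ge1$. Alternatively --- the argument ``similar to Lemma \ref{limitlama}'' announced before the statement --- one passes to a subsequence along which $\lambda_{n+2,\epsilon}\to\Lambda\in[0,1]$ (the upper bound being already available), uses the decay estimate \eqref{day2} and elliptic regularity to deduce $\tilde v_{n+2,\epsilon}\to v_0$ in $C^1_{loc}(\mathbb{R}^n)$ with $v_0$ solving the limiting linearized equation \eqref{e-00} for the parameter $\Lambda$, and observes that $v_0\not\equiv0$ because the maximum of $\tilde v_{n+2,\epsilon}$ is attained in a fixed ball by \eqref{day2}. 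By Proposition \ref{propep} this forces $\Lambda\in\{(2p-1)^{-1},1\}$, and $\Lambda=(2p-1)^{-1}$ is excluded: it would make $v_0$ a nonzero multiple of $W[0,1]$, while the $H^1_0(\Omega)$-orthogonality of $v_{n+2,\epsilon}$ and $v_{1,\epsilon}$ --- rewritten as orthogonality in the weighted form associated with $\mathcal G_\epsilon$ --- together with $\tilde v_{1,\epsilon}\to W[0,1]$ from Theorem \ref{Figalli-1} passes to the limit to give $(2p-1)\int_{\mathbb{R}^n}\int_{\mathbb{R}^n}\frac{W^p[0,1](x)W^p[0,1](y)}{|x-y|^{n-2}}\,dx\,dy=0$, contradicting \eqref{p1-00}. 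In either case $\Lambda=1$, and since every subsequence has the same limit, $\lambda_{n+2,\epsilon}\to1$.

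I expect the main obstacle to be the upper-bound computation: one has to verify that adjoining the dilation direction $\xi_{n+1,\epsilon}$ to the translation directions still yields a Rayleigh quotient equal to $1+o(1)$. The delicate terms are the nonlocal cross terms mixing $\frac{\partial u_\epsilon}{\partial x_j}$ with $(x-x_\epsilon)\cdot\nabla u_\epsilon+\frac{2}{p-1-\epsilon}u_\epsilon$ inside the double convolution integrals, and the terms where $\psi$ is differentiated; these are handled by the $C^2_{loc}$ convergence $\tilde u_\epsilon\to W[0,1]$, the pointwise bound $\tilde u_\epsilon\le CW$, and Lemmas \ref{Lem6.1}--\ref{L6.2}, with no genuinely new estimate beyond the invocation of Lemma \ref{nabla}.
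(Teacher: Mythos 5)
Your strategy matches the paper's: for the upper bound, use the $(n+2)$-dimensional test space spanned by $u_\epsilon,\xi_{1,\epsilon},\dots,\xi_{n+1,\epsilon}$, invoke Lemma \ref{nabla} to handle the dilation generator, and run the min--max computation of Lemma \ref{baowenbei} with the new cross terms controlled by the same Hardy--Littlewood--Sobolev and pointwise-decay tools; for the lower bound, monotonicity $\lambda_{n+1,\epsilon}\le\lambda_{n+2,\epsilon}$ together with Lemma \ref{limitlama}. One small correction: your claim that $\mathcal P_{\epsilon,1}(f)=1+o(1)$ \emph{uniformly} on $\mathcal V_\epsilon\setminus\{0\}$ is false (taking $f=u_\epsilon$ alone gives a quotient near $(2p-1)^{-1}$); what one actually proves, and what suffices, is that $\max_{f\in\mathcal V_\epsilon\setminus\{0\}}\mathcal P_{\epsilon,1}(f)\le 1+o(1)$, which in the paper comes from showing $\mathcal B_{\epsilon,2}\le O(\|u_\epsilon\|_\infty^{-2})$ (a one-sided bound, since the $a_0^2$-contribution is negative) and $\mathcal B_{\epsilon,3}\ge\delta>0$.
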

\begin{proof}
By virtue of Theorem \ref{remainder terms} and Lemma \ref{limitlama},
it is enough to show that $$\limsup_{\epsilon\rightarrow0}\lambda_{n+2,\epsilon}\leq1.$$
Referring to \eqref{varep}, we let $\mathcal{W}$ be a vector space whose basis is $$\{u_{\epsilon}\}\cup\{\xi_{j,\epsilon}: 1\leq j\leq n+1\},$$
so that any function $\tilde{f}\in\mathcal{W}\setminus\{0\}$ can be write as
\begin{equation}\label{nonzero-00}
\tilde{f}=a_0u_{\epsilon}+\psi\left(\sum_{j=1}^{n}a_j\frac{\partial u_{\epsilon}}{\partial x_j}+d\Big((x-x_{\epsilon})\cdot\nabla u_{\epsilon}+\frac{2}{p-1-\epsilon}u_{\epsilon}\Big)\right):=a_0u_{\epsilon}+\psi\tilde{z}_{\epsilon},
\end{equation}
for some $a_0,~a_j,~d\in\mathbb{R}$.
Then we have
		\begin{equation}\label{minmax-00}
\begin{split}
			\lambda_{n+2,\epsilon}&=\min\limits_{\substack{\mathcal{W}\subset H_{0}^1(\Omega),\\ dim\mathcal{W}=n+2}}\max\limits_{\tilde{f}\in\mathcal{W}\setminus\{0\}}\frac{\int_{\Omega}|\nabla \tilde{f}(x)|^2dx}{p_{\epsilon}^{(1)}\int_{\Omega}\Big(|x|^{-(n-2)} \ast \big(u_{\epsilon}^{p-1-\epsilon}\tilde{f}\big)\Big)u_{\epsilon}^{p-1-\epsilon}\tilde{f}dx
		+p_{\varepsilon}^{(2)}\int_{\Omega}\Big(|x|^{-(n-2)} \ast u_{\epsilon}^{p-\epsilon}\Big)u_{\epsilon}^{p-2-\epsilon}\tilde{f}^2dx}			\\&\leq\max\limits_{\tilde{f}\in\mathcal{W}\setminus\{0\}}\frac{\int_{\Omega}|\nabla \tilde{f}(x)|^2dx}{p_{\epsilon}^{(1)}\int_{\Omega}\Big(|x|^{-(n-2)} \ast \big(u_{\epsilon}^{p-1-\epsilon}\tilde{f}\big)\Big)u_{\epsilon}^{p-1-\epsilon}\tilde{f}dx
		+p_{\epsilon}^{(2)}\int_{\Omega}\Big(|x|^{-(n-2)} \ast u_{\epsilon}^{p-\epsilon}\Big)u_{\epsilon}^{p-2-\epsilon}\tilde{f}^2dx}\\&
:=\max\limits_{\tilde{f}\in\mathcal{W}\setminus\{0\}}\mathcal{B}_{\epsilon,1}
\end{split}
		\end{equation}
with $p_{\epsilon}^{(1)}=(p-\epsilon),~ p_{\epsilon}^{(2)}=(p-1-\epsilon)$. Together with \eqref{dend}, we find that $\tilde{z}_{\epsilon}$ solves \eqref{A6}. Consequently, by replacing $z_{\epsilon}$ with $\tilde{z}_{\epsilon}$, we have that \eqref{kuangquanshui}-\eqref{psi-z} hold and yield $\mathcal{B}_{\epsilon,1}=1+\mathcal{B}_{\epsilon,2}/\mathcal{B}_{\epsilon,3}$. Here, $\mathcal{B}_{\epsilon,2}$ and $\mathcal{B}_{\epsilon,3}$ are defined as $\mathcal{P}_{\epsilon,2}$ and $\mathcal{P}_{\epsilon,3}$ were in the proof of Lemma \ref{baowenbei}, with $z_{\epsilon}$ replaced by $\tilde{z}_{\epsilon}$. Our next step is to estimate each term of $\mathcal{B}_{\epsilon,2}$ and $\mathcal{B}_{\epsilon,3}$. To begin, from \eqref{m000} we find that
\begin{equation}\label{partial}
\begin{split}
&\int_{\Omega}|\nabla\psi|^2\tilde{z}^2_{\epsilon}dx=\int_{\Omega}\big|\nabla\psi\big|^2\sum_{j=1}^{n}a_j\frac{\partial u_{\epsilon}}{\partial x_j}(x)\sum_{k=1}^{n}a_k\frac{\partial u_{\epsilon}}{\partial x_k}(x)dx\\&
=\int_{\Omega}\big|\nabla\psi\big|^2\Big[2d\sum_{j=1}^{n}a_j\frac{\partial u_{\epsilon}}{\partial x_j}+\Big((x-x_{\epsilon})\cdot\nabla u_{\epsilon}+\frac{2}{p-1-\epsilon}u_{\epsilon}\Big)\Big]\Big((x-x_{\epsilon})\cdot\nabla u_{\epsilon}+\frac{2}{p-1-\epsilon}u_{\epsilon}\Big)dx\\&
=O\Big(\big\|u_{\epsilon}\big\|_{L^{\infty}}^{-2}\Big).
\end{split}
\end{equation}
We decompose
\begin{equation*}
			\begin{split}
&\int_{\Omega}\Big(|x|^{-(n-2)} \ast u_{\epsilon}^{p-\epsilon}\Big)u_{\epsilon}^{p-1-\epsilon}\psi \tilde{z}_{\epsilon}dx=\int_{\Omega}\int_{\Omega}\frac{u_{\epsilon}^{p-1-\epsilon}(x)\psi(x) u_{\epsilon}^{p-\epsilon}(y)}{|x-y|^{n-2}}\sum_{j=1}^{i-1}a_j\frac{\partial u_{\epsilon}}{\partial x_j}(x) dxdy
\\&+\int_{\Omega}\Big(|x|^{-(n-2)} \ast u_{\epsilon}^{p-\epsilon}\Big)u_{\epsilon}^{p-1-\epsilon}\psi \big[(x-x_{\epsilon})\cdot\nabla u_{\epsilon}+\frac{2}{p-1-\epsilon}u_{\epsilon}\big]dx:=\mathcal{I}_1+\mathcal{I}_2.
\end{split}
\end{equation*}
Furthermore, we have
\begin{equation*}
			\begin{split}
\mathcal{I}_2&=\Big[\int_{\{|x-x_{\epsilon}|\leq\varrho\}}\int_{\{|x-x_{\epsilon}|\leq\varrho\}}
+\int_{\Omega\setminus\{|x-x_{\epsilon}|\leq\varrho\}}\int_{\{|x-x_{\epsilon}|\leq\varrho\}}+\int_{\{|x-x_{\epsilon}|\leq\varrho\}}\int_{\Omega\setminus\{|x-x_{\epsilon}|\leq\varrho\}}\\&+\int_{\Omega\setminus\{|x-x_{\epsilon}|\leq\varrho\}}\int_{\Omega\setminus\{|x-x_{\epsilon}|\leq\varrho\}}\Big]
\frac{u_{\epsilon}^{p-\epsilon}(x) u_{\epsilon}^{p-1-\epsilon}(y)\psi}{|x-y|^{n-2}}\big[(x-x_{\epsilon})\cdot\nabla u_{\epsilon}+\frac{2}{p-1-\epsilon}u_{\epsilon}\big]dxdy\\&=:\mathcal{E}_{1}+\mathcal{E}_{2}+\mathcal{E}_{3}+\mathcal{E}_{4}.
\end{split}
\end{equation*}
An integration by parts yields
\begin{equation*}
			\begin{split}				
\mathcal{E}_{1}&=\frac{1}{p-\epsilon}\int_{\{|x-x_{\epsilon}|\leq\varrho\}}\int_{\{|x-x_{\epsilon}|\leq\varrho\}}
\frac{u_{\epsilon}^{p-\epsilon}(y) }{|x-y|^{n-2}}\sum_{i=1}^{n}\frac{\partial}{\partial x_i}\big[(x_i-x_{i,\epsilon}) u_{\epsilon}^{p-\epsilon}\big]\psi dxdy\\&\hspace{3mm}-\big(\frac{n}{p-\epsilon}-\frac{2}{p-1-\epsilon}\big)\int_{\{|x-x_{\epsilon}|\leq\varrho\}}\int_{\{|x-x_{\epsilon}|\leq\varrho\}}
\frac{u_{\epsilon}^{p-\epsilon}(x) u_{\epsilon}^{p-\epsilon}(y)\psi}{|x-y|^{n-2}}dxdy\\
&=\frac{n-2}{p-\epsilon}\int_{\{|x-x_{\epsilon}|\leq\varrho\}}\int_{\{|x-x_{\epsilon}|\leq\varrho\}}
\frac{(x-y)(x-x_{\epsilon})u_{\epsilon}^{p-\epsilon}(x) u_{\epsilon}^{p-\epsilon}(y)\psi}{|x-y|^{n}} dxdy\\&\hspace{3mm}+\frac{1}{p-\epsilon}\int_{\{|x-x_{\epsilon}|=\varrho\}}\int_{\{|x-x_{\epsilon}|\leq\varrho\}}
\frac{u_{\epsilon}^{p-\epsilon}(x) u_{\epsilon}^{p-\epsilon}(y)\psi}{|x-y|^{n-2}}(x-x_{\epsilon})\cdot\nu dxdy\\&\hspace{3mm}-\big(\frac{n}{p-\epsilon}-\frac{2}{p-1-\epsilon}\big)\int_{\{|x-x_{\epsilon}|\leq\varrho\}}\int_{\{|x-x_{\epsilon}|\leq\varrho\}}\frac{u_{\epsilon}^{p-\epsilon}(x) u_{\epsilon}^{p-\epsilon}(y) \psi}{|x-y|^{n-2}}dxdy\\&=:\frac{n-2}{p-\epsilon}\widetilde{\mathcal{E}}_1+\frac{1}{p-\epsilon}\widetilde{\mathcal{E}}_2+\big(-\frac{n}{p-\epsilon}+\frac{2}{p-1-\epsilon}\big)\widetilde{\mathcal{E}}_3.
\end{split}
\end{equation*}
Similarly to estimates of $I_1-I_4$ in Lemma \eqref{baowenbei}, we have
\begin{equation*}
			\begin{split}	
\widetilde{\mathcal{E}}_2=O\Big(\big\|u_{\epsilon}\big\|_{\infty}^{\frac{\epsilon(n+2)}{4}-p}\Big).
\end{split}
\end{equation*}
Let us now estimate $\widetilde{\mathcal{E}}_1$.
Exploiting the definition of  $\psi$ and symmetry, we get that
\begin{equation*}
\begin{split}
\int_{\{|x-x_{\epsilon}|\leq\varrho\}}\int_{\{|x-x_{\epsilon}|\leq\varrho\}}
&\frac{(x-y)(x-x_{\epsilon})u_{\epsilon}^{p-\epsilon}(x) u_{\epsilon}^{p-\epsilon}(y)}{|x-y|^{n}}dxdy\\&
=-\int_{\{|x-x_{\epsilon}|\leq\varrho\}}\int_{\{|x-x_{\epsilon}|\leq\varrho\}}
\frac{(y-x)(x_{\epsilon}-y)u_{\epsilon}^{p-\epsilon}(x) u_{\epsilon}^{p-\epsilon}(y)}{|x-y|^{n}}dxdy,
\end{split}
\end{equation*}
then since $\psi=1$ in $B_{\varrho}(x_{\epsilon})$ we have
\begin{equation*}
\frac{n-2}{p-\epsilon}\widetilde{\mathcal{E}}_{1}=\frac{n-2}{2\cdot(p-\epsilon)}\int_{\{|x-x_{\epsilon}|\leq\varrho\}}\int_{\{|x-x_{\epsilon}|\leq\varrho\}}
\frac{u_{\epsilon}^{p-\epsilon}(x) u_{\epsilon}^{p-\epsilon}(y) \psi}{|x-y|^{n-2}}dxdy.
\end{equation*}
This implies that
\begin{equation*}
\frac{n-2}{p-\epsilon}\widetilde{\mathcal{E}}_{1}+\big(\frac{2}{p-1-\epsilon}-\frac{n}{p-\epsilon}\big)\widetilde{\mathcal{E}}_{3}=O(\epsilon)=O\Big(\big\|u_{\epsilon}\big\|_{\infty}^{-2}\Big),
\end{equation*}
by \eqref{Fn}. Combing these estimates, we obtain
\begin{equation}\label{e31}
\mathcal{E}_{1}=O\Big(\big\|u_{\epsilon}\big\|_{\infty}^{-2}\Big)+O\Big(\big\|u_{\epsilon}\big\|_{\infty}^{\frac{\epsilon(n+2)}{4}-p}\Big).
\end{equation}
By Hardy-Littlewood-Sobolev inequality, Holder inequality and Sobolev inequality we also deduce that
\begin{equation}\label{e32}
\begin{split}
\mathcal{E}_{2}\leq&C\big\|u_{\epsilon}\big\|_{\infty}^{p-\epsilon}\Big[\int_{\Omega\setminus\{|x-x_{\epsilon}|\leq\varrho\}}
\Big(\frac{1}{1+\|u_{\epsilon}\|_{\infty}^{p-1-\epsilon}|x-x_{\epsilon}|^2}\Big)^{\frac{n(p-\epsilon)}{p}}dx\Big]^{\frac{n+2}{2n}}\Big[\int_{\{|x-x_{\epsilon}|\leq\varrho\}}u_{\epsilon}^{\frac{2n}{n-2}-\frac{n\epsilon}{2}}dx\Big]^{\frac{2}{n}}
\\ \times&\Big[\int_{\Omega\setminus\{|x-x_{\epsilon}|\leq\varrho\}}\big|(x-x_{\epsilon})\cdot\nabla u_{\epsilon}+\frac{2}{p-1-\epsilon}u_{\epsilon}\big|^{\frac{2n}{n-2}}dx\Big]^{\frac{n-2}{2n}}
\\
\leq&C\frac{\|u_{\epsilon}\|_{\infty}^{p-\epsilon}}{\|u_{\epsilon}\|_{\infty}^{\frac{(p-1-\epsilon)(n+2)}{4}}}
\Big[\int_{\Omega\setminus\{|x|\leq\varrho\|u_{\epsilon}\|_{\infty}^{\frac{p-1-\epsilon}{2}}\}}\frac{1}{(1+|x|^2)^{\frac{n(p-\epsilon)}{p}}}dy\Big]^{\frac{n+2}{2n}}=O\Big(\frac{1}{\|u_{\epsilon}\|_{\infty}^{p-[1+\frac{\epsilon(n-2)(p-\epsilon)}{2}]\epsilon}}\Big),
\end{split}
		\end{equation}
since \eqref{dus}.
By Hardy-Littlewood-Sobolev inequality, \eqref{decay1} and $\frac{\partial }{\partial\mu}W[0,\mu](x)|_{\mu=1}=\frac{n-2}{2}\frac{1-|x|^2}{(1+|x|^2)^{n/2}}=O(W[0,1])$,
we have
\begin{equation}\label{e33}
\begin{split}
\mathcal{E}_{3}\leq&
 \frac{\|u_{\epsilon}\|_{\infty}^{p-\epsilon}}{\|u_{\epsilon}\|_{\infty}^{\frac{(p-1-\epsilon)(n+2)}{4}}}\Big[\int_{\{|x-x_{\epsilon}|\leq\varrho\}}u_{\epsilon}^{\frac{2n}{n-2}-\frac{2n\epsilon}{n+2}}dx\Big]^{\frac{n+2}{2n}}
 \\& \times\Big[\int_{\Omega_{\epsilon}\setminus\{|x|\leq\varrho\|u_{\epsilon}\|_{\infty}^{\frac{p-1-\epsilon}{2}}\}}\big|\tilde{u}_{\epsilon}^{p-1-\epsilon}[y_{\epsilon}\cdot\nabla \tilde{u}_{\epsilon}+\frac{2}{p-1-\epsilon}\tilde{u}_{\epsilon}]\big|^{\frac{2n}{n+2}}dx\Big]^{\frac{n+2}{2n}}
 \\
\leq&C\frac{\|u_{\epsilon}\|_{\infty}^{p-\epsilon}}{\|u_{\epsilon}\|_{\infty}^{\frac{(p-1-\epsilon)(n+2)}{4}}}
\Big[\int_{\Omega_{\epsilon}\setminus\{|y|\leq\varrho\|u_{\epsilon}\|_{\infty}^{\frac{p-1-\epsilon}{2}}\}}\frac{1}{(1+|y|^2)^{\frac{n(p-\epsilon)}{p}}}dy\Big]^{\frac{n+2}{2n}}=O\Big(\frac{1}{\|u_{\epsilon}\|_{\infty}^{p-[1+\frac{\epsilon(n-2)(p-\epsilon)}{2}]\epsilon}}\Big).
\end{split}
		\end{equation}
Similarly, we can obtain an estimate for $\mathcal{E}_{4}$:
\begin{equation}\label{e34}
\mathcal{E}_{4}=O\Big(\frac{1}{\|u_{\epsilon}\|_{\infty}^{2p-[2+\epsilon(n-2)(p-\epsilon)]\epsilon}}\Big).
\end{equation}
Collecting all the previous estimates \eqref{e31}-\eqref{e34} for $\mathcal{I}_{2}$ and the estimate of $\mathcal{I}_{1}$ from the proof of Lemma \ref{baowenbei}, we find that
\begin{equation}\label{e35}
\begin{split}
\int_{\Omega}\Big(|x|^{-(n-2)} \ast u_{\epsilon}^{p-\epsilon}\Big)u_{\epsilon}^{p-1-\epsilon}\psi \tilde{z}_{\epsilon}dx
=O\Big(\big\|u_{\epsilon}\big\|_{\infty}^{-2}\Big) +O\Big(\big\|u_{\epsilon}\big\|_{\infty}^{\frac{\epsilon(n+2)}{4}-p}\Big)
+O\Big(\big\|u_{\epsilon}\big\|_{\infty}^{[1+\frac{\epsilon(n-2)(p-\epsilon)}{2}]\epsilon-p}\Big).
\end{split}
\end{equation}
Arguing in the same way, we deduce
\begin{equation}\label{e36}
				\int_{\Omega}\Big(|x|^{-(n-2)} \ast \big(u_{\epsilon}^{p-1-\epsilon}\psi \tilde{z}_{\epsilon}\big)\Big)u_{\epsilon}^{p-\epsilon}dx=O\Big(\big\|u_{\epsilon}\big\|_{\infty}^{-2}\Big)+O\Big(\big\|u_{\epsilon}\big\|_{\infty}^{\frac{\epsilon(n+2)}{4}-p}\Big)
+O\Big(\big\|u_{\epsilon}\big\|_{\infty}^{[1+\frac{\epsilon(n-2)(p-\epsilon)}{2}]\epsilon-p}\Big).
		\end{equation}
Furthermore, thanks to symmetry, one has that
\begin{equation*}
\begin{split}
&\int_{\Omega}\Big(|x|^{-(n-2)} \ast \big(u_{\epsilon}^{p-1-\epsilon} \tilde{z}_{\epsilon}\big)\Big)u_{\epsilon}^{p-1-\epsilon}\psi^2 \tilde{z}_{\epsilon}dx-\int_{\Omega}\Big(|x|^{-(n-2)} \ast \big(u_{\epsilon}^{p-1-\epsilon}\psi \tilde{z}_{\epsilon}\big)\Big)u_{\epsilon}^{p-1-\epsilon}\psi \tilde{z}_{\epsilon}dx\\&
=\frac{1}{2}\int_{\Omega}\int_{\Omega}\frac{(\psi(x)-\psi(y))^2u_{\epsilon}^{p-1-\epsilon}(x) u_{\epsilon}^{p-1-\epsilon}(y)}{|x-y|^{n-2}}\sum_{j=1}^{n}a_j\frac{\partial u_{\epsilon}}{\partial x_j}\sum_{k=1}^{n}a_k\frac{\partial u_{\epsilon}}{\partial y_k}dxdy\\&
+\frac{d}{2}\int_{\Omega}\int_{\Omega}\frac{(\psi(x)-\psi(y))^2u_{\epsilon}^{p-1-\epsilon}(x) u_{\epsilon}^{p-1-\epsilon}(y)}{|x-y|^{n-2}}\sum_{j=1}^{n}a_j\frac{\partial u_{\epsilon}}{\partial x_j}\big[(y-x_{\epsilon})\cdot\nabla u_{\epsilon}+\frac{2}{p-1-\epsilon}u_{\epsilon}\big]dxdy\\&
+\frac{d}{2}\int_{\Omega}\int_{\Omega}\frac{(\psi(x)-\psi(y))^2u_{\epsilon}^{p-1-\epsilon}(x) u_{\epsilon}^{p-1-\epsilon}(y)}{|x-y|^{n-2}}\sum_{k=1}^{n}a_k\frac{\partial u_{\epsilon}}{\partial y_k}\big[(x-x_{\epsilon})\cdot\nabla u_{\epsilon}+\frac{2}{p-1-\epsilon}u_{\epsilon}\big]dxdy\\&
+\frac{d^2}{2}\int_{\Omega}\int_{\Omega}\frac{(\psi(x)-\psi(y))^2u_{\epsilon}^{p-1-\epsilon} u_{\epsilon}^{p-1-\epsilon}}{|x-y|^{n-2}}\big[(x-x_{\epsilon})\cdot\nabla u_{\epsilon}+\frac{2}{p-1-\epsilon}u_{\epsilon}\big]\big[(y-x_{\epsilon})\cdot\nabla u_{\epsilon}+\frac{2}{p-1-\epsilon}u_{\epsilon}\big]dxdy\\&
:=\mathcal{F}_1+\mathcal{F}_2+\mathcal{F}_3+\mathcal{F}_4.
\end{split}
\end{equation*}
A direct computation shows
\begin{equation*}
\begin{split}
\mathcal{F}_2&=\frac{d}{2(p-\epsilon)^2}\int_{\Omega}\int_{\Omega}\frac{(\psi(x)-\psi(y))^2 }{|x-y|^{n-2}}\sum_{j=1}^{n}a_j\frac{\partial }{\partial x_j}u_{\epsilon}^{p-\epsilon}(x) \sum_{k=1}^{n}\frac{\partial}{{\partial y_k}}\big[(y_k-x_{\epsilon,k}) u_{\epsilon}^{p-\epsilon}\big]dxdy\\&
-\frac{[(p-1-\epsilon)n-2]d}{2(p-1-\epsilon)(p-\epsilon)^2}\int_{\Omega}\int_{\Omega}\frac{(\psi(x)-\psi(y))^2 u_{\epsilon}^{p-\epsilon}(y)}{|x-y|^{n-2}}\sum_{j=1}^{n}a_j\frac{\partial }{\partial x_j}u_{\epsilon}^{p-\epsilon}(x)dxdy\\&
:=\mathcal{F}_{21}+\mathcal{F}_{22}.
\end{split}
\end{equation*}
By an integration by parts yields, definition of $\psi$ and Hardy-Littlewood-Sobolev inequality, we get
\begin{equation*}
\begin{split}
\mathcal{F}_{22}&=\frac{[(p-1-\epsilon)n-2]d}{2(p-1-\epsilon)(p-\epsilon)^2}\sum_{j=1}^{n}a_j\int_{\Omega}\int_{\Omega}\frac{\partial}{\partial x_j}\Big(\frac{(\psi(x)-\psi(y))^2}{|x-y|^{n-2}}\Big)u_{\epsilon}^{p-\epsilon}(x)u_{\epsilon}^{p-\epsilon}(y)dxdy
\\&=\frac{[(p-1-\epsilon)n-2]d}{2(p-1-\epsilon)(p-\epsilon)^2}\sum_{j=1}^{n}a_j\int_{\Omega}\int_{\Omega}\Big[\frac{\partial}{\partial x_j}\Big(\frac{1}{|x-y|^{n-2}}\Big)(\psi(x)-\psi(y))^2+\frac{\partial_{x_j}\psi(x)}{|x-y|^{n-2}}\Big]u_{\epsilon}^{p-\epsilon}(x)u_{\epsilon}^{p-\epsilon}(y)dxdy
\\&=O\Big(\big\|u_{\epsilon}\big\|_{\infty}^{\frac{\epsilon(n+2)}{4}-p}\Big).
\end{split}
\end{equation*}
Similar to the estimates of each term of $J_0$ in \eqref{JO}, due to \eqref{4J} and integrate by parts on $\Omega$, by exploiting Hardy Littlewood Sobolev inequality and definition of $\psi$, we obtain
\begin{equation*}
\begin{split}
\mathcal{F}_{21}&=\frac{d}{2(p-\epsilon)^2}\sum_{j=1}^{n}\sum_{k=1}^{n}a_j\int_{\Omega}\int_{\Omega}\frac{\partial^2 }{\partial y_k\partial x_j}\Big(\frac{(\psi(x)-\psi(y))^2 }{|x-y|^{n-2}}\Big)(y_k-x_{\epsilon,k})u_{\epsilon}^{p-\epsilon}(y)u_{\epsilon}^{p-\epsilon}(x)dxdy\\&
=\frac{d}{2(p-\epsilon)^2}\sum_{j=1}^{n}\sum_{k=1}^{n}a_j\int_{\Omega}\int_{\Omega}(y_k-x_{\epsilon,k})u_{\epsilon}^{p-\epsilon}(y)u_{\epsilon}^{p-\epsilon}(x)[J_1+J_2+J_3+J_4]dxdy\\&
=O\Big(\big\|u_{\epsilon}\big\|_{\infty}^{\frac{\epsilon(n+2)}{4}-p}\Big).
\end{split}
\end{equation*}
Hence, combining the above estimates entails that
\begin{equation}\label{F22}
\mathcal{F}_{2}=O\Big(\big\|u_{\epsilon}\big\|_{\infty}^{\frac{\epsilon(n+2)}{4}-p}\Big).
\end{equation}
Analogously, we have
\begin{equation}\label{F33}
\mathcal{F}_{3}=O\Big(\big\|u_{\epsilon}\big\|_{\infty}^{\frac{\epsilon(n+2)}{4}-p}\Big).
\end{equation}
Next we estimate $\mathcal{F}_{4}$. Simple computations give
\begin{equation*}
\begin{split}
\mathcal{F}_4&=\frac{d^2}{2(p-\epsilon)^2}\int_{\Omega}\int_{\Omega}\frac{(\psi(x)-\psi(y))^2 }{|x-y|^{n-2}}\sum_{j=1}^{n}\frac{\partial }{\partial x_j}\big[(x_j-x_{\epsilon,j}) u_{\epsilon}^{p-\epsilon}\big]\sum_{k=1}^{n}\frac{\partial}{{\partial y_k}}\big[(y_k-x_{\epsilon,k}) u_{\epsilon}^{p-\epsilon}\big]dxdy\\&
-\frac{[(p-1-\epsilon)n-2]d^2}{2(p-1-\epsilon)(p-\epsilon)^2}\int_{\Omega}\int_{\Omega}\frac{(\psi(x)-\psi(y))^2 u_{\epsilon}^{p-\epsilon}(y)}{|x-y|^{n-2}}\sum_{j=1}^{n}\frac{\partial }{\partial x_j}\big[(x_j-x_{\epsilon,j})u_{\epsilon}^{p-\epsilon}\big]dxdy\\&
-\frac{[(p-1-\epsilon)n-2]d^2}{2(p-1-\epsilon)(p-\epsilon)^2}\int_{\Omega}\int_{\Omega}\frac{(\psi(x)-\psi(y))^2 u_{\epsilon}^{p-\epsilon}(x)}{|x-y|^{n-2}}\sum_{k=1}^{n}\frac{\partial}{{\partial y_k}}\big[(y_k-x_{\epsilon,k}) u_{\epsilon}^{p-\epsilon}\big]dxdy\\&
:=\mathcal{F}_{41}+\mathcal{F}_{42}+\mathcal{F}_{43}.
\end{split}
\end{equation*}
Using Hardy Littlewood Sobolev inequality and definition of $\psi$, we compute the following estimate
\begin{equation*}
\begin{split}
\mathcal{F}_{41}&=\frac{d^2}{2(p-\epsilon)^2}\sum_{j=1}^{n}\sum_{k=1}^{n}\int_{\Omega}\int_{\Omega}\frac{\partial^2}{\partial y_k\partial x_j}\Big(\frac{(\psi(x)-\psi(y))^2 }{|x-y|^{n-2}}\Big)(x_j-x_{\epsilon,j}) u_{\epsilon}^{p-\epsilon}(y_k-x_{\epsilon,k}) u_{\epsilon}^{p-\epsilon}dxdy\\&
=\frac{d^2}{2(p-\epsilon)^2}\sum_{j=1}^{n}\sum_{k=1}^{n}\int_{\Omega}\int_{\Omega}(x_j-x_{\epsilon,j}) u_{\epsilon}^{p-\epsilon}(y_k-x_{\epsilon,k}) u_{\epsilon}^{p-\epsilon}[J_1+J_2+J_3+J_4]dxdy\\&
=O\Big(\big\|u_{\epsilon}\big\|_{\infty}^{\frac{\epsilon(n+2)}{4}-p}\Big).
\end{split}
\end{equation*}
Similarly, we have
\begin{equation*}
\mathcal{F}_{42}
=O\Big(\big\|u_{\epsilon}\big\|_{\infty}^{\frac{\epsilon(n+2)}{4}-p}\Big),\hspace{2mm}\mathcal{F}_{43}
=O\Big(\big\|u_{\epsilon}\big\|_{\infty}^{\frac{\epsilon(n+2)}{4}-p}\Big).
\end{equation*}
Therefore, we deduce that
\begin{equation}\label{AF4}
\mathcal{F}_{4}
=O\Big(\big\|u_{\epsilon}\big\|_{\infty}^{\frac{\epsilon(n+2)}{4}-p}\Big).
\end{equation}
Combining \eqref{F22}-\eqref{AF4} and the estimate of $\mathcal{F}_1$ in \eqref{JO} and \eqref{dx} tell us that
\begin{equation}\label{Omega}
\begin{split}
&\int_{\Omega}\Big(|x|^{-(n-2)} \ast \big(u_{\epsilon}^{p-1-\epsilon} \tilde{z}_{\epsilon}\big)\Big)u_{\epsilon}^{p-1-\epsilon}\psi^2 \tilde{z}_{\epsilon}dx-\int_{\Omega}\Big(|x|^{-(n-2)} \ast \big(u_{\epsilon}^{p-1-\epsilon}\psi \tilde{z}_{\epsilon}\big)\Big)u_{\epsilon}^{p-1-\epsilon}\psi \tilde{z}_{\epsilon}dx\\&
=O\Big(\big\|u_{\epsilon}\big\|_{\infty}^{\frac{\epsilon(n+2)}{4}-p}\Big).
\end{split}
\end{equation}
We have the following identity
\begin{equation*}
\begin{split}
&\int_{\Omega}\Big(|x|^{-(n-2)} \ast \big(u_{\epsilon}^{p-1-\epsilon}\psi \tilde{z}_{\epsilon}\big)\Big)u_{\epsilon}^{p-1-\epsilon}\psi \tilde{z}_{\epsilon}dx=\int_{\Omega}\int_{\Omega}\frac{u_{\epsilon}^{p-1-\epsilon}(x)\psi(x)  u_{\epsilon}^{p-1-\epsilon}(y)\psi(y) }{|x-y|^{n-2}}\sum_{j=1}^{n}a_j\frac{\partial u_{\epsilon}}{\partial x_j}\sum_{k=1}^{n}a_k\frac{\partial u_{\epsilon}}{\partial y_k}dxdy\\&
+d^2\int_{\Omega}\int_{\Omega}\frac{u_{\epsilon}^{p-1-\epsilon}(x)\psi(x)  u_{\epsilon}^{p-1-\epsilon}(y)\psi(y) }{|x-y|^{n-2}}\big[(x-x_{\epsilon})\cdot\nabla u_{\epsilon}+\frac{2}{p-1-\epsilon}u_{\epsilon}\big]\big[(y-x_{\epsilon})\cdot\nabla u_{\epsilon}+\frac{2}{p-1-\epsilon}u_{\epsilon}\big]dxdy
\\&
+d\int_{\Omega}\int_{\Omega}\frac{u_{\epsilon}^{p-1-\epsilon}(x)\psi(x)  u_{\epsilon}^{p-1-\epsilon}(y)\psi(y) }{|x-y|^{n-2}}\sum_{k=1}^{n}a_k\frac{\partial u_{\epsilon}(y)}{\partial y_k}\big[(x-x_{\epsilon})\cdot\nabla u_{\epsilon}(x)+\frac{2}{p-1-\epsilon}u_{\epsilon}(x)\big]dxdy\\&+
d\int_{\Omega}\int_{\Omega}\frac{u_{\epsilon}^{p-1-\epsilon}(x)\psi(x)  u_{\epsilon}^{p-1-\epsilon}(y)\psi(y) }{|x-y|^{n-2}}\big[(y-x_{\epsilon})\cdot\nabla u_{\epsilon}+\frac{2}{p-1-\epsilon}u_{\epsilon}\big]\sum_{k=1}^{i-1}a_j\frac{\partial u_{\epsilon}}{\partial x_j}dxdy:=\mathcal{G}_1+\mathcal{G}_2+\mathcal{G}_3+\mathcal{G}_4.
\end{split}
\end{equation*}
Similarly to the argument of \eqref{hand23}, thanks to \eqref{p1-00}-\eqref{thatwwe} and the change of variable $x=\|u_{\epsilon}\|_{\infty}^{-\frac{4-(n-2)\varepsilon}{2(n-2)}}y+x_{\epsilon}$, and $y=\|u_{\epsilon}\|_{\infty}^{-\frac{4-(n-2)\varepsilon}{2(n-2)}}z+x_{\epsilon}$,  we deduce
\begin{equation*}
\begin{split}
\mathcal{G}_2=&d^2\big\|u_{\epsilon}\big\|_{\infty}^{p+1-\epsilon}\int_{\Omega_{\epsilon}}\int_{\Omega_{\epsilon}}\frac{\tilde{u}_{\epsilon}^{p-1-\epsilon}(z)\psi(\big\|u_{\epsilon}\big\|_{\infty}^{-\frac{4-(n-2)\varepsilon}{2(n-2)}}y+x_{\epsilon})  \tilde{u}_{\epsilon}^{p-1-\epsilon}(y)\psi(\big\|u_{\epsilon}\big\|_{\infty}^{-\frac{4-(n-2)\varepsilon}{2(n-2)}}z+x_{\epsilon})}{|y-z|^{n-2}}\\&
\times\big[y_{\epsilon}\cdot\nabla \tilde{u}_{\epsilon}+\frac{2}{p-1-\epsilon}\tilde{u}_{\epsilon}\big]\big[z_{\epsilon}\cdot\nabla \tilde{u}_{\epsilon}+\frac{2}{p-1-\epsilon}\tilde{u}_{\epsilon}\big]dydz\\
=&d^2\big\|u_{\epsilon}\big\|_{\infty}^{p+1-\epsilon}\bigg[\int_{\mathbb{R}^n}\int_{\mathbb{R}^n}\frac{W^{p-1}(y)W^{p-1}(z)}{|y-z|^{n-2}}\frac{1-|z|^2}{(1+|z|^2)^{\frac{n}{2}}}\frac{1-|y|^2}{(1+|y|^2)^{\frac{n}{2}}}\psi^2\big(x_{0}\big)dydz+o(1)\bigg]\\
=&d^2\big\|u_{\epsilon}\big\|_{\infty}^{p+1-\epsilon}\bigg[\Gamma_n\frac{\delta_j^{k}}{n}\int_{\mathbb{R}^n}W^{p-1}(x)\Big(\frac{1-|y|^2}{(1+|y|^2)^{\frac{n}{2}}}\Big)^2dy+o(1)\bigg].
\end{split}
\end{equation*}
Furthermore, by changing variable and recalling the definition of $\tilde{u}_{\epsilon}$, we have
\begin{equation*}
\begin{split}
\mathcal{G}_3=&d\big\|u_{\epsilon}\big\|_{\infty}^{\frac{3(p+1-\epsilon)}{2}+2}\int_{\Omega_{\epsilon}}\int_{\Omega_{\epsilon}}\frac{\tilde{u}_{\epsilon}^{p-1-\epsilon}(y)\psi(\|u_{\epsilon}\|_{\infty}^{-\frac{4-(n-2)\varepsilon}{2(n-2)}}y+x_{\epsilon})  \tilde{u}_{\epsilon}^{p-1-\epsilon}(z)\psi(\|u_{\epsilon}\|_{\infty}^{-\frac{4-(n-2)\varepsilon}{2(n-2)}}z+x_{\epsilon})}{|y-z|^{n-2}}\\&
\times\sum_{k=1}^{n}a_k\frac{\partial \tilde{u}_{\epsilon}(z)}{\partial z_k}\big[y_{\epsilon}\cdot\nabla \tilde{u}_{\epsilon}+\frac{2}{p-1-\epsilon}\tilde{u}_{\epsilon}\big]dydz\\
=&d\big\|u_{\epsilon}\big\|_{\infty}^{\frac{3(p+1-\epsilon)}{2}+2}\bigg[\int_{\mathbb{R}^n}\int_{\mathbb{R}^n}\frac{W^{p-1}(z)W^{p-1}(y)}{|y-z|^{n-2}}\sum_{k=1}^{n}a_k\frac{\partial W(z)}{\partial z_k}\frac{1-|y|^2}{(1+|y|^2)^{\frac{n}{2}}}\psi^2\big(x_{0}\big)dydz+o(1)\bigg]\\
=&d\big\|u_{\epsilon}\big\|_{\infty}^{\frac{3(p+1-\epsilon)}{2}+2}\bigg[(2-n)\sum_{k=1}^{n}a_k\frac{\Gamma_n}{p}\frac{\delta_j^{k}}{n}\int_{\mathbb{R}^n}W^{p-1}(x)\frac{y_k}{(1+|y|^2)^{\frac{n}{2}}}\frac{1-|y|^2}{(1+|y|^2)^{\frac{n}{2}}}dy+o(1)\bigg]\\
=&o\big(\big\|u_{\epsilon}\big\|_{\infty}^{\frac{3(p+1-\epsilon)}{2}+2}\big).
\end{split}
\end{equation*}
Arguing in the same way to $\mathcal{G}_3$, we are able to deduce
\begin{equation*}
\begin{split}
\mathcal{G}_4=o\big(\big\|u_{\epsilon}\big\|_{\infty}^{\frac{3(p+1-\epsilon)}{2}+2}\big).
\end{split}
\end{equation*}
Summing the estimate of $\mathcal{G}_1$ in the proof of Lemma \ref{baowenbei}, and estimates of $\mathcal{G}_2$-$\mathcal{G}_4$, we deduce that
\begin{equation}\label{Omega-0}
\begin{split}
\int_{\Omega}\Big(|x|^{-(n-2)} \ast \big(u_{\epsilon}^{p-1-\epsilon}\psi \tilde{z}_{\epsilon}\big)\Big)u_{\epsilon}^{p-1-\epsilon}\psi \tilde{z}_{\epsilon}dx=&\frac{1}{p}\big\|u_{\epsilon}\big\|_{L^{\infty}}^{\frac{4}{n-2}}\sum_{j=1}^{n}a_j\frac{\delta_j^{k}}{n}\int_{\mathbb{R}^{n}}W^{p-1}(x)\big|\nabla W(x)\big|^2dx
\\&
+d^2\big\|u_{\epsilon}\big\|_{\infty}^{p+1-\epsilon}\Gamma_n\frac{\delta_j^{k}}{n}\int_{\mathbb{R}^n}W^{p-1}(x)\Big(\frac{1-|x|^2}{(1+|x|^2)^{\frac{n}{2}}}\Big)^2dx
\\&+o\big(\big\|u_{\epsilon}\big\|_{\infty}^{\frac{3(p+1-\epsilon)}{2}+2}\big).
\end{split}
\end{equation}
Finally, we also have
\begin{equation*}
\begin{split}
&\int_{\Omega}\Big(|x|^{-(n-2)} \ast u_{\epsilon}^{p-\epsilon}\Big)u_{\epsilon}^{p-2-\epsilon}\psi^2 \tilde{z}_{\epsilon}^2dx=\int_{\Omega}\int_{\Omega}\frac{u_{\epsilon}^{p-\epsilon}(y) u_{\epsilon}^{p-2-\epsilon}(x)\psi^2(x) }{|x-y|^{n-2}}\sum_{j=1}^{n}a_j\frac{\partial u_{\epsilon}}{\partial x_j}\sum_{k=1}^{n}a_k\frac{\partial u_{\epsilon}}{\partial x_k}dxdy\\&
+2d\int_{\Omega}\int_{\Omega}\frac{u_{\epsilon}^{p-\epsilon}(y)  u_{\epsilon}^{p-2-\epsilon}(x)\psi^2(x) }{|x-y|^{n-2}}\sum_{j=1}^{n}a_j\frac{\partial u_{\epsilon}}{\partial x_j}\big[(x-x_{\epsilon})\cdot\nabla u_{\epsilon}(x)+\frac{2}{p-1-\epsilon}u_{\epsilon}(x)\big]dxdy\\&
+d^2\int_{\Omega}\int_{\Omega}\frac{u_{\epsilon}^{p-\epsilon}(y)  u_{\epsilon}^{p-2-\epsilon}(x)\psi^2(x) }{|x-y|^{n-2}}\big[(x-x_{\epsilon})\cdot\nabla u_{\epsilon}(x)+\frac{2}{p-1-\epsilon}u_{\epsilon}(x)\big]^2dxdy:=\mathcal{H}_1+\mathcal{H}_2+\mathcal{H}_3.
\end{split}
\end{equation*}
For $\mathcal{H}_2$, similarly to the estimates of $\mathcal{G}_3-\mathcal{G}_4$, we have
\begin{equation*}
\mathcal{H}_2=o\big(\big\|u_{\epsilon}\big\|_{\infty}^{\frac{3(p+1-\epsilon)}{2}+2}\big).
\end{equation*}
From \eqref{p1-00}-\eqref{thatwwe} and the change of variable $x=\|u_{\epsilon}\|_{\infty}^{-\frac{4-(n-2)\varepsilon}{2(n-2)}}y+x_{\epsilon}$, we obtain that
\begin{equation*}
\begin{split}
\mathcal{H}_3=&d^2\big\|u_{\epsilon}\big\|_{\infty}^{p+1-\epsilon}\int_{\Omega_{\epsilon}}\int_{\Omega_{\epsilon}}\frac{\tilde{u}_{\epsilon}^{p-\epsilon}(z)\psi^2(\|u_{\epsilon}\|_{\infty}^{-\frac{4-(n-2)\varepsilon}{2(n-2)}}y+x_{\epsilon})  \tilde{u}_{\epsilon}^{p-2-\epsilon}(y)\big[y_{\epsilon}\cdot\nabla \tilde{u}_{\epsilon}+\frac{2}{p-1-\epsilon}\tilde{u}_{\epsilon}\big]^2}{|y-z|^{n-2}}dydz\\
=&d^2\big\|u_{\epsilon}\big\|_{\infty}^{p+1-\epsilon}\bigg[\Gamma_n\frac{\delta_j^{k}}{n}\int_{\mathbb{R}^n}W^{2^\ast-2}(y)\Big(\frac{1-|y|^2}{(1+|y|^2)^{\frac{n}{2}}}\Big)^2dy+o(1)\bigg].
\end{split}
\end{equation*}
Utilizing these estimates and estimate of $\mathcal{H}_1$, we deduce that
\begin{equation}\label{Omega-1}
\begin{split}
\int_{\Omega}\Big(|x|^{-(n-2)} \ast u_{\epsilon}^{p-\epsilon}\Big)u_{\epsilon}^{p-2-\epsilon}\psi^2 \tilde{z}_{\epsilon}^2dx&=\big\|u_{\epsilon}\big\|_{L^{\infty}}^{\frac{4}{n-2}}\sum_{j=1}^{n}a_j\frac{\delta_j^{k}}{n}\int_{\mathbb{R}^{n}}W^{2^{\ast}-2}(x)\big|\nabla W(x)\big|^2dx\\&
+d^2\big\|u_{\epsilon}\big\|_{\infty}^{p+1-\epsilon}\Gamma_n\frac{\delta_j^{k}}{n}\int_{\mathbb{R}^n}W^{2^\ast-2}(x)\Big(\frac{1-|x|^2}{(1+|x|^2)^{\frac{n}{2}}}\Big)^2dx
\\&+o\big(\big\|u_{\epsilon}\big\|_{\infty}^{\frac{3(p+1-\epsilon)}{2}+2}\big).
\end{split}
\end{equation}
Hence, by \eqref{partial}, \eqref{e35}, \eqref{e36}, \eqref{Omega} and \eqref{Omega-1}, we conclude that
\begin{equation}\label{617}
\begin{split}
\mathcal{B}_{\epsilon,2}&=-(p_{\epsilon}^{(1)}+p_{\epsilon}^{(2)}-1)a_0^2\int_{\Omega}\Big(|x|^{-(n-2)} \ast u_{\epsilon}^{p-\epsilon}\Big)u_{\epsilon}^{p-\epsilon}dx
+O\Big(\big\|u_{\epsilon}\big\|_{\infty}^{-2}\Big)\\&
\leq O\Big(\big\|u_{\epsilon}\big\|_{\infty}^{-2}\Big),
\end{split}
\end{equation}
and
\begin{equation}\label{618}
\begin{split}
\mathcal{B}_{\epsilon,3}&\geq
\big(\frac{p_{\epsilon}^{(1)}}{p}+p_{\epsilon}^{(2)}\big)\big\|u_{\epsilon}\big\|_{\infty}^{\frac{4}{n-2}}\sum_{j=1}^{n}a_j\frac{\delta_j^{k}}{n}\int_{\mathbb{R}^{n}}W^{p-1}(x)\big|\nabla W(x)\big|^2dx
\\&
+d^2\big(p_{\epsilon}^{(1)}+p_{\epsilon}^{(2)}\big)\big\|u_{\epsilon}\big\|_{\infty}^{p+1-\epsilon}\Gamma_n\frac{\delta_j^{k}}{n}\int_{\mathbb{R}^n}W^{p-1}(x)\Big(\frac{1-|x|^2}{(1+|x|^2)^{\frac{n}{2}}}\Big)^2dx
\\&+o\big(\big\|u_{\epsilon}\big\|_{\infty}^{\frac{3(p+1-\epsilon)}{2}+2}\big)
+
O\Big(\big\|u_{\epsilon}\big\|_{\infty}^{-2}\Big)\\&
\geq\delta>0
\end{split}
\end{equation}
for some constant $\delta$.
It follows from \eqref{minmax-00} and \eqref{617}-\eqref{618} that
\begin{equation*}
\limsup_{\epsilon\rightarrow0}\lambda_{n+2,\epsilon}\leq \limsup_{\epsilon\rightarrow0}\max\limits_{\tilde{f}\in\mathcal{W}\setminus\{0\}}\big(1+\mathcal{B}_{\epsilon,2}/\mathcal{B}_{\epsilon,3}\big)\leq1,
\end{equation*}
and the conclusion follows.
\end{proof}

 Now we are ready to prove Theorem \ref{thmprtb}.
\begin{proof}[\textbf{Proof of Theorem \ref{thmprtb}}]	
We now proceed similarly to Lemma \ref{identity}. For the rescaled eigenfunctions $\tilde{v}_{n+2,\epsilon}$, we deduce that
\begin{equation}\label{viep-1}
\tilde{v}_{n+2,\epsilon}(x)\rightarrow-(n-2)\sum_{k=1}^{n}\frac{\alpha_kx_k}{(1+|x|^2)^{\frac{n}{2}}}+\beta\frac{1-|x|^2}{(1+|x|^2)^{\frac{n}{2}}}\quad\mbox{in}\hspace{2mm} C_{loc}^{1}(\mathbb{R}^n)
\end{equation}
with $(\alpha_1,\cdots,\alpha_{n},\beta)\neq(0,\cdots,0)$ in $\mathbb{R}^{n+1}$. We claim that $(\alpha_1,\cdots,\alpha_{n})=(0,\cdots,0)$. Note that once this claim is established, \eqref{rtbdy46} will follow directly. Indeed, arguing in the same way as in Lemma \ref{wanfan}, we can show that the eigenfunctions $v_{n+2,\epsilon}$ and $v_{m,\epsilon}$ are orthogonal in $H_{0}^{1}(\Omega)$ for $m=2,\cdots,n+1$.  Then we have
\begin{equation}\label{ping-1}
		\begin{split}	(p-\epsilon)\int_{\Omega}\int_{\Omega}&\frac{u_{\epsilon}^{p-1-\epsilon}(y)v_{n+2,\epsilon}(y)u_{\epsilon}^{p-1-\epsilon}(x)v_{m,\epsilon}(x)}{|x-y|^{n-2}}dxdy \\&+(p-1-\epsilon)\int_{\Omega}\int_{\Omega}\frac{u_{\varepsilon}^{p-\epsilon}(y)u_{\epsilon}^{p-2-\epsilon}(x)v_{n+2,\epsilon}(x)v_{m,\epsilon}(x)}{|x-y|^{n-2}} dxdy=0.
\end{split}
		\end{equation}
It follows from \eqref{vie-1} and \eqref{viep-1}-\eqref{ping-1} that
\begin{equation*}
\begin{split}		
&p\sum_{k,h=1}^{n}\int_{\mathbb{R}^n}\int_{\mathbb{R}^n}\frac{W^{p-1}(y)W^{p-1}(x)}{|x-y|^{n-2}}
\Big[-(n-2)\sum_{k=1}^{n}\frac{\alpha_ky_k}{(1+|y|^2)^{\frac{n}{2}}}+\beta\frac{1-|y|^2}{(1+|y|^2)^{\frac{n}{2}}}\Big]\Big(\frac{\alpha_h^mx_h}{(1+|x|^2)^{\frac{n}{2}}}\Big)dxdy\\&+	(p-1)\sum_{k,h=1}^{n}\int_{\mathbb{R}^n}\int_{\mathbb{R}^n}\frac{W^{p}(y)W^{p-2}(x)}{|x-y|^{n-2}}
\Big[-(n-2)\sum_{k=1}^{n}\frac{\alpha_kx_k}{(1+|x|^2)^{\frac{n}{2}}}+\beta\frac{1-|x|^2}{(1+|x|^2)^{\frac{n}{2}}}\Big]\frac{\alpha_h^mx_h}{(1+|x|^2)^{\frac{n}{2}}}dxdy=0.	\end{split}
\end{equation*}
Combining \eqref{W-0} and \eqref{p1-00}-\eqref{thatwwe} yields
\begin{equation*}
\begin{split}
&\big[I(\frac{n-2}{2})S^{\frac{(2-n)}{8}}C_{n,n-2}^{\frac{2-n}{8}}[n(n-2)]^{\frac{n-2}{4}}
p+p-1\big]\beta\sum_{k,h=1}^{n}\int_{\mathbb{R}^n}W^{p-1}(x)\frac{1-|x|^2}{(1+|x|^2)^{\frac{n}{2}}}\frac{\alpha_h^mx_h}{(1+|x|^2)^{\frac{n}{2}}}dx\\&
-4I(\frac{n-2}{2})S^{\frac{(2-n)}{8}}C_{n,n-2}^{\frac{2-n}{8}}[n(n-2)]^{\frac{n-2}{4}}
\sum_{k,h=1}^{n}\int_{\mathbb{R}^n}W^{p-1}(x)\frac{\alpha_kx_k}{(1+|x|^2)^{\frac{n}{2}}}\frac{\alpha_h^mx_h}{(1+|x|^2)^{\frac{n}{2}}}dx\\&
=-4I(\frac{n-2}{2})S^{\frac{(2-n)}{8}}C_{n,n-2}^{\frac{2-n}{8}}[n(n-2)]^{\frac{n-2}{4}}
\sum_{k,h=1}^{n}\alpha_h\alpha_h^m\int_{\mathbb{R}^n}W^{p-1}(x)\frac{x_h^2}{(1+|x|^2)^{n}}dx
=0,
\end{split}
\end{equation*}
which implies $(a,a^{(m)})=0$ for $m=2,\cdots, n+1$, and so the claim follows from Lemma \ref{wanfan}.

To show \eqref{rt}, note that from \eqref{bepus}, \eqref{dus}, \eqref{decay1}, \eqref{day2}, Lemma \ref{regular} and Lemma \ref{ba}, similarly to Lemma \ref{m00}, we have
\begin{equation}\label{number-1}
\big\|u_{\epsilon}\big\|_{\infty}^2v_{n+2,\epsilon}(x)\rightarrow \mathcal{M}_0\beta G(x,x_0)\quad\mbox{in}\hspace{2mm}C^{1,\alpha}(\omega),
\end{equation}
where the convergence is in $C^{1,\alpha}(\omega)$ with $\omega$ any compact set of $\bar{\Omega}$ not containing $x_0$,  the limit point of $x_\epsilon$, and $\mathcal{M}_0=-\big(\frac{4}{n(n+2)}+\frac{1}{n}\big)\Gamma_nC_N\omega_n$.
Thus, thanks to $\beta\neq0$,
\begin{equation*}
\begin{split}
1-\lambda_{n+2,\varepsilon}&=\frac{1}{\|u_{\epsilon}\|_{\infty}^{\frac{4+(n-2)\epsilon}{2}}}\bigg[
-\frac{(n-2)\mathcal{K}_n \mathcal{M}_0}{(2p-1)\Gamma_nC_N}\Big(\int_{\mathbb{R}^n}W^{p-1}(x)\big(\frac{1-|x|^2}{(1+|x|^2)^{\frac{n}{2}}}\big)^2dx\Big)^{-1}H(x_0,x_0)+o(1)\bigg]
\\&=\epsilon(\mathcal{C}_0+o(1))
\end{split}
\end{equation*}
with
$$\mathcal{C}_0=-\frac{(n-2)\mathcal{K}_n \mathcal{M}_0F_n}{(2p-1)\Gamma_nC_N}\Big(\int_{\mathbb{R}^n}W^{p-1}(x)\big(\frac{1-|x|^2}{(1+|x|^2)^{\frac{n}{2}}}\big)^2dx\Big)^{-1}H(x_0,x_0).$$
Then \eqref{rt} is established.

We can now prove that $\lambda_{n+2,\epsilon}$ is simple. Then we may assume by contradiction that there exist at least two eigenfunctions $v_{n+2,\epsilon}^{(1)}$ and $v_{n+2,\epsilon}^{(2)}$ corresponding to $\lambda_{n+2,\epsilon}$ orthogonal in the space $H_{0}^{1}(\Omega)$, and so that
\begin{equation*}
		\begin{split}	(p-\epsilon)\int_{\Omega}\int_{\Omega}&\frac{u_{\epsilon}^{p-1-\epsilon}(y)v_{n+2,\epsilon}^{(1)}(y)u_{\epsilon}^{p-1-\epsilon}(x)v_{n+2,\epsilon}^{(2)}(x)}{|x-y|^{n-2}}dxdy \\&+(p-1-\epsilon)\int_{\Omega}\int_{\Omega}\frac{u_{\varepsilon}^{p-\epsilon}(y)u_{\epsilon}^{p-2-\epsilon}(x)v_{n+2,\epsilon}^{(1)}(x)v_{n+2,\epsilon}^{(2)}(x)}{|x-y|^{n-2}} dxdy=0.
\end{split}
\end{equation*}
Thus, combing \eqref{rtbdy46} and \eqref{p1-00}-\eqref{thatwwe} we have
\begin{equation*}
\begin{split}
&\int_{\mathbb{R}^n}\int_{\mathbb{R}^n}\Big[p\frac{W^{p-1}(y)W^{p-1}(x)}{|x-y|^{n-2}}\frac{1-|y|^2}{(1+|y|^2)^{\frac{n}{2}}}
+(p-1)\frac{W^{p}(y)W^{p-2}(x)}{|x-y|^{n-2}}\frac{1-|x|^2}{(1+|x|^2)^{\frac{n}{2}}}\Big]\frac{1-|x|^2}{(1+|x|^2)^{\frac{n}{2}}}
dxdy\\&
=(2p-1)\Gamma_n\beta_1\beta_2\int_{\mathbb{R}^n}W^{p-1}(x)\Big(\frac{1-|x|^2}{(1+|x|^2)^{\frac{n}{2}}}\Big)^2dx=0,
\end{split}
\end{equation*}
where $\beta_1$ and $\beta_2$ are the convergence coefficients of $v_{n+2,\epsilon}^{(1)}$ and $v_{n+2,\epsilon}^{(2)}$ in \eqref{rtbdy46}, respectively. This gives the expected contradiction and the result follows.

We next want to show that $v_{n+2,\epsilon}$ has only two nodal regions.
For this, we first define
\begin{equation}\label{gama}
\gamma(x)=\frac{1-|x|^2}{(1+|x|^2)^{\frac{n}{2}}}>0\hspace{2mm}\mbox{in the ball}\hspace{2mm}B_1(0);\hspace{2mm}\gamma(x)<0\hspace{2mm}\mbox{in}\hspace{2mm}\mathbb{R}^n\setminus B_1(0).
\end{equation}
Without loss of generality, we may assume that $\beta>0$ in \eqref{rtbdy46}, then by the $C_{loc}^{1}$ convergence  we have $\tilde{v}_{n+2,\epsilon}>0$ in $B_{1/2}(0)$ for $\epsilon>0$ small enough, which means that
$$v_{n+2,\epsilon}>0\hspace{2mm}\mbox{in}\hspace{2mm}\big\{x:|x-x_{\epsilon}|<\frac{1}{2}\|u_{\epsilon}\|_{\infty}^{-\frac{4-(n-2)\epsilon}{2(n-2)}}\big\}.$$
In the same way, in view of $\gamma<0$ on $D_{R}=\{x\in\mathbb{R}^n,~|x|=R\}$ for any $R>1$, we find $v_{n+2,\epsilon}<0$ on $\Omega^{\prime}:=\{x:~|x-x_{\epsilon}|=r\}$ for the radius $r=\frac{1}{2}\|u_{\epsilon}\|_{\infty}^{-\frac{4-(n-2)\epsilon}{2(n-2)}}R$.
Thus, as $R$ large enough, the boundary of $\Omega^{\prime\prime}:=\Omega\setminus\{x,~|x-x_{\epsilon}|<r\}$ satisfies the negative constraint and together with ellipticity and non-positivity of zero-order terms of operators $L_{\epsilon}\phi=-\Delta \phi-\lambda_{n+2,\epsilon} \mathcal{G}_{\epsilon}[\phi]$ hold, and thanks to the maximum principle and the fact $v_{n+2,\epsilon}<0$ on $\Omega^{\prime\prime}$,
\begin{equation}\label{LL}
v_{n+2,\epsilon}<0\hspace{2mm}\mbox{in}\hspace{2mm}\Omega^{\prime\prime}.
\end{equation}
Therefore, we argue by contradiction if $v_{n+2,\epsilon}$ has more than two nodal regions, then these nodal regions must be in domain $D_{\epsilon}:=\big\{x,~|x-x_{\epsilon}|<r\big\}\setminus\big\{x:|x-x_{\epsilon}|<\frac{1}{2}\|u_{\epsilon}\|_{\infty}^{-\frac{4-(n-2)\epsilon}{2(n-2)}}\big\}$.
We may denote one of them as $C_{\epsilon}$, such that $\overline{C}_{\epsilon}\subset D_{\epsilon}$. We consider two cases, depending wether $v_{n+2,\epsilon}<0$ or not.\\
\bigskip

\textbf{Case 1.} If $v_{n+2,\epsilon}>0$, then taking $y_{\epsilon}\in D_{\epsilon}$ as its maximum, we have $v_{n+2,\epsilon}(y_{\epsilon})>0$ and $\nabla v_{n+2,\epsilon}(y_{\epsilon})=0$.
Passing to the limit as $\epsilon\rightarrow0$ with the change of variable $\tilde{y}_{\epsilon}=\|u_{\epsilon}\|_{\infty}^{-\frac{4-(n-2)\epsilon}{2(n-2)}}y_{\epsilon}+x_{\epsilon}$,
we would get $\gamma(\tilde{y}_{0})>0$, $|\tilde{y}_{0}|>\frac{1}{2}$, and $\nabla \gamma(\tilde{y}_{0})=0$, which gives a contradiction to the rigidity of function $\gamma(x)$.\\
\bigskip

\textbf{Case 2.} If $v_{n+2,\epsilon}<0$. Similarly to case 1 and assume that $y_{\epsilon}\in D_{\epsilon}$ is a minimum point of $v_{n+2,\epsilon}$, then we have $v_{n+2,\epsilon}(y_{\epsilon})<0$, $v_{n+2,\epsilon}\equiv0$ on $\partial C_{\epsilon}$ and $\nabla v_{n+2,\epsilon}(y_{\epsilon})=0$, and on passing to the limit as $\epsilon$ small enough so that $\gamma(\tilde{y}_{0})=0$, $|\tilde{y}_{0}|=1$, and $\nabla \gamma(\tilde{y}_{0})=0$, a contradiction. Hence the conclusion follows.

Finally, due to \eqref{LL}, we obtain that $v_{n+2,\epsilon}$ is always negative, and then the closure of the nodal set $\mathcal{N}_{n+2,\epsilon}$ follows. This concludes the proof.
\end{proof}	

We conclude the section by proving the Corollary \ref{emm-1} by exploiting Theorem \ref{Figalli-1}, Theorem \ref{remainder terms} and \eqref{rt}.
\begin{proof}[\textbf{Proof of Corollary \ref{emm-1}}]
By virtue of Theorem \ref{remainder terms} we have that any $\nu_{i-1}<0$ for $i=2,\cdots,n+1$ so that  $\lambda_{1,\epsilon}<1$, and $\lambda_{n+2,\epsilon}>1$ in \eqref{rt} for $\epsilon$ sufficiently small, we obtain
$$1\leq1+ind(-D^2\phi(x_0))\leq ind(u_{\epsilon})\leq n+1,$$
and the conclusion follows.
\end{proof}

\small

\end{document}